\tikzset{>=latex}
\tikzset{
   shift left/.style={decorate,decoration={simple line,raise=#1}},
   shift right/.style={decorate,decoration={simple line,raise=-1*#1}},
}
\DeclareMathAlphabet{\mathpzc}{OT1}{pzc}{m}{it}
\newtheorem{theorem}{Theorem}[section]
\newtheorem{lemma}[theorem]{Lemma}
\newtheorem{prop}[theorem]{Proposition}
\newtheorem{cor}[theorem]{Corollary}
\theoremstyle{definition}
\newtheorem{defn}[theorem]{Definition}
\newtheorem{remark}[theorem]{Remark}
\newtheorem{example}[theorem]{Example}
\newtheorem{notation}[theorem]{Notation}
\numberwithin{equation}{section}
\def\beq{\begin{equation}}
\def\eeq{\end{equation}}
\def\longra{\longrightarrow}
\newcommand{\hr}[1]{\left(#1\right)} 
\newcommand{\ha}[1]{\left\langle#1\right\rangle} 
\newcommand{\hs}[1]{\left[#1\right]} 
\newcommand{\hc}[1]{\left\{#1\right\}} 
\newcommand\nord[1]{\hspace{.2em}\boldsymbol{:}\hspace{-.2em}#1\hspace{-.2em}\boldsymbol{:}\hspace{.2em}}
\def\le{\leqslant}
\def\ge{\geqslant}
\def\act{\mathrm{act}}
\def\Ac{\mathcal A}
\def\b{\mathfrak b}
\def\Bc{\mathcal B}
\def\C{\mathbb C}
\def\Cc{\mathcal C}
\def\Ch{\operatorname{Ch}}
\def\Conf{\operatorname{Conf}^{fr}}
\newcommand{\OqConftri}[1]{\Oq(\Conf_{#1})[S_\tri^{-1}]}
\newcommand{\OqConfe}[2]{\Oq(\Conf_{#1})[{#2}^{-1}]}
\def\dim{\operatorname{dim}}
\def\Disp{\mathbb{D}\mathrm{isk}}
\def\Dist{\mathrm{Dist}}
\def\eps{\varepsilon}
\def\End{\operatorname{End}}
\def\Fc{\mathcal F}
\def\For{\operatorname{For}}
\def\g{\mathfrak g}
\def\Gc{\mathcal G}
\def\hatS{\widehat S}
\def\Hom{\operatorname{Hom}}
\def\coker{\operatorname{coker}}
\def\Free{\operatorname{Free}}
\def\id{\mathrm{id}}
\def\la{\lambda}
\def\longra{\longrightarrow}
\def\Lf{\mathbf L}
\def\mod{\mathrm{-mod}}
\def\Mc{\mathcal M}
\def\n{\mathfrak n}
\def\nsp{\:\!}
\def\one{\mathds 1}
\def\op{\mathrm{op}}
\def\Oc{\mathcal O}
\def\Pc{\mathcal P}
\def\Pr{\mathbf{Pr}}
\def\QC{\mathcal{QC}}
\def\rhu{\rightharpoonup}
\def\R{\mathbb R}
\def\Rc{\mathcal R}
\def\Rep{\mathrm{Rep}}
\def\sl{\mathfrak{sl}}
\def\Surp{\mathbb{S}\mathrm{urf}}
\def\t{\mathfrak t}
\def\Tc{\mathcal T}
\def\Tf{\mathbf T}
\def\trib{\tri_\bullet}
\def\wt{\operatorname{wt}}
\def\Xc{\mathcal X}
\def\cC{\mathcal C }
\def\K{k}
\def\Z{\mathbb Z}
\def\Zc{\mathcal Z}
\def\Bc{\mathcal B}
\def\Xc{\mathcal X}
\def\Lbf{\mathbf L}
\def\Zq{\mathcal{Z}}
\def\Aut{\operatorname{Aut}}
\def\iHom{\underline{\Hom}}
\def\id{\operatorname{id}}
\def\iEnd{\underline{\End}}
\def\oo{\ensuremath{\infty}}
\def\SL{\operatorname{SL}}
\def\PGL{\operatorname{PGL}}
\def\ot{\otimes}
\def\gt{\mathfrak{t}}
\def\gb{\mathfrak{b}}
\def\DD{\mathbb{D}}
\def\colim{\operatorname{colim}}
\def\bt{\boxtimes}
\def\gn{\mathfrak{n}}
\def\cC{\mathcal C }
\def\cD{\mathcal D }
\def\cE{\mathcal E }
\def\cP{\mathcal P }
\def\Id{\operatorname{Id}}
\def\cA{\mathcal A}
\def\cM{\mathcal M}
\def\Torsion{\mathrm{Tors}}
\def\Repq{\mathrm{Rep}_q}
\newcommand{\bop}[1]{\overline{#1}}
\newcommand{\Oq}{\Fc_q}
\newcommand{\rev}[1]{\overline{#1}}
\newcommand{\defterm}[1]{\textbf{\emph{#1}}}
\newcommand{\into}{\hookrightarrow}
\newcommand{\decS}{\mathbb{S}}
\newcommand{\decX}{\mathbb{X}}
\newcommand{\decE}{\mathbb{E}}
\newcommand{\tri}{{\underline{\Delta}}}
\newcommand{\Fq}{\mathcal{F}_q}
\newcommand{\TBG}{T\leftarrow B \rightarrow G}
\newcommand{\Vect}{\operatorname{Vect}}
\newcommand{\BrTens}{\operatorname{BrTens}}
\newcommand{\Res}{\operatorname{Res}}
\newcommand{\Spec}{\operatorname{Spec}}
\newcommand{\col}[1]{~{\raise-2pt\hbox{\tiny$\bullet$}\hskip -4.7pt \raise4pt\hbox{\tiny$\bullet$}{{#1}} \raise-2pt\hbox{\tiny$\bullet$}\hskip -4.7pt \raise4pt\hbox{\tiny$\bullet$}}\,~}
\begin{document}

\title[Quantum decorated character stacks]{Quantum decorated character stacks}

\author{David Jordan}
\author{Ian Le}
\author{Gus Schrader}
\author{Alexander Shapiro}
\maketitle

\begin{abstract}
We initiate the study of decorated character stacks and their quantizations using the framework of stratified factorization homology. We thereby extend the construction by Fock and Goncharov of (quantum) decorated character varieties to encompass also the stacky points, in a way that is both compatible with cutting and gluing and equivariant with respect to canonical actions of the modular group of the surface. In the cases $G=\SL_2,\PGL_2$ we construct a system of categorical charts and flips on the quantum decorated character stacks which generalize the well--known cluster structures on the Fock--Goncharov moduli spaces. 
\end{abstract}

\setcounter{tocdepth}{1}
\tableofcontents

\section{Introduction}
In this paper we introduce \emph{decorated character stacks} and their quantizations using the framework of \emph{stratified factorization homology}, as recently developed by Ayala, Francis, and Tanaka \cite{AFT}. Given a reductive group $G$ with Borel subgroup $B$ and its Cartan quotient $T$, a \emph{decorated surface} is a bipartite surface with open regions colored $G$ and $T$ and codimension one defects between them labelled $B$.  The decorated character stack $\Ch(\decS)$ is the moduli stack of $G$- and $T$-local systems over each corresponding region of $\decS$, together with additional $B$-reduction data along the defect lines. The quantum decorated character stack is a quantization $\Zc(\decS)$ of the category $\QC(\Ch_G(\decS))$, obtained by integration over $\decS$ with coefficients in a \emph{parabolic induction algebra}, which prescribes the ribbon braided tensor categories $\Repq(G)$ and $\Repq(T)$ in their respective regions, and their central tensor category $\Repq(B)$ along line defects.

The resulting quantum decorated character stacks form a basic ingredient in the study of Betti quantum geometric Langlands, a mathematical model for the Kapustin-Witten twist of $\mathcal{N}=4$ SUSY $4d$ Yang-Mills theory compactified on $\R^2$ (that is, evaluated on decorated surfaces times $\mathbb{R}^2$).  In mathematical terms, the structure of the parabolic induction algebra is simply that of a $1$-morphism between $\Repq(G)$ and $\Repq(T)$, these categories being regarded as objects in the 4-category of braided tensor categories \cite{BJS} which is the target of the 4d TFT, so that $\Repq(B)$ defines a natural transformation between the $G$ and $T$-theories. In physical terms, the parabolic induction algebra defines a \emph{domain wall} between the $G$- and $T$-theories, providing the structure of local operators at interfaces of bipartite manifolds colored by $G$ and $T$, and hence our constructions give computations of global operators for such manifolds.

In this paper we develop a number of tools which allow for very concrete computations with quantum decorated character stacks.  Our lodestar is the following proposal of David Ben-Zvi:
\smallskip
\begin{quote}
{\it The quantum cluster algebras associated to a marked surface by Fock, Goncharov, and Shen describe charts on a proper open locus of $\Zc(\decS)$, consisting wholly of non-stacky points.}
\end{quote}
\smallskip

Let us briefly recall the quantum cluster algebra constructions of Fock, Goncharov and Shen.  In~\cite{FG06}, Fock and Goncharov consider a pair of moduli spaces. The first one, $\Xc_{G,S}$, is the moduli space of \emph{framed} $G$-local systems on a decorated surface $S$, defined for an arbitrary split reductive group $G$. The second one, $\Ac_{G,S}$, is the moduli space of \emph{decorated twisted} $G$-local systems on $S$, defined for any simply-connected reductive group $G$. Among other results, it was shown in~\cite{FG06} that $\Xc_{PGL_n,S}$ and $\Ac_{SL_n,S}$ are respectively cluster Poisson and cluster $K_2$-varieties\footnote{Cluster Poisson and a cluster $K_2$-varieties are also known as cluster $\Xc$-- and  cluster $\Ac$--varieties, respectively.}, and moreover, form a cluster ensemble. In~\cite{GS19}, \cite{Le19}, \cite{Ip18}, these results were extended to arbitrary Dynkin types. In~\cite{GS19} the moduli space $\Xc_{G,S}$ was promoted to a new one, $\Pc_{G,S}$, parameterizing framed $G$-local systems with \emph{pinnings.} This moduli space also has a cluster Poisson structure, but in contrast with $\Xc_{G,S}$, allows for frozen variables at the boundary of $S$ and thus admits gluing maps that are not available for the spaces $\Xc_{G,S}$.

Non-commutative deformations of cluster Poisson varieties were constructed in~\cite{FG09a,FG09b}, by associating to each cluster Poisson chart a quantum torus algebra, and promoting the transition maps between charts (called ``cluster Poisson transformations") to algebra isomorphisms between the skew fields of fractions of the corresponding quantum tori.  The output of this construction is best understood as a functor $\Xc^q_{G,S}$ from the \emph{cluster modular groupoid}, whose objects are cluster Poisson charts on $\Xc_{G,S}$ and morphisms cluster Poisson transformations, to the category of quantum torus algebras with birational isomorphisms between them. One may then pass to ``global sections'' to obtain an algebra $\Lbf_q(\Xc_{G,S})$ consisting of all quantum torus elements which remain regular under any sequence of quantum cluster transformations.  

Ben-Zvi's proposal predicts on the one hand that the category $\Zc(\decS)$ should capture the rich combinatorics of the $q$--deformed moduli spaces $\Pc^q_{G,S}, \Xc^q_{G,S}$, and on the other hand that $\Zc(\decS)$ provides a ``stacky'' enhancement of each one.  Such an enhancement is important:  whereas removing the stacky points allows for more explicit descriptions of open loci in cluster terms, it crucially destroys the topological functoriality present in our construction.  Indeed, the assignment $\decS\mapsto\Zc(\decS)$ is \emph{a priori} an invariant of surfaces, naturally equivariant for the appropriate mapping class group. In the traditional approach, the corresponding equivariance statement can only be deduced \emph{as a consequence of} the highly non-trivial construction of a cluster structure on $\Pc^q_{G,S}, \Xc^q_{G,S}$.

The second important reason to work within the framework of stacks is that doing so allows to construct the moduli spaces and their quantizations in a way that is much more local with respect to the decorated surface $\decS$. The central idea of the standard cluster approach is also to work locally in $\decS$ by regarding it as being glued from decorated triangles -- these are the smallest decorated surfaces for which the moduli stack has an open rational subvariety. Yet our techniques allow us to localize all the way down to the very simplest nontrivial decorated surface -- a disk $\DD_1$ with a single domain wall between $G$ and $T$ regions. Then, in a precise sense (explained in \cref{prop-Pr}), the parabolic induction algebra $\Zc(\DD_1)=\Repq(B)$ completely determines $\Zc(\decS)$ for any other decorated surface $\decS$. In particular, the decorated triangles fundamental to the standard cluster formalism simply arise by gluing together three copies of $\DD_1$, as indicated in \cref{fig:D3}.

\begin{figure}[t]
\begin{tikzpicture}[every node/.style={inner sep=0.5, thick, circle}, thick, x=0.5cm, y=0.5cm]

\def\Greg{(-5,0) to [out=90,in=180] (-3,2) to (3,2) to [out=0,in=90] (5,0) to (5,-3) to [out=-90,in=0] (4,-4) to [out=180,in=-90] (3,-3) to [out=90,in=0] (2,-1) to [out=180,in=90] (1,-3) to [out=-90,in=0] (0,-4) to [out=180,in=-90] (-1,-3) to [out=90,in=0] (-2,-1) to [out=180,in=90] (-3,-3) to [out=-90,in=0] (-4,-4) to [out=180,in=-90] (-5,-3) to (-5,0)};

\def\Tone{(-5,-2.5) to [bend left = 50] (-3,-2.5) to (-3,-3) to [out=-90,in=0] (-4,-4) to [out=180,in=-90] (-5,-3) to (-5,-2.5)};
\def\Ttwo{(-1,-2.5) to [bend left = 50] (1,-2.5) to (1,-3) to [out=-90,in=0] (0,-4) to [out=180,in=-90] (-1,-3) to (-1,-2.5)};
\def\Tthree{(3,-2.5) to [bend left = 50] (5,-2.5) to (5,-3) to [out=-90,in=0] (4,-4) to [out=180,in=-90] (3,-3) to (3,-2.5)};

\fill[Orchid!20] \Greg;
\fill[Goldenrod!60] \Tone \Ttwo \Tthree;

\draw[very thick, cyan] (-5,-2.5) to [bend left = 50] (-3,-2.5);
\draw[very thick, cyan] (-1,-2.5) to [bend left = 50] (1,-2.5);
\draw[very thick, cyan] (3,-2.5) to [bend left = 50] (5,-2.5);
\draw[ultra thick, dashdotted] (-2,-1) to (-2,2);
\draw[ultra thick, dashdotted] (2,-1) to (2,2);
\draw \Greg;

\draw[ultra thick, Goldenrod!60, domain=-120:-60] plot ({-4+cos(\x)}, {-3+sin(\x)});
\draw[ultra thick, Goldenrod!60, domain=-120:-60] plot ({cos(\x)}, {-3+sin(\x)});
\draw[ultra thick, Goldenrod!60, domain=-120:-60] plot ({4+cos(\x)}, {-3+sin(\x)});

\draw[domain=0.85:1.15] plot ({-4+\x*cos(-60)}, {-3+\x*sin(-60)});
\draw[domain=0.85:1.15] plot ({-4+\x*cos(-120)}, {-3+\x*sin(-120)});
\draw[domain=0.85:1.15] plot ({\x*cos(-60)}, {-3+\x*sin(-60)});
\draw[domain=0.85:1.15] plot ({\x*cos(-120)}, {-3+\x*sin(-120)});
\draw[domain=0.85:1.15] plot ({4+\x*cos(-60)}, {-3+\x*sin(-60)});
\draw[domain=0.85:1.15] plot ({4+\x*cos(-120)}, {-3+\x*sin(-120)});

\node at (-4,-4.5) {\footnotesize 1};
\node at (0,-4.5) {\footnotesize 2};
\node at (4,-4.5) {\footnotesize 3};

\begin{scope}[shift={(-15,0)}]

\def\Greg{(-5,0) to [out=90,in=180] (-3,2) to (3,2) to [out=0,in=90] (5,0) to (5,-3) to [out=-90,in=0] (4,-4) to [out=180,in=-90] (3,-3) to [out=90,in=0] (2,-1) to [out=180,in=90] (1,-3) to [out=-90,in=0] (0,-4) to [out=180,in=-90] (-1,-3) to [out=90,in=0] (-2,-1) to [out=180,in=90] (-3,-3) to [out=-90,in=0] (-4,-4) to [out=180,in=-90] (-5,-3) to (-5,0)};

\def\Gregl{(-2.2,-1) to [bend left = 10] (-2.2,2) to (-1.8,2) to [bend left = 10] (-1.8,-1) to (-2.2,-1)};
\def\Gregr{(2.2,-1) to [bend right = 10] (2.2,2) to (1.8,2) to [bend right = 10] (1.8,-1) to (2.2,-1)};

\def\Tone{(-5,-2.5) to [bend left = 50] (-3,-2.5) to (-3,-3) to [out=-90,in=0] (-4,-4) to [out=180,in=-90] (-5,-3) to (-5,-2.5)};
\def\Ttwo{(-1,-2.5) to [bend left = 50] (1,-2.5) to (1,-3) to [out=-90,in=0] (0,-4) to [out=180,in=-90] (-1,-3) to (-1,-2.5)};
\def\Tthree{(3,-2.5) to [bend left = 50] (5,-2.5) to (5,-3) to [out=-90,in=0] (4,-4) to [out=180,in=-90] (3,-3) to (3,-2.5)};

\fill[Orchid!20] \Greg;
\fill[Goldenrod!60] \Tone \Ttwo \Tthree;

\draw[very thick, cyan] (-5,-2.5) to [bend left = 50] (-3,-2.5);
\draw[very thick, cyan] (-1,-2.5) to [bend left = 50] (1,-2.5);
\draw[very thick, cyan] (3,-2.5) to [bend left = 50] (5,-2.5);
\begin{scope}
\clip\Greg;
\draw (-2.2,-1) to [bend left = 10] (-2.2,2);
\draw (-1.8,-1) to [bend right = 10] (-1.8,2);
\draw (1.8,-1) to [bend left = 10] (1.8,2);
\draw (2.2,-1) to [bend right = 10] (2.2,2);
\fill[Orchid!50] \Gregl;
\fill[Orchid!50] \Gregr;
\end{scope}
\draw \Greg;

\draw[ultra thick, Goldenrod!60, domain=-120:-60] plot ({-4+cos(\x)}, {-3+sin(\x)});
\draw[ultra thick, Goldenrod!60, domain=-120:-60] plot ({cos(\x)}, {-3+sin(\x)});
\draw[ultra thick, Goldenrod!60, domain=-120:-60] plot ({4+cos(\x)}, {-3+sin(\x)});

\draw[domain=0.85:1.15] plot ({-4+\x*cos(-60)}, {-3+\x*sin(-60)});
\draw[domain=0.85:1.15] plot ({-4+\x*cos(-120)}, {-3+\x*sin(-120)});
\draw[domain=0.85:1.15] plot ({\x*cos(-60)}, {-3+\x*sin(-60)});
\draw[domain=0.85:1.15] plot ({\x*cos(-120)}, {-3+\x*sin(-120)});
\draw[domain=0.85:1.15] plot ({4+\x*cos(-60)}, {-3+\x*sin(-60)});
\draw[domain=0.85:1.15] plot ({4+\x*cos(-120)}, {-3+\x*sin(-120)});

\node at (-4,-4.5) {\footnotesize 1};
\node at (0,-4.5) {\footnotesize 2};
\node at (4,-4.5) {\footnotesize 3};

\end{scope}

\end{tikzpicture}
\caption{Triangle $\DD_3$ with a gate in each $T$-region, composed of three copies of $\DD_B$ via excision along $G$-regions. In what follows, we abbreviate overlapping regions by thick dashed and dotted lines, as shown on the right.}
\label{fig:D3}
\end{figure}
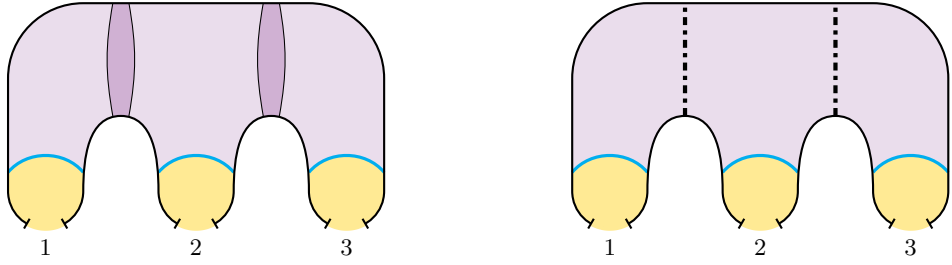

Finally -- and most importantly -- quantum decorated character stacks fit by construction into the framework of fully extended 4-dimensional topological field theory with defects, so that in particular one can define functors $\Zc(\decS)\to\Zc(\decS')$ from decorated cobordisms between $\decS$ and $\decS'$.  These functors do not preserve -- and hence cannot be defined via -- the non-stacky locus.  In future work we intend to explore applications of the functoriality properties of quantum decorated character stacks to the construction of invariants of links in 3--manifolds, and in particular to the proposals in  \cite{Dim,DGG}.

Our main results in this paper establish Ben--Zvi's proposal in rank one -- i.e. for the groups $\SL_2$ and $\PGL_2$.  We exhibit a collection of categorical open ``cluster charts" $\Zc(\tri)\subset \Zc(\decS)$, with transition functors given by simple ``cluster mutations" $\Zc(\tri)\to\Zc(\tri')$ relating triangulations.  In \cref{thm:Ahat} and \cref{thm:X}, we describe each chart $\Zc(\tri)$ explicitly as a category of modules for a quantum torus arising via monadic reconstruction, and we also describe flips as certain explicit birational transformations between these tori.  We show that in special cases (see \cref{thm:X},  \cref{rmk:A}, \cref{rmk:X}, \cref{cor:A}) our charts and flips coincide with those on $\Ac^{q=1}_{\SL_2,\decS}$, $\Pc^q_{\PGL_2,\decS}$, $\Xc^q_{\PGL_2,\decS}$. We note that in the remaining cases -- $\Ac^{q\neq 1}_{G,\decS}$, $\Ac^{q=1}_{\PGL_2,\decS}$, $\Pc^q_{\SL_2,\decS}$, $\Xc^q_{\SL_2,\decS}$ -- analogues of cluster structures were not previously known, so there is nothing to which we should compare our construction. In a similar vein, we extend to arbitrary $q$ the cluster ensemble map, which was previously only defined for $q=1$, or in the absence of punctures (see \cref{rmk:clust-ens}).

\medskip
For the remainder of the introduction, we give a more detailed overview of our basic definitions and main results.

\subsection{Decorated local systems and their moduli spaces}
Fix a connected reductive group $G$, a Borel subgroup $B\into G$ and its universal Cartan quotient $T=B/[B,B]$.

By a \defterm{walled surface} we will mean an oriented surface $S$, together with a collection $\Cc = \hc{C_1, \dots, C_r}$ of non-intersecting simple curves embedded into $S$, such that $\partial C_i \subset \partial S$ and $(C_i \smallsetminus \partial C_i) \cap \partial S = \emptyset$ for every $C_i \in \Cc$. \defterm{Walls} and \defterm{regions} of the walled surface $\decS$ are respectively the curves $C_i$ and the connected components of $S \smallsetminus \Cc$.

\begin{defn}
A \defterm{decorated surface} $\decS$ is a walled surface $S$ together with a labeling of each region from the alphabet $\{G,T\}$, such that any two regions sharing a wall have distinct labeling. We denote the union of all regions with label $G$ by $\decS_G$, and that of all regions with label $T$ by $\decS_T$.
\end{defn}

\begin{defn}
A \defterm{$G$-gate} (resp. $T$-gate) is an interval in $\decS_G\cap \partial S$ (resp. $\decS_T\cap \partial S$). A set $\Gc$ of disjoint gates in $\decS$ will be called a \defterm{gating} of $\decS$.  In the special case that $\Gc$ consists of a unique gate in each $T$-region, we will say $\decS$ is \defterm{notched}.
\end{defn}

\begin{defn}
An \defterm{$n$-gon} $\DD_n$ is a $G$-disk decorated with $n$ contractible $T$-regions along the boundary (see \cref{fig:D4-disk}).  
\end{defn}

\begin{defn}
A \defterm{triangulation} $\tri$ is a presentation of $\decS$ as a union of triangles $\DD_3$, such that the intersection of any pair of triangles is either empty or a union of (one, or two) digons $\DD_2$, and the intersection of any triple of triangles is empty.  We shall often refer to the digons in a triangulation of a decorated surface as the \defterm{edges} of the triangulation. A \defterm{notched triangulation} of a notched $\decS$ is such that each end of each digon lies in the unique $T$-gate.
\end{defn}

\begin{figure}[t]
\begin{tikzpicture}[every node/.style={inner sep=0.5, thick, circle}, x=0.5cm, y=0.5cm]

\def\Greg{(-3,2) to [out=90,in=180] (-2,3) to (2,3) to [out=0,in=90] (3,2) to (3,-2) to [out=-90,in=0] (2,-3) to (-2,-3) to [out=180,in=-90] (-3,-2) to (-3,2)};
\def\Tone{(-3,2) to [out=90,in=180] (-2,3) to (-1.5,3) to [out=-90,in=0] (-3,1.5) to (-3,2)};
\def\Ttwo{(2,3) to [out=0,in=90] (3,2) to (3,1.5) to [out=180,in=-90] (1.5,3) to (2,3)};
\def\Tthree{(3,-2) to [out=-90,in=0] (2,-3) to (1.5,-3) to [out=90,in=180] (3,-1.5) to (3,-2)};
\def\Tfour{(-2,-3) to [out=180,in=-90] (-3,-2) to (-3,-1.5) to [out=0,in=90] (-1.5,-3) to (-2,-3)};

\fill[Orchid!20] \Greg;
\fill[Goldenrod!60] \Tone \Ttwo \Tthree \Tfour;

\begin{scope}
	\clip\Greg;
	\draw[ultra thick, dashdotted] (-3,3) to (3,-3);
\end{scope}


\draw[thick] \Greg;
\draw[very thick, cyan] (-1.5,3) to [out=-90,in=0] (-3,1.5);
\draw[very thick, cyan] (3,1.5) to [out=180,in=-90] (1.5,3);
\draw[very thick, cyan] (1.5,-3) to [out=90,in=180] (3,-1.5);
\draw[very thick, cyan] (-3,-1.5) to [out=0,in=90] (-1.5,-3);


\begin{scope}[shift={(12,0)}]

\def\Gcirc{(0,0) circle (3) (0,0) circle (.5)};
\fill[Orchid!20, even odd rule] \Gcirc;
\def\Tone{(90:3) circle (1)};
\def\Ttwo{(-90:3) circle (1)};
\def\Tthree{(0,0) circle (1) (0,0) circle (.5)};

\begin{scope}
	\clip\Gcirc;
	\fill[Goldenrod!60, even odd rule] \Tone \Ttwo \Tthree;
	\draw[very thick, cyan] \Tone \Ttwo (0,0) circle (1);
\end{scope}

\draw[thick] \Gcirc;

\draw[dashdotted, ultra thick] (0,3) to (0,.5);

\draw[dashdotted, ultra thick] (0,-3) to (0,-.5);

\end{scope}

\end{tikzpicture}
\caption{At left: triangulation of~$\DD_4$. At right: triangulation of the punctured digon~$\DD_2^\circ$.}
\label{fig:D4-disk}
\end{figure}

\begin{remark}
\label{rem:triangulations}
A decorated surface $\decS$ is called \defterm{simple} if it admits a triangulation.  In a simple decorated surface, each $T$-region is either a disk contracting to a point of $\partial S$ or an annulus contracting onto an entire component of $\partial S$.  We call such $T$-regions \defterm{marked points} and \defterm{punctures} respectively.  Isotopy classes of simple decorated surfaces are in bijection with marked surfaces appearing in \cite{FG06}, while isotopy classes of triangulations of decorated surfaces are in bijection with those of  \defterm{ideal triangulations} -- maximal collections of non-intersecting arcs with endpoints at marked points or punctures -- of the corresponding marked surfaces $S$. A notched triangulation determines a \defterm{notched ideal triangulation} -- an ideal triangulation with a distinguished angle (see \cref{fig:punct-disk}).
\end{remark}

\begin{defn} Let $\decS$ be a decorated surface. A \defterm{decorated local system} $\decE$ on $\decS$ is a triple $\decE = (E_G,E_T,E_B)$, consisting of a $G$-local system $E_G$ on $\decS_G$, a $T$-local system $E_T$ on $\decS_T$, and a reduction $E_B$ to $B$ of the product $G\times T$-local system $E_G\times E_T$ over $\cC$.
\end{defn}

Recall that a $G$-local system means a principal $G$-bundle with flat connection. A reduction of a local system to a subgroup $H$ means the specification of a flat $H$-subbundle.  We shall consider the group embedding $B \into G\times T$ obtained by composing the diagonal embedding of $B$ into $B \times B$ with the inclusion of $B$ in $G$ and the projection to $T$. In this case, a $B$-reduction amounts to specifying an element of the coset space $B\backslash(G\times T) \cong N\backslash G$, satisfying a certain compatibility with the $G$ and $T$-monodromies near the curve $\cC$ (see \cref{ex:punc-annuli}).

\begin{defn}
The \defterm{decorated character stack} $\Ch(\decS)$ is the moduli stack of decorated local systems on $\decS$.
\end{defn}

Now fix a gating $\Gc$ of $\decS$.  Then by a framing\footnote{Hence our use of `framing' differs from \cite{FG06}, where it refers to a $B$-reduction rather than a trivialization.} of $\decE$ we will mean a trivialization of the local systems $E_G$ and $E_T$ at each $G$- and $T$-gate, respectively.

\begin{defn} 
The \defterm{framed decorated character stack} $\Ch^{fr}_\Gc(\decS)$ is the moduli stack of pairs, consisting of a decorated local system, and a framing of the local system over $\Gc$.
\end{defn}

Suppose that $\Gc$ consists of at least one gate in every region of $\decS$.  In this case, the moduli stack is in fact a variety, since an isomorphism $\decE\to\decE'$ of decorated local systems is uniquely determined by the condition that it preserve trivializations at each gate.  This allows us to present the decorated character stack concretely as the stack quotient:
\[
\Ch(\decS) \cong \Ch^{fr}_\Gc(\decS)/(G^m\times T^n),
\]
of an affine variety by a reductive algebraic group, where $G^m\times T^n$ acts by changing framings at the $m$ $G$-gates and $n$ $T$-gates of $\Gc$.

\begin{example}
Consider the $n$-gon $\DD_n$.  The only $G$- and $T$-local systems are the trivial one, hence the only data are the $B$-reductions, given by elements of $B\backslash(G\times T)\cong N\backslash G$.  Hence, putting a $G$-gate in the $G$-region and a $T$-gate in each $T$-region, we have:
$$
\Ch^{fr}_\Gc(\DD_n) = (N\backslash G)^{\times n}
\qquad\text{and}\qquad
\Ch(\DD_n)=T^n\,\backslash(N\backslash G)^{\times n}/\,G.
$$
\end{example}

\begin{example}\label{ex:punc-annuli}
Let us consider the four decorated and gated surfaces depicted in \cref{fig:four-variations}, and their decorated character stacks.  In each one, the $G$-local system $E_G$ is prescribed by an element $g$ of $G$ recording the monodromy in the $G$-region. For $(\hat{A})$ and $(P)$, the $T$-local system $E_T$ is given by an element $h$ of $T$, while for ($A$) and ($\overline{P}$) we must have $h=e$ since $E_T$ must be trivial.  Finally, in each case we need to specify a principal $B$-sub-bundle of the restriction of $E_G\times E_T$ over the wall.  Such a sub-bundle is specified by a Borel subgroup $B'$ containing $g$.  To summarize, in each case $\Ch_\Gc^{fr}(\decS)$ parameterizes:
\begin{itemize}
\item[($\hat{A}$)] An element $\widetilde{F}\in N\backslash G$ at the puncture, with a requirement that the monodromy in the $G$-region preserves $\pi(\widetilde{F})\in B\backslash G$.
\item[($P$)] An element $F\in B\backslash G$ at the puncture, with a requirement that the monodromy in the $G$-region preserves $F$.
\item[($A$)] An element $\widetilde{F} \in N\backslash G$ at the puncture, with a requirement that the monodromy in the $G$-region fixes $\widetilde{F}$.
\item[($\overline{P}$)] An element $F \in B\backslash G$ at the  puncture, with a requirement that the monodromy in the $G$-region preserves $F$, and is unipotent.
\end{itemize}
\end{example}
 
\begin{figure}[t]
 
\begin{tikzpicture}[every node/.style={inner sep=0.5, thick, circle}, x=0.75cm, y=0.75cm]

\begin{scope}[shift={(0,0)}, scale=0.3]
\def\Greg{(0,0) circle (5)};

\def\Tcirc{(0,0) circle (2)}

\fill[Orchid!20] \Greg;
\begin{scope}
	\clip\Greg;
	\fill[Goldenrod!60] \Tcirc;
	\draw[very thick, cyan] \Tcirc;
\end{scope}

\draw[thick, fill=white] (0,0) circle (1);

\draw[thick] \Greg;

\draw[ultra thick, Orchid!20, domain=-10:10] plot ({5*cos(\x)}, {5*sin(\x)});
\draw[thick, domain=4.7:5.3] plot ({\x*cos(10)}, {\x*sin(10)});
\draw[thick, domain=4.7:5.3] plot ({\x*cos(-10)}, {\x*sin(-10)});

\draw[ultra thick, Goldenrod!60, domain=-30:30] plot ({cos(\x)}, {sin(\x)});
\draw[thick, domain=.7:1.3] plot ({\x*cos(30)}, {\x*sin(30)});
\draw[thick, domain=.7:1.3] plot ({\x*cos(-30)}, {\x*sin(-30)});

\node at (-90:7) {$(\widehat{A})$};
\end{scope}

\begin{scope}[shift={(4,0)}, scale=0.3]
\def\Greg{(0,0) circle (5)};

\def\Tcirc{(0,0) circle (2)}

\fill[Orchid!20] \Greg;
\begin{scope}
	\clip\Greg;
	\fill[Goldenrod!60] \Tcirc;
	\draw[very thick, cyan] \Tcirc;
\end{scope}

\draw[thick, fill=white] (0,0) circle (1);

\draw[thick] \Greg;
\draw[ultra thick, Orchid!20, domain=-10:10] plot ({5*cos(\x)}, {5*sin(\x)});
\draw[thick, domain=4.7:5.3] plot ({\x*cos(10)}, {\x*sin(10)});
\draw[thick, domain=4.7:5.3] plot ({\x*cos(-10)}, {\x*sin(-10)});


\node at (-90:7) {($P$)};
\end{scope}

\begin{scope}[shift={(8,0)}, scale=0.3]
\def\Greg{(0,0) circle (5)};

\def\Tcirc{(0,0) circle (2)}

\fill[Orchid!20] \Greg;
\begin{scope}
	\clip\Greg;
	\fill[Goldenrod!60] \Tcirc;
	\draw[very thick, cyan] \Tcirc;
	\node[draw, dashed, fill=Goldenrod!60] at (0:0) {\tiny$\DD_T$};
\end{scope}


\draw[thick] \Greg;
\draw[ultra thick, Orchid!20, domain=-10:10] plot ({5*cos(\x)}, {5*sin(\x)});
\draw[thick, domain=4.7:5.3] plot ({\x*cos(10)}, {\x*sin(10)});
\draw[thick, domain=4.7:5.3] plot ({\x*cos(-10)}, {\x*sin(-10)});


\node at (-90:7) {$(A)$};
\end{scope}

\begin{scope}[shift={(12,0)}, scale=0.3]
\def\Greg{(0,0) circle (5)};

\def\Tcirc{(0,0) circle (2)}

\fill[Orchid!20] \Greg;
\begin{scope}
	\clip\Greg;
	\fill[Goldenrod!60] \Tcirc;
	\draw[very thick, cyan] \Tcirc;
\end{scope}


\draw[thick] \Greg;
\draw[ultra thick, Orchid!20, domain=-10:10] plot ({5*cos(\x)}, {5*sin(\x)});
\draw[thick, domain=4.7:5.3] plot ({\x*cos(10)}, {\x*sin(10)});
\draw[thick, domain=4.7:5.3] plot ({\x*cos(-10)}, {\x*sin(-10)});


\node at (-90:7) {$(\overline{P})$};
\end{scope}

\end{tikzpicture}
\caption{Gated decorated surfaces corresponding to moduli problems at a puncture are depicted. See \cref{rmk:A} for explanation of the ($A$) case.}
\label{fig:four-variations} 
\end{figure}
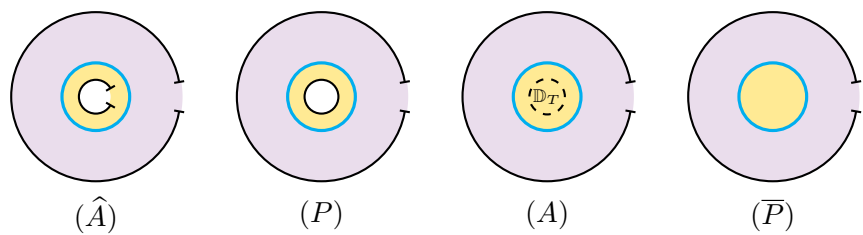

The $(\hat{A})$ space serves as our main workhorse in this paper, because it controls the others by regulation of $T$-framings.  The $(A)$ space is the preimage of the identity with respect to a multiplicative moment map $\mu$ valued in $T$.  The $(P)$ space is the quotient of the $(\hat{A})$ space by the $T$-action changing the framing. Accordingly, the $(\overline{P})$ space is the multiplicative Hamiltonian reduction of the $\hat{A}$ space. These inter-relations are summarized in the diagram below
\[
\begin{tikzcd}[row sep=large, column sep = large]
& (\hat{A})\arrow["- / T" description]{dr} \arrow[rightsquigarrow, "\mu^{-1}(e)/T" description]{dd} \\
(A) \arrow[hook, "\mu^{-1}(e)" description]{ur} \arrow["-/T" description]{dr} && (P) \\
&(\overline{P}) \arrow[hook,"\mu^{-1}(e)" description]{ur}
\end{tikzcd}
\]

\begin{remark}
For simplicity, we have considered only a single puncture and no marked points in the above discussion.  In general, each puncture contributes one datum of each type, while each marked point contributes a factor of $N\backslash G$, with no condition. There are also ($X$) and $(\overline{X})$ spaces, which have ($P$) data at each puncture, but instead have a $B\backslash G$ factor for each marked point.  These are obtained from $(P)$ spaces by quotienting the $T$-action at each marked point. 
\end{remark}
\subsection{Construction via factorization homology}
Our starting point is the following fundamental result, which is a direct corollary of Theorem \cite[Theorem 1.2]{BZFN}:

\begin{theorem}\label{thm:BZFN-1}
For any $n$-manifold $X$, we have an equivalence of categories,
\[ \QC(\Ch(X)) = \int_{X}\Rep(G).\]
\end{theorem}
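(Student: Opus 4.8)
The plan is to exhibit both sides as symmetric monoidal functors on the $\infty$-category of $n$-manifolds with open embeddings and to identify them via the universal property of factorization homology. First, recall $\Rep(G) = \QC(BG)$ with $BG = \pt/G$; since $G$ is affine, $BG$ is a perfect stack in the sense of Ben-Zvi--Francis--Nadler, so $\Rep(G)$ is a dualizable object of $\Pr^L$ whose symmetric monoidal structure makes it an $E_n$-algebra there for every $n$. On the other side, for a manifold $X$ the character stack is the Betti mapping stack $\Ch(X) = \mathrm{Map}(X, BG)$ — the moduli of $G$-local systems on $X$ — which is a perfect stack when $X$ is compact, and is homotopy-invariant in general (in particular $\Ch(\R\times Y) \simeq \Ch(Y)$ for $Y$ compact). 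Thus the assignment $F\colon X \mapsto \QC(\Ch(X))$ is well-defined and functorial, and $F(\R^n) = \QC(BG) = \Rep(G)$ since $\R^n$ is contractible.

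Second, I would check that $F$ satisfies the two axioms characterizing $\int_{(-)}\Rep(G)$. \emph{Disk axiom / monoidality}: disjoint unions of manifolds go to products of mapping stacks, and by \cite[Theorem 1.2]{BZFN} $\QC$ of a product of perfect stacks is the Lurie tensor product of the factors' $\QC$-categories, so $F(U\sqcup V) \simeq F(U)\otimes F(V)$ compatibly with the $E_n$-structure; together with $F(\R^n) = \Rep(G)$ this pins down the coefficient. \emph{$\otimes$-excision}: a collar gluing $X = X_0 \cup_{\R\times Y} X_1$ is a homotopy pushout of underlying spaces, hence $\Ch(X) \simeq \Ch(X_0)\times_{\Ch(\R\times Y)}\Ch(X_1)$ is a fiber product of perfect stacks; applying $\QC$ and invoking \cite[Theorem 1.2]{BZFN} once more turns this pullback into the relative tensor product
\[
\QC(\Ch(X)) \;\simeq\; \QC(\Ch(X_0))\otimes_{\QC(\Ch(\R\times Y))}\QC(\Ch(X_1)),
\]
which is exactly the excision formula for $F$.

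Third, I would invoke the characterization of factorization homology (Ayala--Francis, in the generality of \cite{AFT}): a symmetric monoidal functor on $n$-manifolds satisfying the disk axiom and $\otimes$-excision is canonically the factorization homology of its value on $\R^n$. Applied to $F$, this yields the asserted equivalence $\QC(\Ch(X)) \simeq \int_X \Rep(G)$, naturally in $X$; naturality is what makes it a genuine corollary of \cite{BZFN} rather than merely a pointwise statement.

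The main obstacle is the one nontrivial geometric input: that the mapping stacks $\mathrm{Map}(X,BG)$ arising here are perfect stacks, so that the Künneth/base-change property of $\QC$ from \cite[Theorem 1.2]{BZFN} applies in both the disk and the excision steps. This is standard for $G$ affine and $X$ a finite homotopy type, but must be handled with some care for the non-compact pieces appearing in collar decompositions — where one first uses homotopy-invariance of $\Ch$ to reduce to compact $Y$. A secondary, purely formal point is verifying that $\Ch(-)$ is functorial and homotopy-invariant with enough coherence to define $F$ as an honest functor on the $\infty$-category of $n$-manifolds and embeddings; this again follows from the formal properties of the mapping-stack construction.
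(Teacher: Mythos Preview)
Your proposal is correct and is precisely the argument that the paper is gesturing at: the paper does not give a proof but simply records the statement as ``a direct corollary of \cite[Theorem~1.2]{BZFN},'' and what you have written is the standard unpacking of that corollary via the Ayala--Francis characterization of factorization homology together with the K\"unneth/base-change theorem of \cite{BZFN} for perfect stacks. There is nothing to add.
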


On the left we denote the category of quasi-coherent sheaves on the (ordinary, not decorated) character stack.  The integral notation on the righthand side denotes the factorization homology of $X$ with coefficients in the symmetric monoidal category $\Rep(G)$, which is here regarded as an $E_n$-algebra.  The theorem reconstructs the character stack $\Ch(X)$ on any $n$-manifold $X$ in terms of its factorization homology.

We may treat decorated character stacks analogously in the framework of \emph{stratified factorization homology}. Whereas an ordinary surface has only a single basic open disk (to which is attached $\Rep(G)$ above), a decorated surface has three basic open disks: one contained in $\decS_G$, one contained in $\decS_T$, and one straddling a wall.

Accordingly, factorization homology of decorated surfaces requires us to specify coefficients for each type of basic open disk, together with a \defterm{$\Disp$-algebra} --- a system of functors and higher coherences relating the coefficients.  For instance, in the undecorated case we were required to specify not only $\Rep(G)$, but also its $E_2$ (= braided monoidal) structure.  In order to recover \emph{decorated} character stacks, we will specify the so-called ``parabolic induction algebra'', being the triple of categories
$$
\Rep(\TBG) := (\Rep(G),\Rep(B),\Rep(T)),
$$
together with the pullback functors along $i \colon B\to G$ and $\pi \colon B\to T$.  The following theorem again follows as a corollary of \cite[Theorem 1.2]{BZFN}:
\begin{theorem}
We have an equivalence of categories,
\[ \QC(\Ch(\decS)) = \int_{S}\Rep(\TBG).\]
\end{theorem}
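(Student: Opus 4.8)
The plan is to mimic the deduction of \cref{thm:BZFN-1} from \cite[Theorem 1.2]{BZFN}, now in the stratified framework of \cite{AFT}. Stratified factorization homology $\decS\mapsto\int_S\Rep(\TBG)$ is by construction the left Kan extension of the $\Disp$-algebra $\mathcal{A}=\Rep(\TBG)$ along the inclusion of stratified disks into decorated surfaces, and by the main result of \cite{AFT} it is the \emph{unique} homology theory -- i.e.\ symmetric monoidal functor satisfying $\otimes$-excision -- whose restriction to disks recovers $\mathcal{A}$. So I would show that $\decS\mapsto\QC(\Ch(\decS))$ is such a functor; this amounts to three checks: (a) on the basic disks it agrees with $\mathcal{A}$ as a $\Disp$-algebra; (b) it is symmetric monoidal; (c) it satisfies $\otimes$-excision.

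For (a): a decorated surface has three types of basic disk -- a disk $\DD_G$ inside $\decS_G$, a disk $\DD_T$ inside $\decS_T$, and a wall disk $\DD_B$ straddling a single wall (isotopic to $\DD_1$) -- together with their half-disk variants along $\partial S$, which are handled identically. Since the only $G$- or $T$-local system on a contractible region is the trivial one, the computation of $\Ch(\DD_n)$ recalled earlier and \cref{ex:punc-annuli} give $\Ch(\DD_G)=BG$, $\Ch(\DD_T)=BT$, and $\Ch(\DD_B)=T\backslash(N\backslash G)/G\simeq BB$ (writing out the $B$-reduction over the contractible wall and dividing by the gauge group $G\times T$ of the two half-disks). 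Applying $\QC$ and $\QC(BH)=\Rep(H)$ for $H$ an affine algebraic group yields $\Rep(G),\Rep(T),\Rep(B)$; and the open inclusions $\DD_G\hookrightarrow\DD_B\hookleftarrow\DD_T$ induce on character stacks the maps $BB\to BG$ and $BB\to BT$ classified by $i\colon B\to G$ and $\pi\colon B\to T$, so that $\QC$ recovers the pullback functors $i^*\colon\Rep(G)\to\Rep(B)$ and $\pi^*\colon\Rep(T)\to\Rep(B)$ -- precisely the structure functors of the parabolic induction algebra. Functoriality of $\Ch$ for embeddings together with compatibility of $\QC$ with the little-disks operad \cite{BZFN} should transport all higher coherences, giving the asserted equivalence of $\Disp$-algebras.

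Check (b) is immediate from symmetric monoidality of $\QC$ on perfect stacks \cite{BZFN} and $\Ch(\decS\sqcup\decS')=\Ch(\decS)\times\Ch(\decS')$. For (c), I would take a collar gluing $\decS=\decS_1\cup_{\decS_0\times\R}\decS_2$ along a (possibly wall-crossing) separating $1$-manifold $\decS_0$ and invoke van Kampen for the stack of decorated local systems: a decorated local system on $\decS$ is the same as one on each of $\decS_1,\decS_2$ plus an isomorphism of their restrictions to $\decS_0\times\R$ -- which reduces to descent for $G$- and $T$-local systems and for the $B$-reduction data separately -- so $\Ch(\decS)\simeq\Ch(\decS_1)\times_{\Ch(\decS_0\times\R)}\Ch(\decS_2)$ as stacks. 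Since every character stack in sight is assembled from $BG,BT,BB$ by such fibre products and is again perfect, \cite[Theorem 1.2]{BZFN} turns this pullback of stacks into the relative tensor product $\QC(\Ch(\decS_1))\otimes_{\QC(\Ch(\decS_0\times\R))}\QC(\Ch(\decS_2))$, which is exactly $\otimes$-excision for $\QC\circ\Ch$. With (a)--(c) in hand, the universal property of \cite{AFT} identifies $\QC(\Ch(\decS))$ with $\int_S\Rep(\TBG)$.

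The hard part will be the perfectness and base-change input hidden in (c): one must know that $\Ch(\decS)$, and every fibre product occurring along the way, is a perfect stack of the sort to which \cite[Theorem 1.2]{BZFN} applies, and that the van Kampen description genuinely holds at the level of \emph{stacks} and not merely coarse moduli. Two features are special to the decorated setting and warrant care: the non-reductive group $B$ sitting along walls -- so one needs $BB$, and more generally stacks of $B$-reduction data, to be perfect, and the maps $BB\to BG$, $BB\to BT$ to lie in the class for which $\QC$ sends pullbacks to relative tensor products -- and the stratified separating $1$-manifolds $\decS_0$ arising in collar gluings (e.g.\ an interval meeting one wall), whose associated character stacks must be controlled. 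As in the undecorated case behind \cref{thm:BZFN-1}, I expect these to reduce to perfectness and affineness statements already implicit in \cite{BZFN}, which I would make explicit.
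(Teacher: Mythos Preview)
Your proposal is correct and follows essentially the same approach as the paper, which simply asserts the theorem ``again follows as a corollary of \cite[Theorem 1.2]{BZFN}'' without further detail; you have merely unpacked what that deduction entails in the stratified setting of \cite{AFT}. Your flagged concern about perfectness of stacks involving the non-reductive group $B$ is a legitimate technical point that the paper does not explicitly address either.
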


In the works \cite{BBJ18a, BBJ18b}, \cref{thm:BZFN-1} was taken as the starting place for a \emph{quantization procedure.} Namely, replacing $\Rep(G)$ with $\Repq(G)$, yields a functorial deformation quantization of~$\Ch(S)$. Such quantizations of character stacks via factorization homology were subsequently related to combinatorial Chern--Simons theory and Alekseev--Grosse--Schomerus algebras \cite{AGS} in \cite{BBJ18a}, double affine Hecke algebras \cite{Cherednik} in \cite{BBJ18b}, and to skein algebras and skein categories \cite{Turaev, Przytycki} in \cite{Coo19}.

In the present paper we extend this paradigm to the study of decorated surfaces by replacing $\Rep(\TBG)$ with the ``quantum parabolic induction algebra''.  This consists of the triple,
$$
\Repq(\TBG):=\hr{\Repq(G),\Repq(B),\Repq(T)}
$$
along with 
the ribbon braided tensor structure on $\Repq(G)$ and $\Repq(T)$, the tensor structure on $\Repq(B)$, and a braided tensor functor
$$
\Repq(G)\bt\rev{\Repq(T)} \longra Z(\Repq(B)),
$$
where $Z(\Repq(B))$ denotes the \emph{Drinfeld center} of the monoidal category $\Repq(B)$, and $\rev{\Repq(T)}$ carries the opposite of its standard braiding. This data defines a \emph{local coefficient system} for stratified factorization homology of \cite{AFT}, see also \cite[Section 3]{BJS}.  Hence, we make the following
\begin{defn}\label{def:qdec-ch-stack}
The \defterm{quantum decorated character stack} $\Zc(\decS)$ is the stratified factorization homology,
\[ \Zc(\decS) := \int_{\decS} \Repq(\TBG).\]
\end{defn}

The two most important features of stratified factorization homology are that it is functorial for stratified embeddings, and satisfies the excision property. To recall the excision property, we first recall that any \emph{cylindrical} decorated $\decX$ carries a monoidal structure by stacking in the cylinder direction, and that any embedding of $\decX$ into a neighborhood of the boundary of $\decS$ induces a $\Zc(\decX)$-module category structure on $\Zc(\decS)$.

Excision computes the stratified factorization homology of two decorated surfaces $\decS_1$ and $\decS_2$ glued over a cylindrical decorated surface $\decS_{12}$ as a relative tensor product
$$
\Zc(\decS) = \Zc(\decS_1) \underset{\Zc(\decS_{12})}{\bt} \Zc(\decS_2).
$$
For example, \cref{fig:D3} shows the equivalence
$$
\Zc(\DD_3) \simeq \Repq B\underset{\Repq G}{\bt}\Repq B\underset{\Repq G}{\bt}\Repq B,
$$
while~\cref{fig:D4-disk} illustrates
$$
\Zc(\DD_4) \simeq \Zc(\DD_3) \underset{\Zc(\DD_2)}{\bt}\Zc(\DD_3)
\qquad\text{and}\qquad
\Zc(\DD_2^\circ) \simeq \Zc(\DD_3)\!\!\!\!\!\!\!\!\underset{\Zc(\DD_2)\bt\Zc(\DD_2)}{\boxtimes}\!\!\!\!\!\!\!\! \Zc(\DD_3).
$$

 \begin{notation}
Given a set $\Gc$ of $m$ $G$-gates and $n$ $T$-gates, we will use the following abbreviations where convenient.
\[
\Repq(\Gc) = \Repq(G)^{m}\bt\Repq(T)^{n}=\Repq(G^m\times T^n).
\]
We refer to the action of $\Repq(\Gc)$ on $\Zc(\decS)$ induced by $\Gc\times I\into \decS$ as \defterm{disk insertion}.  To improve readability throughout the paper, we will write $G$, $T$, $G\times T$, etc. in place of $\Gc$ for emphasis as needed, and we will abbreviate ``$\Repq(\Gc)$-module category" and ``$\Repq(\Gc)$-module functor" by $\Gc$-module category, $\Gc$-module functor, respectively.
\end{notation}

A final important ingredient of stratified factorization homology is the existence of a canonical \defterm{distinguished object} $\Dist_\decS$ in each category $\Zc(\decS)$.  The assignment $\decS \mapsto \Zc(\decS)$ is functorial for stratified embeddings, and the morphism in factorization homology induced by the empty embedding $\emptyset \into \decS$ determines a functor $\Vect\to \Zc(\decS)$; its image on the one-dimensional vector space is, by definition, the distinguished object.  Classically, the distinguished object is precisely the structure sheaf of $\Ch(\decS)$, hence the distinguished object quantizes the structure sheaf to $\Zc(\decS)$, and the functor $\Hom(\Dist_\decS,-)$ quantizes the global sections functor.

\subsection{Aside on noncommutative algebraic geometry} Our main results are phrased in the language of noncommutative algebraic geometry, for which we follow~\cite{Smith}. The main idea is to treat an arbitrary abelian category $\cC$, such as $\Zc(\decS)$, as the category of sheaves on some putative noncommutative algebraic variety (more generally, stack), and to study $\cC$ functorially drawing on geometric intuition coming from the commutative case.  Hence, each definition below, when applied in the commutative case $q=1$, will recover precisely the usual geometric notion, and for generic $q$ will give its deformation.

In this context, a full subcategory $i \colon \cC\to\cD$ is called \defterm{open} if the inclusion functor $i$ admits an exact left adjoint.  The name ``open'' comes from the following basic example: given an inclusion $ i \colon U \into X$ of an open subvariety (more generally, an open substack), the pushforward $i_* \colon \QC(U)\to \QC(X)$ is fully faithful with exact left adjoint $i^*$.  For this reason, we will refer to $i^L$ as the \defterm{restriction functor}, and talk about restricting objects to open subcategories.  Important examples of open subcategories arise from Ore localizations: the inclusion $A[S^{-1}]\mod \subset A\mod$ has an exact left adjoint --- the localization functor --- given by tensoring over $A$ with $A[S^{-1}]$.

Dually, we may view an open subcategory $\cC$ as a quotient of $\cD$. Indeed $\ker(i^L)$ is a Serre subcategory, hence $i^L$ induces an equivalence $\cC\simeq \cD/\ker(i^L)$.  Classically, $\ker(i^*)$ consists of the sheaves supported on the complement of $U$.  The \defterm{geometric union} of open subcategories $i_1 \colon \cC_1\to\cD$ and $i_2 \colon \cC_2\to\cD$ is defined as
$\cC_1\dot{\cup}\cC_2 = \cD/(\ker i_1 \cap \ker i_2)$.  We emphasize that this is not simply the full subcategory obtained by taking unions of their objects.

We say that an open subcategory $\cC\subset \cD$ is a \defterm{chart} (alternatively, is \defterm{affine}) if $\cC$ admits a compact projective generator $X$, and hence an equivalence $\cC\simeq \End(X)^{\op}\mod$ with the category of modules for a (typically, noncommutative) algebra. Classically, this would mean that the corresponding open set is affine.  We say that the chart is \defterm{toric} if, moreover, the algebra $\End(X)^{\op}$ is a quantum torus, i.e. if there exists an isomorphism of algebras,
$$
\End(X)^{\op} \simeq \C \ha{x_1^{\pm 1}, \dots, x_m^{\pm 1}} / \ha{x_ix_j=q^{m_{ij}}x_jx_i},
$$
for some skew symmetric integer matrix $M=(m_{ij})$.

Given two charts $\cC$ and $\cC'$ of $\cD$, the \defterm{transition functor} $\phi_{\cC,\cC'} \colon \cC\to\cC'$ is the composite exact functor,
$$
\phi_{\cC,\cC'} \colon \cC\xrightarrow{i_\cC}\cD \xrightarrow{i_{\cC'}^L}\cC',
$$
where $i_\cC, i_{\cC'}$ are inclusion functors, and $i_{\cC'}^L$ is left adjoint to $i_{\cC'}$.
In the case $\cC=A\mod$ and $\cC'=A'\mod$ are affine charts, the transition functor $\phi_{\cC,\cC'} \colon \cC\to\cC'$ is given by tensoring with some $A-A'$ bimodule $M$.  Classically, if $\cC = \QC(U)$ and $\cC'=\QC(U')$ with $U,U'$ being open affine subvarieties of some variety $X$, the bimodule $M$ is merely the algebra of functions on the intersection $U\cap U'$.

An additional feature in our story is the presence of quantum categorical $G$- and $T$-actions coming from disk insertions through $G$ and $T$-gates.  We will say that a subcategory is a \defterm{$\Gc$-chart} (alternatively, is \defterm{$\Gc$-affine}) if it admits a compact projective $\Repq(\Gc)$-generator $X$, hence an equivalence,
\[
\cC\simeq \iEnd_\Gc(X)^{\op}\mod_{\Gc},
\]
where the notation $A\mod_\Gc$ is a shorthand for the category of $A$-modules internal to $\Repq(\Gc)$ -- i.e. the category of $A$-modules equipped with compatible actions of the quantum group attached to $\Gc$.  

Classically for a quotient stack $X/\Gc$, $QC(X)$ being $\Gc$-affine means that the map $X\to \bullet/\Gc$ is affine.  For instance $\bullet/\Gc$ itself is not affine, since $\QC(\bullet/\Gc) = \Rep(\Gc)$ does not admit a compact projective generator; however it is $\Gc$-affine, since the trivial representation is tautologically a compact projective $\Gc$-generator.  See \cref{sec:monadic} for a discussion of monadic reconstruction and internal endomorphism algebras, and \cref{sec:barrbeckwarmup} for discussion of familiar classical examples in this language.

\subsection{Charts and flips on $\Zc(\decS)$}
As explained in \cref{sec:strat-facthom}, the framework of stratified factorization homology may be understood as the specification of a universal property to be satisfied by an desired invariant of surfaces, and a theorem stating that this universal property can be solved in suitable settings.  The universal property itself is phrased in $\infty$-categorical terms, and its {\it a priori} solution is expressed via colimits in higher categories.

At first encounter, such a definition involving $\oo$-colimits may sound hopelessly abstract.  However, a number of techniques --- most notably the excision formula, and its monadic reformulations --- allow us to wrangle this abstract definition into a concrete enough form from which we can extract the calculus of quantum cluster algebras. Our strategy is roughly as follows; we will focus the exposition on $G=\SL_2$, however, we will explain the modifications for the $\PGL_2$-case as well.

To begin, we treat the case of the $n$-gon $\DD_n$, with a unique gate in the $G$-region and in each $T$-region.  We compute the internal endomorphism algebra $\Oq(\Conf_n)=\iEnd_{G\times T^n}(\Dist_\decS)$, identifying it with the braided tensor product of $n$ copies of an algebra $\Oq(N\backslash G)$.  The functor $\iHom_{G\times T^n}(\Dist_\decS,-)$, of $G\times T^n$-equivariant global sections, defines an open embedding,
$$
i \colon \Zc(\DD_n) \into \Oq(\Conf_n)\mod_{G\times T^n}
$$
In ~\cref{disk-computation} we compute the orthogonal of $\Zc(\DD_n)$ to be the Serre subcategory of \defterm{torsion} modules (see \cref{def:torsion}).  

We consider triangulations $\tri$ of $\DD_n$.  We define $U_q(\g)$-invariant Ore sets $S_\tri$ in $\Oq(\Conf_n)$, whose generators correspond to edges of $\tri$, and which quantize corresponding cluster $\Ac$-coordinates on $G \backslash (N\backslash G)^{\times n}$.   In fact, the most important cases are $n=2, 3$, which each have a unique triangulation. We denote these special triangulations by $\underline{\DD_2}$ (a degenerate triangulation with one edge, but no triangles) and $\underline{\DD_3}$, respectively. For $n=2$, $S_{\underline{D_2}}$ consists of monomials in a single element denoted $D_{12}$, and for $n=3$ $S_{\underline{\DD_3}}$ consists of monomials in elements denoted $D_{12},D_{23},D_{13}$, and coming from the three inclusions of $\DD_2$ as an edge of $\DD_3$.

In~\cref{sec:disk_charts}, we define an open subcategory $\Zc(\tri)\subset \Zc(\DD_n)$, by the condition that the elements of $S_\tri$ act invertibly.
We show that the restriction $\Dist_\tri=\Res_{\Zc(\tri)}(\Dist_{\DD_n})$ of the distinguished object to $\Zc(\tri)$ is a $G\times T^n$-compact-projective generator (we note that it is not so for $\Zc(\DD_n)$), hence yielding an equivalence of categories
\[\Zc(\tri) \simeq \Oq(\Conf_n)[S_\tri^{-1}]\mod_{G\times T^n},\]
and sandwiching $\Zc(\DD_n)$ between $G\times T^n$-affine categories:
$$
\Oq(\Conf_n)[S_\tri^{-1}]\mod_{G\times T^n} \subset \Zc(\DD_n) \subset \Oq(\Conf_n)\mod_{G\times T^n},
$$
Crucially, we also show that the functor of taking $G$-invariants is conservative on $\Zc(\tri)$ (we note again, it is not so for $\Zc(\DD_n)$), hence we obtain an equivalence of categories,
$$
\Zc(\tri)\simeq \iEnd_{T^n}(\Dist_\tri)\mod_{T^n} \cong \hr{\Oq(\Conf_n)[S_\tri^{-1}]}^{U_q(\g)}\mod_{T^n}.
$$
Henceforth let us abbreviate $\zeta(\tri):=\iEnd_{T^n}(\Dist_\tri)\mod_{T^n}$.  Returning to the special cases $n=2,3$, we calculate isomorphisms,
$$
\zeta(\underline{\DD_2}) \cong \C_q\ha{D_{12}^{\pm1}}
\qquad\text{and}\qquad
\zeta(\underline{\DD_3}) \cong \C_q\ha{D_{12}^{\pm 1},D_{23}^{\pm 1},D_{13}^{\pm 1}}/I,
$$
where $\C_q = \C(q^{\pm1/2})$, and $I$ consists of the q-commutation relations,
\[D_{13} D_{12}=q^{-\frac12}D_{13} D_{12}, \qquad D_{23} D_{13}=q^{-\frac12}D_{13} D_{23}, \qquad D_{23} D_{12}=q^{-\frac12}D_{12} D_{23}.
\]

We then turn to the construction of charts on a general simple decorated surface $\Zc(\decS)$, for which we appeal to excision.  In simple cases like $\DD_n$, one might imagine building an arbitrary surface inductively by gluing triangles of a triangulation in one by one. To give a more uniform formula and to allow for self-gluings, however, we elect instead to glue all triangles together in one go, by writing
\beq\label{eqn:gluing}
\decS = (\DD_3)^{\sqcup t} \bigsqcup_{(\DD_2)^{\sqcup 2\ell}} (\DD_2)^{\sqcup \ell}.
\eeq
Excision gives an equivalence of categories,
\begin{equation}\label{eqn:tensor-product}
\Zc(\decS) \simeq \Zq(\DD_3)^{t} \bigboxtimes_{\Zq(\DD_2)^{2\ell}}\Zq(\DD_2)^{\ell},
\end{equation}
Hence we define the open subcategory $\Zc(\tri)\subset \Zc(\decS)$ by gluing the open subcategories for triangles over digons, accordingly:
\begin{equation}\label{eqn:tilde-tensor-product}
\Zc(\tri) := \Zc(\underline{\DD_3})^{ t} \bigboxtimes_{\Zc(\underline{\DD_2})^{2\ell}}\Zc(\underline{\DD_2})^{\ell},
\end{equation}

\begin{remark}
The reader is encouraged to view \cref{eqn:tensor-product} in analogy with the formula for Hochschild homology of an associative algebra $A$ with coefficients in its bimodule $M$,
\beq
HH(A,M) = A \otimes_{A\otimes A^{op}} M.
\eeq
In this analogy, $A$ is the monoidal category obtained from the union of all the digons, $M$ is the bimodule category obtained as the unions of all the triangles, and the bimodule structure comes from the two inclusions of each edge into the triangles it borders.
\end{remark}

Having defined the charts by gluing, we proceed to computing their global sections.  We use the following particular case of~\cite[Theorem 4.12]{BBJ18a}:
\begin{lemma}
\label{lem:excision}
Let $\Cc$ be a braided monoidal category with a braided commutative algebra object $A \in \Cc$, and a pair $M$, $N$, of $A$-algebras in $\Cc$. Then we have an equivalence of categories,
$$
M\mod_\Cc \underset{A\mod_\Cc}\bt N\mod_{\Cc} \simeq \hr{M \otimes_A N^\op}\mod_\Cc.
$$
\end{lemma}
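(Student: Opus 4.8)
The plan is to recognize the claimed equivalence as the instance of \cite[Theorem 4.12]{BBJ18a} obtained by base change along $A$: namely, that forming categories of modules converts relative tensor products of algebras into relative tensor products of module categories. First I would replace $\Cc$ by $\cD := A\mod_\Cc$. Since $A$ is braided commutative, $\cD$ is again a braided monoidal category, with tensor product $\otimes_A$ and unit $A$, inheriting cocompleteness and cocontinuity of $\otimes_A$ from $\Cc$. The structure maps $A\to M$ and $A\to N$ exhibit $M$ and $N$ as algebra objects of $\cD$, and restriction of scalars along them identifies $M\mod_\Cc\simeq M\mod_\cD$ and $N\mod_\Cc\simeq N\mod_\cD$ compatibly with the $\cD$-module-category structures, in which $\cD$ acts through the $A$-linear tensor factor; under this identification $M\otimes_A N^\op$ becomes the tensor-product algebra of $M$ and $N^\op$ formed inside $\cD$ using its braiding. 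So it suffices to prove
\[ M\mod_\cD \,\underset{\cD}{\bt}\, N\mod_\cD \;\simeq\; \bigl(M\otimes_\cD N^\op\bigr)\mod_\cD \]
for a cocomplete braided monoidal $\cD$ with cocontinuous tensor product and algebras $M,N\in\cD$.

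The heart of the matter is the standard fact that $M\mod_\cD$, viewed as a right $\cD$-module category, satisfies $M\mod_\cD \bt_\cD \cE \simeq M\mod_\cE$ for every left $\cD$-module category $\cE$, where $M\mod_\cE$ denotes the category of modules in $\cE$ over the monad $M\otimes(-)$ induced by the $\cD$-action. This is proved by presenting the relative tensor product as the geometric realization of the two-sided bar construction $[k]\mapsto M\mod_\cD\bt\cD^{\bt k}\bt\cE$ and invoking Barr--Beck--Lurie: the forgetful functor to $\cE$ is conservative and preserves the geometric realizations at hand because $\otimes_A$ is cocontinuous and the forgetful functors out of module categories preserve such colimits, and the induced monad is computed to be $M\otimes(-)$. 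Applying this with $\cE = N\mod_\cD$, whose left $\cD$-module structure is defined by sliding $\cD$ past $N$ using the braiding of $\cD$, identifies $M\mod_\cD\bt_\cD N\mod_\cD$ with the category of $M$-modules in $N\mod_\cD$; unwinding the two commuting actions --- the tautological left $N$-action and the braided left $M$-action --- via the hexagon axioms identifies this in turn with $\bigl(M\otimes_\cD N^\op\bigr)\mod_\cD$, with the $\op$ produced by the braiding used to commute $M$ past $N$.

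The main obstacle I anticipate is bookkeeping rather than anything conceptual: (i) verifying uniformly that the relevant forgetful functors preserve exactly the colimits that Barr--Beck--Lurie requires, and (ii) tracking the braidings carefully enough to be sure that the output algebra is $M\otimes_A N^\op$ with precisely this $\op$ --- which enters because $N\mod_\Cc$ contributes its $\cD$-action as a right, rather than left, module structure --- rather than $M\otimes_A N$ or a variant formed using the reverse braiding. Since both points are exactly what \cite[Theorem 4.12]{BBJ18a} supplies, in practice I would verify that the hypotheses of that theorem hold in our setting (that $\cD = A\mod_\Cc$ is a suitably cocomplete braided tensor category and that $M$ and $N$ are algebra objects of it) and invoke it, recording the identification of the resulting algebra as $M\otimes_A N^\op$.
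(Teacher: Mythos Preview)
Your proposal is correct and matches the paper's own treatment: the paper does not give an independent proof but simply records this lemma as a particular case of \cite[Theorem 4.12]{BBJ18a}, exactly as you do. Your additional explanation of the reduction via base change to $\cD=A\mod_\Cc$ and the Barr--Beck--Lurie argument is a faithful unpacking of that citation, not a different route.
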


However, in order to apply \cref{lem:excision}, we turn out to require an intermediate step of opening additional $T$-gates to ensure that the algebras we wish to tensor over is in fact braided commutative.  This being done, we can glue using \cref{lem:excision}, and finally close all but one gate in each $T$-region to obtain our main results.  By \defterm{opening a gate}, we refer to the operation of pulling the $T$-action on $\Zq(\decS)$ back through the tensor functor,
$$
\otimes \colon \Repq(T)^2 \to \Repq(T).
$$
In order to \defterm{close a gate}, we apply the functor of taking $T$-invariants at the gate.

Now, let us open two gates in each $T$-region of $\DD_3$ and consider the category $\Zq(\underline{\DD_3})$ as a ${T^6}$-module category. The result is still $T^6$-affine with the $T^6$-progenerator $\Dist_{\underline{\DD_3}}$, and in~\cref{basic-hex-rels} we calculate the algebra,
$$
\widetilde \zeta(\underline{\DD_3}) = \iEnd_{T^6}\hr{\Dist_{\underline{\DD_3}}}\mod_{T^6}.
$$
It is again a quantum torus, now with six generators.  We have equivalences of categories
$$
\Zq(\underline{\DD_3}) \simeq \widetilde \zeta(\underline{\DD_3})\mod_{T^6} \simeq \zeta(\underline{\DD_3})\mod_{T^3}. 
$$
We similarly define an algebra $\widetilde \zeta(\underline{\DD_2})$ by opening a pair of $T$-gates in each $T$-region of a digon $\DD_2$, and have equivalences
$$
\Zq(\underline{\DD_2}) \simeq \widetilde\zeta(\underline{\DD_2})\mod_{T^4} \simeq \zeta(\underline{\DD_2})\mod_{T^2}.
$$
Thanks to~\cref{lem:excision}, we conclude that the chart $\Zc(\tri)$ is toric and \emph{$\Gc$-affine}, that is
$$
\Zc(\tri) \simeq \widetilde \zeta(\tri)\mod_{T^{6t}}
$$
for a quantum torus algebra $\widetilde \zeta(\tri)$ obtained as a braided tensor product as in \cref{lem:excision}. We provide an explicit presentation of $\widetilde \zeta(\tri)$ in~\cref{pre-glue-alg}.

We now choose a distinguished gate at every $T$-region of $\decS$, and close all but the distinguished $T$-gate.  This fixes notchings of $\decS$ and $\tri$; we denote the altter by $\trib$.  We obtain a quantum torus $\zeta(\trib)$ from $\widetilde \zeta(\tri)$, as the subalgebra of invariants with respect to the $T$-actions at all non-distinguished $T$-gates.

The quantum torus $\zeta(\trib)$ has a simple presentation, which depends only on the isotopy class of $\trib$.  There is one generator $Z_e$ for each edge $e$ of the notched triangulation, and an additional generator $\alpha_v$ for each puncture. The $q$-commutation relations between generators are described by simple formulas depending on the incidence relations and total ordering at each vertex, see \cref{thm:zeta-rel}.

\begin{remark}
We note that $\DD_n$ has no punctures, and the resulting algebra $\zeta(\trib)$ does indeed coincide with $\zeta(\tri)$ computed above as a subalgebra of $\Oq(\Conf_n)[S_\tri^{-1}]$. 
\end{remark}

Throughout the discussion above, we have fixed $G=\SL_2$.  In fact, our computations immediately give rise to charts $\Zc_{\PGL_2}(\tri)\subset\Zc_{\PGL_2}(\decS)$ as well, with the corresponding quantum tori $\zeta_{\PGL_2}(\trib)$ being realized naturally as subalgebras of $\zeta_{\SL_2}(\trib)$ (see \cref{cor:PGL2-D2-D3-gen} and
\cref{prop:cover}).

\begin{theorem}\label{thm:Ahat} Let $\decS$ be a simple decorated surface with $k$ $T$-regions, with gating $\Gc=T^k$ consisting of a single gate in each $T$-region.  Let $G$ be either $\SL_2$ or $\PGL_2$.
\begin{enumerate}
\item Each isotopy class of triangulations $\tri$ determines a $\Gc$-toric chart $\Zc(\tri) \subset \Zc_G(\decS)$. The additional choice of the isotopy class of a notched triangulation $\trib$ defines a quantum torus~$\zeta_G(\trib)$, and equivalence of categories:
$$
\Zc_G(\tri) \simeq \zeta_G(\trib)\mod_{T^k}.
$$
\item Given two notched triangulations $\trib$ and $\tri'_\bullet$ differing by a flip of a single edge, the functor $\Zc_G(\tri) \to \Zc_G(\tri')$ is induced by an explicit birational isomorphism $\zeta(\trib,\trib')$ between $\zeta_G(\trib)$ and $\zeta_G(\trib')$ described in~\cref{prop:Ahat-flip} for the case $G=\SL_2$, and by its restriction in the case $\PGL_2$.
\end{enumerate}
\end{theorem}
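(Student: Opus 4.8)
The plan is to reduce everything to explicit monadic computations on the two building blocks $\DD_2$ and $\DD_3$ and then propagate the answer to a general simple surface via excision. For part (1), I would start from the gluing presentation $\decS=(\DD_3)^{\sqcup t}\bigsqcup_{(\DD_2)^{\sqcup 2\ell}}(\DD_2)^{\sqcup\ell}$ of \cref{eqn:gluing} and the corresponding excision equivalence \cref{eqn:tensor-product}, and \emph{define} the candidate chart $\Zc(\tri)$ by the parallel formula \cref{eqn:tilde-tensor-product}, assembled from the open subcategories $\Zc(\underline{\DD_3})\subset\Zc(\DD_3)$ and $\Zc(\underline{\DD_2})\subset\Zc(\DD_2)$ cut out in \cref{sec:disk_charts} by inverting the Ore sets $S_{\underline{\DD_3}}$ and $S_{\underline{\DD_2}}$. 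The first point to check is that $\Zc(\tri)$ really is an open subcategory of $\Zc_G(\decS)$: since the restriction functors to the $\Zc(\underline{\DD_i})$ are localizations, and localization commutes with the relative tensor products in the excision formula, the inclusion $\Zc(\tri)\hookrightarrow\Zc_G(\decS)$ inherits an exact left adjoint built out of the local ones.

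Next I would produce the toric (monadic) description. The key preparatory move is to open two extra $T$-gates in each $T$-region of every triangle and digon: on $\DD_3$ with six $T$-gates the distinguished object $\Dist_{\underline{\DD_3}}$ is still a $T^6$-compact-projective generator, and in \cref{basic-hex-rels} one computes $\widetilde\zeta(\underline{\DD_3})=\iEnd_{T^6}(\Dist_{\underline{\DD_3}})$ and identifies it with a quantum torus on six generators; likewise $\widetilde\zeta(\underline{\DD_2})$ is a quantum torus on four generators. The purpose of opening gates is precisely that, with enough $T$-gates, the digon algebra over which one glues becomes a braided \emph{commutative} algebra object in the relevant tensor power of $\Repq(T)$, so that \cref{lem:excision} applies edge by edge. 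Iterating \cref{lem:excision} along the edges of $\tri$ then gives $\Zc(\tri)\simeq\widetilde\zeta(\tri)\mod_{T^{6t}}$, with $\widetilde\zeta(\tri)$ the iterated braided tensor product of the $\widetilde\zeta(\underline{\DD_3})$ over the $\widetilde\zeta(\underline{\DD_2})$; as a braided tensor product of quantum tori over a quantum torus is again a quantum torus, $\widetilde\zeta(\tri)$ is toric, with the explicit presentation of \cref{pre-glue-alg}, establishing that $\Zc(\tri)$ is a $\Gc$-toric chart.

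To reach the clean statement I would then close all but one $T$-gate in each $T$-region: fixing a distinguished gate determines the notchings $\trib$ of $\tri$, and one takes $T$-invariants at the remaining $6t-k$ gates. This sends the $T^{6t}$-progenerator to a $T^k$-progenerator, and the invariant subalgebra $\zeta_G(\trib)\subset\widetilde\zeta(\tri)$ is again a quantum torus whose presentation depends only on the isotopy class of $\trib$ — one generator $Z_e$ per edge of the notched triangulation, one $\alpha_v$ per puncture, with the $q$-commutation relations of \cref{thm:zeta-rel} — yielding $\Zc_G(\tri)\simeq\zeta_G(\trib)\mod_{T^k}$; the case $G=\PGL_2$ follows by passing to the subalgebra $\zeta_{\PGL_2}(\trib)\subset\zeta_{\SL_2}(\trib)$ of \cref{prop:cover}. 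For part (2), given a flip of an edge $e$ relating $\trib$ and $\tri'_\bullet$, the transition functor $\phi\colon\Zc_G(\tri)\to\Zc_G(\tri')$ is by definition the composite of the inclusion $\Zc_G(\tri)\hookrightarrow\Zc_G(\decS)$ with the restriction $\Zc_G(\decS)\to\Zc_G(\tri')$; since a flip is supported on the union of the two triangles bordering $e$ — a square $\DD_4$, or a once-punctured digon $\DD_2^\circ$ in the self-folded case — functoriality of stratified factorization homology together with excision reduce the problem to identifying $\phi$ on this local piece, where both triangulations are computed explicitly. There $\phi$ is the transition functor between two toric charts, hence given by a bimodule, which in the quantum-torus picture restricts to an algebra isomorphism $\Frac\,\zeta_G(\trib)\xrightarrow{\ \sim\ }\Frac\,\zeta_G(\tri'_\bullet)$; writing this map on generators and checking it is well defined and $T^k$-equivariant is the content of \cref{prop:Ahat-flip}, with the $\PGL_2$ map its restriction.

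I expect the main obstacle to be the braided-commutativity bookkeeping needed to invoke \cref{lem:excision}: one must track precisely which braided category each gluing algebra lives in, confirm that opening exactly two $T$-gates per $T$-region makes $\widetilde\zeta(\underline{\DD_2})$ braided commutative there, and then verify that the subsequent gate-closing (passage to $T$-invariants) is compatible with all the monadic reconstruction equivalences. A second, closely related point is showing that $\Dist_\tri$ remains a compact projective $\Gc$-generator after restriction — equivalently, that taking $G$-invariants is conservative on $\Zc(\tri)$ even though it fails to be so on $\Zc(\DD_n)$ — which is what makes the toric description possible and rests on the specific form of the Ore sets $S_\tri$.
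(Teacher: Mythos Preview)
Your proposal is correct and follows essentially the same route as the paper: the local inputs from \cref{sec:disk_charts} (Ore localization, \cref{thm-inv} for conservativity of $G$-invariants), the gate-opening computations of \cref{basic-sq-rels} and \cref{basic-hex-rels}, excision via \cref{lem:excision} to obtain \cref{pre-glue-alg}, and gate-closing to reach \cref{thm:zeta-rel}, with the flip reduced to $\DD_4$ and described by \cref{prop:Ahat-flip}. One small point of alignment: the paper treats all flips uniformly through an embedded $\DD_4$ rather than singling out a separate punctured-digon case; the possibility that vertices or edges of the quadrilateral coincide after gluing is absorbed entirely into the notching bookkeeping (the integers $\gamma_i$ and the $\alpha$-factors in \cref{prop:Ahat-flip}), so there is no need for a separate $\DD_2^\circ$ analysis.
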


\begin{remark}\label{rmk:A}
Consider the cluster $K_2$-variety structure on the moduli space of twisted $\SL_2$-local systems on $S$, see~\cite{FG06} and~\cite{FST08}. Let $\Ac_\tri$ be a cluster chart arising from the ideal triangulation $\tri$. If the surface $S$ has only boundary marked points and no punctures, the cluster algebra $\Ac_\tri$ admits quantization in the sense of~\cite{BZ05}, see~\cite{Mul16}. In that case, we denote the quantum chart arising from the triangulation $\tri$ by $\Ac^q_\tri$. Then the following observations are immediate.
\begin{enumerate}
    \item If $S$ has no punctures, $\zeta(\tri)\cong\Ac^q_\tri$, and the flip between $\zeta(\trib)$ and $\zeta(\trib')$ coincides with the corresponding quantum cluster mutation.
    \item If $S$ has punctures, the elements $\alpha_v$ are not central in $\zeta(\trib)$. However, upon setting $q=1$, the algebra $\zeta^{q=1}(\trib)$ becomes commutative and one may consider an ideal $I \subset \zeta^{q=1}(\trib)$ generated by all elements of the form $\alpha_v-1$. Then, $\zeta^{q=1}(\tri_\bullet)/I \cong \Ac_\tri$, and the flip between $\zeta(\tri_\bullet)$ and $\zeta(\tri'_\bullet)$ descends to the cluster mutation between the charts $\Ac_\tri$ and $\Ac_{\tri'}$.
\end{enumerate}
In this way, one recovers all cluster charts which are labelled by triangulations without vertices incident to two or more tagged arcs; we refer the reader to~\cite{FST08} for the definition of a tagged arc.
\end{remark}

Now, let $\decS$ be a simple decorated surface with $p$ punctures and $m$ marked points, with a single gate in each $T$-region. Let $\chi(\tri) = \zeta(\trib)^{T^p}$ be the subalgebra of invariants with respect to the $T$-action at the punctures. Note that $\chi(\tri)$ is still a quantum torus and depends only on the triangulation $\tri$ rather than on the notched triangulation $\trib$. In this way, we can give an alternative formulation of \cref{thm:Ahat} using the $\chi(\tri)$-tori instead of $\zeta(\trib)$.

Before proceeding to this formulation in \cref{thm:X}, let us briefly discuss the case $G=\PGL_2$, and its relation to $G=\SL_2$. Recall that the algebra $\widetilde \zeta_{\SL_2}(\tri)$ carries a $T^{6t}$-action, where $t$ is the number of triangles in $\tri$. This action endows $\widetilde \zeta_{\SL_2}(\tri)$ with a $\Z^{6t}$-grading. Then $\widetilde \zeta_{\PGL_2}(\tri)$ coincides with the subalgebra of $\widetilde \zeta_{\SL_2}(\tri)$, generated by elements of even degree with respect to the $T$-action at any $T$-gate. Closing all gates but the distinguished gate in each $T$-annulus, we obtain an embedding $\zeta_{\PGL_2}(\trib) \subset \zeta_{\SL_2}(\trib)$. Furthermore, let $\chi_{\PGL_2}(\tri) \subset \zeta_{\PGL_2}(\trib)$ denote the subalgebra of invariants with respect to the $T$-action at punctures as described previously in the $\SL_2$ case. We have the following commutative diagram:
$$
\begin{tikzcd}[row sep=large, column sep = large]
\chi_{\SL_2}(\tri) \arrow[hook]{r} & \zeta_{\SL_2}(\trib)\\
\chi_{\PGL_2}(\tri) \arrow[hook]{u} \arrow[hook]{r} & \zeta_{\PGL_2}(\trib) \arrow[hook]{u}
\end{tikzcd}
$$

We describe the generators of the subalgebra $\chi_{\SL_2}(\tri) \subset \zeta_{\SL_2}(\trib)$ in \cref{sec:PGL2-comparison}, where we show that $\chi_{\SL_2}(\tri)$ is a free module over $\chi_{\PGL_2}(\tri)$ of rank $2^N$, where $N = \dim H_1(S,M)$ is the dimension of the first homology of $S$ relative to the subset $M$ of marked points.

Finally, let us recall from~\cite{FG06, FG09a}, the quantum $\Xc$-variety structure on the moduli space of $\PGL_2$-local systems with pinnings on a marked surface $S$. Let $\Xc^q_\tri$ denote the quantum cluster chart arising from the triangulation $\tri$. Then we obtain the following result.

\begin{theorem}
\label{thm:X}
Let $\decS$ be a simple decorated surface with $m$ singly gated marked points.  Let either $G=\SL_2$ and $m\geq 1$, or $G=\PGL_2$ and $m\geq 0$.
\begin{enumerate}
\item The isotopy class of each triangulation $\tri$ of $\decS$ determines a quantum toric chart $\Zc_{G}(\tri)\subset \Zc_{G}(\decS)$ with quantum torus $\chi_{G}(\tri)$ and an equivalence of categories:
$$
\Zc_{G}(\tri) \simeq \chi_{G}(\tri)\mod_{T^m}.
$$
\item Given two such triangulations $\trib$ and $\tri'_\bullet$ differing by a single flip, the functor $\Zc_G(\tri) \to \Zc_G(\tri')$ is the birational isomorphism obtained by restricting the $\hat{A}$-flip $\zeta(\trib,\trib')$ between $\zeta(\trib)$ and $\zeta(\trib')$ described in~\cref{prop:Ahat-flip} to the subalgebras $\chi(\tri)$, $\chi(\tri')$.
\item We have isomorphisms $\chi_{\PGL_2}(\tri) \cong \Xc^q_\tri$, and the flip between $\chi_{\PGL_2}(\tri)$ and $\chi_{\PGL_2}(\tri')$ coincides with the quantum cluster mutation of $\Xc$-variables between $\Xc^q_\tri$ and $\Xc^q_{\tri'}$.
\end{enumerate}
\end{theorem}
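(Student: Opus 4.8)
The theorem is a reformulation of \cref{thm:Ahat}, obtained by \emph{closing} the $T$-gates at the punctures, with one extra input needed for part (3): the comparison with Fock--Goncharov. For part (1), let $\decS^+$ be the decorated surface obtained from $\decS$ by opening one $T$-gate in each puncture region, so that $\decS^+$ carries the gating $\Gc^+ = T^{p+m}$; by \cref{thm:Ahat}, each notched triangulation $\trib$ then yields a $\Gc^+$-toric chart $\Zc_G(\tri)\subset\Zc_G(\decS^+)$ with $\Zc_G(\tri)\simeq\zeta_G(\trib)\mod_{T^{p+m}}$. The category $\Zc_G(\decS)$ is recovered from $\Zc_G(\decS^+)$ by closing the $p$ puncture gates, i.e.\ by applying the functor of $T^p$-invariants at those gates. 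The open embedding $\Zc_G(\tri)\into\Zc_G(\decS^+)$ is $\Repq(T^p)$-linear and so is its exact left adjoint, so taking $T^p$-invariants preserves this adjunction together with its exactness; hence $\Zc_G(\tri)^{T^p}\into\Zc_G(\decS)$ is again an open embedding, and we take this to be the chart $\Zc_G(\tri)\subset\Zc_G(\decS)$. On the monadic side, $\zeta_G(\trib)$ is $\Z^p$-graded via the $T^p$-action at the punctures and, being a quantum torus, is strongly graded; consequently $\Z^p$-graded $\zeta_G(\trib)$-modules are recovered from their degree-zero parts over the degree-zero subalgebra $\chi_G(\tri)=\zeta_G(\trib)^{T^p}$, which is itself a quantum torus, and this gives $\Zc_G(\tri)\simeq\chi_G(\tri)\mod_{T^m}$. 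Under the stated hypotheses on $m$, one checks that the image of $\Dist_\tri$ under $T^p$-invariants is still a compact projective $T^m$-generator, so the chart is toric; and $\chi_G(\tri)$ depends only on $\tri$, not on $\trib$, since the notching data is supported at the punctures whose $T$-action we have just quotiented out.

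For part (2), \cref{thm:Ahat}(2) gives that the transition functor $\Zc_G(\tri)\to\Zc_G(\tri')$ over $\decS^+$ is induced by the birational isomorphism $\zeta(\trib,\trib')$ of \cref{prop:Ahat-flip}. This isomorphism comes from a $\Gc^+$-equivariant equivalence, hence is compatible with the $\Z^p$-gradings at the punctures, and a graded birational isomorphism of graded quantum tori restricts to a birational isomorphism of the degree-zero subalgebras $\chi_G(\tri)$ and $\chi_G(\tri')$. Since the transition functor over $\decS$ is obtained from the one over $\decS^+$ by the same $T^p$-invariants functor as in part (1), it is precisely the functor induced by this restricted isomorphism, which is the content of part (2).

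For part (3), I would compare presentations. From the presentation of $\zeta_{\SL_2}(\trib)$ by generators $Z_e$ (one per edge $e$ of $\trib$) and $\alpha_v$ (one per puncture $v$) with the $q$-commutation relations of \cref{thm:zeta-rel}, and from the identification of $\zeta_{\PGL_2}(\trib)$ as the subalgebra of $\zeta_{\SL_2}(\trib)$ generated by elements of even degree at every $T$-gate (\cref{sec:PGL2-comparison}), one extracts an explicit presentation of $\chi_{\PGL_2}(\tri)=\zeta_{\PGL_2}(\trib)^{T^p}$: it is a quantum torus with generators $X_e$, one per edge $e$ of $\tri$, each realized as a monomial in the $Z_f$ of degree zero at every puncture and even degree at every marked point, and the $q$-commutation relation between $X_e$ and $X_f$ works out to $q^{\varepsilon_{ef}}$ with $\varepsilon=(\varepsilon_{ef})$ the exchange matrix of the ideal triangulation $\tri$. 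This matches the defining presentation of the Fock--Goncharov quantum $\Xc$-torus $\Xc^q_\tri$, yielding $\chi_{\PGL_2}(\tri)\cong\Xc^q_\tri$. For the flip, one restricts the $\hat A$-flip of \cref{prop:Ahat-flip} to these subalgebras: the $\hat A$-flip factors as a monomial (cluster $\Ac$-mutation) transformation followed by conjugation by a ratio of quantum dilogarithms, and, exactly as the classical cluster-ensemble map intertwines $\Ac$- and $\Xc$-mutations, its restriction to the $X_e$ should reproduce the Fock--Goncharov quantum mutation of $\Xc$-variables term by term. I expect this last comparison to be the main obstacle: matching the noncommutative flip formula with the Fock--Goncharov $\Xc$-mutation requires carefully propagating the quantum dilogarithm conjugations and the sign conventions of $\varepsilon$ through the restriction from $\zeta$ to $\chi$, whereas parts (1) and (2) are essentially formal once \cref{thm:Ahat} is available.
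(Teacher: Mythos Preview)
Your strategy for parts (1) and (2) is essentially the paper's own. The paper proves the equivalence $\Zc(\tri)\simeq\chi(\tri)\mod_{\Tc_M}$ as a Corollary of \cref{thm:zeta-rel} by showing that the functor of $\Tc_P$-invariants is conservative on $\zeta(\trib)\mod_{\Tc_{P\cup M}}$; this is exactly your ``strongly graded'' claim, rephrased. One point you gloss over: the conservativity (equivalently, strong grading) is \emph{not} automatic from $\zeta(\trib)$ being a quantum torus---you need invertible elements in every $\Tc_P$-degree. The paper supplies the missing argument: for $\SL_2$, given a puncture $i$ and an adjacent triangle $ijk$, the ratio $D_{ij}D_{ik}^{-1}$ is invertible of weight $0$ at $i$ and nonzero weight at the marked vertex $k$, so one can transport any nonzero vector to one of $\Tc_P$-degree zero, provided a marked point exists to receive the weight---this is precisely where $m\ge1$ enters. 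For $\PGL_2$ one instead uses $(D_{ij}D_{jk}D_{ki})^2/D_{jk}^2$, which has weight $2$ at $i$ and zero elsewhere, so no marked point is needed. Your sentence ``Under the stated hypotheses on $m$, one checks\ldots'' is the right placeholder, but the check is the content.

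For part (3), your plan and the paper's diverge in execution. The paper does not extract a presentation of $\chi_{\PGL_2}(\tri)$ and match it to $\Xc^q_\tri$; rather it writes down an explicit homomorphism $\iota_{\trib}\colon\Xc^q_\tri\to\zeta_{\SL_2}(\trib)$, sending $X_s\mapsto\nord{\widetilde X_s\alpha_s}$ for certain concrete monomials $\widetilde X_s,\alpha_s$ (with separate formulas for boundary edges and self-folded triangles), verifies the image lies in $\chi_{\PGL_2}(\tri)$, and then proves the map is an isomorphism: surjectivity by exhibiting the generators $\alpha_p^2$, the triangle monomials $M_\Delta$, and path monomials $M_\gamma$ of $\chi_{\PGL_2}(\tri)$ as products of the $X_s$; injectivity by a dimension count at $q=1$. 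Your proposal hides exactly this work inside ``one extracts an explicit presentation\ldots with generators $X_e$, one per edge''---that sentence is the theorem, not a step toward it. Finally, the paper does \emph{not} factor the $\hat A$-flip as a monomial map followed by a quantum dilogarithm conjugation; the exchange relation of \cref{prop:Ahat-flip} is a two-term identity in $\zeta(\trib,\trib')$, and the compatibility with the Fock--Goncharov $\Xc$-mutation is checked by a direct computation on the restricted monomials (the Corollary closing Section~4.6). Your dilogarithm route may well work, but it is not what the paper does and would require its own verification.
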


\begin{remark}\label{rmk:X}
One may also consider the subalgebra,
$$
\eta_G(\tri) := \chi_G(\tri)^{T^m} \subset \chi_{G}(\tri),
$$
of invariants with respect to the remaining $T$-actions. For $G=\PGL_2$, one obtains an equivalence of categories
$$
\Zc_{G}(\tri) \simeq \eta_G(\tri)\mod,
$$
and the quantum tori $\eta_G(\tri)$ coincide with quantum cluster charts on the $\Xc$-variety considered in~\cite{FG06}. In the language of cluster algebras, one obtains the quiver for $\eta_G(\tri)$ from the one for $\chi_G(\tri)$ by erasing all of the frozen nodes.

In the case $G=\SL_2$, some additional care is needed in closing the final gate, owing to the non-trivial center of $\SL_2$.  See \cref{rmk:SL2-eta} for more details.
\end{remark}

\begin{remark}\label{rmk:clust-ens}
Setting $q=1$, the homomorphism
$
\chi^{q=1}_{\PGL_2}(\tri) \hookrightarrow \zeta^{q=1}_{\SL_2}(\trib)/I
$
described above coincides with the cluster ensemble map between the $\Xc$- and $\Ac$-cluster structures discussed above. In case $S$ has no punctures, the homomorphism
$
\chi_{\PGL_2}(\tri) \hookrightarrow \zeta_{\SL_2}(\trib)
$ in \cref{thm:X}
coincides with the quantum cluster ensemble map.  In particular our construction may be regarded as the extension of the cluster ensemble map, to surfaces with punctures and to $q\neq 1$.
\end{remark}

To connect with cluster $\Ac$-varieties in previous remarks, we have set $q=1$.  This is necessary merely because the ideal $I$ is only a one-sided ideal for $q\neq 1$, which is in turn a manifestation of the fact that in the presence of punctures, $\Ac$-varieties are not naturally Poisson.  The promotion to $\hat{\Ac}$-varieties by allowing non-trivial monodromy of the $T$-connection has precisely fixed this issue, by adding a new generator for each puncture.  Classically $\hat{\Ac}$ varieties are Poisson (and in fact symplectic in the absence of marked points), and we have constructed their deformation quantizations.

Still, one might wonder if quantum $\Ac$-varieties nevertheless appear directly in factorization homology terms.  Topologically, the picture to consider is the disk labelled $(A)$ in \cref{fig:four-variations}, which we will denote here by $\DD^\bullet$; we will also denote by $\DD^\circ$ the disk labelled $(\hat{A})$.  We can include the disk $\DD_T$ into the filled inner $T$-region of $\DD^\bullet$ as depicted in the figure, and thereby obtain a functor $\Repq(T)\to\Zc(\DD^\bullet)$.  However, since $\DD_T$ does not enter $\DD^\bullet$ through a gate, this functor is \emph{not} a $T$-module functor.  We therefore \emph{can} construct $\iEnd_T(\Dist_{\DD^\bullet})$ formally as we do in the presence of a gate, however owing to this lack of module structure, it is only a plain object -- namely $\iEnd_T(\Dist_{\DD^\circ})/\langle\alpha_p-1\rangle\in\Repq(T)$ -- and not an algebra object.

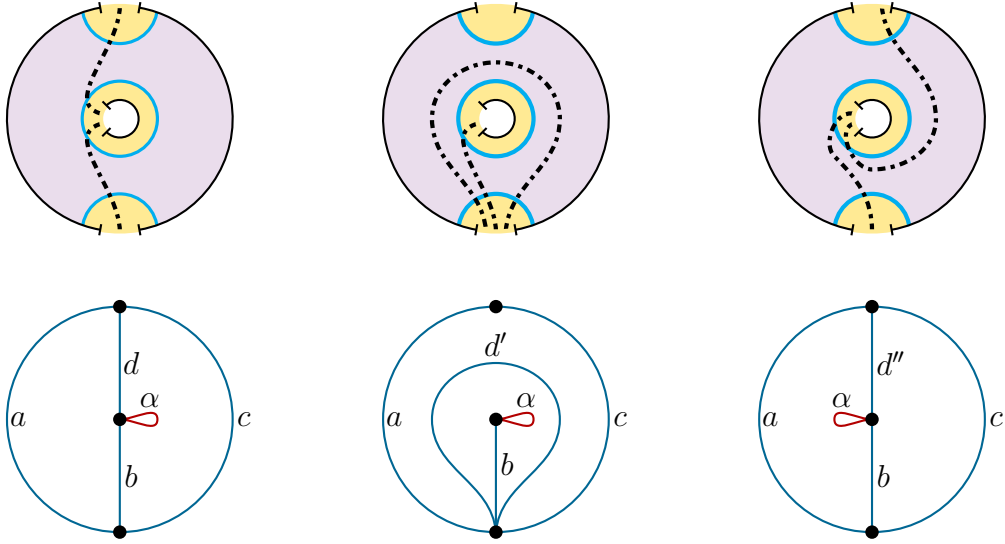
\begin{figure}[t]
\begin{tikzpicture}[every node/.style={inner sep=0.5, thick, circle}, thick, x=0.5cm, y=0.5cm]

\def\Gcirc{(0,0) circle (3) (0,0) circle (.5)};
\fill[Orchid!20, even odd rule] \Gcirc;
\def\Tone{(90:3) circle (1)};
\def\Ttwo{(-90:3) circle (1)};
\def\Tthree{(0,0) circle (1) (0,0) circle (.5)};

\begin{scope}
	\clip\Gcirc;
	\fill[Goldenrod!60, even odd rule] \Tone \Ttwo \Tthree;
	\draw[very thick, cyan] \Tone \Ttwo (0,0) circle (1);
\end{scope}

\draw[ultra thick, dashdotted] (0,3) to [out=-90,in=165] (165:0.5);
\draw[ultra thick, dashdotted] (0,-3) to [out=90,in=-165] (-165:0.5);

\draw[ultra thick, Goldenrod!60] (135:0.5) arc (135:225:0.5);
\draw[ultra thick, Goldenrod!60] (-100:3) arc (-100:-80:3);
\draw[ultra thick, Goldenrod!60] (80:3) arc (80:100:3);

\draw (-135:0.5) arc (-135:135:0.5);
\draw[domain=0.35:0.66] plot ({\x*cos(-135)}, {\x*sin(-135)});
\draw[domain=0.35:0.65] plot ({\x*cos(135)}, {\x*sin(135)});
\draw (-80:3) arc (-80:80:3);
\draw[domain=2.85:3.15] plot ({\x*cos(-80)}, {\x*sin(-80)});
\draw[domain=2.85:3.15] plot ({\x*cos(80)}, {\x*sin(80)});
\draw (100:3) arc (100:260:3);
\draw[domain=2.85:3.15] plot ({\x*cos(100)}, {\x*sin(100)});
\draw[domain=2.85:3.15] plot ({\x*cos(260)}, {\x*sin(260)});

\begin{scope}[shift={(10,0)}]

\def\Gcirc{(0,0) circle (3) (0,0) circle (.5)};
\fill[Orchid!20, even odd rule] \Gcirc;
\def\Tone{(90:3) circle (1)};
\def\Ttwo{(-90:3) circle (1)};
\def\Tthree{(0,0) circle (1) (0,0) circle (.5)};

\begin{scope}
	\clip\Gcirc;
	\fill[Goldenrod!60, even odd rule] \Tone \Ttwo \Tthree;
	\draw[ultra thick, cyan] \Tone \Ttwo (0,0) circle (1);
\end{scope}

\draw[ultra thick, dashdotted] (0,-3) to [out=90,in=-165, shift left=1pt] (-165:0.5);
\draw[ultra thick, dashdotted] (-85:3) to [out = 85, in =-90] (1.7,0) to [out=90,in=0] (0,1.5) to [out=180,in=90] (-1.7,0) to [out=-90,in=95] (-95:3);

\draw[ultra thick, Goldenrod!60] (135:0.5) arc (135:225:0.5);
\draw[ultra thick, Goldenrod!60] (-100:3) arc (-100:-80:3);
\draw[ultra thick, Goldenrod!60] (80:3) arc (80:100:3);

\draw (-135:0.5) arc (-135:135:0.5);
\draw[domain=0.35:0.66] plot ({\x*cos(-135)}, {\x*sin(-135)});
\draw[domain=0.35:0.65] plot ({\x*cos(135)}, {\x*sin(135)});
\draw (-80:3) arc (-80:80:3);
\draw[domain=2.85:3.15] plot ({\x*cos(-80)}, {\x*sin(-80)});
\draw[domain=2.85:3.15] plot ({\x*cos(80)}, {\x*sin(80)});
\draw (100:3) arc (100:260:3);
\draw[domain=2.85:3.15] plot ({\x*cos(100)}, {\x*sin(100)});
\draw[domain=2.85:3.15] plot ({\x*cos(260)}, {\x*sin(260)});

\end{scope}








\begin{scope}[shift={(20,0)}]

\def\Gcirc{(0,0) circle (3) (0,0) circle (.5)};
\fill[Orchid!20, even odd rule] \Gcirc;
\def\Tone{(90:3) circle (1)};
\def\Ttwo{(-90:3) circle (1)};
\def\Tthree{(0,0) circle (1) (0,0) circle (.5)};

\begin{scope}
	\clip\Gcirc;
	\fill[Goldenrod!60, even odd rule] \Tone \Ttwo \Tthree;
	\draw[ultra thick, cyan] \Tone \Ttwo (0,0) circle (1);
\end{scope}

\draw[ultra thick, dashdotted] (0,-3) to [out=90,in=-45] (-1,-1) to [out=135,in=165, shift left=1pt] (165:0.5);
\draw[ultra thick, dashdotted] (85:3) to [out = -85, in = 90] (1.7,0) to [out=-90,in=0] (0,-1.35) to [out=180,in=-165] (-165:0.5);

\draw[ultra thick, Goldenrod!60] (135:0.5) arc (135:225:0.5);
\draw[ultra thick, Goldenrod!60] (-100:3) arc (-100:-80:3);
\draw[ultra thick, Goldenrod!60] (80:3) arc (80:100:3);

\draw (-135:0.5) arc (-135:135:0.5);
\draw[domain=0.35:0.66] plot ({\x*cos(-135)}, {\x*sin(-135)});
\draw[domain=0.35:0.65] plot ({\x*cos(135)}, {\x*sin(135)});
\draw (-80:3) arc (-80:80:3);
\draw[domain=2.85:3.15] plot ({\x*cos(-80)}, {\x*sin(-80)});
\draw[domain=2.85:3.15] plot ({\x*cos(80)}, {\x*sin(80)});
\draw (100:3) arc (100:260:3);
\draw[domain=2.85:3.15] plot ({\x*cos(100)}, {\x*sin(100)});
\draw[domain=2.85:3.15] plot ({\x*cos(260)}, {\x*sin(260)});

\end{scope}

\begin{scope}[shift={(0,-8)}]

\draw[MidnightBlue] (0,0) circle (3);
\draw[MidnightBlue] (0,-3) to (0,3);
\draw[red!70!black] (0,0) to [out=0, in=-90] (1,0) to [out=90,in=0] (0,0);
\draw[fill] (0,-3) circle (0.15);
\draw[fill] (0,0) circle (0.15);
\draw[fill] (0,3) circle (0.15);

\node at (-2.7,0) {\large $a$};
\node at (0.3,-1.5) {\large $b$};
\node at (3.3,0) {\large $c$};
\node at (0.3,1.5) {\large $d$};
\node at (0.8,0.5) {\large $\alpha$};

\end{scope}

\begin{scope}[shift={(10,-8)}]

\draw[MidnightBlue] (0,0) circle (3);
\draw[MidnightBlue] (0,-3) to (0,0);
\draw[MidnightBlue] (0,-3) to [out = 85, in =-90] (1.7,0) to [out=90,in=0] (0,1.5) to [out=180,in=90] (-1.7,0) to [out=-90,in=95] (0,-3);
\draw[red!70!black] (0,0) to [out=0, in=-90] (1,0) to [out=90,in=0] (0,0);
\draw[fill] (0,-3) circle (0.15);
\draw[fill] (0,0) circle (0.15);
\draw[fill] (0,3) circle (0.15);

\node at (-2.7,0) {\large $a$};
\node at (0.3,-1.2) {\large $b$};
\node at (3.3,0) {\large $c$};
\node at (0,2) {\large $d'$};
\node at (0.8,0.5) {\large $\alpha$};

\end{scope}

\begin{scope}[shift={(20,-8)}]

\draw[MidnightBlue] (0,0) circle (3);
\draw[MidnightBlue] (0,-3) to (0,0);
\draw[MidnightBlue] (0,3) to (0,0);
\draw[red!70!black] (0,0) to [out=180, in=90] (-1,0) to [out=-90,in=180] (0,0);
\draw[fill] (0,-3) circle (0.15);
\draw[fill] (0,0) circle (0.15);
\draw[fill] (0,3) circle (0.15);

\node at (-2.7,0) {\large $a$};
\node at (0.3,-1.5) {\large $b$};
\node at (3.3,0) {\large $c$};
\node at (0.5,1.5) {\large $d''$};
\node at (-0.8,0.5) {\large $\alpha$};

\end{scope}

\end{tikzpicture}
\caption{Notched triangulations (above), corresponding to notched ideal triangulations (below), of the punctured digon $\DD^\circ_2$.}
\label{fig:punct-disk}
\end{figure}

\begin{example}\label{ex:punc-digon}
Consider a punctured digon $\DD^\circ_2$. Three isotopy classes of its notched triangulations are shown in the top row of~\cref{fig:punct-disk}, the bottom row of the same figure shows the corresponding notched ideal triangulations. Let $\zeta_{\SL_2} := \zeta_{\SL_2}(\tri^l_\bullet)$, where $\tri^l_\bullet$ is the triangulation on the left. Then
$$
\zeta_{\SL_2} = \C[q^{\pm\frac12}]\ha{Z_a^{\pm1},Z_b^{\pm1},Z_c^{\pm1},Z_d^{\pm1},\alpha^{\pm1}}
$$
with the following relations:
\begin{align*}
Z_a Z_b &= q^{-\frac12} Z_b Z_a,  &  Z_b Z_c &= q^{-\frac12}Z_cZ_b,  &  \alpha  Z_a &= Z_a \alpha, \\
Z_a Z_c &= Z_c Z_a,  &  Z_b Z_d &= q^{-\frac12} Z_d Z_b,  &  \alpha Z_b &= q Z_b \alpha, \\
Z_a Z_d &= q^{\frac12} Z_d Z_a,  &  Z_c Z_d &= q^{-\frac12} Z_d Z_c  &  \alpha Z_c &= Z_c \alpha, \\[3pt]
&& && \alpha Z_d &= q Z_d \alpha.
\end{align*}

Let us also set $\zeta^c_{\SL_2} := \zeta_{\SL_2}(\tri^c_\bullet)$ and $\zeta^r_{\SL_2} := \zeta_{\SL_2}(\tri^r_\bullet)$, where $\tri^c_\bullet$ and $\tri^r_\bullet$ are the triangulations in the left and on the right of~\cref{fig:punct-disk}. Then the quantum tori $\zeta^c_{\SL_2}$ and $\zeta^r_{\SL_2}$ are obtained from $\zeta_{\SL_2}$ by replacing the generator $Z_d$ with $Z_{d'}$ and $Z_{d''}$ respectively. Then
$$
Z_{d''} = q^{-1}Z_d\alpha^{-1},
$$
while the flip between $\zeta_{\SL_2}$ and $\zeta^c_{\SL_2}$ reads
$$
Z_{d'} = \nord{Z_d^{-1}\alpha Z_aZ_b} + \nord{Z_d^{-1}Z_cZ_b} = q^{-\frac34}Z_d^{-1}\alpha Z_aZ_b + q^{\frac14}Z_d^{-1}Z_cZ_b,
$$
where $\nord{\;\;}$ denotes the \emph{Weyl ordering} introduced in~\cref{weyl-order}, and we obtain relations 
\begin{align*}
Z_c Z_{d'} &= q Z_{d'} Z_c, & Z_a Z_{d'} &= q^{-1} Z_{d'} Z_a, & Z_b Z_{d'} &= Z_{d'} Z_b, & \alpha Z_{d'} &= Z_{d'} \alpha.
\end{align*}

Let us now focus on the triangulation $\tri_\bullet^l$, and describe the quantum subtori of $\zeta_{\SL_2}$ obtained from the latter by taking $T$-invariants at the puncture or marked points, and by replacing $\SL_2$ with $\PGL_2$. We have
$$
\zeta_{\PGL_2} = \C\big[q^{\pm\frac12}\big]\ha{Z_a^{\pm2},Z_b^{\pm2},\alpha^{\pm2},(Z_aZ_bZ_d)^{\pm1},(Z_bZ_cZ_d\alpha)^{\pm1}} \subset \zeta_{\SL_2}
$$
and
$$
\chi_{\SL_2} = \C\big[q^{\pm\frac12}\big]\ha{Z_a^{\pm1},Z_c^{\pm1},\alpha^{\pm1},(Z_bZ_d^{-1})^{\pm1}} \subset \zeta_{\SL_2}.
$$
Then, setting
$$
X_a = \nord{Z_aZ_b^{-1}Z_d}, \quad X_b = \nord{\alpha Z_aZ_c^{-1}}, \quad X_c = \nord{\alpha^{-1}Z_cZ_d^{-1}Z_b}, \quad X_d = \nord{\alpha Z_cZ_a^{-1}},
$$
we obtain
$$
\chi_{\PGL_2} = \C\big[q^{\pm\frac12}\big]\ha{X_a^{\pm1},X_b^{\pm1},X_c^{\pm1},X_d^{\pm1}} = \zeta_{\PGL_2} \cap \chi_{\SL_2}.
$$
Finally, we also have
\begin{align*}
\eta_{\SL_2} &= \C\big[q^{\pm\frac12}\big]\ha{(Z_aZ_c^{-1})^{\pm1},\alpha^{\pm1}} \subset \chi_{\SL_2}, \\
\eta_{\PGL_2} &= \C\big[q^{\pm\frac12}\big]\ha{X_b^{\pm1},X_d^{\pm1}} \subset \chi_{\PGL_2}.
\end{align*}

We end this example with the following observation. In~\cite{SS19}, the third and fourth authors showed that the quantum group $U_q(\sl_n)$ is closely related to the quantum cluster Poisson variety on the $P$-space for $G = PGL_n$. Let us recall this relation in the setup of the present paper. In case $n=2$, the Drinfeld double $\mathfrak{D}$ of the quantum Borel $U_q(\mathfrak b) \subset U_q(\sl_2)$ is the algebra
$$
\mathfrak{D} = \C(q)\ha{e,f,k^{\pm1}_\omega,k^{\pm1}_{\omega'}}
$$
with relations
\begin{align*}
k_\omega e &= q e k_\omega, & k_{\omega'} e &= q^{-1} e k_{\omega'}, & k_\omega k_{\omega'} &= k_{\omega'} k_\omega, \\
k_\omega f &= q^{-1} f k_\omega, & k_{\omega'} f &= q f k_{\omega'},
\end{align*}
and
$$
[e,f] = (q^{-1}-q)(k_\alpha - k_{\alpha'}),
$$
where $k_\alpha = k_\omega^2$ and $k_{\alpha'} = k_{\omega'}^2$. Here we denote by $\omega$ the fundamental weight of $\sl_2$, and by $\alpha = 2\omega$ the positive root. Then one has an injective homomorphism $\iota \colon \mathfrak{D} \hookrightarrow \chi_{\SL_2}$ given by\footnote{In~\cite{SS19}, the above homomorphism is written in terms of $E = \mathbf{i}k_\omega e k_{\omega'}^{-1}$, $F = \mathbf{i} k_{\omega'} f k_\omega^{-1}$, $K = k_\alpha$, and $K' = k_{\alpha'}$, where $\mathbf{i} = \sqrt{-1}$ is the imaginary unit. Note that the subalgebra of $\mathfrak{D}$, generated by $E,F,K, K'$, is mapped into $\chi_{\PGL_2} \subset \chi_{\SL_2}$ under $\iota$.}
$$
e \longmapsto Z_{b'}, \qquad f \longmapsto Z_{d'}, \qquad k_\omega \longmapsto Z_a, \qquad k_{\omega'} \longmapsto Z_c.
$$

\end{example}

\subsection{Distinguished objects and global sections}

The charts constructed in \cref{thm:Ahat} and \cref{thm:X} are not arbitrary:  the required generator of each chart is the restriction of the distinguished object.  We may therefore consider the restriction $\Dist_{U} = \Res_U(\Dist_\decS)$ of the distinguished object to the subcategory $U = \dot{\cup}\Zc(\tri)$ covered by all open charts $\Zc(\tri)$.

As a corollary of \cref{thm:Ahat}, we have that $\iEnd_{T^k}(\Dist_U)$ is isomorphic to the subalgebra of $\zeta(\trib)$ consisting of those elements which remain regular, that is Laurent polynomials, under every quantum flip (clearly, this algebra is independent of the triangulation $\trib$). Similarly, as a corollary of \cref{thm:X}, we have that $\iEnd_{T^d}(\Dist_U)$ and $\End(\Dist_U)$ are isomorphic respectively to the subalgebras of $\chi(\tri)$ and $\eta(\tri)$, consisting of elements regular under all flips.

Now, let $\Lf(\Pc_\decS)$ and $\Lf(\Xc_\decS)$ denote the universally Laurent algebras from the cluster Poisson structure~\cite{FG06,GS19} on the $\Pc$- and $\Xc$-moduli space respectively. The algebras $\Lf(\Pc_\decS)$ and $\Lf(\Xc_\decS)$ consist of elements that are Laurent polynomials in the cluster coordinates of every chart. We denote by $\Lf_q(\Pc_\decS)$ and $\Lf_q(\Xc_\decS)$ the quantum versions~\cite{FG09a, FG09b} of these algebras. Similarly, let $\Lf(\Ac_\decS)$ be the universally Laurent algebra, also known as the upper cluster algebra, from the cluster $K_2$-structure on the moduli space of twisted $\SL_2$-local systems on $S$, considered in~\cite{FG06}. If $S$ has no punctures, we denote by $\Lf_q(\Ac_\decS)$ the quantized version~\cite{BZ05} of $\Lf(\Ac_\decS)$. Then we have the following Corollary, which follows from the fact that the tori $\zeta_{\trib}$ and flips between them coincide with the quantum cluster charts on $\Lf_q(\Ac_\decS)$ and their mutations.

\begin{cor}\label{cor:A}
If $\decS$ has no punctures and $G=SL_2$, we have the following isomorphism of algebras in $\Rep_q(T^k)$:
$$
\iEnd_{T^k}(\Dist_U) \simeq \Lf_q(\Ac_\decS).
$$
\end{cor}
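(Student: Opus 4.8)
The plan is to deduce \cref{cor:A} from two facts already in hand. The first is the consequence of \cref{thm:Ahat} recorded just above the statement: for any notched triangulation $\trib$, the algebra $\iEnd_{T^k}(\Dist_U)$ is the subalgebra of $\zeta(\trib)$ consisting of those elements which remain Laurent polynomials under every quantum flip (and this subalgebra is independent of $\trib$). The second is \cref{rmk:A}(1): when $\decS$ has no punctures there is no notching data, each quantum torus $\zeta(\trib)=\zeta(\tri)$ is identified with the quantum cluster $\Ac$-chart $\Ac^q_\tri$, and under this identification the flip $\zeta(\trib,\trib')$ becomes the quantum cluster mutation. Granting these, what remains is to unwind the definition of the quantum upper cluster algebra and to track the $T^k$-equivariant structure.

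First I would fix a reference triangulation $\tri_0$ and pass to the common skew field $\Frac\,\zeta(\tri_0)=\Frac\,\Ac^q_{\tri_0}$. Composing the flip isomorphisms along any chain from $\tri_0$ to a triangulation $\tri$ yields, by \cref{rmk:A}(1), the corresponding quantum cluster mutation, and hence embeds $\zeta(\tri)$ into $\Frac\,\zeta(\tri_0)$ with image the quantum torus $\Ac^q_\tri$ of the cluster atlas. Since any two ideal triangulations of a surface with no punctures are joined by a finite chain of flips, every cluster chart $\Ac^q_\tri$ arises this way, so the subalgebra of $\zeta(\tri_0)$ of elements regular under all flips is exactly $\bigcap_\tri \Ac^q_\tri$ inside $\Frac\,\zeta(\tri_0)$. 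By the quantum Laurent phenomenon this intersection is the quantum upper cluster algebra; since $\decS$ has no punctures the Berenstein--Zelevinsky quantization of the cluster $K_2$-structure is available (cf.~\cite{Mul16}), so this intersection is precisely $\Lf_q(\Ac_\decS)$. Combined with the first fact, this produces an isomorphism of algebras $\iEnd_{T^k}(\Dist_U)\simeq \Lf_q(\Ac_\decS)$.

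It then remains to promote this to an isomorphism of algebra objects in $\Rep_q(T^k)$. On the left the $T^k$-action, equivalently the $\Z^k$-grading, is induced by disk insertion at the $k$ $T$-gates; on the right $\Lf_q(\Ac_\decS)$ carries the $\Z^k$-grading coming from the torus rescaling the decoration at each marked point, which is mutation-invariant and so descends to $\bigcap_\tri\Ac^q_\tri$. I would check these match under \cref{rmk:A}(1): the $T^k$-equivariant structure on $\zeta(\trib)$ is read off in the proof of \cref{thm:Ahat} from the $T$-gate actions, and the generator $Z_e$ attached to a boundary edge $e$ carries exactly the weight with which the adjacent marked-point torus acts on the associated $\Ac$-coordinate. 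The main obstacle I anticipate is not any single step in isolation but the bookkeeping behind \cref{rmk:A}(1) itself: one must match the quiver produced by the gluing formula in \cref{eqn:tilde-tensor-product}, including its frozen nodes at boundary digons and the $q$-commutation relations of \cref{thm:zeta-rel}, against the seed and compatible pair used in the Berenstein--Zelevinsky quantization of the surface cluster $K_2$-algebra, being careful with signs and the Weyl-ordering convention of \cref{weyl-order}, and confirm that flip-connectivity genuinely identifies ``regular under all flips'' with ``universally Laurent over the whole cluster atlas.'' Once that dictionary is pinned down, \cref{cor:A} is formal.
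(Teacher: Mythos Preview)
Your approach matches the paper's: the corollary is deduced from the identification of $\iEnd_{T^k}(\Dist_U)$ with the ``regular under all flips'' subalgebra, together with \cref{rmk:A}(1) identifying the tori $\zeta(\tri)$ and their flips with the quantum cluster charts $\Ac^q_\tri$ and their mutations. The paper's justification is the one-liner to this effect.

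One correction is worth making. The sentence ``by the quantum Laurent phenomenon this intersection is the quantum upper cluster algebra'' is not the right justification. The Laurent phenomenon asserts that each cluster variable lies in every chart; it does not tell you that intersecting over \emph{triangulation} charts equals intersecting over \emph{all} cluster seeds. The actual reason---and the place where the no-punctures hypothesis enters---is that in the absence of punctures there are no tagged arcs, so ordinary ideal triangulations already exhaust the cluster seeds, and hence $\bigcap_\tri \Ac^q_\tri$ is $\Lf_q(\Ac_\decS)$ by definition of the upper cluster algebra. The paper makes exactly this point in the discussion immediately following the corollary, where the obstruction in the remaining cases is the presence of ``special'' charts coming from tagged triangulations. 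Your flip-connectivity argument is needed too (to identify the ``regular under all flips'' subalgebra with the intersection), but it is a separate ingredient.
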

We expect this description to extend to the following cases:
\begin{enumerate}
\item For $G=SL_2$, we have the following isomorphism of commutative rings:
$$
\End_{T^k}(\Dist_U)|_{q=1}/I \simeq \Lf(\Ac_\decS).
$$
\item For $G=PGL_2$, there is an isomorphisms of algebras in $\Rep_q(T^d)$
$$
\iEnd_{T^d}(\Dist_U) \simeq \Lf_q(\Pc_\decS).
$$
\item For $G=PGL_2$, there is an isomorphisms of algebras
$$
\iEnd(\Dist_U) \simeq \Lf_q(\Xc_\decS).
$$
\end{enumerate}

The only reason the above equivalences are non-obvious as stated is that we have not included certain \emph{special} charts in our definition of $U$, here we call a chart special if it contains a vertex incident to two or more tagged arcs. Hence to prove the identities, it would suffice to show that an element which is regular in every non-special (quantum) cluster chart, is in fact regular in every (quantum) cluster chart, or alternatively we could incorporate special charts into the definition of $U$.

A more interesting question is whether the above identities hold with $\Dist_\decS$ in place of $\Dist_U$.  Classically, this is the question whether $U^c$ has codimension at least two in the decorated character stack.  We intend to return to such questions in a future paper.
 
\subsection{Outlook}
The most self-evident challenge which presents itself is to extend the results of this paper to groups of higher rank, namely to discover cluster charts and flips within quantum decorated character stacks for higher rank groups.  While the general construction from this paper applies to arbitrary reductive groups, we expect the detailed determination of cluster charts beyond rank one will involve many interesting and challenging aspects, and will not be a straightforward modification of our computations here. In higher rank, decorated character stacks admit richer algebraic structure --- an arbitrary parabolic subgroup $P\subset G$ with Levi quotient $P\to L$ determining a domain wall between $G$ and $L$. These domain walls can not only stack (i.e, compose as 1-morphisms in $\BrTens$), but they may also overlap --- this is modelled by the 2-morphisms in $\BrTens$, and hence involves stratified surfaces with point defects.  We expect a correspondence between these interesting additional structures on the lattice of Levi subgroups, and the rich combinatorial structure of the cluster charts in higher rank.

Let us mention a number of more immediate applications which we intend to pursue:
\begin{itemize}
    \item Combining our work with that of \cite{Coo19} gives rise to embeddings of skein algebras into quantum cluster algebras.  These should be related to --- and extend --- the well-known constructions of \cite{Mul16} and \cite{BW}.  We will give an invariant construction of such embeddings in a forthcoming work.
    \item Alekseev--Grosse--Schomerus have obtained~\cite{AGS} a  generators-and-relations presentation of certain algebras, now commonly called AGS algebras, by quantizing the Poisson structure described in~\cite{FR99} by Fock and Rosly on framed character varieties (in the absence of $T$-regions). These algebras are now understood via monadic reconstruction for factorization homology, in \cite{BBJ18a}, and subsequently in terms of \emph{stated} \cite{CL} and \emph{internal} \cite{GJS} skein algebras. The techniques in this paper therefore lead us to quantum cluster embeddings of arbitrary AGS algebras, the simplest case of which is discussed already in~\cref{ex:punc-digon}. We will explore this systematically in a forthcoming work.
    \item While this paper focuses primarily on construction of the subcategory $\dot\cup\Zc(\tri)$ and its relation to quantum cluster algebras, an important direction of future inquiry is to understand precisely the \emph{complement}, which describes the stacky points of decorated character stacks and their quantizations.
    \item By appealing to the 4-category $\BrTens$, we may regard our constructions as giving the 2-dimensional part of a fully extended $(3+\epsilon)$-dimensional TFT. In particular, we may consider extension of our work to decorated 3-manifolds,
    for example, to hyperbolic knot complements. We expect close relations to \cite{Dim,DGG}.
    \item We have largely restricted attention to \emph{simple} decorated surfaces in the present work.  While there is no reason to expect something like cluster charts for decorated surfaces which are not simple, we nevertheless expect that many interesting examples will come from studying non-simple quantum decorated character stacks, with connections to Hecke categories, quantum Beilinson-Bernstein theorem \cite{BK-BB,Tan-BB}, dynamical quantum groups, and numerous other topics in quantum representation theory where parabolic induction plays a role.
\end{itemize}

\subsection*{Outline} Let us now outline of the contents of this paper. In Section 2, we recall basic notions from stratified factorization homology and representations of quantum groups, leading to the formal definition of $\Zc(\decS)$. In Section 3, we review monadic reconstruction, do numerous computations with it, and finally use it to construct cluster charts on $\Zc(\DD_n)$.  In Section 4, extend the construction to general $\decS$, and discuss the relation to $\Ac_{G,S}$, $\Xc_{G,S}$, $\Pc_{G,S}$ constructions. In Section 5, we close with a number of examples illustrating formulas for charts and flips.

\subsection*{Acknowledgements}
We are grateful to David Ben-Zvi for furnishing us our lodestar, and to Sam Gunningham for countless helpful discussions and clarifications around the categorical approach to algebraic geometry in which we obtained our results.

The work of D.J. and A.S. was supported by European Research Council (ERC) under the European Union's Horizon 2020 research and innovation programme (grant agreement no. 637618). D.J. was partially supported by NSF grant DMS-0932078, administered by the Mathematical Sciences Research Institute while in residence
at MSRI during the Quantum Symmetries program in Spring 2020. A.S. was partially supported by the NSF Postdoctoral Fellowship DMS-1703183, as well as by the International Laboratory of Cluster Geometry NRU HSE grant no. 2020-220-08-7077.

\section{Decorated surfaces and stratified factorization homology}\label{sec:strat-facthom}

The basic observation motivating the interaction between character stacks and factorization homology is as follows:  the assignment to each decorated surface of its decorated character stack is functorial for stratified embeddings and their isotopies, and characterized by a strong locality property called excision; meanwhile factorization homology provides universal such functorial invariants.  These observations are captured in the following definitions.

\begin{defn}
Let $\decS$ and $\decS'$ be decorated surfaces.  A \defterm{stratified embedding} $\iota \colon \decS \hookrightarrow \decS'$ is an oriented embedding of surfaces which preserving the partition into walls, regions and labels.
A \defterm{stratified isotopy} is a continuous path through the space of stratified embeddings.
\end{defn}

\begin{defn}
The symmetric monoidal $(2,1)$-category $\Surp$ is a category with
\begin{enumerate}
\item[] \textbf{Objects:} decorated surfaces,
\item[] \textbf{1-morphisms:} stratified embeddings of decorated surfaces,
\item[] \textbf{2-morphisms:} stratified isotopies of stratified embeddings.
\end{enumerate}
The symmetric monoidal structure is given by disjoint union.
\end{defn}

\begin{defn}
The full symmetric monoidal (2,1)-subcategory $\Disp\subset\Surp$ of \defterm{basic decorated disks} is generated by the following decorated surfaces shown on~\cref{fig:disks}:
\begin{itemize}
\item[] $\DD_G$ = $\R^2$ with no walls, the unique region labeled $G$;
\item[] $\DD_T$ = $\R^2$ with no walls, the unique region labeled $T$;
\item[] $\DD_B$ = $\R^2$ with the $x$-axis as its unique wall, the upper region labeled $G$, and the lower region labeled $T$.
\end{itemize}
\end{defn}

\begin{figure}[t]
\begin{tikzpicture}[every node/.style={inner sep=0, minimum size=0.2cm, thick, circle}, thick, dashed, x=0.5cm, y=0.5cm]
\draw[fill=Orchid!30] (-9,0) circle (2);
\node at (-9,0) {$G$};
\draw[fill=Goldenrod!70] (-3,0) circle (2);
\node at (-3,0) {$T$};

\fill[Orchid!30] (5,0) arc (0:180:2) to (5,0);
\fill[Goldenrod!70] (5,0) arc (0:-180:2) to (5,0);
\draw (3,0) circle (2);
\draw[very thick, cyan, solid] (1,0) to (2.7,0);
\draw[very thick, cyan, solid] (3.4,0) to (5,0);
\node at (3,1) {$G$};
\node at (3,-1) {$T$};
\node at (3,0) {$B$};


\end{tikzpicture}
\caption{Disks with parabolic structures $\DD_G$, $\DD_T$, $\DD_B$.}
\label{fig:disks}
\end{figure}

\begin{remark}
The disks above are a basis for the topology of decorated surfaces, in the sense that any point of a decorated surfaces sits in a neighborhood of the form $\DD_G, \DD_T$ or $\DD_B$.
\end{remark}

\begin{defn} A $\Disp$-algebra is a symmetric monoidal functor $\Fc \colon \Disp\to\Pr$.\end{defn}
Here, $\Pr$ is the symmetric monoidal (2,1)-category whose objects are presentable $\K$-linear categories, morphisms are colimit-preserving functors, and 2-morphisms are natural isomorphisms.

\begin{notation}
Let $\Cc = (\Cc, \otimes, \sigma)$ be a braided monoidal category with tensor multiplication and braiding denoted by
$$
\otimes \colon (V,W) \longmapsto V \otimes W
\qquad\text{and}\qquad
\sigma_{V,W} \colon V \otimes W \longmapsto W \otimes V
$$
respectively. From $\Cc$ we obtain a pair of braided monoidal categories
$$
\Cc^\op = (\Cc, \ot^\op, \sigma^\op)
\qquad\text{and}\qquad
\rev\Cc = (\Cc, \ot, \rev\sigma),
$$
where $\ot^\op$ is the opposite tensor product:
$$
\ot^\op \colon (V,W) \longmapsto W \otimes V,
$$
while
$$
\sigma^\op_{V,W}  = \sigma_{V,W}^{-1}
\qquad\text{and}\qquad
\rev\sigma_{V,W} = \sigma_{W,V}^{-1}.
$$
In fact these are equivalent as braided tensor categories, via the identity functor with tensor structure given by the braiding.
\end{notation}

\begin{defn} Given a pair $\Ac$, $\Bc$ of braided tensor categories, an $(\Ac,\Bc)$-central structure on a tensor category $\cC$ is a braided tensor functor $\Ac\bt\rev{\Bc}\to Z_{Dr}(\Cc)$.
\end{defn}

Note that by definition of the Drinfeld center, the data of a lift a monoidal functor $F\colon\Ac\bt\Bc\to \Cc$ to an $(\Ac,\Bc)$-central structure is equivalent to the data, for every object of $\Ac\bt\Bc$ of a \defterm{half-braiding}, i.e., a natural isomorphism $F(a\bt b)\ot c \cong c\bt F(a\bt b)$, which must be natural in all arguments, and moreover compatible with the braiding on $\Ac\bt\rev{\Bc}$ in the obvious way.

\begin{prop}
\label{prop-Pr}
A $\Disp$-algebra is uniquely determined by the following data:
\begin{enumerate}
\item The categories $\Fc(\DD_G), \Fc(\DD_T), \Fc(\DD_B) \in \Pr$,
\item The structures of a ribbon braided tensor category on $\Fc(\DD_G)$ and $\Fc(\DD_T)$,
\item The structure of a $(\Fc(\DD_G), \Fc(\DD_T))$-central tensor category on $\Fc(\DD_B)$.
\end{enumerate}
\end{prop}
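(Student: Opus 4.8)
The plan is to identify $\Disp$ with (the symmetric monoidal envelope of) an explicit three-colored $(2,1)$-operad $\mathcal{O}$, with colors $\DD_G,\DD_T,\DD_B$, whose space of operations from $\DD_{c_1}\sqcup\cdots\sqcup\DD_{c_k}$ to $\DD_c$ is the space of stratified embeddings; since $\Pr$ is a $(2,1)$-category only the fundamental groupoid of each such space is relevant. A symmetric monoidal functor $\Fc\colon\Disp\to\Pr$ is then precisely an $\mathcal{O}$-algebra in $\Pr$, so the proposition reduces to a presentation of $\mathcal{O}$: the claim is that it is generated, with no relations beyond the stated ones, by the operations producing $(1)$, $(2)$ and $(3)$.

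I would first dispatch the three ``pure'' corners. For $c\in\{G,T\}$ an embedding $\DD_c\hookrightarrow\DD_c$ deformation retracts onto its linear part, an orientation-preserving automorphism of $\R^2$, so $\mathrm{Emb}(\DD_c,\DD_c)\simeq SO(2)$ and $\mathrm{Emb}(\bigsqcup_a\DD_c,\DD_c)$ is the framed little $2$-disks space; its algebras in $\Pr$ are ribbon braided tensor categories by the (truncated, categorified) recognition principle for framed $E_2$-algebras \cite{BJS}, which gives $(2)$ on $\Fc(\DD_G)$ and $\Fc(\DD_T)$. A stratified self-embedding of $\DD_B$ must carry the wall into the wall, hence retracts onto a map $(x,y)\mapsto(ax,cx+dy)$ with $a,d>0$, up to a translation along the wall --- a contractible space; so $\mathrm{Emb}(\DD_B,\DD_B)\simeq\ast$. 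In particular there is no framing rotation on the wall disk, and $\mathrm{Emb}(\bigsqcup_c\DD_B,\DD_B)$ is a disjoint union of $c!$ contractible components (wall-straddling disks pinned to the wall in a linear order) permuted by $S_c$; this is the associative operad, whose algebras in $\Pr$ are monoidal categories, giving the tensor structure on $\Fc(\DD_B)$ required by $(3)$.

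Next I would analyze the mixed operations with target $\DD_B$: a configuration is $a$ disks in the open upper ($G$) half-plane, $b$ disks in the open lower ($T$) half-plane, and $c$ wall-straddling disks pinned to the wall in order. Since neither a $G$-disk nor a $T$-disk can cross the wall, the $G$- and $T$-parts are homotopically independent of each other; the $G$-disks braid among themselves as in framed $E_2$, and a $G$-disk can be transported around a wall-straddling disk only over the top, while a $T$-disk must pass under the bottom. Inspecting the resulting generators (transporting one disk past a wall defect) and relations (two disks on the same side braid via the braiding of $\Fc(\DD_G)$, resp.\ $\Fc(\DD_T)$; hexagon identities for moving two disks past a defect) shows this is exactly the data of a monoidal $\Fc(\DD_G)\bt\rev{\Fc(\DD_T)}$-module structure on $\Fc(\DD_B)$ together with a braided lift to $Z_{Dr}(\Fc(\DD_B))$ --- i.e.\ a $(\Fc(\DD_G),\Fc(\DD_T))$-central structure, which is the remaining content of $(3)$. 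The over/under asymmetry is precisely the passage to $\rev{(-)}$ on the $T$-factor; embeddings with target $\DD_G$ or $\DD_T$ and any input of another color are empty, so impose nothing. The converse --- that an arbitrary triple $(1)$--$(3)$ assembles into an $\mathcal{O}$-algebra --- is then routine once the presentation of $\mathcal{O}$ is in hand.

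The crux, and the main obstacle, is the middle claim: computing the fundamental groupoids of the mixed stratified-configuration spaces and verifying that $\mathcal{O}$ carries no hidden higher operadic structure beyond a central, half-braided module. In practice I would deduce this from the stratified additivity theorem of Ayala--Francis--Tanaka \cite{AFT}: $\Disp$ arises by collaring the one-dimensional defect operad --- an $E_1$-operad with a commuting pair of $E_1$-module structures, governing the monoidal category $\Fc(\DD_B)$ with its two actions --- into the two-dimensional disk operad, which promotes each $E_1$-action to a central $E_2$-action (the two-sided analogue of the Swiss-cheese recognition theorem), with the orientation data supplying the ribbon structures on $\Fc(\DD_G)$, $\Fc(\DD_T)$ and the reversal on the $T$-side. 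Cross-checking against \cite[Section 3]{BJS}, where $\Repq(B)$ is constructed as exactly such a central algebra between $\Repq(G)$ and $\Repq(T)$, confirms that $(1)$--$(3)$ is the complete list of data.
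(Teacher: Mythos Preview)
The paper does not prove this proposition; it is stated without argument and implicitly deferred to \cite{AFT} and \cite[Section 3]{BJS}. Your proposal therefore supplies a proof where the paper offers none, and the outline is the right one: identify $\Disp$ with (the envelope of) a three-colored operad, compute the homotopy types of its spaces of stratified embeddings, and invoke the recognition principles for framed $E_2$-algebras and for two-sided Swiss-cheese module operads.

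Two remarks. First, a small slip: your linear model for a stratified self-embedding of $\DD_B$ should be $(x,y)\mapsto(ax+by,\,dy)$ with $a,d>0$, not $(ax,\,cx+dy)$ --- the latter does not preserve the wall $\{y=0\}$. The conclusion (contractibility) is unaffected. Second, the substantive content is entirely in the mixed-configuration step, and there your sketch is thinnest. You correctly identify the over/under asymmetry as the source of the $\rev{(-)}$ on the $T$-factor, but the claim that the operad imposes \emph{no further} relations beyond those of a central algebra --- i.e.\ that the half-braidings, hexagons, and compatibility with the ambient braidings exhaust the $\pi_1$ of the mixed configuration spaces --- is precisely the two-sided Swiss-cheese recognition theorem. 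Your final paragraph's appeal to \cite{AFT} additivity and \cite[Section 3]{BJS} is not a cross-check but the actual argument; the hand computation of generators and relations preceding it is heuristic. This is fine, and is exactly why the paper itself omits a proof, but you should be clear that the citation carries the weight.
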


We refer to \cref{sec:quagrp} for a definition of the notion of a central tensor category, and for examples.

\begin{defn}[Adapted\footnote{Strictly speaking, the labeling of regions of a decorated surface is additional data on the stratification (while it is also less general because we only allow simple smooth curves as defects).  However, the labelling data does not materially change any argument from \cite{AFT}, so we will simply adapt this definition and the formulation of the excision theorem to our setting, without further comment.} from \cite{AFT}]
The stratified factorization homology with coefficients in $\Fc$ is the functor,
$$
\Zc \colon \Surp \to \Pr
$$
defined as the unique left Kan extension of the functor $\Fc$ from the diagram
\[
\xymatrixcolsep{3pc}
\xymatrixrowsep{3pc}
\xymatrix{
\Disp \ar[dr] \ar[rr]^\Fc && \Pr\\
&\Surp \ar[ur]_\Zc
}
\]
\end{defn}

In other words, $\Zc(\decS)$ is defined as the following homotopy colimit:
$$\Zc(\decS) = \underset{\decX\to \decS}{\colim}\,\,  \Zc(\decX),$$
taken in the 2-category of locally presentable categories, indexed over all embeddings and their isotopies, of disjoint unions $\decX$ of finitely many copies of $\DD_G$, $\DD_B$, and $\DD_T$, into $\Sigma$.  Unpacking the definition of colimit, this means:
\begin{enumerate}
\item For each disjoint union of decorated disks,
\[\decX = \DD_G ^{\sqcup I} \sqcup \DD_B^{\sqcup J} \sqcup \DD_T^{\sqcup K},\quad
\textrm{ we have }\quad\Zc(\decX) = \Fc(\DD_G)^{I} \bt \Fc(\DD_B)^{J} \bt \Fc(\DD_T)^{K}. 
\]
\item Each stratified embedding $i_\decX\colon\decX\to \decS$, induces a colimit-preserving functor, \[i_{\decX*}\colon \Fc(\DD_G)^{I} \bt \Fc(\DD_B)^{J} \bt \Fc(\DD_T)^{K}\to \Zc(S),\]
\item Each stratified isotopy induces an isomorphism of induced functors, and 
\item Each factorization $i_\decX\colon\decX\to \decX'\to \decS$ of a disk embedding through an intermediate disk embedding (i.e. where $\decX,\decX'\in\Disp$), induces a factorization of the functor $i_{\decX*}$ through the fixed braided tensor, tensor, and central structures on $\Fc(\DD_G)$, $\Fc(\DD_B)$, and $\Fc(\DD_T)$.  Likewise, and isotopies thereof are given by the fixed braiding, associator, and action morphisms. 
\item Finally, the category $\Zc(\decS)$ is universal for such structures:  given any other category $\cC$ equipped with all of the above data, we obtain a canonical functor $\Zc(\decS)\to \cC$, such that the data on $\cC$ is obtained from that on $\Zc(\decS)$ by composition.
\end{enumerate}

\subsection{Cylinder algebras, module actions and excision}
The definition of factorization homology as a left Kan extension emphasizes universal properties and topological invariance.  These are useful in applications, but are not convenient for doing explicit computations.  Let us now recall the excision formula from \cite{AFT}, which gives a concrete algebraic algorithm for doing computations.

\begin{defn} A decorated 1-manifold $\decX$ is an oriented 1-manifold with a finite collection $\cP$ of points.  The \defterm{walls} and \defterm{regions} of $\decX$ are $\cP$ and the connected components of $\decX\smallsetminus\cP$, respectively.  We fix the further data of a labeling of the regions from the alphabet $\{G,T\}$, such that two regions sharing a wall have distinct labelings.
\end{defn} 
\begin{defn} A decorated 1-disk is a disjoint union of the four decorated intervals in \cref{fig:1man}.\end{defn}
\begin{figure}[h]
\begin{tikzpicture}[every node/.style={inner sep=0, minimum size=0.2cm, thick, circle, fill=white}, thick, x=0.5cm, y=0.5cm]
\draw (-11,0) to (-7,0);
\node at (-9,0) {$G$};
\draw (-5,0) to (-1,0);
\node at (-3,0) {$T$};
\draw (1,0) to (5,0);
\node[minimum size=0.15cm, fill=black] at (3,0) {};
\node at (2,0) {$G$};
\node at (4,0) {$T$};
\node at (3,-0.75) {$B$};
\draw (7,0) to (11,0);
\node[minimum size=0.15cm, fill=black] at (9,0) {};
\node at (8,0) {$T$};
\node at (10,0) {$G$};
\node at (9,-0.75) {$B$};
\end{tikzpicture}
\caption{Flagged 1-manifolds are disjoint unions of the above.}
\label{fig:1man}
\end{figure}
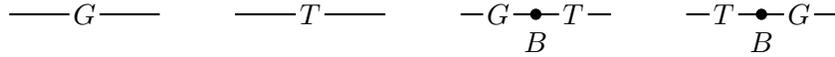

\begin{defn} An $\decX$-gate in a decorated surface is a neighborhood $U$ of the boundary with a homeomorphism $U\cong\decX\times [0,1)$ with a cylindrical decorated half-surface.  We abbreviate the cases $U=\DD_G, \DD_T, \DD_B$, by $G$-gate, $T$-gate, $B$-gate respectively.
\end{defn}

Concatenation in the $(0,1)$ direction makes any cylindrical decorated surface $\decX\times (0,1)$ into an algebra object in $\Surp$, and hence equips the category $\Zc(\decX\times (0,1))$ with the structure of a monoidal category.  Similarly, the specification of an $\decX$-gate on a decorated surface $\decS$ induces on $\Zc(\decS)$ the structure of a $\decX\times (0,1)$-module category.

A decomposition of the decorated surface $\decS$, denoted $\decS = \decS_1 \underset{\decX\times (0,1)}{\sqcup} \decS_2,$ consists of the choice of a cylindrical decorated surface embedding $\decX\times (0,1)\into\decS$, such that $\decS\smallsetminus(\decX\times (0,1))$ is a disjoint union of decorated surfaces $\decS_1$ and $\decS_2$.  In this case, $\decS_1$ and $\decS_2$ are naturally marked with $\decX$-gates.

\begin{theorem}[Excision, \cite{AFT}] Each decomposition of a decorated surface,
\[\decS = \decS_1 \underset{\decX\times (0,1)}{\sqcup} \decS_2,\]
induces a canonical equivalence of categories,
\[
\Zc(S) \simeq \Zc(\decS_1) \underset{\Zc(\decX \times (0,1))}{\boxtimes} \Zc(\decS_2).
\]
\end{theorem}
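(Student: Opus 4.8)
The plan is to deduce this as a special case of the excision theorem of \cite{AFT} for stratified factorization homology, checking only that the passage to \emph{labelled} stratifications introduces nothing new. First I would unwind both sides. On the right, set $A:=\Zc(\decX\times(0,1))$, the monoidal category obtained by stacking cylinders in the interval direction; the two collar embeddings of $\decX\times(0,1)$ supplied by the decomposition make $\Zc(\decS_1)$ a right $A$-module category and $\Zc(\decS_2)$ a left $A$-module category, and $\Zc(\decS_1)\bt_A\Zc(\decS_2)$ is by definition the realization in $\Pr$ of the two-sided bar construction $[n]\mapsto\Zc(\decS_1)\bt A^{\bt n}\bt\Zc(\decS_2)$, with faces given by the monoidal and module actions and degeneracies by insertion of the monoidal unit. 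This realization exists and behaves well because $\Pr$ is cocomplete and the tensor product $\bt$ of presentable categories preserves colimits separately in each variable. On the left, the left Kan extension presents $\Zc(\decS)$ as the colimit of $\Zc(-)$ over the category $\Disp_{/\decS}$ of finite configurations of basic decorated disks stratified-embedded in $\decS$ (with isotopies and nestings as morphisms), using that $\Zc$ carries disjoint unions to $\bt$.

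Next I would fix a product collar $\decX\times\R\into\decS$ refining the given decomposition, so that $\decS_1$ and $\decS_2$ are the complementary open pieces, and introduce the full subcategory of \emph{separated} configurations: those in which each basic disk lies wholly in $\decS_1$, wholly in $\decS_2$, or wholly inside the collar and there of product form, a basic decorated $1$-disk times an interval. The crux is to prove that this subcategory is cofinal in $\Disp_{/\decS}$ --- equivalently, that any finite disk configuration can be isotoped off the hypersurface $\decX\times\{0\}$ and, once disjoint from it, its collar disks shrunk and replaced by products, and that the poset of such refinements of a fixed configuration is filtered. This is exactly the general-position and ``engulfing'' argument of \cite{AFT}; the one point to note is that an isotopy through \emph{stratified} embeddings automatically preserves region labels --- a disk meeting a $G$-region can only be moved within that $G$-region --- so the labelled cofinal subcategory is literally the same diagram already used in the undecorated case, and no estimate changes.

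Finally I would restrict the defining colimit of $\Zc(\decS)$ to this cofinal subcategory and reorganize it as an iterated colimit: over configurations of disks in $\decS_1$, over configurations of disks in $\decS_2$, and over configurations of product disks in the collar sorted by their $\R$-coordinate. The collar direction assembles into a simplicial object whose $n$-simplices record $n$ ``chambers'' cut along $(0,1)$ by the disks, with the two outermost chambers absorbed into $\decS_1$ and $\decS_2$; its face maps merge adjacent chambers by the monoidal and module actions and its degeneracies widen by an empty chamber, so it coincides termwise and structurally with the bar construction above. Performing the remaining colimits over the $\decS_1$- and $\decS_2$-configurations inside each simplicial level recovers $\Zc(\decS_1)$ and $\Zc(\decS_2)$ by their own universal properties, and one reads off $\Zc(\decS)\simeq\Zc(\decS_1)\bt_A\Zc(\decS_2)$. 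The only step carrying real content is the cofinality of the separated subcategory; everything downstream is a formal manipulation of colimits in $\Pr$, so the decorated excision theorem follows from the undecorated one essentially for free.
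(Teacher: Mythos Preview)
Your sketch is reasonable, but you should know that the paper does not prove this theorem at all: it is simply stated with attribution to \cite{AFT}, and the paper's footnote on the adapted definition explicitly says that the labelling data ``does not materially change any argument from \cite{AFT}, so we will simply adapt this definition and the formulation of the excision theorem to our setting, without further comment.'' In other words, the paper's ``proof'' is the citation together with that one-sentence remark.

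What you have written is essentially an expansion of that remark --- a sketch of why the cofinality argument underlying excision in \cite{AFT} goes through verbatim once one observes that stratified isotopies automatically preserve region labels. That is a fair and accurate account of why the decorated case follows from the undecorated one, and it is more than the paper itself provides. The only caution is that your write-up reads as though you are reproving the full excision theorem from scratch; it would be cleaner to frame it as you do in your final sentence, namely as a reduction to the cited result, with the single new observation being that label-preservation is automatic for stratified isotopies.
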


\subsection{Recollection about quantum groups}\label{sec:quagrp}
In this section we recall the braided tensor categories $\Repq(G)$ and $\Repq(T)$, and the tensor category $\Repq(B)$, consisting of `integrable' representations of the quantized universal enveloping algebras $U_q(\g)$, $U_q(\t)$ and $U_q(\b)$, and use them to define quantizations of framed configuration spaces.  We will not recall in detail explicit presentations of each enveloping algebra in this paper, referring to standard texts.  However, we recall more details the cases $\SL_2$ and $\PGL_2$, in order to fix notation for computations to come.

Given a lattice $\Lambda$, we let $\Vect_\Lambda$ denote the category of $\Lambda$-graded vector spaces, whose objects are vector spaces $V$ equipped with a decomposition $V=\oplus_{\lambda\in\Lambda}V_\lambda$, and whose morphisms are those linear maps respecting the $\Lambda$-grading.
The simple objects of this category are of the form $\C_\lambda$ (a one-dimensional vector space supported at $\lambda$), and every object is a direct sum of simples.  Clearly, we have isomorphism $\C_\lambda\ot\C_\mu\to\C_{\lambda+\mu}$.

Given, further, a complex number $q\in\C$ and an integer-valued symmetric bilinear form,
\[
\alpha\colon\Lambda\times\Lambda \to \Z,
\]
we obtain a braided tensor category structure on $\Vect_\Lambda$ by declaring the braiding on $\C_\lambda\otimes \C_\mu$ to be scalar multiplication $q^{\alpha(\lambda,\mu)}$ followed by the flip of tensor factors, and extending to arbitrary direct sums of simples.

Fix a reductive algebraic group $G$ along with a Borel subgroup $B\subset G$, and denote by $T$ the universal Cartan quotient.

\begin{defn}
\label{def:repq-t}The category $\Repq(T)$ denotes the braided tensor category $(\Vect_\Lambda,-\alpha)$, where $\Lambda$ is the weight lattice and $\alpha$ is the Cartan pairing.
\end{defn}

\begin{notation} Let $U_q(\g)$ denote the quantized universal enveloping algebra of $\g=Lie(G)$, defined over the field $\C(q)$.  Denote the standard Serre generators by $E_1,\ldots, E_r$, $F_1,\ldots F_r$, and $K_\lambda,$ for $\lambda \in \Lambda$.  Let $U_q(\b)$ (resp., $U_q(\b_+)$) denote subalgebras generated $K_\lambda$'s and by $F_i$'s (resp. $E_i$'s).  Let $U_q(\t)$ denote the subalgebra generated by the $K_\lambda$'s.
\end{notation}

Let $\Lambda_{ad}\subset \Lambda$ denote the root lattice.  In order to determine $G$ from $\g$, we need to further specify a lattice $\Lambda_G$ between them, $\Lambda_{ad} \subset \Lambda_G\subset \Lambda$.  We then define $\Repq(T)$ as the full subcategory of the category of $U_q(\t)$-modules: its representations are spanned by weight vectors $v_\mu$, $\mu \in \Lambda$, on which $K_\lambda$ acts by $q^{\alpha(\lambda,\mu)}$.  Recall that a representation of an algebra $A$ is called \defterm{locally finite} if the orbit $A\cdot v$ of every vector $v\in V$ is finite-dimensional.

\begin{defn}
The category $\Repq(B)$ is the full subcategory of locally finite $U_q(\b)$-modules whose restriction to $U_q(\t)$ lies in $\Rep_q(T)$.
\end{defn}

Recall that for generic $q$, the category $\Repq(G)$ is in fact semisimple: every object may be presented as a (possibly infinite) direct sum of simple finite-dimensional $U_q(\g)$-modules.  It follows from the classification of finite-dimensional $U_q(\g)$-modules that the restriction of an object of $\Repq(G)$ to $U_q(\b)$ lies in $\Repq(B)$.  

The universal $\Rc$-matrix is an invertible element $\Rc$ lying in a completion of $U_q(\g)\otimes U_q(\g)$,  satisfying the quantum Yang-Baxter equation. In Sweedler notation, we write $\Rc = \Rc_{(1)} \otimes  \Rc_{(2)}$. Then the braiding is given by:
\[ \sigma_G(v\otimes w) = \hr{\Rc_{(2)} w}\otimes  \hr{\Rc_{(1)} v} \]

\begin{defn}
The category $\Repq(G)$ is the category of locally finite $U_q(\g)$-modules, all of whose weights lie in $\Lambda_G$, with its braided tensor structure given via the universal $\Rc$-matrix.
\end{defn}

Recall that in Dynkin type $A_1$m we have $\Lambda=\Z$, and $\Lambda_{ad}=2\Z$, hence the only two braided tensor categories we consider are $\Repq(\SL_2)$ (with $\Lambda_{\SL_2}=\Z$), and $\Repq(\PGL_2)$ (with $\Lambda_{\PGL_2}=2\Z$).  Let us recall these in detail now.

The quantum group $U_q(\sl_2)$ is a quasi-triangular Hopf algebra over $\C(q)$ with generators $\hc{E, F, K^{\pm1}}$ subject to relations
$$
KE =q^2 EK, \qquad KF = q^{-2}FK, \qquad \hs{E,F} = \frac{K-K^{-1}}{q-q^{-1}}.
$$
It carries the comultiplication
\beq
\label{coproduct-def}
\Delta(E) = E\otimes 1+K\otimes E, \qquad \Delta(F) = F\otimes K^{-1}+1\otimes F,  \qquad \Delta(K) = K \otimes K,
\eeq
the antipode
$$
S(E)=-K^{-1}E, \qquad S(F)=-FK, \qquad S(K) = K^{-1},
$$
and the counit
$$
\epsilon(K)=1, \qquad \epsilon(E) = \epsilon(F)=0.
$$
The universal $\Rc$-matrix of the quantum group can be written as
\begin{align}
\label{Uqg-rmat}
\Rc^G = \Psi_q\hr{-(q-q^{-1})^2 E \otimes F} q^{-\frac{1}{2} H \otimes H},
\end{align}
where $\Psi_q(x)$ is the quantum dilogarithm
$$
\Psi_q(x) = \prod_{a=1}^\infty \frac{1}{1+q^{2a-1}x}.
$$
The latter is a close relative with the q-exponential, namely
$$
\Psi_q(x) = \exp_{q^2}\hr{\frac{x}{q-q^{-1}}}
$$
where
$$
\exp_q(x) = \sum_{n=0}^{\infty} \frac{x^n}{[n]_q!} \qquad\text{with}\qquad [n]_q! = \prod_{a=1}^n \frac{q^a-1}{q-1}.
$$
The category $\Repq(SL_2)$ and its subcategory $\Rep_q(PGL_2)$ are ribbon categories, with  the ribbon element $\nu_G$ acting in the $n$--dimensional irreducible representation $V_n$ of $U_q(\mathfrak{sl}_2)$ by the scalar $q^{\frac{1}{2}n^2+1}$.

The quantum group $U_q(\sl_2)$ has Borel subalgebras $U_q(\b_\pm)$ generated by $\hc{E,K^{\pm1}}$ and $\hc{F,K^{\pm1}}$ respectively, their nilpotent subalgebras $U_q(\n_\pm)$ generated respectively by the elements $E$ and $F$, and the torus $U_q(\t) = U_q(\b_+) \cap U_q(\b_-)$ generated by the elements $K^{\pm1}$. Note that $\Rc$-matrix that determines the braiding on $\Repq(T)$ from \cref{def:repq-t} may be expressed as
$$
\Rc^T = q^{-\frac{1}{2}H\otimes H}.
$$
The corresponding ribbon element $\nu_T$ acts on the 1-dimensional simple $\mathbb{C}_n, n\in\mathbb{Z}$ by the scalar $q^{\frac{1}{2}n^2}$.

Let us describe the different categories which arise in these cases completely explicitly, for generic $q$.  The category $\Repq(\SL_2)$ consists of arbitrary direct sums of simple representations $V(k)$ with highest weight $k$, $\Rep_q(B)$ consists of direct sums of indecomposable modules $X(k,l)$, with $k\equiv l\,\textrm{mod}\, 2$, where $X(k,l)$ has a basis $\{v_k, v_{k+2},\ldots v_l\}$, each $v_i$ a weight vector of weight $i$, with $Fv_i=v_{i-2}$.  In this case $\Rep_q(T)$ is the category of $\Z$-graded vector spaces with the pairing $\langle m, n \rangle=-\frac{mn}{2}$.  All three corresponding categories are the same for $\PGL_2$ in place of $\SL_2$, except we add a requirement that all $T$-weights (in particular $k$ and $l$ above) are even, i.e. in $\Lambda_{ad}$.

\subsection{The $\Disp$-algebra $\Repq(\TBG)$ and $\Zc(\decS)$}

The inclusion $i \colon U_q(\b)\to U_q(\g)$ and the projection $\pi \colon U_q(\b)\to U_q(\t)$ (obtained by quotienting by $F_i=0)$ induce pullback functors,
\[i^* \colon \Repq(G)\to\Repq(B), \qquad \pi^* \colon \Repq(T)\to \Repq(B).\]

These functors admit the important additional structure of a central lift (see \cref{sec:quagrp}), making $\Repq(B)$ into a \defterm{$(\Repq(G),\Repq(T))$-central tensor category}. Indeed, since the $\Rc$-matrix~\eqref{Uqg-rmat} is a sum of elements of $U_q(\b_+)\otimes U_q(\b)$, it has a well-defined action on any tensor product $i^*V\otimes W$ for objects $V,W$ of $\Repq(G)$ and $\Repq(B)$ respectively, and composing this action with the flip of tensor factors yields a morphism in $\Repq(B)$. Moreover, given weight vectors $v_\lambda\in \pi^*\chi$ and $w_\mu\in W$, the form of the coproduct~\cref{coproduct-def} along with the fact that $\pi(F_i)=0$ imply that
\begin{align*}
    F_i\cdot\hr{v_\lambda\otimes w_\mu} = v_{\lambda}\otimes F_iw_\mu,
\end{align*}
where $F_iw_\mu$ has weight $\mu-\alpha_i$. Hence
\begin{align*}
    \overline{\sigma}_T\circ F_i\hr{v_\lambda\otimes w_\mu} &= q^{(\lambda,\mu-\alpha_i)} F_iw_\mu\otimes v_{\lambda}\\
    &=q^{(\lambda,\mu)}F_iw_\mu\otimes K_i^{-1}v_{\lambda}\\
    &=F_i\cdot \overline{\sigma}_T(v_\lambda\otimes w_\mu),
\end{align*}
and we see that $\overline{\sigma}_T$ also defines a morphism in $\Repq(B)$. 

\begin{defn} We define a central structure,
\[ i^*\ot\pi^* \colon \Rep_q(G)\boxtimes \bop{\Rep_q(T)}\longrightarrow Z(\Rep_q(B))\]
\[V\boxtimes \chi \longmapsto i^*(V)\ot \pi^*(\chi)\]
via the half-braiding,
\[
\sigma_{(V,\chi),W} \colon (i^*(V) \otimes \pi^*(\chi)) \otimes W \xrightarrow{\sigma_{(123)} \circ \Rc^G_{13} \circ (\Rc^T_{32})^{-1}} W \otimes (i^*(V) \otimes \pi^*(\chi)).
\]
\end{defn}

\begin{defn}  The \defterm{parabolic induction $\Disp$-algebra $\Repq(\TBG)$} is determined via \cref{prop-Pr} by the assignments:
\[\DD_G\mapsto \Repq(G),\quad\DD_T\mapsto \Repq(T), \quad \DD_B\mapsto \Repq(B),\]
with their braided tensor and central structures defined above.
\end{defn}

We now come to the main definition of the paper, which as been stated already in the introduction.

\begin{defn}
The \defterm{quantum decorated character stack} $\Zc(\decS)$ is the stratified factorization homology,
\[ \Zc(\decS) := \int_{\decS} \Repq(\TBG),\]
of $\decS$ with coefficients in the parabolic induction algebra $\Repq(\TBG)$.
\end{defn}

\section{Monadic reconstruction of module categories}\label{sec:monadic}
Monadic reconstruction for module categories is a center-piece in the higher representation theory of tensor categories.  It was developed the furthest in the fusion, and in the finite non-semisimple, settings; a comprehensive reference is \cite{EGNO}.  Monadic reconstruction was developed also in the locally presentable framework in \cite{BBJ18a,BBJ18b,BJS}.  We recall those constructions here, with an additional wrinkle: in contrast to the usual situation, generators we encounter will not be projective, even in the relative sense; this reflects that the framed moduli spaces at the classical level are only quasi-affine rather than affine.  For this reason, in place of the equivalences of categories featuring in usual monadic reconstruction we will instead obtain open embeddings of categories, and we will compute their orthogonal complements.

\subsection{Adjoints and monads}
To begin, we recall some standard facts and definitions in enriched category theory.

\begin{defn}[Sketch] A $\K$-linear category $\cC$ is \defterm{locally presentable} if the colimit $\colim(D)$ of any diagram $D$ in $\cC$ exists as an object of $\cC$, and if $\cC$ is generated under $\kappa$-filtered colimits by a small category of $\kappa$-compact objects, for some cardinal $\kappa$.\end{defn}

\begin{remark}
It will suffice to take $\kappa=\aleph_0$, the countable cardinal; such categories are called \defterm{locally finitely presentable}.
\end{remark}

\begin{defn} Let $\Pr$ denote the (2,1)-category of locally presentable $\mathbf k$-linear categories with colimit-preserving functors and natural isomorphisms.\end{defn}

\begin{remark}
Examples of locally finitely presentable categories include the category of $A$-modules for an arbitrary associative $\K$-algebra, the category of representations of an algebraic group, and the category of quasi-coherent sheaves on algebraic variety, or more generally on an Artin stack.  All examples we will consider will turn out to one of these types of categories, or its $q$-deformation.
\end{remark}

\begin{theorem}[Special adjoint functor theorem] Any colimit-preserving functor $L\colon\cC\to \cD$ between locally finitely presentable categories has a right adjoint $R$, however $R$ is not itself necessarily colimit-preserving.
\end{theorem}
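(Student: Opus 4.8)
The plan is to obtain $R$ from the representability criterion, with local presentability of $\cC$ supplying the missing solution-set condition, and then to produce an explicit example witnessing the failure of cocontinuity of $R$. For a fixed object $d\in\cD$, consider the functor $h_d=\Hom_\cD(L(-),d)\colon\cC^\op\to\mathrm{Set}$. Because $\Hom$ takes colimits in its first variable to limits and $L$ preserves all small colimits, $h_d$ carries colimits in $\cC$ to limits; that is, $h_d$ is continuous. The right adjoint evaluated at $d$ will be a representing object $R(d)$ for $h_d$, and the defining bijection $\Hom_\cC(c,R(d))\cong\Hom_\cD(L(c),d)$, natural in $c$ and $d$, is then precisely the adjunction $L\dashv R$; functoriality of $R$ and naturality of the bijection follow from uniqueness of representing objects in the usual way.

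The only nonformal point is representability of each $h_d$, and this is exactly where local finite presentability enters. I would invoke (the dual of) the Special Adjoint Functor Theorem: a locally finitely presentable category is cocomplete, co-well-powered, and admits a small strong generator --- a set of representatives for the $\aleph_0$-compact objects, since every object is an $\aleph_0$-filtered colimit of such --- and for a category with these properties every continuous functor to $\mathrm{Set}$ is representable. Applying this to the $h_d$ produces $R$. Equivalently, one can exhibit a solution set by hand: writing $c$ as an $\aleph_0$-filtered colimit of compact objects $c_i$ and using $L(c)=\colim L(c_i)$, one bounds the relevant factorizations of maps $L(c)\to d$ by a set depending only on $d$; but the appeal to the adjoint functor theorem is cleaner, and I would cite the monograph of Ad\'amek--Rosick\'y for the precise structural statements.

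For the second assertion --- that $R$ need not preserve colimits --- it suffices to give one example among locally finitely presentable categories. Let $L\colon\mathrm{Set}\to\mathrm{Ab}$ be the free-abelian-group functor, with right adjoint the forgetful functor $R\colon\mathrm{Ab}\to\mathrm{Set}$; then $L$, being a left adjoint, preserves all colimits, whereas $R$ does not preserve coproducts, since the underlying set of $A\oplus B$ has cardinality $|A|\cdot|B|$ while the coproduct $R(A)\sqcup R(B)$ in $\mathrm{Set}$ has cardinality $|A|+|B|$. A variant living inside the world of this paper: for an affine algebraic group $G$ that is not linearly reductive, the trivial-representation functor $L\colon\Vect\to\Rep(G)$ is colimit-preserving, but its right adjoint, the invariants functor $(-)^G$, is merely left exact and already fails to commute with cokernels.

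The main obstacle, such as it is, is purely bookkeeping: recording precisely which structural features of locally (finitely) presentable categories --- cocompleteness, co-well-poweredness, and existence of a small generator --- feed into the adjoint functor theorem, and checking that $\cC$ indeed has them. Once this is in place the existence of $R$ is immediate, and the counterexample is elementary, so I expect the write-up to be short.
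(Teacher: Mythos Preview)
Your argument is correct: locally presentable categories are cocomplete, co-well-powered, and admit a small strong generator, so the dual form of the Special Adjoint Functor Theorem applies and your counterexamples are valid. The paper, however, offers no proof at all---it merely records this as a standard background fact from category theory (implicitly citing the classical SAFT), so there is nothing to compare; your write-up simply supplies the details the paper omits.
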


We will refer below to the \defterm{adjoint pair} $(L,R)$, and to the \defterm{unit} $\eta\colon\id_\cC\to LR$ and \defterm{counit} $\epsilon\colon RL\to\id_\cD$ of the adjunction.

\begin{defn} The functor $G\colon \cD\to\cC$ is \defterm{conservative} if it reflects isomorphisms, i.e. for any morphism $f$ in $\cD$, $G(f)$ is an isomorphism if, and only if, $f$ is an isomorphism.\end{defn}

\begin{prop}  Suppose that $\cC$ and $\cD$ are abelian locally presentable categories and $G\colon\cD\to\cC$ is colimit-preserving.  Then $G$ is conservative if, and only if, $G(X)=0$ implies $X=0$ for any object $X$.
\end{prop}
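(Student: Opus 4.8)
The plan is to handle the two implications separately: one is formal, the other carries the content.

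\emph{($\Rightarrow$).} If $G$ is conservative and $G(X)=0$, then, $G$ being additive, it sends the zero endomorphism $0_X$ of $X$ to the zero endomorphism of $G(X)=0$, which is $\mathrm{id}_{G(X)}$ and so an isomorphism. Conservativity forces $0_X$ to be an isomorphism, whence $\mathrm{id}_X=0_X$ and $X=0$.

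\emph{($\Leftarrow$).} Suppose $G$ annihilates no nonzero object, and let $f\colon X\to Y$ have $G(f)$ an isomorphism; I must show $f$ is an isomorphism. Since $G$ preserves all finite colimits it is additive and right exact. Applying $G$ to $X\xrightarrow{f}Y\to\coker f\to 0$ gives $G(\coker f)=\coker\bigl(G(f)\bigr)=0$, so $\coker f=0$ by hypothesis and $f$ is an epimorphism. Writing $K=\ker f$, the sequence $0\to K\xrightarrow{\iota}X\xrightarrow{f}Y\to 0$ is then exact, and applying the right-exact functor $G$ yields the exact sequence $G(K)\xrightarrow{G(\iota)}G(X)\xrightarrow{G(f)}G(Y)\to 0$; since $G(f)$ is in particular a monomorphism, exactness at $G(X)$ forces $G(\iota)=0$. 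By hypothesis it now suffices to prove $G(K)=0$: then $K=0$, so $f$ is also a monomorphism, hence an isomorphism.

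\emph{The crux.} The remaining point — that a monomorphism $\iota\colon K\to X$ with $G(\iota)=0$ must have $G(K)=0$, i.e.\ that $G$ reflects monomorphisms — is the step I expect to be the main obstacle, because right exactness alone is blind to kernels and some use of the locally presentable structure seems unavoidable. The plan is: write $K$ as the filtered union of its finitely generated subobjects $K_\alpha\subseteq K\subseteq X$ (which is possible in a locally finitely presentable abelian category), observe that each inclusion $K_\alpha\hookrightarrow X$ is again killed by $G$ and that $G$ preserves $K=\colim_\alpha K_\alpha$, so that it suffices to treat finitely generated $K$; present such a $K$ as the image of a map $P\to X$ with $P$ finitely presentable, and use compactness of $P$ together with the fact that $X$ is a filtered colimit of finitely presentable objects to reduce the vanishing statement to one inside a finitely presentable subquotient of $X$, where right exactness and the hypothesis that $G$ kills no nonzero object can be combined to conclude. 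An alternative attack runs through the right adjoint $R$ of $G$, which exists by the special adjoint functor theorem: additivity of $R$ and naturality of the unit $\eta\colon\mathrm{id}_\cD\to RG$ force $\iota$ to factor through $\ker\eta_X$, after which one exploits that $G(\eta_X)$ is a split monomorphism via the triangle identity with the counit. Once $G(K)=0$ is established the argument is finished.
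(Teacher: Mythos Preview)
The paper states this proposition without proof, so there is no ``paper's own proof'' to compare against; I can only evaluate your argument on its merits.

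Your $(\Rightarrow)$ direction is fine, and your reduction in the $(\Leftarrow)$ direction is correct: right exactness gives $\coker f=0$, and then exactness of $G(K)\to G(X)\to G(Y)\to 0$ together with $G(f)$ being monic forces $G(\iota)=0$. The genuine content, as you yourself flag, is the ``crux'': deducing $G(K)=0$ (hence $K=0$) from $G(\iota)=0$. But neither of your sketched attacks closes this gap.

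For the second sketch: factoring $\iota$ through $\ker\eta_X$ and invoking that $G(\eta_X)$ is split monic only reproduces $G(\iota)=0$; it gives no purchase on $G(K)$ itself, since $G$ does not preserve kernels. Concretely, from $0\to K\to X\xrightarrow{\eta_X}RG(X)$ you get after applying $G$ only that the composite $G(K)\to G(X)\to GRG(X)$ vanishes, and since the second map is monic you recover $G(K)\to G(X)$ is zero --- which you already knew. There is no mechanism here to kill $G(K)$.

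For the first sketch: writing $K$ as a filtered colimit of finitely generated subobjects and invoking compactness does not work as stated, because compactness of $P$ in $\cD$ says nothing about compactness of $G(P)$ in $\cC$. So when you try to detect the vanishing of $G(P\to X)$ at some finite stage of $X=\colim X_\alpha$, you cannot pull the $\Hom$ out of the colimit on the $\cC$ side. The reduction to ``a finitely presentable subquotient of $X$'' is asserted but not carried out, and I do not see how to make it go through with only right exactness.

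In fact, the crux you have isolated is \emph{equivalent} to the statement you are trying to prove: ``$G(\iota)=0$ with $\iota$ monic implies $K=0$'' is exactly faithfulness of $G$ restricted to monics, and by your own image-factorisation argument this is equivalent to faithfulness of $G$, which for a right exact additive functor is equivalent to conservativity. So you have not reduced the problem; you have restated it. A complete proof will need a genuinely new idea exploiting cocompleteness and local presentability together --- the statement is false for merely right exact functors between small abelian categories (e.g.\ $-\otimes\mathbb Z/2$ on finitely generated abelian groups kills no nonzero object but sends the quotient $\mathbb Z\to\mathbb Z/2$ to an isomorphism).
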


\begin{defn} The functor $L\colon\cC\to\cD$ is \defterm{dominant} if $\cD$ is generated under colimits by the image of $L$.\end{defn}

\begin{prop}
The functor $L\colon\cC\to\cD$ is dominant if, and only if, its right adjoint $R\colon\cD\to\cC$ is conservative.
\end{prop}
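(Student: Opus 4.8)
The plan is to prove the two implications separately, since they are of quite different character.

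For ``$L$ dominant $\Rightarrow$ $R$ conservative'' I would run a soft Yoneda argument. Assume the objects $L(c)$, $c \in \cC$, generate $\cD$ under colimits, and let $f \colon X \to Y$ be a morphism of $\cD$ with $R(f)$ an isomorphism; I must show $f$ is an isomorphism. Consider the full subcategory $\cE \subseteq \cD$ of objects $d$ for which $\Hom_\cD(d, f) \colon \Hom_\cD(d, X) \to \Hom_\cD(d, Y)$ is a bijection. Since $\Hom_\cD(-, f)$ carries colimits in the first variable to limits and a pointwise-bijective map of diagrams induces a bijection on limits, $\cE$ is closed under colimits; and the natural isomorphism $\Hom_\cD(L(c), -) \cong \Hom_\cC(c, R(-))$ identifies $\Hom_\cD(L(c), f)$ with $\Hom_\cC(c, R(f))$, a bijection, so every $L(c)$ lies in $\cE$. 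As these objects generate $\cD$ under colimits, $\cE = \cD$, hence $\Hom_\cD(-, f)$ is a natural isomorphism and $f$ is an isomorphism by Yoneda.

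For the converse ``$R$ conservative $\Rightarrow$ $L$ dominant'', my plan is to pass to the colimit closure of the image of $L$ and compare right adjoints. Let $\cD' \subseteq \cD$ be the full subcategory generated under colimits by the objects $L(c)$. Since $\cC$ is locally presentable it is the colimit closure of a small set of objects, and as $L$ preserves colimits, $\cD'$ is the colimit closure in $\cD$ of a \emph{small} family of objects; by the standard theory of locally presentable categories, $\cD'$ is then itself locally presentable and the inclusion $j \colon \cD' \into \cD$ preserves colimits, so by the adjoint functor theorem $j$ admits a right adjoint $j^R$. Now $L$ factors as $\cC \xrightarrow{L'} \cD' \xrightarrow{j} \cD$, and correspondingly $R$ factors as $R \cong (R j) \circ j^R$. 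Since conservativity of a composite $G \circ H$ implies conservativity of $H$ (if $H(f)$ is an isomorphism then so is $(G \circ H)(f)$, forcing $f$ to be one), conservativity of $R$ forces $j^R$ to be conservative. Finally, $j$ is fully faithful, so the unit $\eta' \colon \id_{\cD'} \to j^R j$ of the adjunction $j \dashv j^R$ is invertible; one triangle identity then gives $j^R(\epsilon') \circ (\eta' j^R) = \id_{j^R}$, where $\epsilon' \colon j j^R \Rightarrow \id_\cD$ is the counit, so $j^R(\epsilon')$ is pointwise the inverse of $\eta'$ and hence invertible. Conservativity of $j^R$ now forces $\epsilon'$ to be a natural isomorphism, i.e.\ every $d \in \cD$ satisfies $d \cong j j^R(d) \in \cD'$, so $\cD' = \cD$ --- precisely the dominance of $L$.

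The only step I expect to require genuine care rather than formal manipulation is the input from the theory of locally presentable categories used in the second implication: that the colimit closure of a small family of objects in a locally presentable category is again locally presentable (equivalently, is an accessible, accessibly embedded subcategory), so that its cocontinuous inclusion has a right adjoint. This is standard and I would cite it; in the abelian setting of interest one could alternatively give a more hands-on argument, but the categorical route above is the cleanest. The remaining ingredients --- the Yoneda argument, the factorization of the right adjoint through the coreflection $j^R$, and the triangle-identity computation --- are all purely formal.
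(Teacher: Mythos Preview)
Your proof is correct. The paper states this proposition without proof, so there is no argument to compare against.

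One brief remark: the point you flag as needing care in the converse direction --- that the colimit closure $\cD'$ of a small family of objects in a locally presentable category is again locally presentable, with cocontinuous inclusion --- is indeed the only non-formal input, and it holds for the reason you would expect (every object of $\cD$ is $\lambda$-presentable for some $\lambda$; enlarging $\kappa$ makes your small generating set $\kappa$-compact in $\cD$, hence in $\cD'$, which is closed under $\kappa$-filtered colimits). The remaining steps --- the Yoneda argument in the forward direction, the factorization $R \cong (Rj)\circ j^R$, the observation that conservativity of a composite forces conservativity of the inner factor, and the triangle-identity computation showing that $j$ fully faithful together with $j^R$ conservative forces $j$ to be an equivalence --- are all formal and correctly executed.
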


\begin{defn}  Let $L\colon\cC\to\cD$ be a colimit preserving functor, and let $R$ denote its right adjoint.  The \defterm{monad for the adjunction} is the composite functor $A=RL\colon \cC\to \cC$, equipped with the structure of a unital algebra in $\End(\cC)$, with unit given by the unit $\eta\colon\Id_\cC\to RL$ defining the adjunction, and with multiplication,
\[AA = RLRL\xrightarrow{\id_{R}\circ\epsilon\circ\id_{L}} RL=A,\]
obtained from the counit of the adjunction.
\end{defn}

\begin{defn} An \defterm{$A$-module} in $\cC$ is a pair $(c,\psi)$ of an object $c\in \cC$ and a morphism $\psi\colon Ac\to c$, satisfying an evident associativity axiom which we omit.   The \defterm{Eilenberg-Moore} category $A\mod_\cC$ is the category of $A$-modules in $\cC$, with the evident notion of morphisms.
\end{defn}

\begin{defn} The \defterm{comparison functor} $\widetilde{R}\colon\cD\to A\mod_\cC$ sends $d\in \cD$ to the object $R(d)$, equipped with an $A$-module structure via
\[AR(d) = RLR(d)\xrightarrow{\id_{R}\circ\epsilon} d.\]
\end{defn}

\begin{theorem}[Barr-Beck reconstruction] 
\label{BB-original}
If the functor $R$ is conservative and colimit preserving, then $\widetilde{R}$ defines an equivalence of categories.
\[ \cD \simeq A\mod_\cC.\] 
\end{theorem}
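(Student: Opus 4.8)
The plan is to run the classical (``crude'') Barr--Beck monadicity argument, which in the present situation is considerably streamlined by the hypothesis that $R$ preserves \emph{all} colimits rather than merely those of $R$-split pairs. Concretely, I will exhibit an explicit left adjoint $\widetilde L\colon A\mod_\cC\to\cD$ to the comparison functor $\widetilde R$, and then show that the unit and counit of the adjunction $(\widetilde L,\widetilde R)$ are isomorphisms; in each case this is reduced, by applying a suitable conservative colimit-preserving functor, to recognizing a certain reflexive coequalizer as an absolute (split) one.

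First I would construct $\widetilde L$. For an $A$-module $(c,\psi)$, the parallel pair $L\psi,\ \epsilon_{Lc}\colon LRLc\rightrightarrows Lc$ is reflexive with common section $L\eta_c$ --- indeed $\epsilon_{Lc}\circ L\eta_c=\id_{Lc}$ by a triangle identity and $L\psi\circ L\eta_c=L(\psi\circ\eta_c)=\id_{Lc}$ by unitality of the module --- so its coequalizer $\widetilde L(c,\psi):=\operatorname{coeq}(LRLc\rightrightarrows Lc)$ exists by cocompleteness of $\cD$ and is evidently functorial. To see $\widetilde L\dashv\widetilde R$, I would unwind both hom-sets: a morphism $\widetilde L(c,\psi)\to d$ is a morphism $f\colon Lc\to d$ with $f\circ L\psi=f\circ\epsilon_{Lc}$, and transporting $f$ across the adjunction $(L,R)$ to $g\colon c\to Rd$ (via $g=Rf\circ\eta_c$), the coequalizing condition on $f$ becomes, after a short manipulation with the triangle identities and naturality of $\epsilon$, precisely the assertion that $g$ is a morphism of $A$-modules $(c,\psi)\to\widetilde R d$. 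This produces a natural isomorphism $\Hom_\cD(\widetilde L(c,\psi),d)\cong\Hom_{A\mod_\cC}((c,\psi),\widetilde R d)$.

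Next I would verify that the counit $\epsilon'\colon\widetilde L\widetilde R d\to d$ is invertible. Applying $R$ and using that $R$ preserves colimits, $R\widetilde L\widetilde R d$ is the coequalizer of $AA(Rd)\rightrightarrows A(Rd)$ whose two arrows are $A$ applied to the $A$-action $R\epsilon_d$ on $Rd$ and the monad multiplication $\mu_{Rd}$; this is the canonical bar presentation of the $A$-module $\widetilde R d$, which is a split coequalizer with extra degeneracy supplied by the monad unit $\eta_{Rd}$ and whose colimit is $Rd$ via the action map. Tracing through the definitions identifies $R(\epsilon')$ with the resulting comparison map, which is an isomorphism; since $R$ is conservative, $\epsilon'$ is an isomorphism. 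Dually, for the unit $\eta'\colon(c,\psi)\to\widetilde R\widetilde L(c,\psi)$ I would apply the forgetful functor $U\colon A\mod_\cC\to\cC$, which is conservative because the inverse of an underlying isomorphism of modules is automatically $A$-linear. As $U\widetilde R=R$ preserves colimits, $U$ of the codomain of $\eta'$ is $\operatorname{coeq}(AAc\rightrightarrows Ac)$ with arrows $A\psi$ and $\mu_c$ --- again a split coequalizer, with extra degeneracy $\eta_c$ and colimit $c$ --- so $U(\eta')$ is an isomorphism, hence so is $\eta'$. Therefore $\widetilde R$ is an equivalence with quasi-inverse $\widetilde L$, proving $\cD\simeq A\mod_\cC$.

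The only genuinely delicate point --- the place I would spend the most care --- is the bookkeeping in the two split-coequalizer arguments: one must write down the extra degeneracy maps explicitly (built from the units $\eta_{Rd}$, resp.\ $\eta_c$, together with functoriality of $L$ and $R$), check the simplicial identities that make each coequalizer absolute (so that applying $R$, resp.\ $U$, again computes the relevant coequalizer, compatibly), and confirm that $R(\epsilon')$, resp.\ $U(\eta')$, is indeed the canonical comparison map rather than merely an abstract isomorphism. Everything else reduces to formal diagram chases with the two triangle identities and the associativity and unit axioms for $A$-modules.
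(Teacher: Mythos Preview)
Your argument is a correct and carefully executed version of the standard crude Barr--Beck monadicity proof. However, the paper does not actually prove this statement: Theorem~\ref{BB-original} is stated as a known result (the classical Barr--Beck theorem) and cited without proof, with the paper's effort going instead into the weaker variant Theorem~\ref{weak-reconstruction}. So there is nothing to compare against --- your proof is simply a valid proof of a theorem the paper takes as background.
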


We will require a variant of Barr-Beck reconstruction in the abelian category setting, involving a weaker condition and a weaker conclusion.  We were unable to find a reference for the claim, although it uses standard constructions in the theory of abelian categories.

\begin{defn} We say that a fully faithful embedding $i\colon\cD\to\cE$ is \defterm{reflective} (resp. \defterm{open}), and that $\cD$ is a \defterm{reflective subcategory} (resp. \defterm{open subcategory}), if $i$ admits a left adjoint (resp. exact left adjoint).
\end{defn}

\begin{lemma}[\cite{Barr-Wells} p. 111, Corollary 7 and Theorem 9]\label{lem-faithfulcounit} The comparison functor $\widetilde{R}$ is faithful if, and only if, the counit of $(L,R)$ is an epimorphism, and is fully faithful if, and only if, the counit of the adjunction is a regular epimorphism.
\end{lemma}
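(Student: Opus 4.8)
The plan is to treat the two equivalences separately, reducing each to a standard fact about adjunctions. For the faithfulness claim I would first note that the forgetful functor $U_A\colon A\mod_\cC\to\cC$ is faithful and that $U_A\circ\widetilde R=R$, so $\widetilde R$ is faithful precisely when $R$ is. Then I would establish the classical criterion that $R$ is faithful if and only if the counit $\epsilon\colon LR\to\id_\cD$ is a pointwise epimorphism: if $\epsilon_d$ is epi then naturality of $\epsilon$ turns $Rf=Rg$ into $f\circ\epsilon_d=g\circ\epsilon_d$, hence $f=g$; conversely, if $R$ is faithful and $f\circ\epsilon_d=g\circ\epsilon_d$, then applying $R$, precomposing with $\eta_{Rd}$, and using the triangle identity $R\epsilon_d\circ\eta_{Rd}=\id_{Rd}$ gives $Rf=Rg$, so $\epsilon_d$ is epi. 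This part is entirely routine.

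For the full faithfulness claim the strategy is to produce an honest left adjoint $\widetilde L$ of $\widetilde R$ and then invoke the general fact that a functor admitting a left adjoint is fully faithful exactly when the counit of that adjunction is invertible. I would define $\widetilde L(c,\psi)$ to be the coequalizer in $\cD$ of the parallel pair $L\psi,\ \epsilon_{Lc}\colon LRLc\rightrightarrows Lc$ (which exists since $\cD$ is cocomplete) and verify $\widetilde L\dashv\widetilde R$ by the direct computation that, under $L\dashv R$, a morphism $Lc\to d$ coequalizing this pair corresponds exactly to an $A$-module morphism $(c,\psi)\to(Rd,R\epsilon_d)=\widetilde R d$. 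Substituting $(c,\psi)=\widetilde R d$ then identifies $\widetilde L\widetilde R d$ with the coequalizer of $LR\epsilon_d,\ \epsilon_{LRd}\colon LRLRd\rightrightarrows LRd$, and the counit at $d$ is the map induced by $\epsilon_d\colon LRd\to d$, which coequalizes this pair by naturality of $\epsilon$. Hence $\widetilde R$ is fully faithful if and only if, for every $d$, the morphism $\epsilon_d$ is the coequalizer of the distinguished pair $(LR\epsilon_d,\epsilon_{LRd})$.

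The only genuinely delicate point, which I expect to be the main obstacle, is recognising that ``$\epsilon_d$ is the coequalizer of this particular pair'' is the same as ``$\epsilon_d$ is a regular epimorphism''. One direction holds by definition. For the converse: write $\epsilon_d=\operatorname{coeq}(a,b\colon V\rightrightarrows LRd)$ — observe this already forces $\epsilon$ to be a pointwise epimorphism — and, given $h$ with $h\circ LR\epsilon_d=h\circ\epsilon_{LRd}$, compute $h\circ a\circ\epsilon_V=h\circ\epsilon_{LRd}\circ LRa=h\circ LR\epsilon_d\circ LRa=h\circ LR(\epsilon_d\circ a)=h\circ LR(\epsilon_d\circ b)=h\circ b\circ\epsilon_V$ using naturality of $\epsilon$ together with functoriality of $LR$ and $\epsilon_d\circ a=\epsilon_d\circ b$; since $\epsilon_V$ is epi this yields $h\circ a=h\circ b$, so $h$ factors through $\epsilon_d$, uniquely because $\epsilon_d$ is epi. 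One must also keep track of the ambient hypotheses that make $\widetilde L$ legitimately available: $\cD$ cocomplete for the defining coequalizer, and $A\mod_\cC$ cocomplete — indeed locally presentable, since $A=RL$ is accessible, $L$ being cocontinuous and $R$ accessible as a right adjoint between locally presentable categories — for the general full faithfulness criterion. Finally it is worth remarking that in the abelian locally presentable categories to which we will apply the lemma every epimorphism is already a cokernel, so there the conditions ``$\epsilon$ pointwise epi'' and ``$\epsilon$ pointwise regular epi'' coincide.
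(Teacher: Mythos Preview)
Your argument is correct and essentially reproduces the standard monadicity-theoretic proof of this classical fact. Note, however, that the paper does not actually prove this lemma: it is simply quoted from Barr--Wells (Corollary~7 and Theorem~9 on p.~111) and used as a black box in the proof of the subsequent \cref{weak-reconstruction}. So there is nothing to compare against --- your write-up supplies what the paper only cites.

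A couple of minor points worth tightening. In the faithfulness direction you silently use that the forgetful $U_A$ is faithful and hence that $R$ faithful $\Leftrightarrow$ $\widetilde R$ faithful; this is immediate but worth stating both implications. In the regular-epimorphism direction, you should make explicit that $\epsilon_d$ itself coequalizes the pair $(LR\epsilon_d,\epsilon_{LRd})$ --- this is exactly the naturality square for $\epsilon$ at the morphism $\epsilon_d$ --- before arguing it is the \emph{universal} such. Finally, your appeal to cocompleteness of $\cD$ and accessibility of $A$ to build $\widetilde L$ is fine in the locally presentable context of the paper, but strictly speaking the Barr--Wells statement holds in greater generality: one only needs the relevant coequalizers to exist in $\cD$, not full cocompleteness. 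None of this affects the correctness of your argument for the paper's purposes.
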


\begin{theorem} 
\label{weak-reconstruction}
Suppose that $\cC$ and $\cD$ are abelian categories, and that $R$ is conservative (but not necessarily colimit-preserving).  Then $\widetilde{R}$ defines a reflective embedding of $\cD$ into $A\mod_\cC$.
\end{theorem}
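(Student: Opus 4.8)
The plan is to establish the two defining properties of a reflective embedding separately: that $\widetilde R$ is fully faithful, and that it admits a left adjoint. For full faithfulness I will use \cref{lem-faithfulcounit}, which reduces the claim to the statement that the counit $\epsilon\colon LR\to\id_\cD$ is a regular epimorphism. Since $\cC$ and $\cD$ are abelian, every epimorphism is the cokernel of its kernel and hence a regular epimorphism, so it suffices to show that each component $\epsilon_d\colon LRd\to d$ is an epimorphism.

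The key point is that $R$ is faithful. Being a right adjoint, $R$ preserves all limits that exist in $\cD$, in particular kernels, so $R$ is left exact. Now if $f\colon X\to Y$ in $\cD$ satisfies $R(f)=0$, let $k\colon\ker f\hookrightarrow X$ be its kernel; by left exactness $R(k)$ is a kernel of $R(f)=0$, hence an isomorphism, and since $R$ is conservative $k$ itself is an isomorphism, forcing $f=0$. Thus $R$ is faithful. To see that $\epsilon_d$ is an epimorphism, recall the triangle identity $R(\epsilon_d)\circ\eta_{Rd}=\id_{Rd}$, which exhibits $R(\epsilon_d)$ as a split epimorphism. Writing $q\colon d\to\coker(\epsilon_d)$ for the canonical projection, we get $R(q)\circ R(\epsilon_d)=R(q\circ\epsilon_d)=0$, so $R(q)=0$ because $R(\epsilon_d)$ is epi; as $R$ is faithful, $q=0$, whence $\coker(\epsilon_d)=0$ and $\epsilon_d$ is an epimorphism. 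By the previous paragraph $\epsilon$ is then a regular epimorphism, so \cref{lem-faithfulcounit} gives that $\widetilde R$ is fully faithful.

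It remains to produce a left adjoint $P\colon A\mod_\cC\to\cD$ of $\widetilde R$. Imitating the standard presentation of a module as a coequalizer of free modules, I define, for an $A$-module $(c,\psi)$ with structure map $\psi\colon Ac=RLc\to c$,
\[
P(c,\psi):=\coker\bigl(L(\psi)-\epsilon_{Lc}\colon LRLc\longrightarrow Lc\bigr),
\]
the coequalizer of the reflexive pair $L(\psi),\,\epsilon_{Lc}\colon LRLc\rightrightarrows Lc$ (with common section $L(\eta_c)$); this exists since $\cD$ is abelian, and it is functorial in $(c,\psi)$ because any morphism of $A$-modules carries this pair to the corresponding one, by naturality of $\eta$ and $\epsilon$. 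One then checks the natural bijection $\Hom_\cD(P(c,\psi),d)\cong\Hom_{A\mod_\cC}\bigl((c,\psi),\widetilde R d\bigr)$ by transposing along $L\dashv R$: a morphism $\bar g\colon Lc\to d$ with $\bar g\circ L(\psi)=\bar g\circ\epsilon_{Lc}$ corresponds to a morphism $g\colon c\to Rd$ with $g\circ\psi=R(\epsilon_d)\circ A(g)$, which is precisely the condition that $g$ be a morphism of $A$-modules $(c,\psi)\to(Rd,R\epsilon_d)=\widetilde R d$. This exhibits $P$ as left adjoint to $\widetilde R$ and completes the proof.

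The only genuinely non-formal input is the implication that $R$, being conservative \emph{and} left exact, is faithful; conservativity by itself would not suffice. Everything else is a diagram chase with the triangle identities together with the cited characterization \cref{lem-faithfulcounit}, and the construction of $P$ is the usual coequalizer formula for a left adjoint of a comparison functor — the one mildly tedious step being the verification, via the triangle identities and the naturality of $\eta$ and $\epsilon$, that $\bar g$ coequalizes the pair exactly when its transpose $g$ respects the $A$-module structures.
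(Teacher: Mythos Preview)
Your proof is correct and follows essentially the same logic as the paper's: establish that $R$ is faithful (conservative plus left exact), deduce that the counit $\epsilon$ is an epimorphism, use that epimorphisms in abelian categories are regular, and apply \cref{lem-faithfulcounit} to conclude full faithfulness of $\widetilde R$.

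The one noteworthy difference is in how you produce the left adjoint. The paper simply observes that $\widetilde R$ is limit-preserving (since $R$ is, and limits in $A\mod_\cC$ are computed in $\cC$) and invokes the adjoint functor theorem, which tacitly relies on the ambient locally presentable setting. You instead give the explicit coequalizer construction $P(c,\psi)=\coker(L\psi-\epsilon_{Lc})$, which works in any abelian target category and does not need presentability hypotheses. This is a mild improvement in generality and has the added benefit of making the left adjoint concrete from the outset; the paper only records this formula afterward as a remark.
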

\begin{proof}
By the previous discussion, we need to show that the counit of $(L,R)$ is a regular epimorphism and that $\widetilde{R}$ admits a left adjoint. Let us fix the forgetful functor $\For\colon A\mod_\cC\to\cC$.  The functor $R$ factors as $\For\circ\widetilde{R}$.  Since $R$ is limit-preserving and conservative, it is faithful, hence so is $\widetilde{R}$.  By \cref{lem-faithfulcounit}, the counit for the adjunction is an epimorphism.  In an abelian category every epimorphism is regular, hence again by \cref{lem-faithfulcounit}, $\widetilde{R}$ is in fact fully faithful.  We also have that $\widetilde{R}$ is limit preserving--because $R$ is limit-preserving and limits in $A\mod_\cC$ are computed in $\cC$--hence $\widetilde{R}$ admits a left adjoint $\widetilde{L}$.
\end{proof}

In summary, we have the following commutative diagram, in which each inner arrow is the right adjoint to each outer arrow, and in which the inner triangle and outer triangle each are commuting triangles.
\[
\begin{tikzpicture}[every node/.style={inner sep=0, minimum size=0.5cm, thick, draw=none}, x=0.75cm, y=0.75cm]

\node (1) at (0,0) {$\cD$};
\node (2) at (6,0) {$\cC$};
\node (3) at (3,3) {$A\mod_\cC$};

\draw [->] (1) to [bend left = 10] node[midway,fill=white] {$R$} (2);
\draw [->] (2) to [bend left = 10] node[midway,fill=white] {$L$} (1);
\draw [->] (1) to [bend right = 15] node[midway,fill=white] {$\widetilde{R}$} (3);
\draw [->] (3) to [bend right = 15] node[midway,fill=white] {$\widetilde{L}$} (1);
\draw [->] (3) to [bend right = 15] node[midway,fill=white] {$\For$}(2);
\draw [->] (2) to [bend right = 15] node[midway,fill=white] {$\operatorname{Free}$} (3);

\end{tikzpicture}
\]

\begin{remark} A formula for the left adjoint $\widetilde{L}$ may be given as follows.  Any $M\in A\mod_\cC$ may be presented as $\coker(\Free(V)\overset{f}{\longra}\Free(W))$, for objects $V$, $W$ of $\cC$.  Since $L$ is colimit preserving, we have:
\begin{equation}\label{eqn:LofM}
\widetilde{L}(M) = \coker \left((\widetilde{L}\circ\Free)(V)\overset{\widetilde{L}(f)}{\longra} (\widetilde{L}\circ\Free)(W)\right) = \coker\left(L(V)\overset{\widetilde{f}}{\longra} L(W)\right),
\end{equation}
where $\widetilde{f}$ is the determined by $f$ by the following natural maps,
\begin{align}\label{eqn:tildef}
\Hom_{A-mod_\cC}(RL(V),RL(W)) \overset{\sim}{\longra} &\Hom_{\cC}(V,RL(W))\overset{L}{\longra}\\
&\overset{L}{\longra}\Hom_{\cD}(L(V),LRL(W))\overset{\epsilon}{\longra} \Hom_{\cD}(L(V),L(W)) \nonumber
\end{align}
\end{remark}

\begin{defn}
Let $\cC$ be an abelian category, and let $\cD$ be a full subcategory.  The left orthgonal $^\perp\cD$ is the full subcategory of $\cC$ consisting of objects $c$ such that $\Hom(c,d)=0$, for all $d\in\cD$.  Similarly, the right orthogonal $\cD^\perp$ is the full subcategory of $\cC$ consisting of objects $c$ such that $\Hom(d,c)=0$, for all $d\in\cD$.
\end{defn}

\begin{prop}\label{prop:kernel}
Suppose that $R\colon\cD\to\cE$ defines a reflective embedding, with left adjoint $L$.  Then we have an identification,
\[R(\cD) = (\ker(L))^\perp,\]
between the essential image of $R$ and the right orthogonal to the kernel of $L$.
\end{prop}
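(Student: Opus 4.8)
The plan is to prove the two inclusions $R(\cD)\subseteq(\ker L)^\perp$ and $(\ker L)^\perp\subseteq R(\cD)$ separately. The first is purely formal: given $d\in\cD$ and $k\in\ker L$, the adjunction isomorphism gives $\Hom_\cE(k,R(d))\cong\Hom_\cD(L(k),d)=\Hom_\cD(0,d)=0$, so $R(d)$ lies in $(\ker L)^\perp$. All of the content is in the reverse inclusion.

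So fix $c\in(\ker L)^\perp$. Since $R$ is fully faithful, $c$ lies in the essential image of $R$ if and only if the unit $\eta_c\colon c\to RL(c)$ is an isomorphism, so it suffices to prove this. First, $L(\eta_c)$ is an isomorphism: this is immediate from the triangle identity $\epsilon_{L(c)}\circ L(\eta_c)=\id_{L(c)}$ together with invertibility of the counit $\epsilon$ (again a consequence of full faithfulness of $R$). Next I would rule out the kernel $K=\ker(\eta_c)$. Factor $\eta_c$ through its image as $c\twoheadrightarrow I\hookrightarrow RL(c)$ and apply $L$ to $0\to K\to c\to I\to 0$; since $L(c)\to L(I)$ becomes the isomorphism $L(\eta_c)$ after composing with $L(I)\to LRL(c)$, it is a split monomorphism, and being an epimorphism by right-exactness of $L$ it is an isomorphism, whence $L(K)=0$, i.e. $K\in\ker L$ (here I use exactness of $L$, which holds for the open embeddings appearing in the applications — for a merely reflective $L$ this step needs extra care). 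On the other hand $K$ is a subobject of $c$, so $K\in(\ker L)^\perp$ as well, because $\Hom(k,-)$ is left exact; hence $\id_K\in\Hom_\cE(K,K)=0$ and $K=0$, so $\eta_c$ is a monomorphism.

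It remains to show the cokernel $Q=\coker(\eta_c)$ vanishes. Right-exactness of $L$ and invertibility of $L(\eta_c)$ give $L(Q)=0$, so $Q\in\ker L$; moreover $RL(c)\in R(\cD)\subseteq(\ker L)^\perp$ by the first inclusion, while $c\in(\ker L)^\perp$ by hypothesis. Applying $\Hom_\cE(Q,-)$ to $0\to c\to RL(c)\to Q\to 0$ and using $\Hom_\cE(Q,c)=\Hom_\cE(Q,RL(c))=0$ yields a monomorphism $\Hom_\cE(Q,Q)\hookrightarrow\operatorname{Ext}^1_\cE(Q,c)$ carrying $\id_Q$ to the class of this extension. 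I expect showing this class to vanish — equivalently, that $\operatorname{Ext}^1_\cE(k,c)=0$ for $k\in\ker L$ and $c\in R(\cD)$ — to be the main obstacle: $\Hom$-orthogonality only forbids subobjects lying in $\ker L$, so to exclude the cokernel one must additionally know that objects of $R(\cD)$ carry no nonsplit extension by an object of $\ker L$, i.e. that they are $\ker L$-closed in the sense of Gabriel localization (the standard description of the image of the section functor of a Serre quotient). Granting this, the extension splits, so $\id_Q=0$, hence $Q=0$ and $\eta_c$ is an isomorphism; the remaining steps are formal consequences of the adjunction, full faithfulness of $R$, and exactness of $L$.
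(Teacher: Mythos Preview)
Your approach---analyzing the unit $\eta_c\colon c\to RL(c)$ directly---differs from the paper's. The paper's proof is a two-liner: first observe via adjunction and Yoneda that $\ker L = {}^\perp R(\cD)$, then conclude $(\ker L)^\perp = ({}^\perp R(\cD))^\perp = R(\cD)$.

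You are right to flag a gap, and it is the same gap the paper's proof conceals in its final equality $({}^\perp R(\cD))^\perp = R(\cD)$: this is not a formal consequence of reflectivity. Indeed the statement is false as written. Take $\cE$ to be abelian groups, $\cD$ to be $\mathbb{Q}$-vector spaces, $R$ the inclusion, and $L=(-)\otimes_{\mathbb Z}\mathbb{Q}$; then $L$ is even exact, $\ker L$ consists of the torsion groups, $(\ker L)^\perp$ is the class of all torsion-free groups, and $\mathbb{Z}\in(\ker L)^\perp\setminus R(\cD)$. Exactly as you diagnose, what is missing is the $\operatorname{Ext}^1$-vanishing: the essential image of the section functor for a Serre quotient consists of those $c$ with $\Hom(\ker L,c)=\operatorname{Ext}^1(\ker L,c)=0$, not merely the $\Hom$-orthogonal. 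So your ``main obstacle'' is a genuine obstruction rather than a technical inconvenience; neither argument closes without an additional hypothesis or a case-specific verification.
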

\begin{proof}
Let $X\in \cE$.  By the Yoneda embedding, $L(X)=0$ if, and only if, $\Hom(L(X),Y)=0$ for all $Y\in\cD$, which is if, and only if, $\Hom(X,R(Y))=0$ for all $Y\in \cD$.  The first condition is that $X\in\ker(L)$, while the second condition is that $X\in \,^\perp R(\cD)$.  Hence $\ker L=\,^\perp R(\cD)$, hence $(\ker L)^\perp=(^\perp R(\cD))^\perp=R(\cD)$.
\end{proof}

\subsection{Internal homomorphisms}  The preceding discussion of monadicity is especially useful in analyzing rigid tensor categories and their module categories.  We recall these applications here.

Let $\cA$ be a rigid tensor category, and $\cM$ its module category, and fix an object $m\in M$.  Then the functor $act_m \colon \cA\to \cM$ sending $a \mapsto a\otimes m$ is colimit preserving.  

\begin{defn} The \defterm{internal homomorphisms from $m$ to $n$} is the object,
$$
\iHom_\cA(m,n)=act_m^R(n)\in\cA.
$$
In what follows, we will often write $\iHom(m,n)$ instead of $\iHom_\cA(m,n)$, if there is no ambiguity in regards to the tensor category.
\end{defn}

\begin{defn} The \defterm{evaluation morphism}
$$
\mathrm{ev}_{m,n}\in \Hom(\iHom(m,n) \otimes m,n)
$$
is the image of the identity morphism, $\id_{\iHom(m,n)}$, under the canonical isomorphism
$$
\Hom(\iHom(m,n)\otimes m,n) \simeq \End(\iHom(m,n)).
$$

\end{defn}

\begin{defn} The \defterm{composition law}
$$
\iHom(n,o) \otimes \iHom(m,n) \longrightarrow \iHom(m,o)
$$
is the image of the morphism
$$
\mathrm{ev}_{n,o}\circ(\id_{\iHom(n,o)}\otimes ~\mathrm{ev}_{m,n}) \in \Hom(\iHom(n,o)\otimes\iHom(m,n)\otimes m,o)
$$
under the canonical isomorphism
$$
\Hom(\iHom(n,o)\otimes\iHom(m,n)\otimes m,o) \simeq \Hom(\iHom(n,o)\otimes\iHom(m,n),\iHom(m,o))
$$
\end{defn}

\begin{defn}
\label{defn-iEnd}
The \defterm{internal endomorphism algebra of $m$} is
$$
\iEnd(m)=\iHom(m,m)\in\cA,
$$
with the algebra structure given by the composition law.
\end{defn}

\begin{remark} Let $A$ denote the monad associated to the adjoint pair $(\act_m, \iHom(m,-))$ in the preceding section.  Then we have a natural isomorphism $\iEnd(m)\cong A(\one_\cA)$, and the algebra structure on $\iEnd(m)$ is the one induced from the monadic structure on $A$.  Hence the calculus of internal Hom's is a special case of monadic reconstruction, so we are able to produce genuine algebra objects in $\cA$, rather than in $\End(\cA)$.  This is due to the fact that $\iHom(m,-)$ and $\act_m$, and hence their composition are $\cA$-module functors, and we have the usual equivalence $\End_\cA(\cA)=\cA^{op}$.
\end{remark}

\begin{defn} We say that $m$ is an $\cA$-generator of $M$ if $\iHom(m,-)$ is conservative, and we say that $m$ is $\cA$-compact-projective if $\iHom(m,-)$ is colimit preserving. \end{defn}
We have the following translation of \cref{BB-original} and \cref{weak-reconstruction} to the case of module categories.

\begin{theorem}[Barr-Beck reconstruction for module categories \cite{Ostrik},\cite{BBJ18a}]  Fix $\cA$ a rigid tensor category, $\cM$ its abelian module category, $m$ an object of $\cM$.
\begin{enumerate}
\item  Suppose that $m$ is an $\cA$-generator of $\cM$.  Then we have a reflective embedding,
\[\cM\hookrightarrow \iEnd(m)\mod_\cA,\]
realizing $\cM$ as the reflective subcategory of $\iEnd(m)\mod_\cA$.
\item Suppose, moreover, that $m$ is an $\cA$-compact-projective generator.  Then we have an equivalence,
\[\cM\simeq \iEnd(m)-mod_\cA.\]
\end{enumerate}
\end{theorem}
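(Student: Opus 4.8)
The plan is to deduce both parts directly from the general monadic reconstruction results \cref{weak-reconstruction} and \cref{BB-original}, applied to the adjoint pair $(\act_m,\iHom(m,-))$ and its associated monad. First I would record the adjunction: $\act_m\colon\cA\to\cM$, $a\mapsto a\ot m$, is colimit preserving, so it admits a right adjoint, which is by definition $R:=\iHom_\cA(m,-)=\act_m^R$; write $A=R\circ\act_m\colon\cA\to\cA$ for the resulting monad, with its unit and multiplication as in the definition of the monad of an adjunction.

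The one genuinely nonformal step, which I would carry out next, is the identification of the Eilenberg--Moore category $A\mod_\cA$ with the category $\iEnd(m)\mod_\cA$ of modules internal to $\cA$ over the internal endomorphism algebra. As indicated in the remark preceding the statement, both $\act_m$ and $\iHom(m,-)$ are $\cA$-module functors, hence so is their composite $A$; under the monoidal equivalence $\End_\cA(\cA)\simeq\cA^{\op}$ obtained by evaluating on the unit, $A$ then corresponds to the algebra object $A(\one_\cA)=\iHom(m,\one_\cA\ot m)=\iEnd(m)$. Here I would verify that the monad unit and multiplication translate, under this equivalence, into the unit and the composition law on $\iEnd(m)$ of \cref{defn-iEnd} --- a direct unwinding of the definitions of $\ev_{m,n}$ and of the composition law. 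Granting this, an $A$-module $(c,\psi\colon Ac\to c)$ is exactly an $\iEnd(m)$-module internal to $\cA$, and the comparison functor $\widetilde R$ is identified with the functor $n\mapsto\iHom(m,n)$ equipped with the $\iEnd(m)$-action induced by the composition law.

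With this identification in place, the two assertions follow by invoking the reconstruction theorems. For (1), the hypothesis that $m$ is an $\cA$-generator of $\cM$ is precisely the statement that $R=\iHom(m,-)$ is conservative; since $\cA$ and $\cM$ are abelian, \cref{weak-reconstruction} gives that $\widetilde R$ is a reflective embedding $\cM\hookrightarrow A\mod_\cA=\iEnd(m)\mod_\cA$, realizing $\cM$ as a reflective subcategory. For (2), the additional hypothesis that $m$ is $\cA$-compact-projective says that $R$ is moreover colimit preserving; being conservative and colimit preserving, $R$ satisfies the hypotheses of classical Barr--Beck \cref{BB-original}, so $\widetilde R$ is an equivalence $\cM\simeq A\mod_\cA=\iEnd(m)\mod_\cA$.

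I expect the main obstacle to be exactly the bookkeeping in the identification $A\mod_\cA\simeq\iEnd(m)\mod_\cA$ and the matching of the two algebra structures on $\iEnd(m)$ (the one induced from the monad versus the one from the composition law); a secondary point requiring care is that the unit and counit of $(\act_m,R)$ be $\cA$-module natural transformations, so that $A$ genuinely lifts to an algebra object in $\End_\cA(\cA)$ --- this is where rigidity of $\cA$ is used, ensuring that the right adjoint of the $\cA$-module functor $\act_m$ is again an $\cA$-module functor. Everything else is a straightforward application of the cited reconstruction statements.
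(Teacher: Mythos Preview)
Your proposal is correct and mirrors the paper's own treatment: the paper presents this theorem as a direct translation of \cref{BB-original} and \cref{weak-reconstruction} to the module-category setting, with the monad identified with $\iEnd(m)$ via the preceding remark, and gives no further argument. Your write-up spells out precisely this translation, including the role of rigidity in making $\iHom(m,-)$ an $\cA$-module functor, so there is nothing to add.
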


\begin{remark}
In all examples we will consider in the paper, the required left adjoint is clearly exact (e.g. if $\cA=\Repq(G)$ or $\Repq(T)$, so that we obtain an open embedding.
\end{remark}
\begin{prop}\label{insertgate} Suppose $\cC$ and $\cD$ are rigid tensor categories, $F\colon\cC\to\cD$ is a tensor functor, and suppose that $d \in \cD$ is an $\cC$-projective generator, so that $\cD\simeq \iEnd(d)\mod_{\cC}$.  Suppose further that $\cM$ is a $\cD$-module category, with a $\cD$-projective generator, $m$.  Then we have
\[\cD\simeq \iEnd(d)\mod_\cC ,\qquad \cM\simeq \iEnd(m)\mod_\cD,\]
and we obtain a homomorphism of algebras in $\cC$:
\[\iEnd_\cC(d)\to \iEnd_\cC(m),\]
such that the action of $\cD$ on $\cM$ is given by relative tensor products over $\iEnd_\cC(d)$.
\end{prop}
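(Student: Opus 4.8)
The plan is to unwind the statement into three pieces — the two monadic reconstructions, the algebra homomorphism, and the relative-tensor-product description of the $\cD$-action — and to treat each using the machinery recalled above.

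First I would dispatch the two displayed equivalences: both are immediate applications of the Barr--Beck reconstruction theorem for module categories recalled above. Applied to the rigid tensor category $\cC$ acting on $\cD$ with the $\cC$-compact-projective generator $d$ it yields $\cD\simeq\iEnd_\cC(d)\mod_\cC$, and applied to $\cD$ acting on $\cM$ with the $\cD$-compact-projective generator $m$ it yields $\cM\simeq\iEnd_\cD(m)\mod_\cD$; in the cases of interest the relevant left adjoints are exact, so one may work with honest open embeddings (and if projectivity ever failed one would fall back on \cref{weak-reconstruction} to get a reflective embedding instead). The content of the proposition is then to transport the second equivalence ``down to $\cC$'' along $F$. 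Via $F$ the category $\cM$ becomes a $\cC$-module category with action functor $\act^\cC_m\colon c\mapsto F(c)\otimes m$, and the crucial observation is the factorization $\act^\cC_m=\act^\cD_m\circ F$. Passing to right adjoints gives a natural isomorphism $\iHom_\cC(m,-)\simeq F^R\circ\iHom_\cD(m,-)$ of functors $\cM\to\cC$; since $F$ is a strong tensor functor, $F^R$ is lax monoidal, and I would check that this isomorphism is compatible with the evaluation and composition morphisms, so that $\iEnd_\cC(m)\simeq F^R\bigl(\iEnd_\cD(m)\bigr)$ as algebras in $\cC$. Running the same argument for the object $d$ of the $\cC$-module category $\cD$ identifies the forgetful functor $\cD\simeq\iEnd_\cC(d)\mod_\cC\to\cC$ with $F^R$ — this is clean precisely because in the applications $d$ is a distinguished object of a cylindrical decorated surface, hence the monoidal unit of $\cD$, so $F=\act^\cC_{\one_\cD}$ and $F^R=\iHom_\cC(d,-)$ is exactly the reconstruction's forgetful functor. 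Consequently the $\iEnd_\cC(d)$-module structure carried by every object of $\cD\simeq\iEnd_\cC(d)\mod_\cC$, applied to the algebra object $\iEnd_\cD(m)$, produces the sought homomorphism $\iEnd_\cC(d)\to\iEnd_\cC(m)$ of algebras in $\cC$. Along the way I would also record that $m$ is itself a $\cC$-compact-projective generator of $\cM$: conservativity and colimit-preservation of $\iHom_\cC(m,-)=F^R\circ\iHom_\cD(m,-)$ follow by combining the corresponding properties of $\iHom_\cD(m,-)$ (as $m$ is a $\cD$-compact-projective generator) with those of $F^R$, and the latter are precisely the content of the hypothesis that $d$ is a $\cC$-compact-projective generator of $\cD$, via the dominant/conservative adjoint dictionary recalled above (cf. also \cref{prop:kernel}).

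Finally, for the relative-tensor description, write $A=\iEnd_\cC(d)$ and $B=\iEnd_\cD(m)$, regard $B$ as an algebra object of the tensor category $\cD\simeq A\mod_\cC$, and let $\bar B\in\cC$ be its underlying object, an $A$-algebra through the homomorphism just constructed. Then $B\mod_\cD=B\mod_{A\mod_\cC}$ is canonically equivalent to $\bar B\mod_\cC$ by the standard base-change fact that an $A$-balanced $\bar B$-action on an $A$-module is the same datum as a $B$-action internal to $A\mod_\cC$; and under this equivalence the action of an object $X\in\cD\simeq A\mod_\cC$ on $\cM\simeq\bar B\mod_\cC$ is $X\otimes_A(-)$. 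Combining this with the identification $\bar B\simeq\iEnd_\cC(m)$ from the previous paragraph gives the assertion that the $\cD$-action is by relative tensor products over $\iEnd_\cC(d)$. The main obstacle I expect is not any single computation but the coherent bookkeeping in the middle paragraph: upgrading $\act^\cC_m=\act^\cD_m\circ F$ from an isomorphism of functors to an isomorphism of algebra objects (respecting evaluation and composition), matching the forgetful functor of the reconstruction of $\cD$ with $F^R$, and verifying that the $A$-module structures on the two sides agree so that the base-change equivalence is $\cD$-linear — together with confirming the exactness hypotheses needed to keep all the embeddings open rather than merely reflective.
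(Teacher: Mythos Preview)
The paper states this proposition without proof, treating it as a standard consequence of the monadic/internal-Hom formalism developed in the preceding paragraphs; there is no argument in the paper to compare against. Your proposal is a correct and reasonable unfolding of why the statement holds: the two Barr--Beck equivalences are immediate, the factorization $\act^\cC_m=\act^\cD_m\circ F$ together with lax monoidality of $F^R$ produces the algebra map $\iEnd_\cC(d)\to\iEnd_\cC(m)$, and the relative-tensor description follows from the base-change identification $B\mod_{A\mod_\cC}\simeq \bar B\mod_\cC$. The only remark is that your digression about open versus reflective embeddings is unnecessary here, since the hypotheses already assume compact-projective generators, and the coherence checks you flag as potential obstacles are indeed routine in this rigid setting.
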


\subsection{Monadic reconstruction: classical examples}\label{sec:barrbeckwarmup}
Let us recall some very classical and well-known facts in geometric representation theory of flag varieties, recast in the categorical framework of noncommutative algebraic geometry and treated by monadic reconstruction.

\begin{example}\label{ex:aff} Let us start with a simple, almost trivial, application of the Barr-Beck theorem. Let $R \rightarrow S$ be commutative algebras. Let $X = \Spec(R)$, $Y=\Spec(S)$ so that we have the corresponding map of varieties $\pi\colon Y \rightarrow X$. We have $\QC(X) = R\mod$ and $\QC(Y) = S\mod$.

There is a natural functor $L := \pi^*\colon  \QC(X) \rightarrow \QC(Y)$ given by $M \rightarrow M \otimes_R S$. The right adjoint is $R := \pi_*\colon  \QC(Y) \rightarrow \QC(X)$. Note that $\QC(Y)$ is a module category for $\QC(X)$. Barr-Beck reconstruction realizes $\QC(Y)$ as the category of objects of $\QC(X)$ equipped with an action of the monad $RL$.

The monad for the adjunction is $- \otimes_R S$. An algebra for this monad is exactly an $R$-module which has a compatible $S$-module structure. Clearly such an object in $\QC(X)$ corresponds to an object in $\QC(Y)$.
\end{example}

The examples we consider throughout this paper build on these examples in three distinct directions.
\begin{enumerate}
\item $X$ and $Y$ will be replaced by stacks.  Hence we will work internally to a monoidal category which carries the symmetries of the stack.
\item The map $Y \rightarrow X$ will not be affine, but quasi-affine.  Hence applying Barr-Beck will only give open embeddings of categories rather than equivalences.
\item We will be interested in quantizations; hence the symmetric monoidal categories expressing classical symmetries will be replaced systematically by their braided monoidal $q$-deformations.
\end{enumerate}
In the remainder of this section, we will illustrate the first two points with classical examples. In the next section, we treat the quantum case.

\begin{example} Let $X=\bullet/(G \times G)$ and let $Y=\bullet/G$ be the stack quotients of the one-point variety, by the trivial group action -- these are sometimes called \defterm{classifying stacks}, and denoted $B(G\times G)$, $BG$.  By definition, we have
\[\QC(X)=\Rep(G) \boxtimes \Rep(G), \qquad \QC(Y)=\Rep(G).\]
The group homomorphism $g \rightarrow (g, g^{-1})$ induces a natural map $\pi\colon Y \rightarrow X$. Pullback along $\pi^*$ defines a $\Rep(G) \boxtimes \Rep(G)$-module structure on $\Rep(G)$, and we can apply Barr-Beck techniques to describe it. Let $\one_G$ be the trivial $G$-module. The functor $L=\pi^*$ is given by $\act_{\one_G}$, or
$$(V \boxtimes W)  \longmapsto V \otimes W.$$
The right adjoint can be computed, $$R=\pi_* V  \longmapsto \bigoplus_{\la \in P_+} (V \otimes V_\la^*) \boxtimes V_\la.$$
One then computes an isomorphism of algebras,
$$
\iEnd_{G\times G}(\one_G) = \bigoplus_{\la \in P_+} V_\la^* \boxtimes V_\la \cong \Oc(G).
$$
Hence, Barr-Beck asserts that $\Rep(G)$ is equivalent to the category $\Oc(G)\mod_{G \times G}$. In other words, $\Oc(G)$ is an algebra object in $\Rep(G) \boxtimes \Rep(G)$, and $\Rep(G)$ can be realized as $\Rep(G) \boxtimes \Rep(G)$-modules which have the structure of an algebra over the $\Rep(G) \boxtimes \Rep(G)$-algebra $\Oc(G)$.  The equivalence in this case expresses the obvious isomorphism of stacks, $G / (G\times G) \simeq \bullet/G$.

Note that the map $Y \rightarrow X$ is a fibration with fiber $G$. Barr-Beck for the map of varieties $G \rightarrow \bullet$ is easy--this is just a map of affine varieties as in the previous example. Thus we are really just revisiting \cref{ex:aff} in a family over the stack $X=\bullet/(G \times G)$.
\end{example}

\begin{example}\label{ex:GmodN} Let $Y=N\backslash G$ and let $X$ be a point. The left adjoint $L = \pi^*$ takes a vector space $V$ to the trivial vector bundle over $Y$ with fiber $V$. The right adjoint $R= \pi_*$ is the functor of global sections. Barr-Beck does not allow us to reconstruct $\QC(Y)$ inside $\QC(X)$, because $\QC(Y)$ does not have a compact-projective generator. The structure sheaf is a generator, but the global sections functor $\Hom( \Oc , -)$ does not preserve colimits (it is not right exact). Therefore, we have an open embedding 
$$\QC(N\backslash G) \hookrightarrow \Oc(N\backslash G)\mod.$$
Geometrically, $N\backslash G$ is quasi-affine, so it sits as an open set in $\overline{N\backslash G} = \Spec(\Oc(N\backslash G))$, and the orthogonal complement to $\QC(N\backslash G)$ in $\QC(\overline{N\backslash G})$ is the Serre subcategory of torsion sheaves supported on the complementary closed subvariety. 
\end{example}

\begin{example}[Classical parabolic induction]
\label{ex:para}
Let $X=\bullet/(G \times T)$ and let $Y=\bullet/B$. The map $B \rightarrow G \times T$ induces a natural map $\pi\colon Y \rightarrow X$.  Recall that $\pi^*$ equips $\QC(Y)=\Rep(B)$ with the structure of a $\QC(X)=\Rep(G\times T)$-module category.  Let $\one_B$ denote the trivial representation of $B$. It is a $\Rep(G\times T)$-generator, but not $\Rep(G\times T)$-compact projective.
Then we have the module functor,
$$L=\act_{\one_B} \colon \Rep(G\times T) \longrightarrow \Rep(B), \qquad V \boxtimes \chi \longmapsto i^*V \otimes \pi^*\chi.$$

We then have
$$
\iEnd_{G \times T}(\one_B) \simeq \bigoplus_{\la \in P_+} V_\la \boxtimes\mathbb{C}_{-\la} \simeq \Oc(N\backslash G),
$$
with the latter isomorphism being one of algebras in $\Rep(G\times {T})$.
Thus $\Rep(B)$ can be realized as an open subcategory of $G\times T$-equivariant $\Oc(N\backslash G)$-modules. By the classical $\mathrm{Proj}$-construction applied to the $T$-graded ring $\Oc(N\backslash G)$, this open subcategory is further equivalent to the category of $G$-equivariant sheaves on the flag variety $B\backslash G$.   Thus the monadic comparison functor is nothing but the classical parabolic induction equivalence
\[
\Rep(B)\simeq \QC_G(B\backslash G), \qquad M \longmapsto M\times_B G,
\]
sending a $B$-module $M$ to the $G$--equivariant vector bundle $M\times_B G$ on $B\backslash G$. Note that the left adjoint to this functor is given by taking the fiber $\mathcal{F}_{e}$ of a $G$--equivariant sheaf $\mathcal{F}$ on $B\backslash G$ over the identity element $e\in G$; this fiber carries a $B$-action since the identity coset $Be$ is a fixed point for the action of $B$ on $B\backslash G$ by right translation.
\end{example}

\subsection{Monadic reconstruction: $\Repq(G)$ and $\Oq(G)$}

As we shall now explain, the monadic approach to the classical parabolic induction construction carries over equally well to the quantum setting. 
\begin{notation}
In what follows we set
$$
\iHom_{H}(V,W) := \iHom_{\Rep_q(H)}(V,W)
$$
for $H$ being $G$, $B$, or $T$. We do the same for $\Rep_q(H)^\op$ and $\rev{\Rep_q(H)}$, so that e.g.
$$
\iEnd_{G \boxtimes \rev{G}}(\one_G) = \iEnd_{\Rep_q(G) \boxtimes \rev{\Rep_q(G)}}(\one_G).
$$
\end{notation}

\begin{example}
\label{ex-Oq}

Consider the rigid tensor category $\cA = \Rep_q(G)^{\op} \boxtimes \Rep_q(G)$. Then $\cM = \Rep_q(G)$ is a module category for $\cA$ with the module structure given by
$$
\cA \otimes \cM \longrightarrow \cM, \qquad (V \boxtimes W) \otimes U \longmapsto V \otimes U \otimes W.
$$
Writing $\one_G \in \cM$ for the unit $U_q(\g)$-module, we obtain the functor
$$
\act_{\one_G} \colon \cA \longrightarrow \cM, \qquad V \boxtimes W \longmapsto V \otimes W.
$$
Note that
\begin{align*}
\Hom_{G^{\op}\times G}\hr{V_\la\boxtimes V_\mu,\iEnd_{G^{\op}\times G}(\one_G)} & =\Hom_{G^{\op}\times G}\hr{V_\la\boxtimes V_\mu,\act^R_{\one_G}(\one_G)} \\[4pt]
&\simeq \Hom_G\hr{V_\la\otimes V_\mu,\one_G}.
\end{align*}
Since the category $\Rep_q(G)$ is semisimple, and
$$
\Hom_G\hr{V_\la\otimes V_\mu,\one_G}=
\begin{cases}
\mathbb{C} &\text{if} \; V_\la\simeq V_\mu^*, \\
0 &\text{otherwise},
\end{cases}
$$
it follows that
$$
\iEnd_{G^{\op}\times G}(\one_G) \simeq \bigoplus_{V \in \mathrm{Irr(\cM)}} V^* \boxtimes V = \bigoplus_{\la \in \Lambda_+} V^*_\la \boxtimes V_\la,
$$
where the first direct sum is taken over the isomorphism classes of simple objects in $\cM$. The algebra structure on $\iEnd_{G\times G^{\op}}(\one_G)$ is given by
$$
(\xi \boxtimes v) \otimes (\zeta \boxtimes w) = (\xi \otimes^{\op} \zeta) \boxtimes (v \otimes w) = (\zeta\otimes \xi) \boxtimes (v \otimes w).
$$
It follows that $\iEnd_{G^{\op}\times G}(\one_G)$ is isomorphic as an algebra in $\Ac$ to the Faddeev--Reshetikhin--Takhtadjan quantum coordinate ring $\Oq(G)$, the Hopf algebra of matrix coefficients of finite-dimensional type I representations of $U_q(\g)$. The algebra $\Oq(G)$ is a $q$--deformation of the Poisson algebra of functions on the simple Lie group $G$ equipped with its standard Poisson-Lie structure  $\pi=r^L-r^R$ given by the difference of the left and right translates of the classical $r$--matrix, see~\cite{STS94}.
\end{example}

\begin{remark}
\label{rem:minors}
Let $l_\lambda \subset V_\lambda$ and $l_{-\lambda}\subset V_{\lambda}^*$ be the highest and lowest weight lines respectively, and consider the \defterm{principal minor} $f_{\lambda}=v^*_{-\lambda}\boxtimes v_\lambda\in \Oq(G)$, where $v^*_{-\lambda},v_\lambda$ are any elements of $l_{-\lambda},l_{\lambda}$ normalized such that $\ha{v^*_{-\la},v_{\la}}=1$. Then since $V_{\lambda+\mu}$ appears with multiplicity one in $V_\lambda\otimes V_\mu$ and $v_\lambda\otimes v_\mu$ spans the corresponding highest weight line, we have $f_{\lambda}f_\mu = f_{\lambda+\mu}$. In particular, the subalgebra of $\Oq(G)$ generated by the principal minors $\{f_\lambda\}_{\lambda\in \Lambda_+}$ is commutative, and isomorphic to the polynomial ring $\mathbb{C}(q)[\Lambda_+]$. 
\end{remark}

\begin{example}
\label{ex-Oq-twisted}
Let us now consider the rigid tensor category $\Ac = \Rep_q(G) \boxtimes {\Rep_q(G)}$. Then $\Mc = \Rep_q(G)$ again has the structure of an $\Ac$--module category, with the module structure now being given by
$$
\Ac\boxtimes\Mc\longrightarrow\Ac, \qquad \hr{V\boxtimes W}\otimes U \longmapsto V\otimes W\otimes U.
$$
The key difference in this case is the nontrivial structure map coming from the braiding in $\Rep_q(G)$: we have the isomorphism
\beq
\label{associator}
\sigma_{W_1,V_2} \colon V_1\otimes W_1\otimes V_2\otimes W_2\otimes U\longrightarrow V_1\otimes V_2\otimes W_1\otimes W_2\otimes U.
\eeq
The same argument used in the previous example shows that as an object of $\Ac$, we again have 
$$
\iEnd_{G^2}(\one_G) \simeq \bigoplus_{\la \in \Lambda_+} V^*_\la \boxtimes V_\la,
$$
but in view of~\eqref{associator} the algebra structure is now given by
\begin{align*}
( \xi\boxtimes v) \otimes (\zeta \boxtimes w) &= \sigma_{V^*,W^*}(\xi\otimes\zeta) \boxtimes (v\otimes w)\\
&=(\Rc_2\cdot\zeta\otimes\Rc_1\cdot\xi) \boxtimes (v\otimes w).
\end{align*}
The  algebra $\iEnd_{G^2}(\one_G)$ thus coincides with $\Oq(G_+)$, the $q$--deformation of the Poisson algebra of functions on the simple Lie group $G$ equipped with the Poisson structure $\pi_+=r^L+r^R$ given by the average of the left and right translates of the classical $r$--matrix, see again~\cite{STS94}.
\end{example}

\begin{remark}
\label{ribbon-rem}
Both algebras $\Oq(G)$ and $\Oq(G_+)$ have subalgebras 
$\Oq(N\backslash G),\Oq(N\backslash G_+)$ given by
$$
\bigoplus_{\la \in P_+}  l_{-\lambda} \boxtimes V_{\lambda}\subset\bigoplus_{\la \in P_+}  V^*_{\lambda} \boxtimes V_{\lambda},
$$
where $l_{-\la}$ is the lowest weight line in $V_{\la}^*$. However, while multiplication rule in $\Oq(N\backslash G)$ is simply given by
$$
(\xi_{-\lambda} \boxtimes v) \otimes (\zeta_{-\mu} \boxtimes w) 
= (\zeta_{-\mu}\otimes \xi_{-\lambda}) \boxtimes (v \otimes w),
$$
in $\Oq(N\backslash G_+)$ we have
$$
(\xi_{-\lambda} \boxtimes v) \otimes (\zeta_{-\mu} \boxtimes w) 
= q^{-(\lambda,\mu)}(\zeta_{-\mu}\otimes \xi_{-\lambda}) \boxtimes (v \otimes w),
$$
owing to the fact that the $\Rc$--matrix acts on the tensor product of lowest-weight vectors by $\Rc_G(\xi_{-\lambda}\otimes\zeta_{-\mu}) = q^{-(\lambda,\mu)}\xi_{-\lambda}\otimes\zeta_{-\mu}$. Nonetheless, the half--ribbon element $\nu^{\frac{1}{2}}_T$ from $\Repq(T)$ induces an isomorphism of algebras in $\Repq(T\times G)$
$$
\nu^{\frac{1}{2}}_T\boxtimes\mathrm{id}\colon\Oq(N\backslash G_+)\rightarrow \Oq(N\backslash G), \qquad \xi_{-\lambda}\otimes v \longmapsto q^{\frac{1}{2}(\lambda,\lambda)}\xi_{-\lambda}\otimes v.
$$
\end{remark}

\begin{example}
\label{ex:gate-opening}
The $\Repq(G^2)$--module category in the \cref{ex-Oq-twisted} can be understood as the pullback of a $\Repq(G)$--module structure by the tensor functor $m_G\colon\Repq(G)^{\boxtimes2}\rightarrow\Repq{G}$, with the tensor structure being given by the braiding in $\Repq(G)$. Such pullbacks will arise often for us as we open additional gates within $G$- and $T$-regions. Of particular importance is the case of a $\Repq(\overline{T})$--module category of the form $\Mc=A\mod_T$, regarded as a $\Repq(\overline{T}^2)$--module category via $m_{\overline{T}}$ with tensor structure coming from the braiding in $\Repq(\overline{T})$. By \cref{insertgate}, we have an equivalence $\Mc\simeq \widetilde{A}\mod_{T^2}$, where
$$
\widetilde{A} = \iEnd_{T^2}(A).
$$
As an object of $\Repq(T^2)$, $\widetilde{A}$ may be determined by a similar argument to that made in \cref{ex-Oq}. Writing $A = \bigoplus_{\lambda\in\Lambda}A^\lambda\otimes\mathbb{C}_{\lambda}$, we find that 
$$
\Hom_{T^2}\hr{\mathbb{C}_\lambda\boxtimes \mathbb{C}_\mu,\widetilde{A}} \simeq\Hom_{A\mod_T}\hr{\mathbb{C}_{\lambda+\mu}\otimes A,A} \simeq A^{\lambda+\mu}.
$$
It follows that we have
\beq
\label{gate-induction}
\widetilde{A} = \bigoplus_{\lambda,\mu\in\Lambda}A^{\lambda+\mu}\otimes\mathbb{C}_\lambda\boxtimes\mathbb{C}_\mu,
\eeq
and recalling~\eqref{associator} we see that the algebra structure reads
\begin{align}
\nonumber
\label{induction-mult-rule}
\widetilde A^{\la,\mu} \ot \widetilde A^{\nu,\rho} &\longra \widetilde A^{\la+\nu,\mu+\rho}\\
\nonumber(a_1 \ot \chi_\la\boxtimes\chi_\mu) \cdot (a_2 \ot \chi_\nu\boxtimes\chi_\rho)&=(a_1a_2)\boxtimes \hr{m_{\overline{T}}^{\otimes2}\circ\overline{\sigma}^{-1}_{\mathbb{C}_\nu\ot\mathbb{C}_\mu}}\hr{\chi_\lambda\ot\chi_\mu\ot\chi_\nu\ot\chi_\rho}\\ 
&= q^{-(\mu, \nu)} a_1a_2 \bt (\chi_{\lambda+\nu} \ot \chi_{\mu+\rho}).
\end{align}
\end{example}

\subsection{Monadic reconstruction: $\Repq(B)$ and $\Oq(N\backslash G)$}
We are now ready to give a monadic description of the category $\Repq(B)$ in terms of the quantized flag variety, by analogy with the classical parabolic induction equivalence described in \cref{ex:para}. The first step in this direction is the computation outlined in the following example. 
\begin{example}\label{ex:OqGN}Recall that $\Repq(B)$ is a module category for $\Repq(G\times \rev T)$.
We write $\one_B$ for the monoidal unit in $\Repq(B)$, and consider the functor
$$
\act_{\one_B} \colon \Repq(G\times \rev T) \longrightarrow \Repq(B), \qquad V \boxtimes \chi \longmapsto i^*V \otimes \pi^*\chi.
$$
Its right adjoint is given by 
\begin{align*}\act^R_{\one_B} \colon \Repq(B) &\longrightarrow \Repq(G\times \rev T), \\
M &\longmapsto  \bigoplus_{\substack{\la \in \Lambda_+ \\ \mu\in \Lambda}} \Hom_{B}\hr{V_\lambda\otimes\mathbb{C}_\mu,M}\otimes V_\lambda\boxtimes\mathbb{C}_\mu,
\end{align*}
which implies
$$
\act^R_{\one_B}(M) \simeq \hr{M\otimes \Fq(G\times T)}^{U_q(\mathfrak{b})},
$$
where we have set
$$
\Fq(G\times T) = \bigoplus_{\substack{\la \in \Lambda_+ \\ \mu\in \Lambda}}\hr{\mathbb{C}_{\mu}^*\ot V_{\lambda}^*}\otimes V_\lambda\boxtimes\mathbb{C}_\mu,
$$
with $U_q(\mathfrak{b})$ acting diagonally in the $\mathbb{C}_{-\mu}\ot V_{\lambda}^*$ tensor factor, and $U_q(\mathfrak{g})\boxtimes U_q(\mathfrak{t})$ acting in $V_{\lambda}\boxtimes\mathbb{C}_{\mu}$.
Let us now turn our attention to the braided-commutative algebra $\iEnd_{G \times \rev{T}}(\one_B).$ Setting $M=\one_B$ in the formula above, we obtain an isomorphism in $\Repq(G\times \rev{T})$
\begin{align*}
\iEnd_{G \times \rev{T}}(\one_B) \simeq \Oq(G\times T)^{U_q(\mathfrak{b})} = \bigoplus_{\la \in P_+} (\mathbb{C}^*_{-\lambda}\otimes l_{-\lambda})\otimes V_\la \boxtimes \C_{-\la}
\end{align*}
where $l_{-\la}$ is the lowest weight line in $V_{\la}^*$. 
On the other hand, recall from \cref{ribbon-rem} the subalgebra 
$$
\Fq(N\backslash G_+)=\bigoplus_{\la \in P_+}  l_{-\lambda} \boxtimes V_{\lambda}\subset \Fq( G_+),
$$
which we regard as an algebra object of $\Rep_q(G)\boxtimes\Rep_q(\overline{T})$ with acting $U_q(\mathfrak{g})$ in the second tensor factor $V_{\lambda}$, and $U_q(\mathfrak{t})$ acting in $l_{-\lambda}$.
Then we have an isomorphism of algebras in $\Rep_q(G)\boxtimes\Rep_q(\overline{T})$
$$
\Oq(N\backslash G_+) \longrightarrow \iEnd_{G \times \rev{T}}(\one_B),  \qquad
v^*_{-\lambda}\otimes v \longmapsto (\chi^\vee_\lambda\otimes v^*_{-\lambda})\otimes v\boxtimes \chi_{-\lambda},
$$
where $\chi^\vee_\lambda,\chi_{-\lambda}$ are any elements of $\mathbb{C}^*_{-\lambda},\mathbb{C}_{-\lambda}$ respectively normalized so that $\ha{\chi^\vee_\lambda,\chi_{-\lambda}}=1$.
Note that by \cref{ribbon-rem}, we may equivalently identify  $\iEnd_{G \times \rev{T}}(\one_B)$ with $\Oq(N\backslash G)$ as an algebra in $\Repq(G\times \overline{T})$, and in the sequel we will always choose to do so in order to avoid extraneous powers of $q$ appearing in various explicit formulas.
\begin{remark} A similar computation will be used to calculate $\widetilde{L}$ in \cref{thm:D1reconst}.
\end{remark}
\end{example}

\begin{prop}
\label{prop:conserv}
The unit object $\one_B\in \Repq(B)$ is a $\Rep_q(G\times \rev T)$-generator, in other words the functor $\iHom_{G\times \rev{T}}(\one_B,-) = \act_{\one_B}^R$ is conservative.
\end{prop}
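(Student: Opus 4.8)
The plan is to derive conservativity of $\act_{\one_B}^R=\iHom_{G\times\rev{T}}(\one_B,-)$ from a statement about its left adjoint. Since $\act_{\one_B}\colon\Repq(G\times\rev{T})\to\Repq(B)$ is colimit‑preserving with right adjoint $\act_{\one_B}^R$, the criterion recalled above (a colimit‑preserving functor is dominant iff its right adjoint is conservative) reduces the claim to showing that $\act_{\one_B}$ is dominant, i.e. that $\Repq(B)$ is generated under colimits by the essential image of $\act_{\one_B}$, which consists of the objects $i^*V\otimes\pi^*\chi$ with $V\in\Repq(G)$ and $\chi\in\Repq(T)$. So it suffices to exhibit every object of $\Repq(B)$ as a colimit of such objects.

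First I would reduce to the finite‑dimensional case: every object of $\Repq(B)$ is a locally finite $U_q(\b)$-module, hence the filtered colimit of its finite‑dimensional subobjects, and since quotients and filtered colimits are colimits it is enough to show that every finite‑dimensional $M\in\Repq(B)$ admits an epimorphism from an object in the image of $\act_{\one_B}$. To build one, choose finitely many weight vectors $w_1,\dots,w_r$ generating $M$ over $U_q(\b)$, say $w_j$ of weight $\nu_j$, and fix a dominant weight $\lambda\in\Lambda_G$ so dominant that every power $F_i^{c_i+1}$ occurring in the defining relations of $V_\lambda$ over $U_q(\n_-)$ has $c_i+1\ge\dim M$ (the $c_i$ grow linearly with $\lambda$, so this is achievable). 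Set $\chi=\bigoplus_{j=1}^r\C_{\nu_j-\lambda}\in\Repq(T)$ and define
\[
\act_{\one_B}(V_\lambda\bt\chi)=\bigoplus_{j=1}^r i^*V_\lambda\otimes\pi^*\C_{\nu_j-\lambda}\;\longrightarrow\;M
\]
by sending the cyclic generator $v_\lambda\otimes c_j$ of the $j$‑th summand (a highest weight vector of $i^*V_\lambda$ tensored with a generator of $\C_{\nu_j-\lambda}$, of total weight $\nu_j$) to $w_j$. This is a well‑defined $U_q(\b)$-map: the $U_q(\n_-)$-annihilator of $v_\lambda\otimes c_j$ coincides with that of the highest weight vector of $V_\lambda$ — tensoring by a character only rescales each $F_i$ and so leaves the annihilator, a sum of left ideals generated by the $F_i^{c_i+1}$, unchanged — while $F_i^{\dim M}$ already annihilates every vector of $M$; since $c_i+1\ge\dim M$ this annihilator sits inside that of $w_j$. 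The image of the map contains each $w_j$, hence equals $M$, so $\act_{\one_B}$ is dominant and $\iHom_{G\times\rev{T}}(\one_B,-)$ is conservative.

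The only non‑formal ingredient, and the step I expect to require the most care, is the presentation of the simple highest weight module $V_\lambda$ as a cyclic $U_q(\n_-)$-module whose relations are generated by the powers $F_i^{c_i+1}$ — the quantum analogue of Serre's theorem, standard for generic $q$ but to be invoked precisely, together with the harmless point that any $\nu_j-\lambda\in\Lambda$ is an admissible $\pi^*$-twist. I would also stress that routing the argument through the dominant‑versus‑conservative criterion is what delivers the full statement that the functor \emph{reflects isomorphisms}: the weaker fact that it kills no nonzero object is immediate, since any nonzero $M\in\Repq(B)$ contains a simple submodule, necessarily of the form $\C_\nu\cong\act_{\one_B}(\one_G\bt\C_\nu)$, whence $\iHom_{G\times\rev{T}}(\one_B,M)\neq 0$; but upgrading this directly to conservativity would still require checking separately that the counit $\act_{\one_B}\act_{\one_B}^R\to\id_{\Repq(B)}$ is an epimorphism, which is essentially the dominance statement again.
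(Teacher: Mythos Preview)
Your proposal is correct and follows essentially the same route as the paper's proof: both reduce conservativity of $\act_{\one_B}^R$ to dominance of $\act_{\one_B}$, then construct the needed map $i^*V_\lambda\otimes\pi^*\C_{\nu-\lambda}\to M$ by choosing $\lambda$ so dominant that the $U_q(\n_-)$-annihilator of the highest weight vector of $V_\lambda$ (generated by the $F_i^{c_i+1}$) is contained in the annihilator of the target weight vector. The only cosmetic differences are that the paper reduces to a \emph{cyclic} $M$ and is content with a nonzero map, whereas you reduce to finite-dimensional $M$ and assemble an epimorphism from several summands --- but the technical heart is identical.
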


\begin{proof}
It is equivalent to show in that case that $\act_{\one_B}$ is dominant, i.e. that for any non-zero $U_q(\gb)$-module $M$, there exists some $V\in\Rep_q(G)$ and $\chi\in\Rep_q(T)$ such that $\Hom(i^*V\ot\pi^*\chi,M) \neq 0$.  Clearly we may further assume (by passing to a submodule) that $M$ is cyclic on some weight vector $m_\nu$, so that being finite-dimensional, it is a quotient of $U_q(\gb)\ot_{U_q(\gt)} \mathbb{C}_\nu$, for some $T$-weight $\nu$.  We may choose $V=V_\lambda$ for $\lambda$ suitably large that the annihilator of $V$, intersected with the subalgebra $U_q(\gn)$, contains the annihilator in $U_q(\gn)$ of $M$.  Then there is a unique $U_q(\gn)$-module map from $V_\lambda\to M$ sending the highest weight vector $v_{\lambda}$ to $m_\nu$ and the only obstruction to this being a $U_q(\gb)$-map is that $\lambda\neq \nu$.  Hence we obtain a non-zero map $i^*V_\lambda\ot\pi^*\mathbb{C}_{\nu-\lambda}\to M$.
\end{proof}

\begin{defn}\label{def:torsion}
An $\Oq(N\backslash G)$-module $M$ is \defterm{torsion} if, for every $m\in M$, there exists a $\lambda$ such that $\mu\geq \lambda$ implies that $V_\mu m=0$.  We denote the collection of torsion modules by $\Torsion$.  A module $M$ is \defterm{torsion-free} if it does not contain any non-zero torsion submodules.
\end{defn}

We note that the sum $\Torsion(M)$ of all torsion submodules of any module $M$ is again a submodule of $M$, and that the quotient of $M$ by this submodule is torsion free.  Moreover, $\Torsion$ is a Serre subcategory, meaning that it is closed under formation of long exact sequences, and the quotient category is again abelian. 

\begin{lemma}\label{lem:invideals}  We have the following characterization of $U_q(\g)\otimes U_q(\mathfrak{t})$-invariant ideals in $\Oq(N\backslash G)$.
\begin{enumerate}
\item For a dominant weight $\la$, the subspace $I_\la = \oplus_{\mu \geq \la} (V_\mu\boxtimes \mathbb{C}_{-\mu})$ is an invariant ideal.
\item Every invariant ideal in $I$ is a finite sum $I=\sum_i I_{\la_i}$ of such ideals.
\end{enumerate}
\end{lemma}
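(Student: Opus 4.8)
The plan is to combine the semisimplicity of $\Repq(G)$ with the multiplicativity of principal minors in order to identify invariant ideals with upward-closed subsets of the dominant cone, and then to invoke Dickson's lemma. \textbf{Module structure.} By \cref{ex:OqGN} we have $\Oq(N\backslash G)=\bigoplus_{\mu\in\Lambda_+}V_\mu\bt\C_{-\mu}$, with $U_q(\g)$ acting on $V_\mu$ and $U_q(\t)$ acting on $\C_{-\mu}$ through the character $-\mu$. Since $\Repq(G)$ is semisimple for generic $q$ and the $V_\mu$, $\mu\in\Lambda_+$, are pairwise non-isomorphic simples, every $U_q(\g)$-submodule is of the form $J_S:=\bigoplus_{\mu\in S}V_\mu\bt\C_{-\mu}$ for a unique $S\subseteq\Lambda_+$, and any such submodule is automatically $U_q(\t)$-stable since $U_q(\t)$ acts through the grading. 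Hence the $U_q(\g)\ot U_q(\t)$-invariant subspaces of $\Oq(N\backslash G)$ are precisely the $J_S$.

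\textbf{Multiplication on graded pieces.} The $\Lambda_+$-grading by $U_q(\t)$-weight forces $\hr{V_\lambda\bt\C_{-\lambda}}\cdot\hr{V_\mu\bt\C_{-\mu}}\subseteq V_{\lambda+\mu}\bt\C_{-\lambda-\mu}$. This product is nonzero: by \cref{rem:minors} the principal minors satisfy $f_\lambda f_\mu=f_{\lambda+\mu}\neq 0$, and under the identification of \cref{ex:OqGN} the element $f_\nu$ lies in the $\nu$-graded piece with $V_\nu$-component a highest weight vector, so the multiplication map $V_\lambda\ot V_\mu\to V_{\lambda+\mu}$ is a nonzero morphism of $U_q(\g)$-modules into a simple object and is therefore surjective. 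Thus $\hr{V_\lambda\bt\C_{-\lambda}}\cdot\hr{V_\mu\bt\C_{-\mu}}=V_{\lambda+\mu}\bt\C_{-\lambda-\mu}$. (In particular left, right, and two-sided invariant ideals coincide, since for $q\neq 1$ the braiding only rescales this product by a power of $q$.)

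\textbf{Reduction to a combinatorial statement.} Order $\Lambda_+$ by $\mu\geq\lambda\iff\mu-\lambda\in\Lambda_+$. By the previous two steps, $J_S$ is an ideal if and only if $\lambda\in S$ and $\mu\in\Lambda_+$ imply $\lambda+\mu\in S$, i.e. if and only if $S$ is upward closed. Since $\hc{\mu:\mu\geq\lambda}$ is upward closed, $I_\lambda=J_{\hc{\mu\geq\lambda}}$ is a $U_q(\g)\ot U_q(\t)$-invariant ideal, which is part (1). For part (2), identify $\Lambda_+$ with $\Z_{\geq 0}^r$ (via the fundamental weights in the simply connected case, or a generating set of the monoid $\Lambda_+$ in general) carrying the coordinatewise order; Dickson's lemma then shows that every upward-closed $S$ has only finitely many minimal elements $\lambda_1,\dots,\lambda_k$ and that $S=\bigcup_{i=1}^k\hc{\mu:\mu\geq\lambda_i}$, whence any invariant ideal $I=J_S$ equals $\sum_{i=1}^k I_{\lambda_i}$.

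\textbf{Main obstacle.} The only step that is not pure bookkeeping is the surjectivity of $V_\lambda\ot V_\mu\to V_{\lambda+\mu}$ in the second step, i.e. the fact that $\Oq(N\backslash G)$ is graded-generated as expected; all the rest is the grading argument together with Dickson's lemma. Should one wish to bypass \cref{rem:minors}, one can instead argue by flatness of the specialization $q\to 1$: non-vanishing of the corresponding product in the integral domain $\Oc(N\backslash G)$ is classical, and non-vanishing at $q=1$ forces non-vanishing over $\C(q)$.
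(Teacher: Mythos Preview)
Your proof is correct and follows essentially the same approach as the paper: both use that $\Oq(N\backslash G)$ is multiplicity-free as a $U_q(\g)\otimes U_q(\t)$-module to reduce to a combinatorial statement about upward-closed subsets of $\Lambda_+$, and both rely on the surjectivity $m(V_\lambda\otimes V_\mu)=V_{\lambda+\mu}$. Your argument is in fact more complete, since you explicitly invoke Dickson's lemma to justify the \emph{finiteness} of the sum in part~(2), a point the paper's proof leaves implicit; your only minor imprecision is that for non-simply-connected $G$ the monoid $\Lambda_+$ need not be free, but it still embeds in $\Z_{\geq 0}^r$ via the fundamental weights of the simply connected cover, so Dickson applies as you intend.
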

\begin{proof}
Claim (1) is clear, since multiplication in $\Oq(N\backslash G)$ only increases weights in the ordering.  For Claim (2), we note that every $U_q(\g)\otimes U_q(\mathfrak{t})$-subspace of $\Oq(N\backslash G)$ is a direct sum of its full isotypic components, because $\Oq(N\backslash G)$ is multiplicity-free.  Moreover, we recall that the multiplication in $\Oq(N\backslash G)$ is such that $m(V_\la\ot V_\mu) = V_{\la+\mu}$, so that once $V_\la$ occurs in the ideal, all its dominant translates do as well. 
\end{proof}

\begin{cor}\label{cor:no-ann}
An object $M$ of $\Oq(N\backslash G)\mod_{G\times\rev{T}}$ is torsion-free if, and only if all its compact subobjects have zero annihilator ideals.
\end{cor}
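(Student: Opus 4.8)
The plan is to prove both implications by first reducing torsion-freeness of $M$ to a statement about compact (equivalently, by Noetherianity of $\Oq(N\backslash G)$, finitely generated) submodules, and then invoking \cref{lem:invideals} to convert ``torsion'' into ``nonzero annihilator''. For the reduction: $M$ is torsion-free iff $\Torsion(M)=0$, and since $M$ is the filtered colimit of its compact subobjects and any torsion submodule is in turn the filtered colimit of its compact subobjects --- each of which is again torsion, being a submodule of a torsion module --- the subobject $\Torsion(M)$ is nonzero iff $M$ contains a nonzero compact torsion submodule. So $M$ is torsion-free precisely when no nonzero compact submodule of $M$ is torsion, and it remains to identify, for a nonzero compact $N\subseteq M$, the conditions ``$N$ is torsion'' and ``$\mathrm{Ann}(N)\neq 0$'' (interpreting, as one must, the Corollary for \emph{nonzero} compact subobjects --- the zero subobject being annihilated by everything).

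For ``$\mathrm{Ann}(N)\neq 0\Rightarrow N$ torsion'' --- which in fact needs no compactness --- I would note that $\mathrm{Ann}(N)$ is a $U_q(\g)\otimes U_q(\mathfrak t)$-invariant ideal of $\Oq(N\backslash G)$, invariance being immediate since the module structure map $\Oq(N\backslash G)\otimes N\to N$ is a morphism in $\Rep_q(G\times\rev{T})$. By \cref{lem:invideals} a nonzero invariant ideal contains some $I_\lambda=\bigoplus_{\mu\ge\lambda}(V_\mu\boxtimes\mathbb{C}_{-\mu})$, so $V_\mu N=0$ for every $\mu\ge\lambda$; comparing with \cref{def:torsion}, this exhibits $N$ as torsion (indeed uniformly so).

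For the converse ``$N$ compact and torsion $\Rightarrow\mathrm{Ann}(N)\neq 0$'' I would use compactness to write $N=\Oq(N\backslash G)\cdot E$ for a finite-dimensional generating subobject $E$, fix a weight basis $e_1,\dots,e_k$ of $E$, and for each $j$ pick (via \cref{def:torsion}) a dominant weight $\lambda_j$ with $V_\mu e_j=0$ for all $\mu\ge\lambda_j$. The crucial point is that $\lambda:=\sum_j\lambda_j$ satisfies $\lambda\ge\lambda_j$ for every $j$ (a sum of dominant weights being dominant), hence $V_\mu E=0$ for all $\mu\ge\lambda$; since multiplication in $\Oq(N\backslash G)$ carries $V_\mu\otimes V_\nu$ onto $V_{\mu+\nu}$ with $\mu+\nu\ge\mu$, this upgrades to $V_\mu N=\sum_\nu V_{\mu+\nu}E=0$ for all $\mu\ge\lambda$, i.e.\ $I_\lambda\subseteq\mathrm{Ann}(N)$, which is nonzero. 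Combining the three steps yields: $M$ is torsion-free $\iff$ no nonzero compact submodule of $M$ is torsion $\iff$ every nonzero compact submodule of $M$ has zero annihilator, as asserted.

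I expect the one genuinely delicate point to be this ``uniform torsion'' step --- that finitely many torsion generators can be killed by a single cone $\lambda+P_+$. It hinges on using the correct partial order: the one in force in \cref{def:torsion} and \cref{lem:invideals} is $\mu\ge\lambda\iff\mu-\lambda\in P_+$ (a \emph{dominant} translate), not $\mu-\lambda$ in the positive root cone, and it is exactly for this order that $P_+$ is upward directed, so that $\sum_j\lambda_j$ is a common upper bound. A subsidiary bookkeeping point is the tacit use that a compact object of $\Oq(N\backslash G)\mod_{G\times\rev{T}}$ is finitely generated over $\Oq(N\backslash G)$ (valid since $\Oq(N\backslash G)$, being a flat deformation of the finitely generated algebra $\mathcal{O}(\overline{N\backslash G})$, is Noetherian) and that $\Torsion(-)$ is compatible with filtered colimits of subobjects; I would state both without further elaboration.
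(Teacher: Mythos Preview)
Your proof is correct and follows the same strategy as the paper's, which is quite terse---dispatching one direction as ``clearly'' and invoking \cref{lem:invideals} for the other. You have simply unpacked that ``clearly'' into the explicit uniform-torsion argument for finitely many generators, correctly identifying the partial order ($\mu\ge\lambda\iff\mu-\lambda\in P_+$) needed to produce a common upper bound; your remark about the zero subobject is a fair nitpick on the statement itself.
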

\begin{proof}
If we suppose $M$ is torsion, clearly all its compact subobjects have non-trivial annihilator ideals.  Conversely, as soon as the annihilator ideal of any compact subobject is non-empty, then it is of the form $I=\sum_iI_{\la_i}$ described in \cref{lem:invideals}, which implies that it is torsion, since we may take $\la=\la_i$ for any $i$ in the definition of a torsion module.
\end{proof}

\begin{theorem}\label{thm:D1reconst}
The comparison functor
\begin{align*}
    \iHom_{G\times \rev T}(\one_B,-) \colon \Repq(B) &\longra \Oq(N\backslash G)\mod_{G\times \rev T}\\
    V&\longmapsto \hr{V\otimes \Oq(G\times T)}^{U_q(\mathfrak{b})},
    \end{align*}
defines a fully faithful embedding, realizing $\Repq(B)$ as the open subcategory $\Torsion^\perp$.
\end{theorem}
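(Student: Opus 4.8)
The plan is to realize the comparison functor via Barr--Beck reconstruction, to identify it through \cref{ex:OqGN}, and then to compute its essential image using \cref{prop:kernel}.

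\emph{Producing the embedding.} By \cref{prop:conserv}, $\one_B$ is a $\Repq(G\times\rev T)$-generator, i.e.\ $\act^R_{\one_B}=\iHom_{G\times\rev T}(\one_B,-)$ is conservative. The module-category counterpart of \cref{weak-reconstruction} then yields a fully faithful, reflective embedding of $\Repq(B)$ into $\iEnd_{G\times\rev T}(\one_B)\mod_{G\times\rev T}$ with left adjoint $\widetilde{L}$; since the module category is taken over $\Repq(G)\bt\rev{\Repq(T)}$, the left adjoint $\widetilde{L}$ is exact (as recorded in the remark following that theorem), so the embedding is open. Finally \cref{ex:OqGN} gives $\iEnd_{G\times\rev T}(\one_B)\simeq\Oq(N\backslash G)$ as an algebra in $\Repq(G\times\rev T)$, and under this identification the comparison functor $\widetilde{R}$ is precisely $V\mapsto(V\otimes\Oq(G\times T))^{U_q(\mathfrak{b})}$. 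It remains to compute the essential image.

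\emph{The inclusion $\Torsion\subseteq\ker\widetilde{L}$.} By \cref{prop:kernel} the essential image of $\widetilde{R}$ equals $(\ker\widetilde{L})^\perp$, so it remains to prove $\ker\widetilde{L}=\Torsion$ inside $\Oq(N\backslash G)\mod_{G\times\rev T}$. For the inclusion $\Torsion\subseteq\ker\widetilde{L}$ it suffices to check that each $\widetilde{R}(V)$ is torsion-free: a morphism from a torsion module into a torsion-free one is zero, so $\Hom(M,\widetilde{R}V)=0$ for every $M\in\Torsion$ and every $V$, hence $\Hom(\widetilde{L}M,V)=0$ for all $V$ and thus $\widetilde{L}M=0$. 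Torsion-freeness of $\widetilde{R}(V)=(V\otimes\Oq(G\times T))^{U_q(\mathfrak{b})}$ follows because it sits inside $V\otimes\Oq(G\times T)$, on which $\Oq(N\backslash G)$ acts through its embedding into $\Oq(G\times T)$ by right multiplication in the second tensorand; as $\Oq(G\times T)=\Oq(G)\otimes\C(q)[\Lambda]$ is a domain at generic $q$, every nonzero element of $\Oq(N\backslash G)$ acts injectively, so by \cref{lem:invideals} a nonzero invariant ideal---which contains some $I_\lambda$, hence a nonzero element---cannot annihilate a nonzero compact submodule, and \cref{cor:no-ann} gives that $\widetilde{R}(V)$ is torsion-free in the sense of \cref{def:torsion}.

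\emph{The reverse inclusion.} Here is the crux, and the place where the quasi-affineness of $N\backslash G$ enters, exactly as classically in \cref{ex:GmodN}. Both $\ker\widetilde{L}$ (the kernel of the exact, colimit-preserving $\widetilde{L}$) and $\Torsion$ are localizing subcategories; since $\Oq(N\backslash G)$ is Noetherian, each object of $\ker\widetilde{L}$ is the directed union of its finitely generated submodules, which lie in $\ker\widetilde{L}$ by exactness of $\widetilde{L}$, so it suffices to prove every finitely generated $K\in\ker\widetilde{L}$ is torsion. Replacing $K$ by $K/\Torsion(K)$, which is again finitely generated, lies in $\ker\widetilde{L}$, and is torsion-free, we must show that a nonzero finitely generated torsion-free equivariant $\Oq(N\backslash G)$-module $K'$ has $\widetilde{L}(K')\neq 0$. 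Using that $\Oq(N\backslash G)$ is a Noetherian domain at generic $q$, $K'$ embeds---after clearing denominators---into a finitely generated free module, so $\Hom_{\Oq(N\backslash G)}(K',\Oq(N\backslash G))\neq 0$; decomposing the residual $G\times T$-action on this space produces a nonzero $G\times T$-equivariant morphism $K'\to\Oq(N\backslash G)\otimes W$ for some $W\in\Repq(G\times\rev T)$. Since $\Oq(N\backslash G)\otimes W=\widetilde{R}(\act_{\one_B}(W))$ is the corresponding free module in the image of $\widetilde{R}$, adjunction gives $\Hom(\widetilde{L}(K'),\act_{\one_B}(W))\neq 0$, hence $\widetilde{L}(K')\neq 0$, a contradiction. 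Therefore $\ker\widetilde{L}=\Torsion$, and by \cref{prop:kernel} the essential image of $\widetilde{R}$ is $\Torsion^\perp$. I expect this reverse inclusion---which is the quantum counterpart of the statement that the complement of $N\backslash G$ in $\Spec\Oq(N\backslash G)$ is cut out by the irrelevant ideal---together with the ring-theoretic input on $\Oq(N\backslash G)$ and the care forced by $\one_B$ being only a generator (so one gets an open embedding, not an equivalence), to be the main work.
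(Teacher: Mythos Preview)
Your setup and your forward inclusion $\Torsion\subset\ker\widetilde{L}$ are fine, and in fact your argument there (showing directly that each $\widetilde{R}(V)$ is torsion-free) is a pleasant alternative to the paper's. The paper instead computes $\widetilde{L}$ explicitly as $M\mapsto M/(\ker\eps)M$ and observes that for torsion $M$ one has $m=(1-f_\lambda)m\in(\ker\eps)M$ once the principal minor $f_\lambda$ annihilates $m$.

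The reverse inclusion, however, has a genuine gap. Your ``clearing denominators'' step---embedding a finitely generated torsion-free $K'$ into a free module---requires torsion-freeness in the ring-theoretic sense (no nonzero ring element kills a nonzero module element). But \cref{def:torsion} and \cref{cor:no-ann} give a strictly weaker condition: a module is torsion-free iff every compact equivariant subobject has zero \emph{invariant} annihilator ideal, i.e.\ no nonzero $I_\lambda$ annihilates it. There is no reason an individual nonzero $a\in\Oq(N\backslash G)$ cannot kill some $m\in K'$ while the full $G\times T$-invariant ideal generated by $a$ does not; the Ore-localization embedding into a free module is simply not available from this hypothesis. (Concretely, for $G=\SL_2$ think of quotients by non-invariant left ideals: the quotient can be paper-torsion-free without being ring-torsion-free, and although such quotients are not themselves equivariant, nothing prevents an equivariant $K'$ from containing elements with nonzero individual annihilators.)

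The paper closes this gap by a completely different, Nakayama-style argument. Using $\widetilde{L}(M)=M/(\ker\eps)M$, one looks at the highest $T$-weight space $M[\lambda_0]$ of a finitely generated torsion-free $M$: if $M=(\ker\eps)M$, then weight considerations force $m_i=\sum_j f_{ij}m_j$ with the $f_{ij}$ lying in the \emph{commutative} subring $\C(q)[\Lambda_+]$ generated by the principal minors, whence $\det(\delta_{ij}-f_{ij})\in 1+\ker\eps$ annihilates all $m_i$, contradicting \cref{cor:no-ann}. This is where the special structure of $\Oq(N\backslash G)$---the commutative ray of principal minors with $\eps(f_\lambda)=1$---does the real work; your proposal does not use it, and I don't see how to complete your route without something of this kind.
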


\begin{proof}  
Thanks to \cref{prop:conserv}, we may apply Theorem~\ref{weak-reconstruction} to conclude that $\iHom_{G\times \rev T}(\one_B,-)$ yields a fully faithful embedding of $\Repq(B)$ as an open subcategory of $\Oq(N\backslash G)\mod_{G\times \rev T}$. So by \cref{prop:kernel}, all that remains is to identify the kernel of its left adjoint $\widetilde{L}$ with $\Torsion$. In complete analogy with the classical parabolic induction functor from \cref{ex:para}, this left adjoint is given by
$$
\widetilde{L}\colon \Oq(N\backslash G)\mod_{G\times \rev T}\longrightarrow \Repq{B}, \qquad M \longmapsto M/(\ker \eps) M,
$$
where $\ker \eps$ is the 2-sided ideal in $\Oq(N\backslash G)$ given by the kernel of the evaluation morphism $\eps \colon \Oq(N\backslash G)\rightarrow \one_B$, and the $U_q(\mathfrak{b})$-action on $M/(\ker \eps) M$ is inherited from that of $U_q(\mathfrak{g})\otimes U_q(\mathfrak{t})$ on $M$.

Now suppose that $M$ is an object of $\Torsion$.  

Recall the principal minors $\{f_{\lambda}\}_{\lambda\in\Lambda_+}$ defined in \cref{rem:minors}, and observe that $f_\lambda\in\Oq(N\backslash G)$, while $\eps(f_{\lambda})=1$ for all $ \lambda\in\Lambda_+$. So for each $m\in M$, choosing $\la$ sufficiently large that $f_{\la}\cdot m=0$, we have $m=1\cdot m = (1-f_{\la}) m \in (\ker\eps)M$, and thus $\widetilde{L}(M)=0$. 

To conclude the proof, we must show that if $M$ is torsion-free, then $M/\ker\eps M\neq0$. For this, let us first suppose that $M$ is finitely generated as an $\Oq(N\backslash G)$--module.     Given a weight $\la$, denote by $M[\la]$ the corresponding weight space  of $M$ with respect to the $U_q(\mathfrak{t})$-action. Let $\la_0$ be the highest weight such that $M[\la_0] \ne 0$. Since for any weight $\la$
$$
\ker(\eps)M[\la] \subseteq \oplus_{\mu\le\la}M[\mu],
$$
the condition $M = \ker(\eps)M$ implies $M[\la_0] \subseteq \ker(\eps)M[\la_0]$. Now let $\hc{m_1,\dots,m_n}$ be a basis of $M[\la_0]$. Then any element $m \in M[\la_0]$ can be written as
\beq
\label{eqn:tors-proof-1}
m = \sum_i h_i m_i
\eeq
for some $h_i\in\ker\eps$. Note that by comparing the  $T$-weights of both sides, we may assume that the $h_i$ are elements of the (commutative) polynomial ring $\mathbb{C}(q)[\Lambda_+]$ generated by the principal minors $f_\lambda$. We may thus make a similar argument to that used to prove the classical Nakayama lemma: expand
\begin{align}
\label{nak}
m_i = \sum_{j}f_{ij}m_j
\end{align}
and form the $n\times n$--matrix $\delta_{ij}-f_{ij}$ with entries in $\mathbb{C}(q)[\Lambda_+]$. Multiplying~\cref{nak} through by its adjugate matrix, we find that $\det(\delta_{ij}-f_{ij})$ annihilates all $m_i$, and is of the form $1+\ker\eps$. The annihilator of the $m_i$ is thus nonzero, which by Proposition~\ref{cor:no-ann} contradicts the assumption that $M$ is torsion-free.

To treat the general case, we note that $\widetilde{L}$ is exact on torsion free modules.  Hence, if $\widetilde{L}(M)=0$, then for any finitely generated submodule $M'\hookrightarrow M$ we must also have $\widetilde{L}(M')=0$. By the preceding discussion this implies $M'$ is torsion.  Hence $M$ is torsion, as required.  
\end{proof}

\subsection{Monadic reconstruction: $\Zc(\DD_n)$ and $\Oq(\Conf_n)$}

\begin{defn} We define the braided tensor category $\Repq(G\times \rev{T}^{\nsp n})$ by
$$
\Repq(G\times \rev{T}^{\nsp n}):= \Repq(G) \bt \rev{\Repq(T)}^{\nsp \bt n}.
$$
\end{defn}

\begin{defn} We endow the categories $\Zc(\DD_n)$ with an action of the monoidal category $\Repq(G\times \rev{T}^{\nsp n}),$ by choosing a single interval in $\partial S \cap S_G$, and one in each connected component of $\partial S \cap S_T$, as indicated in~\cref{fig:D5-insert}.  We let $\Dist$ denote the distinguished object of the category $\Zc(\DD_n)$. \end{defn}

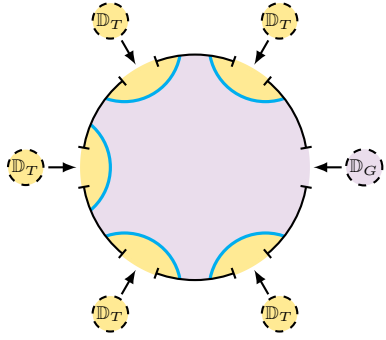
\begin{figure}[t]
\begin{tikzpicture}[every node/.style={inner sep=0.5, thick, circle}, x=0.75cm, y=0.75cm]

\def\Gcirc{(0,0) circle (2)};
\fill[Orchid!20] \Gcirc;
\def\Tone{(60:2.5) circle (1)};
\def\Ttwo{(120:2.5) circle (1)};
\def\Tthree{(180:2.5) circle (1)};
\def\Tfour{(240:2.5) circle (1)};
\def\Tfive{(300:2.5) circle (1)};
\begin{scope}
	\clip\Gcirc;
	\fill[Goldenrod!60] \Tone \Ttwo \Tthree \Tfour \Tfive;
	\draw[very thick, cyan] \Tone \Ttwo \Tthree \Tfour \Tfive;
\end{scope}
\draw[thick] \Gcirc;
\draw[ultra thick, Orchid!20, domain=-10:10] plot ({2*cos(\x)}, {2*sin(\x)});
\draw[thick, domain=1.9:2.1] plot ({\x*cos(10)}, {\x*sin(10)});
\draw[thick, domain=1.9:2.1] plot ({\x*cos(-10)}, {\x*sin(-10)});

\draw[ultra thick, Goldenrod!60, domain=50:70] plot ({2*cos(\x)}, {2*sin(\x)});
\draw[thick, domain=1.9:2.1] plot ({\x*cos(50)}, {\x*sin(50)});
\draw[thick, domain=1.9:2.1] plot ({\x*cos(70)}, {\x*sin(70)});

\draw[ultra thick, Goldenrod!60, domain=110:130] plot ({2*cos(\x)}, {2*sin(\x)});
\draw[thick, domain=1.9:2.1] plot ({\x*cos(110)}, {\x*sin(110)});
\draw[thick, domain=1.9:2.1] plot ({\x*cos(130)}, {\x*sin(130)});

\draw[ultra thick, Goldenrod!60, domain=170:190] plot ({2*cos(\x)}, {2*sin(\x)});
\draw[thick, domain=1.9:2.1] plot ({\x*cos(170)}, {\x*sin(170)});
\draw[thick, domain=1.9:2.1] plot ({\x*cos(190)}, {\x*sin(190)});

\draw[ultra thick, Goldenrod!60, domain=230:250] plot ({2*cos(\x)}, {2*sin(\x)});
\draw[thick, domain=1.9:2.1] plot ({\x*cos(230)}, {\x*sin(230)});
\draw[thick, domain=1.9:2.1] plot ({\x*cos(250)}, {\x*sin(250)});

\draw[ultra thick, Goldenrod!60, domain=290:310] plot ({2*cos(\x)}, {2*sin(\x)});
\draw[thick, domain=1.9:2.1] plot ({\x*cos(290)}, {\x*sin(290)});
\draw[thick, domain=1.9:2.1] plot ({\x*cos(310)}, {\x*sin(310)});

\node[draw, dashed, fill=Orchid!20] at (0:3) {\tiny$\DD_G$};
\node[draw, dashed, fill=Goldenrod!60] at (60:3) {\tiny$\DD_T$};
\node[draw, dashed, fill=Goldenrod!60] at (120:3) {\tiny$\DD_T$};
\node[draw, dashed, fill=Goldenrod!60] at (180:3) {\tiny$\DD_T$};
\node[draw, dashed, fill=Goldenrod!60] at (240:3) {\tiny$\DD_T$};
\node[draw, dashed, fill=Goldenrod!60] at (300:3) {\tiny$\DD_T$};

\draw[thick, ->] (0:2.6) to (0:2.1);
\draw[thick, ->] (60:2.6) to (60:2.1);
\draw[thick, ->] (120:2.6) to (120:2.1);
\draw[thick, ->] (180:2.6) to (180:2.1);
\draw[thick, ->] (240:2.6) to (240:2.1);
\draw[thick, ->] (300:2.6) to (300:2.1);

\end{tikzpicture}
\caption{A gated pentagon $\DD_5$ with a depiction of the action by disk insertions.}
\label{fig:D5-insert}
\end{figure}

\begin{defn}
For $n\geq 1$, the \defterm{quantum coordinate algebra of $n$ framed flags} is the $n$-fold braided tensor product,
\[\Oq(\Conf_n) := \Oq(N\backslash G)\otimes \cdots \otimes \Oq(N\backslash G).\]
\end{defn}

By definition, this means that as an object it is the tensor product in $\Repq(G\times \rev T^n)$, and that multiplication within each factor is given by the algebra structure of $\Oq(N\backslash G)$ and that relations between the factors are given by
\beq
\label{cross-rel}
b^{(j)} a^{(i)} = \hr{\Rc_{(2)} \rhu a^{(i)}} \hr{\Rc_{(1)} \rhu b^{(j)}},
\eeq
where $1 \le i < j \le n$ and $x^{(i)}$ denotes the element $x$ placed in the $i$-th braided tensor factor, where $\Rc$ denotes the universal $R$-matrix of $U_q(\g)$.  We note that each subalgebra $\Oq(N\backslash G)$ occupies a \emph{distinct} $\Repq(T)$ factor, while sharing the same $\Repq(G)$ factor, hence the braided tensor product involves only the universal $\Rc$-matrix for $U_q(\g)$.

\begin{theorem}\label{thm:DnOqConf}
Let $\Dist_{\DD_n}\in\Zc(\DD_n)$ denote the distinguished object.  We have an isomorphism
$$\iEnd_{G \times \rev T^{\nsp n}}(\Dist_{\DD_n}) \cong \Oq(\Conf_n).$$
\end{theorem}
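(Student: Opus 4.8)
The plan is to compute the internal endomorphism algebra $\iEnd_{G\times\rev{T}^{\nsp n}}(\Dist_{\DD_n})$ by exploiting the excision decomposition of $\DD_n$ and the monadic machinery already developed. First I would recall from \cref{fig:D3} and its $n$-gon analogue that $\DD_n$ is obtained by gluing $n$ copies of $\DD_B$ along $\DD_G$-cylinders, so that excision gives an equivalence of $\Repq(G\times\rev{T}^{\nsp n})$-module categories
$$
\Zc(\DD_n) \simeq \Repq(B)\underset{\Repq(G)}{\bt}\cdots\underset{\Repq(G)}{\bt}\Repq(B),
$$
with the distinguished object $\Dist_{\DD_n}$ corresponding to $\one_B\bt\cdots\bt\one_B$ under this equivalence. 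Then I would invoke \cref{thm:D1reconst}, which identifies each factor $\Repq(B)$ monadically as an open subcategory of $\Oq(N\backslash G)\mod_{G\times\rev T}$ with $\iEnd_{G\times\rev T}(\one_B)\cong\Oq(N\backslash G)$, together with \cref{insertgate}, which tells us how internal endomorphism algebras behave under relative tensor products: the internal endomorphism algebra of a product object over a chain of module categories is the corresponding relative tensor product (here, the braided tensor product over $\Repq(G)$) of the internal endomorphism algebras of the factors.

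The key steps, in order, are: (1) set up the excision equivalence identifying $\Zc(\DD_n)$ as an iterated relative tensor product and track $\Dist_{\DD_n}\mapsto\one_B^{\bt n}$; (2) use \cref{ex:OqGN} and \cref{thm:D1reconst} to get $\iEnd_{G\times\rev T}(\one_B)\cong\Oq(N\backslash G)$ in each factor, being careful about which $\Repq(T)$-factor each copy lives in --- distinct $T$-gates give distinct $\rev{\Repq(T)}$ tensorands, while all copies share the single $\Repq(G)$; (3) apply \cref{insertgate} (or rather its iteration, exactly as in the Hochschild-homology analogy in the introduction) to conclude that $\iEnd_{G\times\rev T^{\nsp n}}(\Dist_{\DD_n})$ is the braided tensor product of $n$ copies of $\Oq(N\backslash G)$ in $\Repq(G\times\rev T^{\nsp n})$; (4) observe that this is by definition $\Oq(\Conf_n)$, and check that the cross-relations \eqref{cross-rel} obtained from the braiding in the relative tensor product match those in the definition of the braided tensor product --- the point being that since the $\Repq(T)$-factors are disjoint, the only braiding contributing to the cross-terms is the universal $\Rc$-matrix of $U_q(\g)$, exactly as recorded after \cref{cross-rel}.

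The main obstacle I anticipate is step (3): applying the relative-tensor-product formula for internal endomorphism algebras requires knowing that the relevant generators remain projective (or at least that the monadic reconstruction of \cref{insertgate} applies), but $\one_B$ is only a $\Repq(G\times\rev T)$-generator and \emph{not} compact-projective --- this is precisely the "quasi-affine, not affine" subtlety flagged at the start of \cref{sec:monadic}. So \cref{insertgate} cannot be applied verbatim. The resolution is that the computation of $\iEnd$ as an \emph{object} (and then as an algebra) only uses the right adjoint $\act^R$ and the composition law, both of which are insensitive to projectivity; concretely, one computes $\Hom_{G\times\rev T^{\nsp n}}(V_{\la_1}\boxtimes\mathbb{C}_{\mu_1}\boxtimes\cdots, \iEnd(\Dist_{\DD_n}))$ directly using the excision description of $\Zc(\DD_n)$ and the explicit form of $\act^R_{\one_B}$ from \cref{ex:OqGN}, exactly mimicking the argument in \cref{ex-Oq} and \cref{ex-Oq-twisted}. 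This reduces the statement to a multiplicity count: $V_{\lambda_i}$ occurs in the $i$-th tensor slot with multiplicity one and weight $-\lambda_i$ in the corresponding $T$-slot, giving the underlying object $\bigoplus (l_{-\lambda_1}\boxtimes V_{\lambda_1})\otimes\cdots$, and the braiding isomorphism \eqref{associator} applied across distinct slots produces exactly the relations \eqref{cross-rel}. Thus the heart of the proof is a bookkeeping argument identifying the algebra structure, with the excision theorem and \cref{thm:D1reconst} doing the conceptual work.
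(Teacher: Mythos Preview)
Your proposal is correct and follows essentially the same strategy as the paper: decompose $\DD_n$ via excision as an iterated relative tensor product of copies of $\Repq(B)$ over $\Repq(G)$, identify $\Dist_{\DD_n}$ with $\one_B^{\bt n}$, invoke the $n=1$ computation from \cref{ex:OqGN}, and then argue that the internal endomorphism algebra of the product object is the braided tensor product of the factors.

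The one place where your account diverges from the paper is exactly the obstacle you flagged in step (3). You are right that \cref{insertgate} as stated requires a compact-projective generator, which $\one_B$ is not, and your proposed workaround (a direct Hom-space computation mimicking \cref{ex-Oq-twisted}) would work. The paper sidesteps this by instead citing \cite[Corollary 4.13]{BBJ18a}, a result about internal endomorphisms under relative tensor products which applies to this situation and yields
$$
\iEnd_{G\times\rev T^{\nsp n}}(\Dist_{\DD_n}) \simeq \iEnd_{G\times\rev T}(\one_B)\,\widetilde{\otimes}\,\cdots\,\widetilde{\otimes}\,\iEnd_{G\times\rev T}(\one_B)
$$
directly, without needing the full monadic equivalence. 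So the paper's proof is shorter because it outsources precisely the step you were worried about; your direct computation is a legitimate substitute if you want to avoid the external reference.
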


\begin{proof}
The case $n=1$ has been already treated in \cref{ex:OqGN}. For $n>1$ we only need to apply excision. By induction we can write $\DD_n = \DD_1\sqcup_{\DD_G} \cdots \sqcup_{\DD_G} \DD_1$, hence we obtain an equivalence of categories,
$$
\Zc(\DD_n) \simeq \Rep_q(B) \mathop{\bt}_{\Rep_q(G)} \cdots \mathop\bt_{\Rep_q(G)} \Rep_q(B),
$$
and therefore applying \cite{BBJ18a}[Corollary 4.13], we have an isomorphism,
$$
\iEnd_{G\times \rev T^{\nsp n}}(\Dist_{\DD_n}) \simeq \iEnd_{G\times \rev T}(\one_B) \widetilde{\otimes} \cdots \widetilde{\otimes} \iEnd_{G\times \rev T}(1_B),
$$
where $\widetilde{\otimes}$ denotes the braided tensor product of algebras in $\Rep_q(G)$, as required.

\end{proof}

\begin{defn}  For $n\geq 1$, we denote by $\Torsion_n\subset \Oq(\Conf_n)\mod_{G\times T^n}$ the full Serre subcategory consisting of modules whose restriction to some $\Oq(\Conf_1)$ subfactor is torsion.
\end{defn}

\begin{prop}
\label{disk-computation}
For any $n\geq 1$, the functor $\iHom_{G\times \rev{T}^n}(\Dist_{\DD_n},-)$ induces an equivalence of categories,
\[
\Zc(\DD_n) \simeq \Torsion_n^\perp \subset \Oq(\Conf_n)\mod_{G\times \rev{T}^n}.
\]
\end{prop}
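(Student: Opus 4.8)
The plan is to deduce the statement from three ingredients already in place: the identification $\iEnd_{G\times\rev T^{\nsp n}}(\Dist_{\DD_n})\cong\Oq(\Conf_n)$ of \cref{thm:DnOqConf}, the module-category form of weak monadic reconstruction (\cref{weak-reconstruction} together with Barr--Beck reconstruction for module categories), and \cref{prop:kernel}, which describes the essential image of a reflective embedding as the right orthogonal of the kernel of its left adjoint. So the real content is to establish (a) that $\Dist_{\DD_n}$ is a $\Repq(G\times\rev T^{\nsp n})$-generator, and (b) that the kernel of the resulting left adjoint $\widetilde L$ is exactly $\Torsion_n$.

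\textbf{Steps (a) and the open embedding.} Under the excision equivalence $\Zc(\DD_n)\simeq\Repq(B)\boxtimes_{\Repq(G)}\cdots\boxtimes_{\Repq(G)}\Repq(B)$ used in the proof of \cref{thm:DnOqConf}, the distinguished object corresponds to the external product $\one_B\boxtimes\cdots\boxtimes\one_B$, and each $\one_B$ is a $\Repq(G\times\rev T)$-generator by \cref{prop:conserv}. Generators remain generators under forming relative tensor products of module categories over $\Repq(G)$, by the same input from \cite{BBJ18a} that produces the braided-tensor-product presentation of $\iEnd$ in \cref{thm:DnOqConf}; hence $\Dist_{\DD_n}$ is a generator. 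Since the ambient tensor category is $\Repq(G\times\rev T^{\nsp n})$, the left adjoint of the comparison functor is exact (the remark after Barr--Beck for module categories), so \cref{weak-reconstruction} gives an \emph{open} embedding $\iHom_{G\times\rev T^{\nsp n}}(\Dist_{\DD_n},-)\colon\Zc(\DD_n)\hookrightarrow\Oq(\Conf_n)\mod_{G\times\rev T^{\nsp n}}$.

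\textbf{Step (b): identifying the kernel.} By \cref{prop:kernel} the image is $(\ker\widetilde L)^\perp$, so it suffices to match $\ker\widetilde L$ with $\Torsion_n$. Writing $\Torsion^{(i)}$ for the modules torsion along the $i$-th braided tensor factor, $\Torsion_n$ is by definition the (Serre) subcategory generated by $\bigcup_i\Torsion^{(i)}$, whence $\Torsion_n^\perp=\bigcap_i(\Torsion^{(i)})^\perp$; thus it is equivalent to show $\Zc(\DD_n)=\bigcap_i(\Torsion^{(i)})^\perp$ inside $\Oq(\Conf_n)\mod_{G\times\rev T^{\nsp n}}$. I would do this by induction on $n$, the base case $n=1$ being \cref{thm:D1reconst}. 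For the step, apply excision to $\DD_n=\DD_{n-1}\sqcup_{\DD_G}\DD_1$: the open embedding for $\DD_n$ is the relative tensor product over $\Repq(G)$ of those for $\DD_{n-1}$ and $\DD_1$, whose kernels are $\Torsion_{n-1}$ and $\Torsion_1$ by the inductive hypothesis and \cref{thm:D1reconst}. The relative tensor product over $\Repq(G)$ of two open embeddings is again an open embedding — its left adjoint, being a relative tensor product of exact colimit-preserving functors, is exact and colimit-preserving — and its kernel is the localizing subcategory generated by $\Torsion_{n-1}\boxtimes_{\Repq(G)}\Oq(\Conf_1)\mod$ together with $\Oq(\Conf_{n-1})\mod\boxtimes_{\Repq(G)}\Torsion_1$, the categorical shadow of the fact that the complement of $U_1\times U_2$ in $X_1\times X_2$ is $Z_1\times X_2\cup X_1\times Z_2$. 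Under the equivalence $\Oq(\Conf_{n-1})\mod\boxtimes_{\Repq(G)}\Oq(\Conf_1)\mod\simeq\Oq(\Conf_n)\mod$ (\cref{lem:excision}, as in \cref{thm:DnOqConf}) these two pieces become the modules torsion along one of the first $n-1$ factors and the modules torsion along the last factor, and together they generate precisely $\Torsion_n$.

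\textbf{Main obstacle.} I expect step (b), and specifically the inclusion $\ker\widetilde L\subseteq\Torsion_n$ — that a module torsion-free along every tensor factor lies in the image of the comparison functor — to be the crux; everything else is formal given the machinery already developed. The inductive route above pushes the non-formal content into the lemma that relative tensor products of open embeddings of $\Repq(G)$-module categories behave as expected on kernels. Alternatively one can argue directly, bypassing excision: first show that $\widetilde R\,N$ is torsion-free along each factor (so $\Hom(\Torsion^{(i)},\widetilde R\,N)=0$, giving $\Torsion_n\subseteq\ker\widetilde L$), then that $\widetilde L$ is exact on modules torsion-free along every factor, and finally — reducing to finitely generated modules — run a several-factor Nakayama argument using the classification of invariant ideals \cref{lem:invideals} and its corollary \cref{cor:no-ann} in each factor, exactly as in the proof of \cref{thm:D1reconst}.
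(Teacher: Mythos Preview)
Your proposal is correct and follows essentially the same approach as the paper: reduce to the case $n=1$ (\cref{thm:D1reconst}) via excision, identifying $\widetilde L_n$ with a relative tensor product of copies of $\widetilde L_1$ and thereby identifying its kernel with $\Torsion_n$. The paper does this in one step rather than by induction and is terser about your step~(a), but the strategy is the same; your more careful formulation of how kernels behave under relative tensor products (the ``$Z_1\times X_2\cup X_1\times Z_2$'' picture) is arguably clearer than the paper's compressed line $\ker(\widetilde L_n)=\ker(\widetilde L_1)\boxtimes_{\Repq(G)}\cdots\boxtimes_{\Repq(G)}\ker(\widetilde L_1)$.
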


\begin{proof}
Let $\widetilde{L}_n$ denote the left adjoint.  By \cref{prop:kernel} we need to identify the kernel of $\widetilde{L}$ with $\Torsion_n$.  The case $n=1$ is precisely \cref{thm:D1reconst}.  For $n> 1$, we have have the equivalence,
\[
Z(\DD_n) \simeq \Repq(B)\underset{\Repq(G)}{\bt}\cdots\underset{\Repq(G)}{\bt}\Repq(B).
\]
Under this equivalence $\widetilde{L}$ identifies with the functor, $\widetilde{L}_1\underset{\Repq(G)}{\bt}\cdots \underset{\Repq(G)}{\bt}\widetilde{L}_1$.  The canonical functor from $\widetilde{L}_1\bt\cdots \bt\widetilde{L}_1$ to the relative tensor product is conservative, hence we have $\ker(\widetilde{L}_n) = \ker(\widetilde{L}_1)\underset{\Repq(G)}{\bt}\cdots \underset{\Repq(G)}{\bt}\ker(\widetilde{L}_1)=\Torsion_n$. 

\end{proof}

\subsection{Further computations of $\Oq(\Conf_n)$ for $G=\SL_2$} 
\label{A-sec}

For $G=SL_2$, the algebra $\Fq(G)$ is generated by matrix coefficients of the first fundamental representation, and thus can be written as
$$
\Fq(G) \simeq \C(q)\ha{a_{ij}} \qquad\text{with}\qquad i,j={1,2},
$$
where
$$
a_{11}a_{12} = qa_{12}a_{11}, \qquad a_{11}a_{21} = qa_{21}a_{11}, \qquad a_{12}a_{21}=a_{21}a_{12},
$$
and
$$
a_{11}a_{22} - qa_{12}a_{21} = 1.
$$
Spelling out the above equality, and setting $x_1 = a_{11}$ and $x_2 = a_{12}$ in each factor, we arrive at the following Proposition. 

\begin{prop}
\label{prop:gensrels}
The algebra $\Oq(\Conf_n)$ is an associative algebra over $\C(q)$ with generators 
$$
x^{(i)}_1, x^{(i)}_2, \quad 1\le i \le n,
$$ 
and relations 
$$
x^{(i)}_1 x^{(i)}_2 = q x^{(i)}_2 x^{(i)}_1 \quad \text{for all} \quad 1\le i \le n,
$$
and

\begin{align}
\nonumber x_1^{(j)} x_1^{(i)} &= q^{-\frac12} x_1^{(i)} x_1^{(j)}, \\
\nonumber x_1^{(j)} x_2^{(i)} &= q^{\frac12} x_2^{(i)} x_1^{(j)}, \\
\label{commutator-rel} x_2^{(j)} x_1^{(i)} &= q^{\frac12} \hr{x_1^{(i)} x_2^{(j)} + (q^{-1}-q) x_2^{(i)} x_1^{(j)}}, \\
\nonumber x_2^{(j)} x_2^{(i)} &= q^{-\frac12} x_2^{(i)} x_2^{(j)}.
\end{align}
for all $1\le i<j \le n$.
\end{prop}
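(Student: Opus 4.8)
The plan is to derive the presentation by combining the excision computation of \cref{thm:DnOqConf} with an explicit evaluation of the $U_q(\sl_2)$ universal $R$-matrix on the fundamental representation. \emph{Step 1 (identifying $\Oq(N\backslash G)$ as a quantum plane).} First I would record that for $G=\SL_2$ the subalgebra $\Oq(N\backslash G)\subset\Fq(G)$ appearing in \cref{ribbon-rem} is generated by the first-row matrix coefficients $x_1:=a_{11}$ and $x_2:=a_{12}$ --- these span $l_{-\omega}\boxtimes V(1)$, the pairing of a fixed lowest-weight covector against the fundamental representation $V(1)$ --- and that the only relation among them is the one inherited from the relations recorded above for $\Fq(G)$, namely $x_1x_2=qx_2x_1$. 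Indeed, $V(1)^{\otimes m}\twoheadrightarrow V(m)$ shows that $\Oq(N\backslash G)=\bigoplus_{m\ge 0}l_{-m\omega}\boxtimes V(m)$ is generated in degree one, its degree-$m$ component has dimension $\dim V(m)=m+1$, and these dimensions match those of the quantum plane $\C(q)\langle x_1,x_2\rangle/(x_1x_2-qx_2x_1)$; hence the evident graded surjection from the quantum plane is an isomorphism, under which $x_1$ is the highest- and $x_2$ the lowest-weight vector of the copy of $V(1)$ in degree one.

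\emph{Step 2 (cross relations from the $R$-matrix).} By \cref{thm:DnOqConf}, $\Oq(\Conf_n)$ is the $n$-fold braided tensor product of $\Oq(N\backslash G)$ in $\Repq(G)$, the $n$ copies occupying distinct $\Repq(T)$-slots; so it is generated by the $x_1^{(i)},x_2^{(i)}$ for $1\le i\le n$, each factor contributes the relation $x_1^{(i)}x_2^{(i)}=qx_2^{(i)}x_1^{(i)}$, and the relations between distinct factors $i<j$ are governed by \eqref{cross-rel}. Since each factor is generated in degree one, it suffices to evaluate \eqref{cross-rel} on the degree-one generators, i.e. to compute the action of $\Rc^G$ on $V(1)\otimes V(1)$. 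Because $E$ and $F$ act nilpotently on $V(1)$ with $E^2=F^2=0$, the quantum dilogarithm in \eqref{Uqg-rmat} truncates and a short computation gives $\Rc^G|_{V(1)\otimes V(1)}=\big(1+(q^{-1}-q)\,E\otimes F\big)\,q^{-\frac12 H\otimes H}$. Feeding the four vectors $x_a\otimes x_b$ through this operator --- the diagonal factor contributing $q^{-1/2}$ when the two weights agree in sign and $q^{1/2}$ when they disagree, and the $E\otimes F$ term contributing the single correction turning $x_2^{(j)}x_1^{(i)}$ into $q^{1/2}\big(x_1^{(i)}x_2^{(j)}+(q^{-1}-q)x_2^{(i)}x_1^{(j)}\big)$ --- and substituting into \eqref{cross-rel} reproduces precisely the list \eqref{commutator-rel} for each pair $i<j$.

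\emph{Step 3 (completeness of the presentation).} Finally I would check that these relations already present $\Oq(\Conf_n)$, rather than merely a proper quotient of it. This is immediate from the braided tensor product structure: as an object of $\Repq(G\times\rev T^n)$ the underlying space of $\Oq(\Conf_n)$ is $\Oq(N\backslash G)^{\otimes n}$, which carries the monomial basis $\prod_{i=1}^n (x_1^{(i)})^{a_i}(x_2^{(i)})^{b_i}$; the within-factor relations order the two generators inside each factor, while the cross relations \eqref{commutator-rel} move every factor-$j$ generator to the right of every factor-$i$ generator for $i<j$. Thus the stated relations already suffice to rewrite an arbitrary word in this basis, and a dimension count in each multidegree forces the abstractly presented algebra to map isomorphically onto $\Oq(\Conf_n)$.

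The step I expect to be the main obstacle is Step 2: pinning down the conventions precisely enough --- the eigenvalues of $q^{-\frac12 H\otimes H}$ on $V(1)\otimes V(1)$, the normalisation of $E\otimes F$ as an operator, and the direction in which \eqref{cross-rel} straightens a product of generators --- so that the half-integer powers of $q$ and, above all, the asymmetric term $(q^{-1}-q)x_2^{(i)}x_1^{(j)}$ in the third relation of \eqref{commutator-rel} come out with exactly the coefficients written there. The quantum-plane identification of Step 1 and the normal-form argument of Step 3 are routine by comparison.
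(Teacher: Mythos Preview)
Your proposal is correct and follows essentially the same approach as the paper: the paper simply notes that the relations follow by setting $x_1=a_{11}$, $x_2=a_{12}$ in each factor and using the $R$-matrix to braid the factors via \eqref{cross-rel}. Your Step~3 (the normal-form/dimension count establishing that no further relations are needed) is a useful addition that the paper leaves implicit.
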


All the above identities in \cref{prop:gensrels} follow from viewing functions on $N \backslash G$ as a $G$-representation, and using the $R$-matrix to braid the factors in $\Oq((N \backslash G)^n)$ before multiplying.

\begin{defn}
\label{defn-Z}
For any pair $1 \le i,j \le n$ such that $i \ne j$, we define elements $D_{ij}\in\Oq(\Conf_n)$ by
$$
D_{ij} = x_1^{(i)} x_2^{(j)} - q x_2^{(i)} x_1^{(j)}.
$$
\end{defn}

\begin{lemma}
\label{delta-automorphy}
For $1 \le i < j \le n$ we have
$$
D_{ji} = -q^{\frac32} D_{ij}.
$$
\end{lemma}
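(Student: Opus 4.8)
The plan is a direct computation using the cross relations of \cref{prop:gensrels}. By definition, $D_{ji} = x_1^{(j)}x_2^{(i)} - q\,x_2^{(j)}x_1^{(i)}$, which involves the monomials $x_1^{(j)}x_2^{(i)}$ and $x_2^{(j)}x_1^{(i)}$ in the ``wrong'' order (factor $j$ before factor $i$ with $i<j$). First I would rewrite each of these two monomials in the normal-ordered form (factor $i$ before factor $j$) using the second and third relations in~\eqref{commutator-rel}, namely $x_1^{(j)}x_2^{(i)} = q^{1/2}x_2^{(i)}x_1^{(j)}$ and $x_2^{(j)}x_1^{(i)} = q^{1/2}\bigl(x_1^{(i)}x_2^{(j)} + (q^{-1}-q)x_2^{(i)}x_1^{(j)}\bigr)$.

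Substituting these in gives
$$
D_{ji} = q^{1/2}x_2^{(i)}x_1^{(j)} - q^{3/2}\bigl(x_1^{(i)}x_2^{(j)} + (q^{-1}-q)x_2^{(i)}x_1^{(j)}\bigr).
$$
Then I would collect the coefficient of $x_2^{(i)}x_1^{(j)}$: it is $q^{1/2} - q^{3/2}(q^{-1}-q) = q^{1/2} - q^{1/2} + q^{5/2} = q^{5/2}$, while the coefficient of $x_1^{(i)}x_2^{(j)}$ is $-q^{3/2}$. Hence $D_{ji} = -q^{3/2}x_1^{(i)}x_2^{(j)} + q^{5/2}x_2^{(i)}x_1^{(j)} = -q^{3/2}\bigl(x_1^{(i)}x_2^{(j)} - q\,x_2^{(i)}x_1^{(j)}\bigr) = -q^{3/2}D_{ij}$, as claimed.

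There is no real obstacle here: the only thing to watch is keeping track of the half-integer powers of $q$ and the sign, and making sure the ``quantum correction'' term $(q^{-1}-q)$ in the third relation is what produces the exact cancellation leaving $q^{5/2}$. If desired, one could alternatively derive this from the antisymmetry of the $R$-matrix pairing used to define $\Oq(\Conf_n)$, but the brute-force substitution above is the shortest route and uses only \cref{prop:gensrels}.
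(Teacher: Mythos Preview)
Your proof is correct and is essentially identical to the paper's own proof: both use the second and third relations in \eqref{commutator-rel} to reorder the two monomials in $D_{ji}$ and then collect terms. You have simply made the coefficient bookkeeping (the cancellation $q^{1/2}-q^{3/2}(q^{-1}-q)=q^{5/2}$) explicit, whereas the paper leaves that step to the reader.
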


\begin{proof}
The proof follows immediately from relation~\eqref{commutator-rel} in \cref{prop:gensrels}:
\begin{align*}
D_{ji} &= x_1^{(j)} x_2^{(i)} - q x_2^{(j)} x_1^{(i)} \\
&= q^{\frac12} x_2^{(i)} x_1^{(j)} -q^{\frac32}\hr{x_1^{(i)}x_{2}^{(j)}+(q^{-1}-q)x^{(i)}_2x_1^{(j)}} \\
&= -q^{\frac32} D_{ij}.
\end{align*}
\end{proof}

\begin{prop}
\label{delta-x-rels}
Elements $D_{ij}$ are invariant with respect to the (left coregular) $U_q(\sl_2)$-action, that is $D_{ij} \in \Oq(\Conf_n)^{U_q(\sl_2)}$. Moreover, for all $1 \le i < j \le n$,  and $a=1,2$, they satisfy commutation relations
$$
D_{ij} x_a^{(k)} = 
\begin{cases}
q^{\frac12} x_a^{(i)} D_{ij}, &\text{if} \;\; k=i, \\
q^{-\frac12} x_a^{(j)} D_{ij}, &\text{if} \;\; k=j, \\
x_a^{(k)} D_{ij}, &\text{if} \;\; k<i \;\; \text{or} \;\; k>j,
\end{cases}
$$
as well as
\begin{align*}
D_{ij} x_a^{(k)} &= q^{-\frac{1}{2}} x_a^{(j)} D_{ik} + q^{\frac{1}{2}} x_a^{(i)} D_{kj}, \\
x_a^{(k)} D_{ij} &= q^{\frac{1}{2}} D_{ik} x_a^{(j)} + q^{-\frac{1}{2}} D_{kj} x_a^{(i)}
\end{align*}
if $i < k < j$.
\end{prop}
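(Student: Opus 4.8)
\textit{Proof strategy.} The plan is to establish all three parts by direct computation in $\Oq(\Conf_n)$, organized so as to keep the bookkeeping bounded. The key reduction is that the subalgebra of $\Oq(\Conf_n)$ generated by the generators in factors $i$ and $j$ (for $i<j$) is isomorphic to $\Oq(\Conf_2)$ via a map sending $D_{12}\mapsto D_{ij}$, and that the three factors relevant to part (3) generate a copy of $\Oq(\Conf_3)$ sending $D_{12},D_{13},D_{23}$ to $D_{ik},D_{ij},D_{kj}$. Hence it suffices to verify the invariance claim and the stated relations inside $\Oq(\Conf_2)$ and $\Oq(\Conf_3)$, where each assertion becomes a finite manipulation with the presentation of \cref{prop:gensrels}. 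Throughout I would reduce every monomial to the PBW normal form in which superscripts are nondecreasing and, within each factor, $x_1$ precedes $x_2$, using $x_1^{(i)}x_2^{(i)}=qx_2^{(i)}x_1^{(i)}$ together with the cross relations of \cref{prop:gensrels}.

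\textit{Invariance.} Since $\Oq(\Conf_n)$ carries the diagonal (left coregular) $U_q(\sl_2)$-action and its multiplication is $U_q(\sl_2)$-equivariant, it is enough to observe that $D_{ij}$ lies in the subspace of bidegree $(1,1)$ for the $i$-th and $j$-th $T$-gradings; as a $U_q(\sl_2)$-module this subspace is the image under multiplication of $V_1\otimes V_1\cong V_2\oplus V_0$, whose invariant subspace is one-dimensional. A one-line check with the coproduct \eqref{coproduct-def} — namely $E\cdot D_{ij}=F\cdot D_{ij}=0$, while $K\cdot D_{ij}=D_{ij}$ is immediate on weight grounds — identifies $D_{ij}$ up to scalar with this unique invariant, so $D_{ij}\in\Oq(\Conf_n)^{U_q(\sl_2)}$.

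\textit{The commutation relations.} For part (2) with $k<i$ or $k>j$, one moves $x_a^{(k)}$ across both factors $i$ and $j$ using the cross relations of \cref{prop:gensrels}; the contributions involving the quantum-commutator term $(q^{-1}-q)$ of \eqref{commutator-rel} appear twice with opposite signs and cancel, leaving exactly $x_a^{(k)}D_{ij}$ (no net power of $q$). For $k=i$, pushing $x_a^{(i)}$ to the left past $x_2^{(j)}$ via \eqref{commutator-rel} produces a term proportional to $(q^{-1}-q)$ which cancels against the second summand of $D_{ij}$ after restoring normal order with $x_1^{(i)}x_2^{(i)}=qx_2^{(i)}x_1^{(i)}$, and one is left with $q^{1/2}x_a^{(i)}D_{ij}$; the case $k=j$ is identical with $x_1^{(i)},x_2^{(i)}$ in place of $x_2^{(j)}$ and yields $q^{-1/2}x_a^{(j)}D_{ij}$. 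For part (3) with $i<k<j$, substitute $D_{ij}=x_1^{(i)}x_2^{(j)}-qx_2^{(i)}x_1^{(j)}$ and commute $x_a^{(k)}$ leftward past the two factor-$j$ generators using \eqref{commutator-rel} (legitimate since $k<j$); the resulting four monomials, after re-normalizing and invoking \cref{delta-automorphy}, regroup into $q^{-1/2}x_a^{(j)}D_{ik}+q^{1/2}x_a^{(i)}D_{kj}$. The second identity of part (3) then follows either from an entirely parallel computation with left and right exchanged, or formally: the $\C$-algebra anti-automorphism of $\Oq(\Conf_n)$ that reverses multiplication, fixes each $x_a^{(i)}$, and sends $q\mapsto q^{-1}$ is well defined on the relations of \cref{prop:gensrels}, satisfies $D_{ij}\mapsto q^{1/2}D_{ij}$, and carries the first identity of part (3) onto the second.

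\textit{Main obstacle.} There is no conceptual difficulty; the delicate point is the careful accounting of the half-integer powers of $q$ and of the $(q^{-1}-q)$ terms from \eqref{commutator-rel}, especially in the three-term relations of part (3), where one must check that the stray terms cancel exactly. The reduction to $\Oq(\Conf_2)$ and $\Oq(\Conf_3)$ keeps this a bounded check, and the anti-automorphism observation halves the remaining work.
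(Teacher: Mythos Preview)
Your proposal is correct and follows essentially the same approach as the paper: both prove invariance by a direct check with the coproduct \eqref{coproduct-def} and establish the commutation relations by straightforward manipulation of the presentation in \cref{prop:gensrels}, working case by case. Your two organizing devices --- the reduction to $\Oq(\Conf_2)$ and $\Oq(\Conf_3)$, and the $\C$-linear anti-automorphism fixing the $x_a^{(i)}$ and sending $q\mapsto q^{-1}$ (which indeed sends $D_{ij}\mapsto q^{1/2}D_{ij}$ and swaps the two three-term identities) --- are correct and tidy the bookkeeping, but the underlying argument is the same direct verification the paper sketches for one representative case.
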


\begin{proof}
Note that we have
\begin{align*}
E \rhu x_1 &= 0, & K \rhu x_1 &= qx_1, & F \rhu x_1 &= x_2, \\
E \rhu x_2 &= x_1, & K \rhu x_2 &= q^{-1}x_2, & F \rhu x_2 &= 0.
\end{align*}
The proof of the invariance of degree 2 element $D_{ij}$ then follows from a direct calculation using the module algebra structure. For example, we have
\begin{align*}
E \rhu D_{ij}
&= \hr{(E \rhu x_1^{(i)}) x_2^{(j)} + (K \rhu x_1^{(i)}) (E \rhu x_2^{(j)})} \\
&- q\hr{(E \rhu x_2^{(i)}) x_1^{(j)} + (K \rhu x_2^{(i)}) (E \rhu x_1^{(j)})} \\
&= qx_1^{(i)}x_1^{(j)} - qx_1^{(i)}x_1^{(j)} = 0.
\end{align*}

As for the commutation relations, we shall only prove the top equality here, with the other cases being proved in a similar fashion. Substituting $a=1,2$ we obtain
\begin{align*}
D_{ij} x_2^{(i)} 
&= x_1^{(i)} x_2^{(j)} x_2^{(i)} - q x_2^{(i)} x_1^{(j)} x_2^{(i)} \\
&= q^{\frac12}x_2^{(i)} x_1^{(i)} x_2^{(j)}  - q^{\frac32} x_2^{(i)}x_2^{(i)} x_1^{(j)} \\
&= q^{\frac{1}{2}} x_2^{(i)} D_{ij}
\end{align*}
and
\begin{align*}
D_{ij} x_1^{(i)}
&= x_1^{(i)} x_2^{(j)} x_1^{(i)} - q x_2^{(i)} x_1^{(j)} x_1^{(i)} \\
&= q^{\frac12}x_1^{(i)}\hr{x_1^{(i)} x_2^{(j)} + (q^{-1}-q) x_2^{(i)}x_1^{(j)}} - q^{-\frac12}x_1^{(i)} x_2^{(i)} x_1^{(j)}\\
&= q^{\frac{1}{2}} x_2^{(i)} \hr{x_1^{(i)} x_2^{(j)} -) x_2^{(i)}x_1^{(j)}}\\
&= q^{\frac{1}{2}} x_2^{(i)} D_{ij}.
\end{align*}
\end{proof}

As an immediate corollary of \cref{delta-x-rels}, we obtain the following commutation relations between the $D_{ij}$.

\begin{cor}
\label{cor-deltas}
For $i<k<j$, we have 
\begin{align*}
D_{ik} D_{kj} &= q^{-\frac{1}{2}} D_{kj} D_{ik}, \\
D_{kj} D_{ij} &= q^{-\frac{1}{2}} D_{ij} D_{kj}, \\
D_{ij} D_{ik} &= q^{-\frac{1}{2}} D_{ik} D_{ij}.
\end{align*}
\end{cor}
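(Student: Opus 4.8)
The plan is to obtain all three relations as immediate consequences of \cref{delta-x-rels}, which records how each $D_{ij}$ commutes past the generators $x_a^{(k)}$. The mechanism is always the same: to compare $D_{ab}$ and $D_{cd}$, expand one of the two, say $D_{cd} = x_1^{(c)}x_2^{(d)} - q\,x_2^{(c)}x_1^{(d)}$, and push the other factor to the opposite side using \cref{delta-x-rels}. The point is that in each of the three cases the factor being moved only has to cross generators $x_a^{(k)}$ whose superscript is either an \emph{endpoint index of the other $D$} or lies \emph{outside} the relevant interval, so that only the first (scalar) block of relations in \cref{delta-x-rels} is invoked and the three-term relations never intervene.

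For the first identity, expand $D_{kj}$ and move $D_{ik}$ to the right: since $k$ is the second index of $D_{ik}$ we have $D_{ik}\,x_a^{(k)} = q^{-\frac12}x_a^{(k)}D_{ik}$, while $D_{ik}\,x_a^{(j)} = x_a^{(j)}D_{ik}$ because $j>k$; collecting the two terms gives $D_{ik}D_{kj} = q^{-\frac12}\big(x_1^{(k)}x_2^{(j)} - q\,x_2^{(k)}x_1^{(j)}\big)D_{ik} = q^{-\frac12}D_{kj}D_{ik}$. For the second identity, expand $D_{ij}$ and move $D_{kj}$ to the right using $D_{kj}\,x_a^{(i)} = x_a^{(i)}D_{kj}$ (since $i<k$) and $D_{kj}\,x_a^{(j)} = q^{-\frac12}x_a^{(j)}D_{kj}$, yielding $D_{kj}D_{ij} = q^{-\frac12}D_{ij}D_{kj}$. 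For the third identity, compute $D_{ik}D_{ij}$ by expanding $D_{ij}$ and moving $D_{ik}$ to the right via $D_{ik}\,x_a^{(i)} = q^{\frac12}x_a^{(i)}D_{ik}$ and $D_{ik}\,x_a^{(j)} = x_a^{(j)}D_{ik}$ (again $j>k$), which gives $D_{ik}D_{ij} = q^{\frac12}D_{ij}D_{ik}$, equivalent to the stated relation.

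Each step is a two-line manipulation, so I anticipate no genuine obstacle. The single point requiring attention is in the third case, where one must expand $D_{ij}$ and move $D_{ik}$ rather than the reverse: expanding $D_{ik}$ instead would force the three-term relation of \cref{delta-x-rels} for $D_{ij}\,x_a^{(k)}$ with $i<k<j$ into the computation, and after the cancellation coming from $x_1^{(i)}x_2^{(i)} = q\,x_2^{(i)}x_1^{(i)}$ one merely recovers a tautology. Alternatively, one could cut down the number of cases by first using \cref{delta-automorphy} to rewrite one of the two $D$'s, but the direct verification above is already the shortest route.
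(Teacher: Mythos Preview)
Your proposal is correct and is exactly the approach the paper intends: the corollary is stated as an immediate consequence of \cref{delta-x-rels}, and your computation carries this out by expanding one $D$ and sliding the other past the generators using only the scalar commutation rules. Your observation about which factor to expand in the third case (to avoid the three-term relation) is a nice piece of bookkeeping, but otherwise there is nothing to add.
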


We also obtain the following exchange relation:

\begin{cor}
\label{cor-mut}
For $i<k<j<l$, we have 
$$
D_{ij} D_{kl} = q^{-\frac{1}{2}} D_{ik} D_{jl} + q^{\frac{1}{2}} D_{kj} D_{il}.
$$
\end{cor}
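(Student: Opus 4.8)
The plan is to derive this exchange relation by a short, purely formal manipulation, treating \cref{delta-x-rels} as a black box. First I would expand the second factor using its definition, writing
\[
D_{ij}D_{kl} \;=\; D_{ij}\,x_1^{(k)}x_2^{(l)} \;-\; q\,D_{ij}\,x_2^{(k)}x_1^{(l)}.
\]
Since $i<k<j$, the ``middle index'' exchange relation of \cref{delta-x-rels},
\[
D_{ij}\,x_a^{(k)} \;=\; q^{-\frac12}x_a^{(j)}D_{ik} + q^{\frac12}x_a^{(i)}D_{kj}\qquad(a=1,2),
\]
applies and pushes $D_{ij}$ to the right of $x_a^{(k)}$, at the cost of replacing it by the two ``shorter'' determinants $D_{ik}$ and $D_{kj}$.

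Next, because $l$ is larger than all of $i,k,j$, the third (commuting) case of the $D$--$x$ relations in \cref{delta-x-rels} shows $D_{ik}$ and $D_{kj}$ move freely past $x_a^{(l)}$. Substituting into the two terms above and regrouping the four resulting monomials according to their trailing factor, the coefficient of $D_{ik}$ is $q^{-\frac12}\bigl(x_1^{(j)}x_2^{(l)}-q\,x_2^{(j)}x_1^{(l)}\bigr)=q^{-\frac12}D_{jl}$, and the coefficient of $D_{kj}$ is $q^{\frac12}\bigl(x_1^{(i)}x_2^{(l)}-q\,x_2^{(i)}x_1^{(l)}\bigr)=q^{\frac12}D_{il}$, so that $D_{ij}D_{kl}=q^{-\frac12}D_{jl}D_{ik}+q^{\frac12}D_{il}D_{kj}$.

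Finally I would bring the factors into the order stated in the corollary: $D_{ik}$ commutes with $D_{jl}$ — indeed $j$ and $l$ both lie above the index range of $D_{ik}$, so by \cref{delta-x-rels} $D_{ik}$ commutes with both $x_a^{(j)}$ and $x_a^{(l)}$ — and likewise $D_{kj}$ commutes with $D_{il}$, since its index range $(k,j)$ is nested strictly inside $(i,l)$. This gives $D_{ij}D_{kl}=q^{-\frac12}D_{ik}D_{jl}+q^{\frac12}D_{kj}D_{il}$, as claimed. I expect no genuine obstacle here: the only delicate point is bookkeeping the powers of $q$ contributed by each exchange, and the one place a scalar could slip is the recombination of the $x_a^{(j)}x_a^{(l)}$ and $x_a^{(i)}x_a^{(l)}$ monomials back into $D_{jl}$ and $D_{il}$, which is why I would write those two regroupings out explicitly.
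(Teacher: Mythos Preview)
Your argument is correct and is exactly the kind of direct unwinding from \cref{delta-x-rels} that the paper has in mind; the paper does not spell out a proof of this corollary at all, simply stating that it follows immediately from \cref{delta-x-rels}. Your bookkeeping of the $q$-powers and the final commutation of $D_{jl}$ with $D_{ik}$ (disjoint indices) and of $D_{il}$ with $D_{kj}$ (nested indices) is accurate.
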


Next, we will show that certain sets of functions $D_{ij}$ are multiplicative Ore sets, which will allow us to localize the ring $\Oq(\Conf_n)$.

We will say that an $n$-gon $\DD_n$ is \emph{based} if its vertices are labelled 1 to $n$ in a counterclockwise order. For any $1 \le k < n$, a \emph{step $k$ diagonal} $e_{ij}$ in a based $n$-gon $\DD_n$ is a segment connecting vertices $i$ and $j$ with $j-i=k$. In particular, all step 1 diagonals are sides of $\DD_n$, as well as the only step $(n-1)$ diagonal $e_{1n}$. In what follows, diagonals of step 2 will be referred to as \emph{short diagonals}. We say that diagonals $e_{ij}$ and $e_{kl}$ are \emph{crossing} if either of the following two inequalities holds:
$$
i < k < j < l \qquad\text{or}\qquad k < i < l < j.
$$
A \emph{triangulation} of $\DD_n$ is a maximal collection of its non-crossing diagonals. Note that every triangulation contains all sides of $\DD_n$ and at least one short diagonal.  We will call any collection, not necessarily maximal, of non-crossing diagonals of $\DD_n$ its \emph{partial triangulation.}

To an algebra $\Oq(\Conf_n)$ let us associate an $n$-gon $\DD_n$, whose vertices represent tensor factors of $\Oq(\Conf_n)$ and are labelled accordingly. A partial triangulation $\tri$ of $\DD_n$ gives rise to the multiplicative set $S_{\tri} \subset \Oq(\Conf_n)$ generated by elements $D_{ij}$ such that $e_{ij} \in \tri$. Note that by \cref{cor-deltas}, all elements of $S_{\tri}$ skew-commute.

\begin{lemma}
\label{lem:Ore}
For any element $r \in \Oq(\Conf_n)$ and any triangulation $\tri$ of $\DD_n$, there exist elements $s_r, s'_r \in S_{\tri}$ such that both products
$$
s_rr \qquad\text{and}\qquad rs'_r
$$
can be expressed as sums of monomials in $x_a^{(1)}$, $x_a^{(n)}$, and $D_{ij} \in S_{\tri}$.
\end{lemma}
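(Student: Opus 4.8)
The plan is to induct on $n$, at each step peeling off the triangle of $\tri$ incident to the outer side $e_{1n}$ and reducing to an \emph{elimination step} in which the generator carried by the apex of that triangle becomes redundant once one multiplies by a suitable power of $D_{1n}$.

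Write $A'\subset\Oq(\Conf_n)$ for the subalgebra generated by $x_1^{(1)},x_2^{(1)},x_1^{(n)},x_2^{(n)}$ and the $D_{ij}$ with $e_{ij}\in\tri$; then $S_\tri\subset A'$, and the assertion is exactly that every $r$ admits $s_r,s'_r\in S_\tri$ with $s_rr,\,rs'_r\in A'$. Since the elements of $S_\tri$ skew-commute, any finite family in $S_\tri$ has a common multiple in $S_\tri$, so (splitting $r$ into monomials) it suffices to prove this for $r$ a single monomial in the $x_a^{(k)}$; and reordering the factors of such a monomial using \cref{prop:gensrels} rewrites it as a sum of monomials in which all factors of index $\le m$ precede those of index $>m$, so by the same remark we may assume $r$ itself has this form, for any fixed $m$. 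The base cases $n\le 2$ are immediate, since then every $x_a^{(k)}$ lies in $\{x_a^{(1)},x_a^{(n)}\}$.

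For the inductive step, let $\Delta_0=(1,m,n)$ be the triangle of $\tri$ on the side $e_{1n}$, so $1<m<n$; the chords $e_{1m},e_{mn}$ cut $\DD_n$ into $\Delta_0$ and two subpolygons on $\{1,\dots,m\}$ and $\{m,\dots,n\}$, on which $\tri$ restricts to triangulations $\tri_L,\tri_R$ with $e_{1m}\in\tri_L$, $e_{mn}\in\tri_R$ and $\tri=\tri_L\cup\tri_R\cup\{e_{1n}\}$. The subalgebras of $\Oq(\Conf_n)$ generated by $\{x_a^{(i)}\}_{1\le i\le m}$ and by $\{x_a^{(i)}\}_{m\le i\le n}$ are canonically $\Oq(\Conf_m)$ and $\Oq(\Conf_{n-m+1})$, compatibly with the $D_{ij}$. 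Write the (split) monomial as $r=M_{\le m}\,M_{>m}$ with $M_{\le m}$ in the first subalgebra and $M_{>m}$ in the second. The inductive hypothesis, applied in $\Oq(\Conf_m)$ with $\tri_L$ and in $\Oq(\Conf_{n-m+1})$ with $\tri_R$, gives $s_L\in S_{\tri_L}$ and $s_R\in S_{\tri_R}$ with $s_LM_{\le m}\in\langle x_a^{(1)},x_a^{(m)},D_e\,(e\in\tri_L)\rangle$ and $s_RM_{>m}\in\langle x_a^{(m)},x_a^{(n)},D_e\,(e\in\tri_R)\rangle$. Using \cref{delta-x-rels} one checks that $s_L$ commutes with $M_{>m}$ and that $s_R$ skew-commutes with every generator of the first of these subalgebras; hence $s_Rs_L\,r\in A''$, where $A'':=\langle x_a^{(1)},x_a^{(m)},x_a^{(n)},D_e\,(e\in\tri)\rangle$ (note $\tri_L\cup\tri_R\subset\tri$).

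Finally we eliminate the lone extra generator $x_a^{(m)}$. Since $e_{1m},e_{mn},e_{1n}\in\tri$, \cref{delta-x-rels} gives
\[
D_{1n}\,x_a^{(m)}\;=\;q^{-\frac12}x_a^{(n)}D_{1m}+q^{\frac12}x_a^{(1)}D_{mn}\;\in\;A'.
\]
Moreover $D_{1n}$ skew-commutes with every generator of $A'$: with $x_a^{(1)},x_a^{(n)}$ by the first case of \cref{delta-x-rels}, and with each $D_e$, $e=e_{ij}\in\tri$, because either $e$ meets $e_{1n}$ in a vertex — then \cref{cor-deltas} applies — or $1<i<j<n$, in which case $D_{ij}$ commutes with $x_a^{(1)}$ and $x_a^{(n)}$ and hence with $D_{1n}$. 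It follows, by induction on the largest number of letters $x_\bullet^{(m)}$ occurring in the monomials of $w\in A''$, that $D_{1n}^{k}w\in A'$ for $k$ large: writing a monomial of $w$ as $w_0\,x_a^{(m)}\,w_1$ with $w_0\in A'$ free of $x_\bullet^{(m)}$ and $w_1\in A''$ carrying one fewer such letter, pushing $D_{1n}$ through $w_0$ up to a scalar and using the displayed identity produces an element of $A''$ whose monomials carry fewer $x_\bullet^{(m)}$'s. Taking $s_r=D_{1n}^{k}s_Rs_L\in S_\tri$ gives $s_rr\in A'$; the statement for $rs'_r$ follows by the mirror argument, with $x_a^{(m)}D_{1n}=q^{\frac12}D_{1m}x_a^{(n)}+q^{-\frac12}D_{mn}x_a^{(1)}$ in place of the displayed identity. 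I expect the main obstacle to be this elimination step — specifically the verification that $D_{1n}$ skew-commutes with all of $S_\tri$, which is what guarantees that the auxiliary elements generated along the way never leave $S_\tri$, and which is the point at which the combinatorial structure of the triangulation is genuinely used.
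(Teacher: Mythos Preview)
Your argument is correct, but the induction is organised differently from the paper's. The paper removes an \emph{ear}: it picks a short diagonal $e_{i-1,i+1}\in\tri$, uses the identity $D_{i-1,i+1}\,x_a^{(i)}=q^{-\frac12}x_a^{(i+1)}D_{i-1,i}+q^{\frac12}x_a^{(i-1)}D_{i,i+1}$ to absorb every occurrence of $x_a^{(i)}$ into a suitable power of $D_{i-1,i+1}$ (which skew-commutes with all remaining generators), and then recurses once on the $(n-1)$-gon with vertex $i$ deleted. You instead split at the triangle $(1,m,n)$ on the outer side $e_{1n}$, recurse on both subpolygons $\{1,\dots,m\}$ and $\{m,\dots,n\}$, and only then eliminate $x_a^{(m)}$ using $D_{1n}$. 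The paper's scheme is leaner (a single recursive call, no merging step), while yours keeps the endpoints $1,n$ fixed throughout and so makes the target algebra $A'$ manifestly stable under the recursion. One small point in your merging step: knowing that $s_R$ skew-commutes with every \emph{generator} of $\langle x_a^{(1)},x_a^{(m)},D_e\,(e\in\tri_L)\rangle$ does not immediately let you slide it past the \emph{sum} $s_LM_{\le m}$, since different monomials could acquire different scalars. The simplest repair is to commute $s_R$ directly past the single monomial $M_{\le m}$ (each $D_{pq}$ with $p\ge m$ skew-commutes with each $x_a^{(i)}$ for $i\le m$), obtaining $s_Rs_L\,r=c\,(s_LM_{\le m})(s_RM_{>m})$ for some scalar $c$, whereupon both bracketed factors lie in $A''$ by the inductive hypotheses.
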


\begin{proof}
We shall prove the Lemma by induction on $n$. For $n=2$ the statement is immediate. Note that by~\cref{prop:gensrels}, any element $r \in \Oq(\Conf_n)$ can be uniquely written as a sum
$$
r = \sum_{\ell=1}^m c_\ell \hr{x_{1}^{(1)}}^{\alpha_{\ell,1}} \hr{x_{2}^{(1)}}^{\beta_{\ell,1}} \dots \hr{x_{1}^{(n)}}^{\alpha_{\ell,n}} \hr{x_{2}^{(n)}}^{\beta_{\ell,n}},
$$
where $c_\ell \in \C(q)$ and $m$ is the number of monomials in the expression. Let us set
$$
d_i = \max_\ell\hr{\alpha_{\ell,i} + \beta_{\ell,i}}.
$$
Now, consider a triangulation $\tri$ of $\DD_n$ and any of its short diagonals $e = e_{i-1,i+1} \in \tri$. By \cref{delta-x-rels} and \cref{cor-deltas}, we have relations
\begin{align*}
D_e x_a^{(i)} &= q^{-\frac{1}{2}} x_a^{(i+1)} D_{i-1,i} + q^{\frac{1}{2}} x_a^{(i-1)} D_{i,i+1}, \\
x_a^{(i)} D_e &= q^{\frac{1}{2}} D_{i-1,i} x_a^{(i+1)} + q^{-\frac{1}{2}} D_{i,i+1} x_a^{(i-1)}
\end{align*}
and know that $D_e$ skew-commutes with all monomials in the right hand side of the above equalities. This in turn implies that the products
$$
D_e^{d_i} \cdot r \qquad\text{and}\qquad r \cdot D_e^{d_i}
$$
can be expressed as sums of monomials in elements $D_{i-1,i}$, $D_{i,i+1}$, and $x_a^{(j)}$ with $j \ne i$.

Now, let $\DD_{n-1}$ be the $(n-1)$-gon obtained from $\DD_n$ by erasing the sides $e_{i-1,i}$ and $e_{i,i+1}$. Choosing a short diagonal $e' = e_{j-1,j+1}$ of $\DD_{n-1}$ and repeating the above reasoning, we derive that the products
$$
D_{e'}^{d'_j} D_e^{d_i} \cdot r
\qquad\text{and}\qquad
r \cdot D_e^{d_i} D_{e'}^{d'_j}
$$
can be expressed as sums of monomials in elements $D_{i-1,i}$, $D_{i,i+1}$, $D_{j-1,j}$, $D_{j,j+1}$, and $x_a^{(k)}$ with $k \ne i,j$. The rest of the proof is done by induction on the number $n$ of sides in $\DD_n$.
\end{proof}

\begin{cor}
\label{cor:Ore}
For any triangulation $\tri$ of $\DD_n$, the collection $S_{\tri}$ forms a multiplicative (left and right) Ore set in the ring $R = \Oq(\Conf_n)$: for all $r \in \Oq(\Conf_n)$ and $s\in S_\tri$ one has
$$
S_\tri r\cap Rs \neq \emptyset \qquad\text{and}\qquad rS_\tri\cap sR \neq \emptyset.
$$
\end{cor}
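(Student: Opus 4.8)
The plan is to read off both Ore conditions directly from \cref{lem:Ore}: the point is that $S_{\tri}$ consists of elements which $q$-commute with all of the generators in which that lemma writes its normal forms, so that such normal forms slide freely past elements of $S_{\tri}$. First I would record two elementary facts. \textbf{(i)} The set $S_{\tri}$ is multiplicatively closed and $0\notin S_{\tri}$: indeed $1\in S_{\tri}$, and since no two diagonals of a triangulation cross, the elements $D_{ij}$ with $e_{ij}\in\tri$ pairwise $q$-commute --- this follows from \cref{cor-deltas} when $e_{ij}$ and $e_{kl}$ share a vertex, and from \cref{delta-x-rels} when they are disjoint (in the disjoint non-crossing case one of the two $D$'s commutes outright with both generators $x_a^{(k)},x_a^{(l)}$ building the other). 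Hence every product of generators of $S_{\tri}$ is a scalar multiple of an ordered monomial in the $D_{ij}$, so $S_{\tri}$ is closed under multiplication, and $0\notin S_{\tri}$ because $\Oq(\Conf_n)$ is a domain. \textbf{(ii)} By \cref{delta-x-rels}, each such $D_{ij}$ also $q$-commutes with each of $x_1^{(1)},x_2^{(1)},x_1^{(n)},x_2^{(n)}$ --- here only the cases $k=i$, $k=j$, $k<i$, $k>j$ of that proposition arise, never the three-term case $i<k<j$ which would spoil $q$-commutativity.

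Let $R_{\tri}\subset R:=\Oq(\Conf_n)$ be the subalgebra generated by $x_a^{(1)},x_a^{(n)}$ ($a=1,2$) together with the $D_{ij}$ for $e_{ij}\in\tri$. Facts (i) and (ii) say that every $s\in S_{\tri}$ $q$-commutes with each generator of $R_{\tri}$; consequently $sw=q^{c(w)}ws$ for every monomial $w$ in those generators, and therefore $sR_{\tri}=R_{\tri}s$ --- that is, each $s\in S_{\tri}$ is a normal element of $R_{\tri}$. Now fix $r\in R$ and $s\in S_{\tri}$. For the left condition, \cref{lem:Ore} supplies $t\in S_{\tri}$ with $\rho:=tr\in R_{\tri}$; then $(st)r=s\rho=\rho's$ for some $\rho'\in R_{\tri}\subset R$, and $st\in S_{\tri}$, so $(st)r\in S_{\tri}r\cap Rs$. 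For the right condition, the other half of \cref{lem:Ore} supplies $t'\in S_{\tri}$ with $rt'\in R_{\tri}$, whence $r(t's)=(rt')s=s\rho''$ for some $\rho''\in R_{\tri}$, so $r(t's)\in rS_{\tri}\cap sR$. This establishes the corollary; one moreover obtains for free that $S_{\tri}$ is a two-sided denominator set, the reversibility conditions being vacuous since $R$, being an iterated Ore extension of $\C(q)$ by \cref{prop:gensrels}, is a domain.

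I do not expect a genuine obstacle here: all of the substantive work is already carried out in \cref{lem:Ore}. The one step requiring a little care is the bookkeeping in facts (i) and (ii) --- namely checking that $S_{\tri}$ $q$-commutes with the whole alphabet $\{x_a^{(1)},x_a^{(n)}\}\cup\{D_{ij}:e_{ij}\in\tri\}$ used in the normal forms of \cref{lem:Ore}, and in particular that the exchange relation of \cref{cor-mut}, which holds only for crossing diagonals and would destroy normality, never intervenes precisely because a triangulation contains no crossing pair.
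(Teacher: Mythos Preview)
Your proof is correct and follows essentially the same route as the paper: apply \cref{lem:Ore} to push $r$ into the subalgebra generated by $x_a^{(1)},x_a^{(n)}$ and the $D_{ij}$ with $e_{ij}\in\tri$, then use that every $s\in S_{\tri}$ skew-commutes with each of these generators to slide $s$ past. The paper simply asserts the skew-commutativity facts (pointing to \cref{cor-deltas} for mutual skew-commutation of the $D_{ij}$), whereas you spell out more carefully why the three-term case of \cref{delta-x-rels} never arises for $k\in\{1,n\}$; this extra bookkeeping is helpful but does not change the argument.
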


\begin{proof}
By~\cref{lem:Ore}, for any $r \in R$ there exist elements $s_r, s'_r \in S_\tri$ so that the products $s_rr$ and $r s'_r$ can be expressed as sums of monomials in $x_a^{(1)}$, $x_a^{(n)}$, and $D_{ij} \in S_\tri$. Since elements of~$S_\tri$ skew-commute with each other, as well as with $x_a^{(1)}$ and $x_a^{(n)}$, we conclude that
$$
s s_rr = r's \qquad\text{and}\qquad r s'_r s = sr''
$$
for some $r', r'' \in R$.
\end{proof}

\begin{defn}
Let $\tri$ be a partial triangulation of $\DD_n$. We will denote by $\OqConftri{n}$ the Ore localization of $\Oq(\Conf_n)$ at the multiplicative Ore set $S_\tri$.
In particular, we will write $\OqConfe{n}{e_{ij}}$ for $\Oq(\Conf_n)[D_{ij}^{-1}]$. 
\end{defn}

\begin{lemma}
\label{Oq-morphism-lemma}
For $n \ge 2$ and $1 \le i < j \le n$, there is an injective map of $U_q(\g)$-module algebras
$$
\eta_{ij} \colon \Oq(G) \longra \OqConfe{n}{e_{ij}}
$$
defined by
\begin{align*}
&a_{11} \longmapsto x_1^{(i)}, \qquad\qquad a_{21} \longmapsto x_1^{(j)} D_{ij}^{-1}, \\
&a_{12} \longmapsto x_2^{(i)}, \qquad\qquad a_{22} \longmapsto x_2^{(j)} D_{ij}^{-1}.
\end{align*}
\end{lemma}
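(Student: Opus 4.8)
The plan is to verify three things in turn: that $\eta_{ij}$ is a well-defined algebra homomorphism, that it intertwines the $U_q(\g)$-actions, and that it is injective. The first two are direct computations; injectivity is the only point that requires an idea. Since the Ore localization $\OqConfe{n}{e_{ij}}=\Oq(\Conf_n)[D_{ij}^{-1}]$ exists by \cref{cor:Ore}, for well-definedness it suffices to check that the four elements $x_1^{(i)}$, $x_2^{(i)}$, $x_1^{(j)}D_{ij}^{-1}$, $x_2^{(j)}D_{ij}^{-1}$ satisfy the defining relations of $\Oq(\SL_2)$ recorded in \cref{A-sec}. Each such check is a one-line manipulation: using the relations of \cref{prop:gensrels} together with the commutation rules $D_{ij}x_a^{(i)}=q^{1/2}x_a^{(i)}D_{ij}$, $D_{ij}x_a^{(j)}=q^{-1/2}x_a^{(j)}D_{ij}$ from \cref{delta-x-rels}, one pushes every $D_{ij}^{-1}$ to the right and reduces. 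The quantum determinant relation is immediate from \cref{defn-Z}:
\[
\eta_{ij}(a_{11})\eta_{ij}(a_{22})-q\,\eta_{ij}(a_{12})\eta_{ij}(a_{21})=\bigl(x_1^{(i)}x_2^{(j)}-q\,x_2^{(i)}x_1^{(j)}\bigr)D_{ij}^{-1}=D_{ij}D_{ij}^{-1}=1 .
\]

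For $U_q(\g)$-equivariance, recall from \cref{delta-x-rels} that $D_{ij}$ is $U_q(\g)$-invariant; hence the coregular $U_q(\g)$-module-algebra structure on $\Oq(\Conf_n)$ extends to the Ore localization $\OqConfe{n}{e_{ij}}$, with $D_{ij}^{-1}$ again invariant. As $\eta_{ij}$ is already an algebra map, it is enough to check compatibility on the four generators $a_{11},a_{12},a_{21},a_{22}$. Using the explicit coregular action from the proof of \cref{delta-x-rels} (so $E\rhu x_1=0$, $E\rhu x_2=x_1$, $F\rhu x_1=x_2$, $F\rhu x_2=0$, $K\rhu x_1=qx_1$, $K\rhu x_2=q^{-1}x_2$ in each factor, and the corresponding formulas for $a_{21},a_{22}$), the coproducts $\Delta(E)=E\ot 1+K\ot E$, $\Delta(F)=F\ot K^{-1}+1\ot F$, and $E\rhu D_{ij}^{-1}=F\rhu D_{ij}^{-1}=0$, one finds for instance $E\rhu\eta_{ij}(a_{22})=E\rhu(x_2^{(j)}D_{ij}^{-1})=(E\rhu x_2^{(j)})D_{ij}^{-1}=x_1^{(j)}D_{ij}^{-1}=\eta_{ij}(a_{21})=\eta_{ij}(E\rhu a_{22})$, the remaining generators being handled the same way.

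The substantive point is injectivity. For generic $q$ the algebra $\Oq(\SL_2)$ is semisimple as a $U_q(\sl_2)$-module for the left coregular action, with $V(n)$ occurring with multiplicity $\dim V(n)=n+1$, and its space of highest weight vectors of weight $n$ is exactly $\operatorname{span}\{a_{11}^k a_{21}^l:k+l=n\}$ (the monomials $a_{11}^k a_{21}^l$ are highest weight vectors of weight $n$ since $a_{11},a_{21}$ are highest weight of weight $1$, they are linearly independent by the PBW basis of $\Oq(\SL_2)$, and there are $n+1$ of them). Since $\ker\eta_{ij}$ is a submodule it is semisimple, hence generated by its highest weight vectors, so it suffices to show $\eta_{ij}$ is injective on each space $\operatorname{span}\{a_{11}^k a_{21}^l:k+l=n\}$. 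Now $\eta_{ij}(a_{11}^k a_{21}^l)$ equals a nonzero scalar multiple of $(x_1^{(i)})^k(x_1^{(j)})^l D_{ij}^{-l}$, and multiplying on the right by the invertible element $D_{ij}^{n}$ (a bijection, hence harmless for linear independence) gives a nonzero scalar multiple of $(x_1^{(i)})^k(x_1^{(j)})^l D_{ij}^{k}\in\Oq(\Conf_n)$. Filter $\Oq(\Conf_n)$ by the degree in the fixed generator $x_2^{(j)}$; by \cref{prop:gensrels} reduction to PBW normal form never raises this degree, so this is a genuine algebra filtration, in whose associated graded $D_{ij}$ has leading term $x_1^{(i)}x_2^{(j)}$. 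Hence $D_{ij}^{k}$ has leading term a nonzero multiple of $(x_1^{(i)})^k(x_2^{(j)})^k$, so $(x_1^{(i)})^k(x_1^{(j)})^l D_{ij}^{k}$ has leading term a nonzero multiple of the PBW normal-form monomial $(x_1^{(i)})^{2k}(x_1^{(j)})^{l}(x_2^{(j)})^{k}$; as $(k,l)$ runs over the $n+1$ pairs with $k+l=n$ these are pairwise distinct (the exponent $2k$ of $x_1^{(i)}$ recovers $k$), hence linearly independent in $\Oq(\Conf_n)$, which embeds into its localization. Therefore the $\eta_{ij}(a_{11}^k a_{21}^l)$ are linearly independent and $\ker\eta_{ij}=0$.

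I expect injectivity to be the main obstacle: the first two parts are bookkeeping, but separating the images of the highest weight monomials requires identifying the right invariant, here the $x_2^{(j)}$-filtration. As a consistency check, the statement quantizes the classical fact that two decorated flags in general position in $\SL_2/N$ assemble into a group element, so that $\{D_{ij}\neq 0\}\subset\Conf_n$ maps dominantly onto $\SL_2$ (indeed $\Conf_2|_{D_{12}\neq 0}\cong\SL_2\times\mathbb{G}_m$ classically), forcing the corresponding pullback $\eta_{ij}$ to be injective.
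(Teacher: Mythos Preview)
Your proof is correct and, for well-definedness and $U_q(\g)$-equivariance, coincides with the paper's approach: the paper's entire proof is the single sentence ``easily verified with the help of relations established in \cref{prop:gensrels} and \cref{delta-x-rels},'' and your verifications of the algebra relations, the quantum determinant, and compatibility with $E,F,K$ are exactly what that sentence gestures at.

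Where you genuinely add something is injectivity, which the paper does not address at all. Your argument---reducing to highest weight vectors $a_{11}^k a_{21}^l$ by semisimplicity of $\Oq(\SL_2)$ under the coregular action, then separating their images using the filtration by $x_2^{(j)}$-degree---is clean and correct. One small simplification: since the images $(x_1^{(i)})^k(x_1^{(j)})^l D_{ij}^k$ for $k=0,\dots,n$ already land in \emph{distinct} filtration degrees (namely degree $k$), linear independence is immediate from the filtration alone; you do not actually need to compute the precise PBW monomial of each leading term. Your closing remark about the classical picture (two decorated flags in general position assemble into a group element) is also the right intuition, and could alternatively be leveraged into a flatness-in-$q$ argument for injectivity.
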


\begin{proof}
The statement of the Lemma is easily verified with the help of relations established in \cref{prop:gensrels} and \cref{delta-x-rels}.
\end{proof}

\subsection{Cluster charts on $\Zc(\DD_n)$ for $G=\SL_2$}\label{sec:disk_charts}
We have the algebra $\Oq(\Conf_n)$ along with its localization $\OqConftri{n}$. We would like to relate the localization with $\Zc(\DD_n)$. By construction the two inclusions indicated by solid arrows below define open subcategories: we collect the embedding functors and their left adjoints in the diagram below.

\[
\begin{tikzpicture}[every node/.style={inner sep=0, minimum size=0.5cm, thick, draw=none}, x=0.75cm, y=0.75cm]

\node (1) at (0,0) {$\Zc(\DD_n)$};
\node (2) at (11.5,0) {$\OqConftri{n}\mod_{G\times \rev T^{\nsp n}}$};
\node (3) at (5,4.5) {$\Oq(\Conf_n)\mod_{G\times \rev T^{\nsp n}}$};

\draw [->, thick, dashed] (1.-10) to [bend right = 10] (2.-176);
\draw [->, thick, dashed] (2.176) to [bend right = 10] (1.10);
\draw [->, thick, right hook-latex] (1.25) to [bend right = 15] (3.-162);
\draw [->, thick] (3.-167) to [bend right = 15] (1.40);
\draw [->, thick, right hook-latex] (2.172) to [bend right = 15] (3.-20);
\draw [->, thick] (3.-30) to [bend right = 15] (2.173);

\end{tikzpicture}
\]
We define the dotted arrows as the evident compositions.  In this section we will show that the dotted arrows realize  $\OqConftri{n}\mod_{G\times T^n}$ as an open subcategory of $\Zc(\DD_n)$. Having already identified $\Zc(\DD_n)$ as the subcategory of torsion-free modules, it remains only to show that localizing to $\OqConftri{n}$ kills all torsion modules.

\begin{theorem}
\label{triangle-open-subcat}
The dotted arrows realize $\OqConftri{n}\mod_{G\times T^n}$ as an open subcategory of $\Zc(\DD_n)$.
\end{theorem}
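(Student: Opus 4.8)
\emph{Reduction.} The plan is to reduce the statement to the single concrete claim that the Ore localization functor annihilates every object of $\Torsion_n$. We already have two open embeddings into $\Oq(\Conf_n)\mod_{G\times T^n}$: by \cref{disk-computation}, $\Zc(\DD_n)\simeq\Torsion_n^{\perp}$, realized as the Serre quotient $\Oq(\Conf_n)\mod_{G\times T^n}/\Torsion_n$ by the hereditary torsion class $\Torsion_n=\ker\widetilde L_n$; and by \cref{cor:Ore}, $\OqConftri{n}\mod_{G\times T^n}$ is the Serre quotient by the hereditary torsion class of $S_\tri$-torsion modules --- those $M$ in which every element is annihilated by some $s\in S_\tri$. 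If the containment $\Torsion_n\subseteq\{\,S_\tri\text{-torsion modules}\,\}$ holds, then every $S_\tri$-local module $M$ lies in $\Torsion_n^{\perp}$ (given $T\in\Torsion_n$ and $\varphi\colon T\to M$, each $t\in T$ has $st=0$ for some $s\in S_\tri$, and $s$ acts invertibly on $M$, so $\varphi(t)=0$); hence $\OqConftri{n}\mod\subseteq\Zc(\DD_n)$ as full subcategories of $\Oq(\Conf_n)\mod$, and, passing to the quotient of $\Zc(\DD_n)$ by the hereditary torsion class $\{S_\tri\text{-torsion}\}/\Torsion_n$, we identify $\OqConftri{n}\mod$ with an open (Serre-quotient) subcategory of $\Zc(\DD_n)$. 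The resulting adjoint pair is precisely the pair of dotted arrows in the diagram.

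\emph{The key claim.} It remains to show that each object of $\Torsion_n$ is $S_\tri$-torsion. Since the $S_\tri$-torsion modules form a Serre subcategory and $\Torsion_n$ is generated, as a Serre subcategory, by the modules whose restriction to a single tensor factor $\Oq(N\backslash G)\subset\Oq(\Conf_n)$ is torsion, it suffices to treat such a module $M$: say its restriction to the $i$-th factor is torsion in the sense of \cref{def:torsion}. Every triangulation $\tri$ of $\DD_n$ contains all sides of the polygon, hence some side incident to the vertex $i$; taking $e=e_{i,i+1}$ (the other cases being symmetric), we have $D:=D_{i,i+1}\in S_\tri$. The crucial observation is that for every $k\geq1$ the power $D^{k}=\bigl(x_1^{(i)}x_2^{(i+1)}-qx_2^{(i)}x_1^{(i+1)}\bigr)^{k}$ can be written as a finite sum $\sum_\alpha r_\alpha m_\alpha$ with $r_\alpha\in\Oq(\Conf_n)$ and each $m_\alpha$ a monomial of degree exactly $k$ in $x_1^{(i)},x_2^{(i)}$. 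Indeed, every term in the naive expansion of $D^{k}$ has exactly $k$ letters from slot $i$ and $k$ from slot $i+1$, and each cross-relation of \cref{prop:gensrels} --- including the only inhomogeneous one, $x_2^{(i+1)}x_1^{(i)}=q^{\frac12}\bigl(x_1^{(i)}x_2^{(i+1)}+(q^{-1}-q)\,x_2^{(i)}x_1^{(i+1)}\bigr)$ --- preserves the number of slot-$i$ letters, so a standard normal-ordering argument rewrites $D^{k}$ as a sum of monomials of the shape (monomial in slot $i+1$)$\,\cdot\,$(monomial of degree $k$ in slot $i$). Since the degree-$k$ monomials in $x_1^{(i)},x_2^{(i)}$ span the $k\omega$-isotypic subspace of the $i$-th $\Oq(N\backslash G)$, the torsion hypothesis lets us, for any given $m\in M$, choose $k$ large enough that this subspace kills $m$; then $D^{k}m=\sum_\alpha r_\alpha(m_\alpha m)=0$. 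Thus $m$ is $S_\tri$-torsion, which proves the claim.

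\emph{Main obstacle.} The only step demanding real care is the degree bookkeeping in the normal-ordering argument; the rest is formal manipulation of hereditary torsion classes and Serre quotients. I also note that the statement is vacuous for $n=1$, when no triangulation exists, and that the $U_q(\g)$-invariance and $T^n$-homogeneity of $S_\tri$ --- established in \cref{cor:Ore} --- are precisely what make the localization an exact $G\times T^n$-equivariant functor in the first place, which is what legitimizes treating $\OqConftri{n}\mod_{G\times T^n}$ as an open subcategory at all.
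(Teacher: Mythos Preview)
Your proof is correct and follows essentially the same approach as the paper's. Both reduce to showing that every object of $\Torsion_n$ is annihilated by the Ore localization at $S_\tri$, and both establish this by showing that for a module torsion at slot $k$, a power of some $D_{ij}$ with $k\in\{i,j\}$ kills any given element. The only cosmetic difference is that the paper packages your normal-ordering argument as the single grading identity $\Oq(\Conf_n)^{(i;r)}=\Oq(\Conf_n)\cdot\Oq(\Conf_n)^{r\epsilon_i}$ (deduced from the homogeneity of the relations in \cref{prop:gensrels}), whereas you carry it out explicitly by reordering monomials; the content is identical.
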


\begin{proof}
Let us identify the weight lattice of $\SL_2$ with $\mathbb{Z}$ via $\omega_1\mapsto 1, \alpha_1\mapsto 2$. As an algebra object in $\Rep_q(G\times \overline{T}^n)$, $\Oq(\Conf_n)$ is graded by the torus $T^n$; let us write $\Oq(\Conf_n)^{(i;k)}$ for the set of elements of weight $\geq k$ with respect to the $i$th $T$-factor, and $\Oq(\Conf_n)^{k\epsilon_{i}}$ for the subset of elements whose weight with respect to $T^n$ is the vector $k\epsilon_i$ with a $k$ in position $i$ and zeros elsewhere. Then it is evident from the homogeneity of the relations~\eqref{commutator-rel} that we have
	\begin{equation}\label{eqn:homog-rmk}
	\Oq(\Conf_n)^{(i;k)} = \Oq(\Conf_n)\Oq(\Conf_n)^{k\epsilon_i} . 
	\end{equation}

	Suppose some $\Oq(\Conf_n)$-module $M$ admits a non-zero morphism from a torsion module.  Then there must exist an element $m\in M$, an index $1\le k\le n$, and a natural number $N$ such that 
	\[
	r\ge N\implies\Oq(\Conf_n)^{r\epsilon_k}m=0.
	\]
	Now any $Z_{ij}$ such that $k\in\{i,j\}$ is an element of $\Oq(\Conf_n)^{k;1}$. So by \cref{eqn:homog-rmk}, we have $Z_{ij}^r\in \Oq(\Conf_n)^{k;r} =\Oq(\Conf_n)\Oq(\Conf_n)^{r\epsilon_k} $. Therefore $\left(Z_{ij}\right)^rm=0$, which shows that the element $Z_{ij}$ does not act invertibly on such a module $M$.  Thus we have $\OqConftri{n}\mod_{G\times T^n}\subset\Torsion^{\perp}$. 
\end{proof}

\begin{theorem}
\label{thm-inv}
For any partial triangulation $\tri \ne \emptyset$ of a based $n$-gon associated to $\Oq(\Conf_n)$, the functor of taking $U_q(g)$-invariants,
$$
\Gc \colon \Oq(\Conf_n)\mod_{G\times \rev T^{\nsp n}} \longra \Oq(\Conf_n)^{U_q(\g)}\mod_{\rev T^{\nsp n}},
$$
restricts to an equivalence of categories,
$$
\Gc' \colon \OqConftri{n}\mod_{G\times \rev T^{\nsp n}} \longra \OqConftri{n}^{U_q(\g)}\mod_{\rev T^{\nsp n}}.
$$
\end{theorem}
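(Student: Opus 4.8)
The plan is to realize $\Gc'$ as a quantum analogue of the classical statement that a $G$-equivariant map $Y\to G$ trivializes $Y$ as a principal $G$-bundle, so that $\QC(Y/G)\simeq\QC(Y/\!\!/G)$; the role of the equivariant map will be played by the embedding $\eta_{ij}$ of \cref{Oq-morphism-lemma}. First I would set up the adjunction. The functor $\Gc$, and likewise its restriction $\Gc'$, has a left adjoint given by induction, $L\colon N\mapsto \Oq(\Conf_n)\otimes_{\Oq(\Conf_n)^{U_q(\g)}}N$, where $N\in\Repq(\rev T^{\nsp n})$ is given the trivial $U_q(\g)$-action (here one uses only that the trivial-action functor is left adjoint to $U_q(\g)$-invariants). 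Since $\Repq(G)$ is semisimple for generic $q$, taking $U_q(\g)$-invariants is exact and commutes with tensor products and with the colimits computing the relative tensor product; hence the unit $N\to\Gc L(N)$ is an isomorphism, so $L$ is fully faithful. The same holds after localizing: by \cref{delta-x-rels} the set $S_\tri$ consists of $U_q(\g)$-invariant elements, so localization at $S_\tri$ is $U_q(\g)$-equivariant and commutes with invariants. Consequently $\Gc'$ is an equivalence if and only if the counit $c_M\colon \OqConftri{n}\otimes_{\OqConftri{n}^{U_q(\g)}}M^{U_q(\g)}\to M$ is an isomorphism for every $M$ in $\OqConftri{n}\mod_{G\times\rev T^{\nsp n}}$.

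Next, since $\tri\neq\emptyset$ I would fix an edge $e_{ij}\in\tri$ and compose $\eta_{ij}\colon\Oq(G)\to\OqConfe{n}{e_{ij}}$ from \cref{Oq-morphism-lemma} with the (again $U_q(\g)$-equivariant) localization map $\OqConfe{n}{e_{ij}}\to\OqConftri{n}$, obtaining a homomorphism of $U_q(\g)$-module algebras $\iota\colon\Oq(G)\to A$, where $A:=\OqConftri{n}$. The crucial observation is that such a homomorphism exhibits $A$ as a cleft $\Oq(G)$-comodule algebra over its coinvariants $A^{U_q(\g)}$: being unital and multiplicative, $\iota$ is convolution-invertible with inverse $\iota\circ S$ ($S$ the antipode of $\Oq(G)$), and it is by construction an $\Oq(G)$-comodule map. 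By the structure theorem for cleft extensions (Hopf--Galois descent), the functor of coinvariants from relative $(A,\Oq(G))$-Hopf modules to $A^{U_q(\g)}$-modules is an equivalence, with quasi-inverse $N\mapsto A\otimes_{A^{U_q(\g)}}N$. Under the standard duality between locally finite $U_q(\g)$-modules and $\Oq(G)$-comodules, relative $(A,\Oq(G))$-Hopf modules are precisely the objects of $\OqConftri{n}\mod_{G\times\rev T^{\nsp n}}$ and the coinvariants functor is $\Gc'$, so $c_M$ is an isomorphism. (Alternatively, one can simply write down a two-sided inverse to $c_M$ directly from $\iota$ and $\iota\circ S$, quantizing the classical trivialization $Y\cong G\times (Y/\!\!/G)$.)

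Finally I would observe that the entire argument is compatible with the residual torus module structures: $A$ is an algebra in $\Repq(G\times\rev T^{\nsp n})$, the invariants $M^{U_q(\g)}$ retain their $\Lambda^{n}$-grading and $A^{U_q(\g)}$-module structure, and the quasi-inverse $N\mapsto A\otimes_{A^{U_q(\g)}}N$ visibly preserves the $\Repq(\rev T^{\nsp n})$-action, since the torus plays no part in the Galois argument (in particular $\eta_{ij}$ need not be $T^{\nsp n}$-equivariant). Hence $\Gc'$ is the asserted equivalence. The main obstacle is the middle step: making the Hopf--Galois descent precise in the braided/quantum setting --- reconciling the coregular $U_q(\g)$-action used for $\eta_{ij}$ with the $\Oq(G)$-comodule conventions, and verifying that ``$A$-modules internal to $\Repq(G)$'' coincide with relative $(A,\Oq(G))$-Hopf modules. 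One should also note that the hypothesis $\tri\neq\emptyset$ forces $n\geq 2$, since a $1$-gon has no diagonals, so the degenerate case $n=1$ (treated separately via \cref{thm:D1reconst}) does not arise here.
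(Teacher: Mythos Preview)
Your proposal is correct and rests on the same key ingredient as the paper: the algebra map $\eta_{ij}\colon\Oq(G)\to\OqConftri{n}$ from \cref{Oq-morphism-lemma}, available precisely because $\tri\neq\emptyset$. The packaging differs, however. You prove the counit is an isomorphism by invoking the structure theorem for cleft Hopf--Galois extensions, with $\eta_{ij}$ serving as the cleaving map. The paper instead uses $\eta_{ij}$ only to obtain a forgetful functor $\Fc''$ to $\Oq(G)\mod_{G\times\rev T^{\nsp n}}$, fits $\Gc'$ into a commutative square with the invariants functor $\Gc''$ on that category, and observes that $\Gc''$ is already an equivalence (with inverse $V\mapsto\Oq(G)\otimes V$) while the forgetful functors are conservative; this yields conservativity of $\Gc'$, and together with exactness of invariants (from semisimplicity of $\Repq(G)$) the equivalence follows. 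Your route buys an explicit inverse to $c_M$ and makes the underlying ``quantum trivialization of the $G$-torsor'' visible; the paper's route is shorter, since the equivalence $\Gc''$ is nothing but the fundamental theorem of Hopf modules---the base case of the Hopf--Galois theory you invoke---and the diagram chase avoids unpacking the general cleft-extension machinery. Your worry about braided subtleties is also largely unfounded here: $\Oq(G)$ is an ordinary Hopf algebra over $\C(q)$ and the locally-finite-$U_q(\g)$-module / $\Oq(G)$-comodule correspondence is the standard one, so classical Hopf--Galois descent applies without modification.
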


\begin{proof}
Because the category $\Repq(G)$ is semisimple, the functor $\Gc'$ of taking $U_q(\g)$-invariants is exact. Thus to prove that it is an equivalence, it suffices to show that it is also conservative. Since the partial triangulation $\tri$ is not empty, there is an edge $e_{ij} \in \tri$ and hence an injective homomorpshim $\eta_{ij} \colon \Oq(G) \rightarrow \OqConftri{n}$ by \cref{Oq-morphism-lemma}. This in turn allows us to include the functor $\Gc'$ into the commutative diagram
$$
\xymatrix{
\OqConftri{n}\mod_{G\times \rev T^{\nsp n}} \ar[r]^{\Gc'} \ar[d]^{\Fc''} & \OqConftri{n}^{U_q(\g)}\mod_{\rev T^{\nsp n}} \ar[d]^{\Fc'} \\
\Oq(G)\mod_{G\times \rev T^{\nsp n}} \ar[r]^{\Gc''} &
\rev \Repq(T^{\nsp n})}
$$
where $\Fc'$ and $\Fc''$ are forgetful functors, and $\Gc''$ is again the functor of taking $U_q(\g)$-invariants. Now, it is enough to show that the composition $\Gc''\circ\Fc''$ is conservative. Being a forgetful functor, $\Fc''$ is obviously conservative, while $\Gc''$ is in fact an equivalence with inverse given by $V \mapsto \Oq(G)\otimes V$.
\end{proof}

\section{Charts and flips on $\Zc(\decS)$ via excision}

In this section we will compute with an arbitrary simple decorated surface $\decS$.  We will describe a special family of open subcategories of the category $\Zc(\decS)$, which we regard as open subvarieties -- charts -- on the quantum decorated character stack of $\decS$.  We have one such chart for each isotopy class of triangulation of the  $\decS$, and each chart comes equipped with a distinguished compact projective generator $\Dist_\tri$, and an identification as the category of modules for a quantum torus, resembling the notion of a cluster chart.

The basic mechanism to obtain these charts is to start from the monadic description of $\Zc(\DD_3)$ obtained in the previous section, and then apply excision with respect to gluing triangles along a common digon $\DD_2$.  The charts so constructed admit pairwise transition maps resembling those in the quantum $\Ac$-variety variety of Fock--Goncharov -- we call the whole structure a \emph{quantum $\widehat{\Ac}$-variety}, and we compare it to the quantum cluster $\Ac$-variety of Fock--Goncharov.  Taking $T$-invariants yields a version of the quantum cluster ensemble map, giving a reconstruction of cluster $\Xc$-varieties.

\subsection{Opening $T$-gates}

If $\decS$ is a decorated surface with a collection $\Gc$ of gates, recall that the category $\Zc(\decS)$ inherits the structure of a module category for the braided tensor category $\Repq(\Gc) = \Repq(G^m \boxtimes \rev T^n)$, where $m$ and $n$ denote the number of $G$- and $T$-gates in $\Gc$. Given some object $X\in\decS$ -- typically in examples $X$ will be some $\Dist_\tri$ -- different choices of $\Gc$ clearly produce distinct internal endomorphism algebras $\iEnd_\Gc(X)$. 

However, the category $\iEnd_\Gc(X)\mod_\Gc$ of $\iEnd_\Gc(X)$-modules in $\Repq(\Gc)$ does not change when we open gates.

\begin{lemma}\label{lem:opening}
Let $\Gc$ be some configuration of $T$-gates, and let $\Gc'$ be obtained from $\Gc$ by replacing any $T$-gate with a pair of adjacent gates. Then the functor of taking invariants at either new gate yields an equivalence of categories:
$$
\iEnd_\Gc(\Dist_\decS)\mod_{\Gc} \simeq \iEnd_{\Gc'}(\Dist_\decS)\mod_{\Gc'}.
$$
\end{lemma}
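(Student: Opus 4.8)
The plan is to exhibit the claimed equivalence as an instance of the gate-opening computation already carried out in Example \ref{ex:gate-opening}, combined with the monadic reconstruction package of Section \ref{sec:monadic}. First I would reduce to the local situation: by functoriality of disk insertion and the fact that the new $T$-gate $\Gc'$ differs from $\Gc$ only by pulling back the $\Repq(\overline T)$-action along a single tensor functor $m_{\overline T}\colon \Repq(\overline T)^{\bt 2}\to\Repq(\overline T)$, all other gate actions are untouched and the question is purely about one $\Repq(\overline T)$-factor. So it suffices to prove the statement when $\Gc$ consists of a single $T$-gate and $\Gc'$ of the pair replacing it — the general case follows by applying this to one factor at a time and tensoring over the unchanged factors.

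Next I would invoke \cref{insertgate}: since $\Dist_\decS$ is a $\Repq(\overline T)$-generator of $\Zc(\decS)$ (more precisely, we only need the internal-endomorphism formalism, not projectivity, so one should work with the reflective-embedding version from \cref{weak-reconstruction}), the category $\iEnd_\Gc(\Dist_\decS)\mod_\Gc$ records $\Zc(\decS)$ reconstructed through the single $T$-gate, while $\iEnd_{\Gc'}(\Dist_\decS)\mod_{\Gc'}$ records it through the doubled gate. The tensor functor $m_{\overline T}$ identifies $\Repq(\overline T)$ as $\widetilde A\mod_{T^2}$-type data: concretely, setting $A=\iEnd_\Gc(\Dist_\decS)$, Example \ref{ex:gate-opening} gives $\widetilde A:=\iEnd_{\Gc'}(\Dist_\decS)=\bigoplus_{\lambda,\mu}A^{\lambda+\mu}\otimes\mathbb C_\lambda\bt\mathbb C_\mu$ with multiplication \eqref{induction-mult-rule}, and \cref{insertgate} yields precisely an equivalence $A\mod_T\simeq\widetilde A\mod_{T^2}$. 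Applying this with $\cM=\Zc(\decS)$ and $\cD=\Repq(\overline T)$ (or rather the relevant module category) delivers the equivalence, with the functor in one direction being restriction of scalars along $m_{\overline T}^*$ — equivalently, as stated, taking invariants at the new gate, which is right adjoint to $m_{\overline T}^*$ and is an equivalence because $m_{\overline T}$ is an equivalence after passing to module categories internal to the torus (the extra $\mathbb C_\lambda\bt\mathbb C_\mu$ factors get absorbed by the $T^2$-equivariance).

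The main obstacle I anticipate is bookkeeping the module structure correctly: one must check that the $\Repq(\Gc')$-module structure on $\Zc(\decS)$ really is the pullback of the $\Repq(\Gc)$-module structure along $m_{\overline T}$ — this is where the topology enters, via the factorization property (item (4) in the unpacking of the colimit definition) applied to the factorization of the doubled-gate disk embedding through the single-gate one. Granting that, the algebraic identity $\widetilde A=\iEnd_{\Gc'}(\Dist_\decS)$ is exactly the content of Example \ref{ex:gate-opening}, and the equivalence of module categories is \cref{insertgate}; there is essentially no further computation. A secondary subtlety is that $\Dist_\decS$ need not be $\Repq(\Gc)$-compact-projective, only a generator, so one works throughout with the reflective-embedding form of the reconstruction; but since opening a gate does not change the relevant kernel (torsion) subcategory — it only reorganizes the ambient module category — the reflective subcategories match up, and one still obtains an equivalence at the level of $\Zc(\decS)$ itself rather than merely of the ambient module categories.
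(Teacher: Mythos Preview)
Your approach is correct and is essentially the same as the paper's: the key observation is that the $\Repq(\Gc')$-module structure on $\Zc(\decS)$ is the pullback of the $\Repq(\Gc)$-module structure along the tensor functor $m_{\overline T}\colon\Repq(\overline T)^{\bt 2}\to\Repq(\overline T)$, after which the equivalence follows from \cref{insertgate} and the explicit computation of \cref{ex:gate-opening}. The paper's proof is a one-line version of exactly this, simply asserting the pullback identification.

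One small remark: your ``secondary subtlety'' about $\Dist_\decS$ not being $\Gc$-compact-projective is unnecessary here. The lemma as stated concerns only the ambient module categories $\iEnd_\Gc(\Dist_\decS)\mod_\Gc$ and $\iEnd_{\Gc'}(\Dist_\decS)\mod_{\Gc'}$, not $\Zc(\decS)$ itself; these module categories are defined purely algebraically once the internal endomorphism algebras are computed, and the equivalence between them follows from \cref{insertgate} without any hypothesis on $\Dist_\decS$ beyond what is needed to form $\iEnd$. So you can drop the reflective-embedding digression entirely.
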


\begin{proof}
Indeed, in this case the $\Gc'$-module category $\iEnd_\Gc'(\Dist_\decS)\mod_{\Repq(\Gc')}$ is simply the pullback of $\iEnd_{\Gc}(\Dist_\decS)\mod_{\Repq(\Gc)}$ along the tensor functor $\Repq(\overline{T})\boxtimes \Repq(\overline{T})\to\Repq(\overline{T})$.
\end{proof}

The specification of $T$-gates, as well as the presentation for the resulting algebras of internal endomorphisms will be encoded combinatorially in the data of a \defterm{fencing graph} associated to the triangulation.

\begin{defn}
\label{def:long-short-edges}
Let $\tri$ be a triangulation of a simple decorated surface. Then the boundary of the $G$-region of every digon or triangle in $\tri$ consists of interlacing \defterm{short} and \defterm{long}, where the former lie in $\decS_T$, the latter in $\decS_G$.
\end{defn}

Thus each triangle or digon gives rise to a hexagon or rectangle respectively with alternating long and short edges.

\begin{defn}
\label{def:fencing-graph}
The \defterm{fencing graph} $\Gamma_\tri$ has one vertex for each corner of a hexagon appearing in $\tri$.  The edge set consists of a directed arrow connecting opposite vertices of each short edge of either a hexagon or a rectangle, with the direction being compatible with the clockwise orientation within $T$-region, and a bivalent edge connecting the opposite pair of vertices of each long edge.  
\end{defn}

An example of a fencing graph for the punctured triangle $\DD_3^\circ$ is illustrated in~\cref{fig:peace}. Clearly, there are $6h$ vertices in $\Gamma_\tri$, where $h$ is the number of triangles in $\tri$.  We associate to the fencing graph $\Gamma_{\tri}$ a collection of $T$-gates in $\decS$ by declaring that each vertex of $\Gamma_\tri$ lies in a unique $T$-gate. Thus a notched triangulation $\trib$ determines in each $T$-region a \defterm{distinguished vertex} of the fencing graph $\Gamma_{\tri}$, for convenience we assume that as we traverse the boundary of any $T$-disk in a counterclockwise direction, we encounter the distinguished vertex first. It also induces a total ordering on the edge-ends incident to vertices at each $T$-region: we number edge-ends as we traverse the $T$-region clockwise, starting with the one incident to the distinguished vertex.

This in turn determines a \defterm{notched ideal triangulation}, where we identif all vertices in each $T$-region of the fencing graph, collapse parallel long edges, and collapse all short edges but one whose head is the distinguished vertex. The remaining short edge becomes a loop, which we call a \defterm{notch}; this defines a total ordering on the edge-ends of the notched ideal triangulation, see~\cref{fig:peace}.

We will be considering the following configurations of gates associated to a notched triangulation $\trib$:
\begin{itemize}
    \item $\Tc_\Gamma$ has a single gate located at each vertex of the fencing graph.
    \item $\Tc_P$ contains only the distinguished gate at each puncture.
    \item $\Tc_{P^c}$ contains only the non-distguished gates at each puncture.
    \item $\Tc_M$ contains only the gates at each marked point.
\end{itemize}

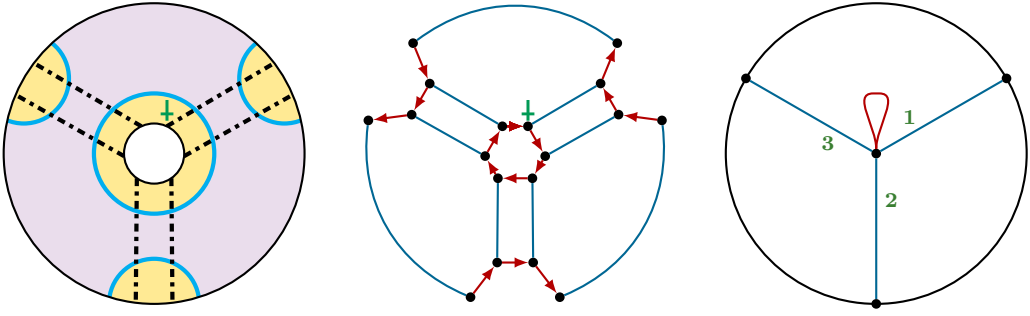
\begin{figure}[t]
\begin{tikzpicture}[every node/.style={inner sep=0.5, thick, circle, minimum size=0.1cm}, x=0.5cm, y=0.5cm, thick, scale={0.8}]

\begin{scope}[shift={(-12,0)}]
\def\Greg{(0,0) circle (5)};

\def\Tone{(-90:5) circle (1.5)};
\def\Ttwo{(30:5) circle (1.5)};
\def\Tthree{(150:5) circle (1.5)};
\def\Tcirc{(0,0) circle (2)}

\fill[Orchid!20] \Greg;
\begin{scope}
	\clip\Greg;
	\fill[Goldenrod!60] \Tone \Ttwo \Tthree \Tcirc;
	\draw[ultra thick, cyan] \Tone \Ttwo \Tthree \Tcirc;
	\draw[ultra thick, dashdotted] (-5:1) to (23:5);
	\draw[ultra thick, dashdotted] (65:1) to (37:5);
	\draw[ultra thick, dashdotted] (-55:1) to (-83:5);
	\draw[ultra thick, dashdotted] (-125:1) to (-97:5);
	\draw[ultra thick, dashdotted] (-175:1) to (157:5);
	\draw[ultra thick, dashdotted] (115:1) to (143:5);
\end{scope}

\draw[fill=white] (0,0) circle (1);

\draw \Greg;

\node[ForestGreen, shift={(65:1)}] at (-0.1,0.3) {$\rotatebox[origin=c]{180}{\Cross}$};

\end{scope}

\begin{scope}
\path[name path = Greg] (0,0) circle (5);

\path[name path = T1] (150:5) circle (1.5);
\path[name path = T2] (-90:5) circle (1.5);
\path[name path = T3] (30:5) circle (1.5);

\path[name path = Tcirc] (0,0) circle (1);

\path[name path = l1] (115:1) to (143:5);
\path[name path = l2] (-175:1) to (157:5);
\path[name path = l3] (-125:1) to (-97:5);
\path[name path = l4] (-55:1) to (-83:5);
\path[name path = l5] (-5:1) to (23:5);
\path[name path = l6] (65:1) to (37:5);

\path[name intersections={of=Greg and T1, by={A1,A2}}];
\path[name intersections={of=Greg and T2, by={A3,A4}}];
\path[name intersections={of=Greg and T3, by={A6,A5}}];

\path[name intersections={of=l1 and T1, by=P1}];
\path[name intersections={of=l2 and T1, by=P2}];
\path[name intersections={of=l3 and T2, by=P3}];
\path[name intersections={of=l4 and T2, by=P4}];
\path[name intersections={of=l5 and T3, by=P5}];
\path[name intersections={of=l6 and T3, by=P6}];

\path[name intersections={of=l1 and Tcirc, by=B1}];
\path[name intersections={of=l2 and Tcirc, by=B2}];
\path[name intersections={of=l3 and Tcirc, by=B3}];
\path[name intersections={of=l4 and Tcirc, by=B4}];
\path[name intersections={of=l5 and Tcirc, by=B5}];
\path[name intersections={of=l6 and Tcirc, by=B6}];

\node[ForestGreen, shift={(65:1)}] at (-0.1,0.3) {$\rotatebox[origin=c]{180}{\Cross}$};

\foreach \i in {1,...,6}
{
	\node[draw, fill] (a\i) at (A\i) {};
	\node[draw, fill] (p\i) at (P\i) {};
	\node[draw, fill] (b\i) at (B\i) {};
	\draw[-, MidnightBlue] (b\i) to (p\i);
}

\foreach \i in {1,...,5}
{
	\pgfmathparse{int(\i+1)};
	\edef\j{\pgfmathresult};
	\draw[->, red!70!black] (b\j) to (b\i);
	\draw[->, red!70!black] (b1) to (b6);
}

\foreach \i in {1,...,3}
{
	\pgfmathparse{int(2*\i-1)};
	\edef\j{\pgfmathresult};
	\pgfmathparse{int(2*\i)};
	\edef\k{\pgfmathresult};
	\draw[->, red!70!black] (a\j) to (p\j);
	\draw[->, red!70!black] (p\j) to (p\k);
	\draw[->, red!70!black] (p\k) to (a\k);
}

\draw[-, MidnightBlue] (a2) to [out=-97, in=157] (a3);
\draw[-, MidnightBlue] (a4) to [out=23, in=-83] (a5);
\draw[-, MidnightBlue] (a6) to [out=143, in=37] (a1);
\end{scope}

\begin{scope}[shift = {(12,0)}]

	\draw (0,0) circle (5);
	\node[draw, fill] (X) at (-90:5) {};
	\node[draw, fill] (Y) at (30:5) {};
	\node[draw, fill] (Z) at (150:5) {};
	\node[draw, fill] (C) at (0,0) {};
    \draw[MidnightBlue,postaction={decorate,decoration={markings,mark=at position 0.7 with
        {\node[OliveGreen] at (0,-0.4) {\scriptsize $\boldsymbol{2}$};}}}] (X) to (C);
    \draw[MidnightBlue,postaction={decorate,decoration={markings,mark=at position 0.7 with
        {\node[OliveGreen] at (0,-0.4) {\scriptsize $\boldsymbol{1}$};}}}] (Y) to (C);
    \draw[MidnightBlue,postaction={decorate,decoration={markings,mark=at position 0.7 with
        {\node[OliveGreen] at (0,-0.4) {\scriptsize $\boldsymbol{3}$};}}}] (Z) to (C);

	\draw[red!70!black] (C) to [out=90, in=180] (-0.1,2) to (0.1,2) to [out=0, in=90] (C);
	

\end{scope}
\end{tikzpicture}
\caption{Presentation of a punctured triangle $\DD_3^\circ$ as in~\cref{eqn:gluing} and \cref{eqn:gluing2}, the corresponding fencing graph with a distinguished vertex, and the notched ideal triangulation with the total numbering on the edge-ends adjacent to the puncture. The location of the distinguished gate in $\DD_3^\circ$, and the distinguished vertex of the fencing graph, are marked with \protect\rotatebox[origin=c]{180}{$\Cross$}.}
\label{fig:peace}
\end{figure}


Hence, we have a decomposition
\[
    \Tc_\Gamma = \Tc_{P^c} \bt \Tc_{P}\bt \Tc_{M}
\]
\begin{remark} Let us discuss the rationale for introducing additional $T$-gates. We will see below that opening gates produces quantum tori $\widetilde\zeta(\trib)$ with more generators than the tori $\zeta(\trib)$, in which we are ultimately interested. However, the larger tori $\widetilde\zeta(\trib)$ have simpler commutation relations which are expressed more locally in the triangulation and hence easier to compute, as in Propositions~\ref{basic-hex-rels} and~\ref{basic-sq-rels} below.  So we compute with the algebra $\widetilde\zeta(\trib)$ while performing excision, and close the additional gates only at the end.
\end{remark}

\subsection{Charts on $\Zc(\DD_2)$ and $\Zc(\DD_3)$}

Let us first consider the two cases when $\decS$ is a triangle or digon.  Recall from \cref{triangle-open-subcat} the open subcategories $\Zc(\underline{\DD_3}) \subset \Zc(\DD_3)$ and $\Zc(\underline{\DD_3}) \subset \Zc(\DD_3)$ defined, respectively, by the condition that elements of $S_{\underline{\DD_2}}$ and $S_{\underline{\DD_3}}$ act invertibly.  We consider the restriction of the distinguished object to each subcategory, and define
\begin{align*}
\zeta(\underline{\DD_2}) &:= \iEnd_{T^2}(\Dist_{\underline{\DD_2}}) \cong \Oq\hr{\Conf_2[\underline{\DD_2}^{-1}]}^{U_q(\g)}, \\
\zeta(\underline{\DD_3}) &:= \iEnd_{T^3}(\Dist_{\underline{\DD_3}}) \cong \Oq\hr{\Conf_3[\underline{\DD_3}^{-1}]}^{U_q(\g)},
\end{align*}
where the $T$-actions are induced by introducing one $T$-gate in each $T$-region.  By \cref{thm-inv}, we have equivalences of categories
$$
\Zc(\underline{\DD_2})\simeq \zeta(\underline{\DD_2})\mod_{\rev T^2},\quad
\Zc(\underline{\DD_3})\simeq \zeta(\underline{\DD_3})\mod_{\rev T^3}.
$$

It is evident that we have $\zeta(\underline{\DD_2})\simeq \mathbb{C}(q)\ha{D^{\pm1}}$, and the following Lemma shows that the algebra $\zeta(\underline{\DD_3})$ is also a quantum torus.

\begin{lemma}
The algebra $\zeta(\underline{\DD_3})$ is generated by the elements $D_{ij}^{\pm1}$ for $1 \le i < j \le 3$ given in the \cref{defn-Z}.
\end{lemma}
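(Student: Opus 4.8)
The plan is to identify $\zeta(\underline{\DD_3})$ with the Ore localization of the subalgebra $R_0:=\Oq(\Conf_3)^{U_q(\g)}$ of $U_q(\g)$-invariants at the multiplicative set $S:=S_{\underline{\DD_3}}$ generated by $D_{12},D_{23},D_{13}$, and then to show that $R_0$ is the skew-polynomial algebra on these three generators with the $q$-commutation relations of \cref{cor-deltas}. Its localization at $S$ is then the quantum torus $\C(q^{\pm\frac12})\ha{D_{12}^{\pm1},D_{23}^{\pm1},D_{13}^{\pm1}}$, which is visibly generated as an algebra by the $D_{ij}^{\pm1}$, giving the Lemma.

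First I would record that $S\subset R_0$ by \cref{delta-x-rels}, and that, since the three generators skew-commute by \cref{cor-deltas}, every element of $S$ is a power of $q$ times an ordered monomial $D_{12}^\alpha D_{23}^\beta D_{13}^\gamma$ with $\alpha,\beta,\gamma\ge 0$; in particular $S$ is already a two-sided Ore set inside $R_0$, and $0\notin S$ once one knows $\Oq(\Conf_3)$ is a domain. Because $\Repq(G)$ is semisimple, taking $U_q(\g)$-invariants commutes with localization at $S$: using the right Ore property (\cref{cor:Ore}) every element of $\Oq(\Conf_3)[S^{-1}]$ has the form $a s^{-1}$ with $a\in\Oq(\Conf_3),\ s\in S$, and since $s,s^{-1}$ are $U_q(\g)$-invariant, such an element is invariant iff $a=(a s^{-1})s$ is, i.e.\ iff it lies in the image of $R_0[S^{-1}]$; that image is all of $(\Oq(\Conf_3)[S^{-1}])^{U_q(\g)}$, and $R_0[S^{-1}]\to\Oq(\Conf_3)[S^{-1}]$ is injective (if $a s^{-1}=0$ with $a\in R_0$ then $a s'=0$ for some $s'\in S$, and $a s'\in R_0$). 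Combined with the identification $\zeta(\underline{\DD_3})\cong(\Oq(\Conf_3)[S^{-1}])^{U_q(\g)}$ already recorded above the Lemma, this yields $\zeta(\underline{\DD_3})\cong R_0[S^{-1}]$.

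Next I would analyze $R_0$ through the $\Z_{\ge 0}^3$-grading of $\Oq(\Conf_3)$ by the total degree in each of the three tensor factors. This grading is well defined since all the relations of \cref{prop:gensrels} are homogeneous for it, and its degree-$(a,b,c)$ component is $V_a\otimes V_b\otimes V_c$ as a $U_q(\g)$-module (the braided tensor product twists only the multiplication, not the module structure). The key input is the quantum Clebsch--Gordan rule, valid for generic $q$ where $\Repq(\SL_2)$ is semisimple with the classical fusion rules: $\dim\bigl(V_a\otimes V_b\otimes V_c\bigr)^{U_q(\g)}\le 1$, with equality precisely when $a,b,c$ obey the triangle inequalities and $a+b+c$ is even. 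On the other hand $D_{12}^\alpha D_{23}^\beta D_{13}^\gamma$ has tri-degree $(\alpha+\gamma,\alpha+\beta,\beta+\gamma)$; the assignment $(\alpha,\beta,\gamma)\mapsto(\alpha+\gamma,\alpha+\beta,\beta+\gamma)$ is injective on $\Z_{\ge 0}^3$ and its image is exactly the set of triples satisfying that condition, and each such monomial is nonzero because $\Oq(\Conf_3)$ is a domain (the PBW normal form recalled in the proof of \cref{lem:Ore} degenerates, under the filtration giving $x_2^{(k)}$ weight $k$ and $x_1^{(k)}$ weight $0$, to a quantum affine space) while each $D_{ij}\ne 0$ by inspection. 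Decomposing an arbitrary element of $R_0$ into tri-homogeneous pieces, each nonzero piece spans a one-dimensional invariant space, hence is a scalar multiple of the corresponding $D$-monomial; therefore $\{D_{12}^\alpha D_{23}^\beta D_{13}^\gamma:\alpha,\beta,\gamma\ge 0\}$ is a $\C(q^{\pm\frac12})$-basis of $R_0$, which is precisely the assertion that $R_0$ is the skew-polynomial ring on $D_{12},D_{23},D_{13}$ with the relations of \cref{cor-deltas}. Localizing at $S$ then inverts each (non-zero-divisor) generator, completing the argument.

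The main obstacle is the middle step: the quantum first-fundamental-theorem statement that the $U_q(\g)$-invariants of $\Oq(\Conf_3)$ are exhausted by monomials in the $D_{ij}$. Its two ingredients --- the Clebsch--Gordan multiplicity bound and the non-vanishing of the $D$-monomials --- are both standard, but that is where the genuine content lies; the reduction to invariants of the localization and the final localization step are formal given the semisimplicity of $\Repq(\SL_2)$ and the Ore property of $S_{\underline{\DD_3}}$.
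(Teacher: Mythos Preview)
Your proof is correct and follows essentially the same approach as the paper: decompose $\Oq(\Conf_3)^{U_q(\g)}$ by the tri-degree (equivalently, the $U_q(\t)^{\otimes 3}$-weight) grading, use the quantum Clebsch--Gordan rule to see that each graded piece is at most one-dimensional and nonzero exactly when $(a,b,c)$ satisfies the triangle inequalities with even sum, and then observe that monomials in the $D_{ij}$ realize precisely those tri-degrees. Your treatment is in fact slightly sharper than the paper's: you note that the \emph{monoid} image of $(\alpha,\beta,\gamma)\mapsto(\alpha+\gamma,\alpha+\beta,\beta+\gamma)$ already hits every nonzero graded piece, so $R_0$ is the skew-polynomial ring on the $D_{ij}$ before localization, whereas the paper only argues at the level of the $\Z$-lattice (allowing negative exponents) to conclude directly for the localized algebra. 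Your careful justification that taking $U_q(\g)$-invariants commutes with the Ore localization is a point the paper leaves implicit.
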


\begin{proof}
The algebra $\Oq(\Conf_3)^{U_q(\g)}$ is the direct sum of its weight subspaces with respect to $U_q(\mathfrak{t})^{\otimes 3}$:
$$
\Oq(\Conf_3)^{U_q(\g)} = \bigoplus_{l,m,n\geq0} \hr{\Oq(\Conf_3)^{U_q(\g)}}^{l,m,n},
$$
and each of these subspaces is at either 1-dimensional if $l+m+n$ is even and there exists a Euclidean triangle with side lengths $l,m,n$, or zero-dimensional otherwise.
But it is straightforward to see that the submodule of $\mathbb{Z}^3$ generated by the weights $(1,1,0),(1,0,1),(0,1,1)$ of the $D_{ij}$ is exactly the lattice of all vectors with even coordinate sum. Thus for each triple $(l,m,n)$ such that the corresponding subspace of $\Oq(\Conf_3)^{U_q(\g)}$ is non-zero, there exists a triple of integers $n_1,n_2,n_3 \in \Z$ such that
$$
D_{12}^{n_3} D_{13}^{n_2} D_{23}^{n_1} \in \hr{\Oq(\Conf_3)^{U_q(\g)}}^{\alpha,\beta,\gamma}.
$$
\end{proof}

Opening an additional gate at each $T$-region equips $\Zc(\DD_2)$ and $\Zc(\DD_3)$ with $T^4$- and $T^6$-actions, respectively, over which the restricted distinguished objects are still relative generators. Thus writing
$$
\widetilde\zeta(\underline{\DD_2}) := \iEnd_{\rev{T}^{\nsp 4}}\hr{\Dist_{\underline{\DD_2}}},\quad
\widetilde\zeta(\underline{\DD_3}) := \iEnd_{\rev{T}^{\nsp 6}}\hr{\Dist_{\underline{\DD_3}}},
$$
we obtain equivalences of categories
$$
\Zc(\underline{\DD_2})\simeq \widetilde{\zeta}(\underline{\DD_2})\mod_{T^4},\quad
\Zc(\underline{\DD_3})\simeq \widetilde{\zeta}(\underline{\DD_3})\mod_{T^6}.
$$

\begin{lemma}\label{basic-sq-rels}
The algebra $\widetilde\zeta(\underline{\DD_2})$ is generated by elements $A_1,A_2$ in correspondence with the long edges of the fencing graph for $\underline{\DD_2}$, as well as generators $a_1,a_2$ in correspondence with its short edges. These generators are subject to the relations
$$
[A_1,A_2]=[a_1,a_2]=0, \qquad a_i A_j = q^{\frac{1}{2}} A_ja_i,
\qquad\text{and}\qquad
a_1A_2 = a_2A_1.
$$
\end{lemma}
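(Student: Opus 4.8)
The plan is to obtain $\widetilde\zeta(\underline{\DD_2})$ from the rank-one quantum torus $\zeta(\underline{\DD_2})\cong\C(q)\ha{D^{\pm1}}$ by the gate-opening procedure of \cref{ex:gate-opening}, carried out once in each of the two $T$-regions of $\DD_2$, and then to read the presentation off from the multiplication rule~\eqref{induction-mult-rule}.

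First I would set up the object. By \cref{lem:opening}, since $\Dist_{\underline{\DD_2}}$ stays a relative generator after opening gates, passing from $\zeta(\underline{\DD_2})=\iEnd_{\rev{T}^2}(\Dist_{\underline{\DD_2}})$ to $\widetilde\zeta(\underline{\DD_2})=\iEnd_{\rev{T}^4}(\Dist_{\underline{\DD_2}})$ is exactly the operation of \cref{ex:gate-opening} applied to the first $T$-region and then to the second; these act on disjoint $T$-regions and so commute, and iterating~\eqref{gate-induction} twice presents $\widetilde\zeta(\underline{\DD_2})$, as an object of $\Repq(\rev{T}^4)$, as the direct sum over $\mu,\nu,\sigma,\tau$ of the $\rev{T}^2$-weight-$(\mu+\nu,\sigma+\tau)$ component of $\zeta(\underline{\DD_2})$, tensored with $(\C_\mu\bt\C_\nu)\bt(\C_\sigma\bt\C_\tau)$. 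Since $\zeta(\underline{\DD_2})=\bigoplus_k\C(q)D^k$ with $D=D_{12}$ of $\rev{T}^2$-weight $(-1,-1)$ (cf.\ \cref{defn-Z}), such a summand is one-dimensional exactly when $\mu+\nu=\sigma+\tau$ (and this sum is even, in the $\PGL_2$ variant) and vanishes otherwise. Hence $\widetilde\zeta(\underline{\DD_2})$ is graded by the rank-three lattice $\{(\mu,\nu,\sigma,\tau):\mu+\nu=\sigma+\tau\}$ with one-dimensional weight spaces, i.e.\ it is a quantum torus of rank three.

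Next I would name the generators by reading the fencing graph of $\underline{\DD_2}$, which is a rectangle with two short edges (one in each $T$-region) and two long edges. Numbering the four gates $1,2$ in the first $T$-region and $3,4$ in the second, so that the long edges join gates $2,3$ and gates $4,1$: let $a_1$ be the class of $1\in\zeta(\underline{\DD_2})$ tagged by $\C_1\bt\C_{-1}$ at gates $1,2$ and trivially elsewhere, and likewise $a_2$ with $\C_1\bt\C_{-1}$ at gates $3,4$, the sign of the tag being fixed by the clockwise orientation of each $T$-region as in \cref{def:fencing-graph}; and let $A_1,A_2$ be the incarnations of $D$, corrected by monomials in the $a_i$, whose weights are supported on gates $2,3$ and on gates $4,1$ respectively. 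A weight count shows that $\{a_1,a_2,A_1,A_2\}$ generate $\widetilde\zeta(\underline{\DD_2})$; moreover their weights already satisfy $A_1-A_2=a_1-a_2$, which is the lattice identity forcing a single algebra relation among the four. Feeding these elements into~\eqref{induction-mult-rule} — whose only numerical input for $\SL_2$ is the Cartan pairing $(\mu,\nu)=\tfrac{\mu\nu}{2}$ — then gives $[A_1,A_2]=[a_1,a_2]=0$ (in each bracket the two generators have disjoint gate supports and the underlying elements of the commutative ring $\zeta(\underline{\DD_2})$ commute, so a short check with~\eqref{induction-mult-rule} produces commutativity), the relations $a_iA_j=q^{1/2}A_ja_i$ (each such pair shares exactly one gate and hence a single phase, equal to $q^{1/2}$ for the orientation conventions of \cref{def:fencing-graph}), and finally $a_1A_2=a_2A_1$.

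The step I expect to be the main obstacle is pinning down this last relation, together with the precise normalisation of $A_1,A_2$. Since all weight spaces of $\widetilde\zeta(\underline{\DD_2})$ are one-dimensional, $a_1A_2$ and $a_2A_1$ automatically coincide up to a scalar in $q^{\frac12\Z}$; what must be verified is that carrying the two products through~\eqref{induction-mult-rule} in the two orders yields matching powers of $q$, so that the scalar is $1$. Equivalently, this pins down the correct normalisations of the long-edge generators relative to $D$ and to $a_1,a_2$, and is what actually identifies them with the long edges of the fencing graph. This is the one point where one must be careful with orientation and twist conventions (e.g.\ which half-ribbon identification of $\Oq(N\backslash G)$ from \cref{ribbon-rem} is in force, and the direction assigned to each short edge) rather than merely bookkeeping weights; everything else reduces to routine tracking of $\rev{T}^4$-weights and powers of $q$. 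The $\PGL_2$-version then follows by restricting all of the above to even $\rev{T}^4$-weights.
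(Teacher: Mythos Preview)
Your approach is essentially the paper's: apply the gate-opening construction of \cref{ex:gate-opening} to $\zeta(\underline{\DD_2})\cong\C(q)\ha{D^{\pm1}}$ and read the relations off the multiplication rule~\eqref{induction-mult-rule}. The only difference is that the paper writes down explicit generators at the outset, namely $A_1=D\ot\chi_{\eps_1,\eps_2}$, $A_2=D\ot\chi_{\eps_2,\eps_1}$, $a_i=1\ot\chi_{\eps_i,-\eps_i}$ with $\eps_1=(1,0),\eps_2=(0,1)\in\Lambda(T^2)$; with these in hand your ``main obstacle'' $a_1A_2=a_2A_1$ becomes immediate, since both sides are $q^{(\eps_1,\eps_2)}D\ot\chi_{\eps_1+\eps_2,0}$ and the pairing is symmetric (indeed $(\eps_1,\eps_2)=0$), so no delicate normalisation or orientation chase is needed.
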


\begin{proof}
The Lemma is a special case of \cref{ex:gate-opening}, where we open one additional gate in each $T$-region. Introducing the weights $\eps_1=(1,0),\eps_2=(0,1)$ for the $\Repq(T^2)$--action on $\zeta(\underline{\DD}_2)$ and setting $\mathbb{C}_{\lambda,\mu} = \mathbb{C}_{\lambda}\boxtimes\mathbb{C}_{\mu}\in\Rep_q(T^2)\boxtimes \Rep_q(T^2)$, we take the generators $A_i,a_i$ to be
\begin{align*}
A_1 &= D \ot \chi_{\eps_1,\eps_2}, \qquad a_1 = 1 \ot \chi_{\eps_1,-\eps_1}, \\
A_2 &= D \ot \chi_{\eps_2,\eps_1}, \qquad a_2 = 1 \ot \chi_{\eps_2,-\eps_2}.
\end{align*}
The asserted relations then follow from the general multiplication rule~\eqref{induction-mult-rule}.
\end{proof}
Similarly, in the case of $\DD_3$ we have
\begin{lemma}
\label{basic-hex-rels}
The algebra $\widetilde\zeta(\underline{\DD_3})$ is a quantum torus with generators  $A_{12},A_{13},A_{23}$ in correspondence with the long edges of the fencing graph for $\underline{\DD_3}$, as well as generators $a_1,a_2,a_3$ in correspondence with its short edges. The relations among these generators are
$$
[A_{ij},A_{kl}]=[a_i,a_j]=0
\qquad\text{and}\qquad
a_k A_{ij} = q^{\frac{\delta_{ik}+\delta_{kj}}{2}} A_{ij} a_k.
$$
\end{lemma}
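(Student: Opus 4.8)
The plan is to deduce the lemma from \cref{ex:gate-opening} in exactly the same way as \cref{basic-sq-rels}, the only change being that we now open one extra $T$-gate in each of the three $T$-regions of $\underline{\DD_3}$, rather than in each of the two $T$-regions of $\underline{\DD_2}$. First I would recall from the preceding lemma together with \cref{cor-deltas} that $\zeta(\underline{\DD_3})\cong\C_q\ha{D_{12}^{\pm1},D_{13}^{\pm1},D_{23}^{\pm1}}$, with the three $q$-commutation relations of \cref{cor-deltas}, and record that as an algebra object in $\Repq(\overline{T}^{\nsp3})$ it is $\Z^3$-graded with $D_{ij}$ of weight $\eps_i+\eps_j$. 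Opening a second gate in each $T$-region replaces the $\Repq(\overline{T}^{\nsp3})$-action by a $\Repq(\overline{T}^{\nsp6})$-action, pulled back along the three tensor functors $\Repq(\overline{T})^{\bt2}\to\Repq(\overline{T})$; by \cref{insertgate} the restricted distinguished object remains a relative progenerator, and iterating the object formula \eqref{gate-induction} and the multiplication rule \eqref{induction-mult-rule} over the three regions describes $\widetilde\zeta(\underline{\DD_3})$ explicitly, its multiplication being that of $\zeta(\underline{\DD_3})$ twisted by one scalar from \eqref{induction-mult-rule} for each of the three regions.

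Next I would write down explicit generators, in parallel with the formulas in the proof of \cref{basic-sq-rels}. For each long edge of $\Gamma_{\underline{\DD_3}}$ connecting $T$-regions $i$ and $j$ I set $A_{ij}=D_{ij}\ot\chi_{ij}$, where $\chi_{ij}$ has weight $1$ at exactly one sub-gate of region $i$ and one sub-gate of region $j$ (and weight $0$ elsewhere), the sub-gates being those singled out by the clockwise orientation in \cref{def:fencing-graph}, so that the two long edges incident to any given $T$-region occupy its two \emph{distinct} sub-gates; and for each short edge at region $k$ I set $a_k=1\ot\chi_k$, where $\chi_k$ has weight $+1$ at one sub-gate of region $k$, $-1$ at the other, and $0$ elsewhere. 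These six elements are invertible, since the $D_{ij}$ are invertible in $\zeta(\underline{\DD_3})$ and one-dimensional objects of $\Repq(T^{\nsp6})$ are invertible; a weight-space count as in the preceding lemma shows that they generate $\widetilde\zeta(\underline{\DD_3})$ and that the listed relations are a complete set, so that the algebra is indeed a quantum torus.

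The relations themselves I would check by applying \eqref{induction-mult-rule} region by region. The identities $[a_i,a_j]=0$ are immediate, since $\chi_i$ and $\chi_j$ have disjoint support and $1\cdot1=1$. The identity $a_kA_{ij}=q^{(\delta_{ik}+\delta_{kj})/2}A_{ij}a_k$ records the single scalar picked up at region $k$ when $k\in\{i,j\}$, and the absence of any scalar otherwise. Finally $[A_{ij},A_{kl}]=0$ holds because any two of the three long edges share exactly one $T$-region, and the $q^{\pm1/2}$ coming from reordering the underlying elements $D_{ij}$ via \cref{cor-deltas} is compensated by the twisting scalar that \eqref{induction-mult-rule} produces at that shared $T$-region; this cancellation is the whole reason for opening the extra gates. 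The one delicate point, and the main obstacle, is the bookkeeping in this last computation: one must fix the sub-gate assignments and the signs in $\chi_{ij}$ and $\chi_k$ coherently — this is exactly the data of the clockwise orientation and the distinguished vertex in \cref{def:fencing-graph} — and keep careful track of the sign convention of \cref{def:repq-t} (recall $\Repq(T)$ carries the pairing $-\alpha$), so that the cancellation against \cref{cor-deltas} is exact and the surviving scalar in $a_kA_{ij}$ comes out with the asserted sign. This is a finite check over the three pairs of long edges and involves no new ideas once the conventions are fixed.
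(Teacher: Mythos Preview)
Your proposal is correct and follows essentially the same approach as the paper. The paper's proof simply declares it ``identical to that of \cref{basic-sq-rels}'', writes down the generators $A_{12}=D_{12}\ot\chi_{\eps_2,\eps_1}$, $A_{23}=D_{23}\ot\chi_{\eps_3,\eps_2}$, $A_{13}=D_{13}\ot\chi_{\eps_1,\eps_3}$, $a_k=1\ot\chi_{\eps_k,-\eps_k}$, and asserts the relations follow from \eqref{induction-mult-rule}; you supply more detail than the paper does, in particular the explicit reason why the $A_{ij}$ commute (the cancellation of the $q^{\pm 1/2}$ from \cref{cor-deltas} against the gate-opening twist at the unique shared region), which the paper leaves implicit.
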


\begin{proof}
The proof is identical to that of \cref{basic-sq-rels}. The generators are given by
$$
A_{12} = D_{12} \ot \chi_{\eps_2,\eps_1}, \qquad
A_{23} = D_{23} \ot \chi_{\eps_3,\eps_2}, \qquad
A_{13} = D_{13} \ot \chi_{\eps_1,\eps_3},
$$
and
$$
a_k = 1 \ot \chi_{\eps_k,-\eps_k}, \qquad\text{for}\qquad k=1,2,3,
$$
with the asserted relations between them following immediately from formula~\eqref{induction-mult-rule}.
\end{proof}

The above computations were all for $G=\SL_2$.  However, we have the following parallel descriptions for $G=\PGL_2$.  The proof in this case is identical except that one requires all representations which appear to have exclusively even weights.
\begin{cor}
\label{cor:PGL2-D2-D3-gen}
We have the following:
\begin{itemize}
    \item Each algebra $\zeta_{\PGL_2}(\underline{\DD_2})$ is a quantum sub-torus of $\zeta_{\SL_2}(\underline{\DD_2})$ generated by $D^{\pm 2}$.
    \item Each algebra $\zeta_{\PGL_2}(\underline{\DD_3})$ is a quantum sub-torus of $\zeta_{\SL_2}(\underline{\DD_2})$ generated by
    \[
    D_{12}^{\pm 2}, D_{13}^{\pm 2}, D_{23}^{\pm 2},(D_{12}D_{13}D_{23})^{\pm 1}.
    \]
    \item Each algebra $\widetilde\zeta_{\PGL_2}(\underline{\DD_2})$ is a quantum sub-torus of $\zeta_{\SL_2}(\underline{\DD_2})$ generated by
    \[A_1^{\pm 2}, A_2^{\pm 2}, a_1^{\pm 2}, a_2^{\pm 2}.
    \]
    \item Each algebra $\widetilde\zeta_{\PGL_2}(\underline{\DD_3})$ is a quantum sub-torus of $\zeta_{\SL_2}(\underline{\DD_3})$ generated by
    \[
    A_{12}^{\pm 2}, A_{13}^{ \pm 2}, A_{23}^{\pm 2},(A_{12}A_{23}A_{13}a_1a_2a_3)^{\pm 1}, a_1^{\pm 2}, a_2^{\pm 2}, a_3^{\pm 2}.
    \].
\end{itemize}
\end{cor}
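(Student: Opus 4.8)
The plan is to exploit the fact that the computations of this section are ``natural in the weight lattice'', so that the $\PGL_2$ statements are the $\SL_2$ ones restricted to even weights, after which the generators are read off from an elementary lattice computation. Recall that $\Repq(\PGL_2)$, $\Repq(B_{\PGL_2})$ and $\Repq(T_{\PGL_2})$ are by definition the full subcategories of their $\SL_2$ counterparts on objects all of whose $U_q(\t)$-weights lie in the root lattice $\Lambda_{ad}=2\Z$, and these inclusions are compatible with every piece of structure entering \cref{def:qdec-ch-stack}: tensor products, braidings, duals, and the central structure $i^*\ot\pi^*$ of \cref{ex:OqGN}. Hence every computation of this section carried out for $G=\SL_2$ applies verbatim to $G=\PGL_2$ once one discards all graded pieces indexed by odd weights. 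In particular, rerunning \cref{ex:OqGN} and \cref{thm:DnOqConf} identifies $\iEnd_{G\times\rev{T}}(\one_B)$ for $\PGL_2$ with the sum of the even-weight graded pieces of $\Oq(N\backslash G)$, and thereby exhibits $\Oq(\Conf_n)_{\PGL_2}$ as the subalgebra of $\Oq(\Conf_n)$ consisting of elements whose weight at each of the $n$ $T$-factors is even; by \cref{prop:gensrels} this is the subalgebra generated by the degree-two monomials in the $x^{(i)}_a$.

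Next one passes to the localizations. The generator $D_{ij}$ of \cref{defn-Z} has weight $\eps_i+\eps_j$ and is therefore odd, but $D_{ij}^2$ lies in $\Oq(\Conf_n)_{\PGL_2}$, as does the product of the three $D$'s around any triangle of $\tri$ (it has even weight and, by \cref{cor-deltas}, is well-defined up to a power of $q$). Localizing $\Oq(\Conf_n)_{\PGL_2}$ at the multiplicative set these elements generate is again an Ore localization by the argument of \cref{cor:Ore}, and since localization is exact and respects the $T^n$-grading, \cref{cor-deltas} and \cref{cor-mut} show that it coincides with the even part of $\Oq(\Conf_n)[S_{\tri}^{-1}]$. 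Taking $U_q(\g)$-invariants, applying \cref{thm-inv}, and then opening gates as in \cref{ex:gate-opening}, one identifies $\zeta_{\PGL_2}(\underline{\DD_2})$, $\zeta_{\PGL_2}(\underline{\DD_3})$, $\widetilde\zeta_{\PGL_2}(\underline{\DD_2})$ and $\widetilde\zeta_{\PGL_2}(\underline{\DD_3})$ with the even-weight subalgebras of the corresponding $\SL_2$ quantum tori of \cref{basic-sq-rels} and \cref{basic-hex-rels}.

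It then remains to count generators, which is a routine lattice computation: each $\SL_2$ algebra is a quantum torus with a distinguished basis of monomials whose $T^k$-weights are recorded in \cref{defn-Z}, \cref{basic-sq-rels} and \cref{basic-hex-rels}, and the $\PGL_2$ subalgebra is the sublattice cut out by requiring the weight at every gate to be even. For $\zeta(\underline{\DD_3})$ the basis $D_{12},D_{13},D_{23}$ carries weights $(1,1,0)$, $(1,0,1)$, $(0,1,1)$, so (writing lattice elements additively) this sublattice is spanned by $2D_{12}$, $2D_{13}$, $2D_{23}$ and $D_{12}+D_{13}+D_{23}$, giving the generators $D_{ij}^{\pm2}$ and $(D_{12}D_{13}D_{23})^{\pm1}$; the case $\zeta(\underline{\DD_2})$ reduces to $2\Z\subset\Z$, giving $D^{\pm2}$. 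For the gate-opened tori one uses that every distinguished generator of \cref{basic-sq-rels} and \cref{basic-hex-rels} has an even number of odd gate-weights: for $\widetilde\zeta(\underline{\DD_2})$ this forces the even sublattice to be exactly $2\Lambda$, while for $\widetilde\zeta(\underline{\DD_3})$ the six basic generators span only a $5$-dimensional subspace modulo $2$, so the even sublattice contains $2\Lambda$ with index two, the nontrivial coset being represented by $A_{12}A_{23}A_{13}a_1a_2a_3$, which yields the seven stated generators. The one step I expect to require real care is the compatibility invoked in the second paragraph --- that the even part of an Ore localization of $\Oq(\Conf_n)$ is itself the Ore localization of the even part at the ``evened'' multiplicative set --- since this is precisely where the exchange relations of \cref{cor-deltas} and \cref{cor-mut} are needed, to rewrite any even-weight product of $D_{ij}$'s used to clear denominators as a product of the elements $D_{ij}^2$ and the triangle products.
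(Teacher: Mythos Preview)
Your proposal is correct and follows exactly the approach the paper takes: the paper's proof is the single sentence ``The proof in this case is identical except that one requires all representations which appear to have exclusively even weights,'' so you have in fact supplied considerably more detail than the paper does. Your lattice computations are right (in particular the index-two extension for $\widetilde\zeta(\underline{\DD_3})$ via the mod-$2$ rank of the weight matrix), and the point you flag about compatibility of Ore localization with passing to the even subalgebra, while a reasonable thing to check, is not something the paper addresses or needs at this level of detail.
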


\begin{remark}
We remark that a monomial subalgebra of a quantum torus is necessarily a quantum torus, so that even though there are relations above the generators listed above one can always (non-canonically) choose free generators.  For instance, in the second list one may take $D_{12}^{2}, D_{13}^{2}, D_{12}D_{13}D_{23}$.
\end{remark}

\begin{remark}
Note that the orientation of $\DD_3$ defines a cyclic ordering on its $T$-regions. In what follows, we regard the generator $A_{ij}$ as being in correspondence with the long edge of the fencing graph $\Gamma_{\DD_3}$ connecting the $T$-regions labelled $i$ and $j$, and the generator $a_k$ as being in correspondence with the short edge of $\Gamma_{\DD_3}$ sitting on the boundary of the $k$-th $T$-region.
\end{remark}

\subsection{Charts on $\Zc(\decS)$ from triangulations}
\label{sec:general-charts}
Suppose now that $\tri$ is a triangulation of a general decorated surface $\decS$ consisting of $t$ triangles and $l$ digons. To organize computations, we shall fix a framing of $\decS$, in which all triangles are drawn in the plane of the blackboard with their long edges parallel to its $x$-axis. Each digon we represent as a ribbon connecting a pair of long edges of the triangles.  In particular, such a framing of $\decS$ fixes for each long edge of the fencing graph $\Gamma_{\tri}$ an ordering on the vertices at its endpoints. An example of such a framing in the case that $\decS$ is a punctured torus is drawn in~\cref{fig:torus}.  We wish to emphasize that this additional choice of a framing is \emph{immaterial} to the determination of the charts we will construct, but is merely \emph{convenient} for giving presentations, and formulas for mutations.  See~\cref{sec:modularity} for more details.

\begin{figure}[t]
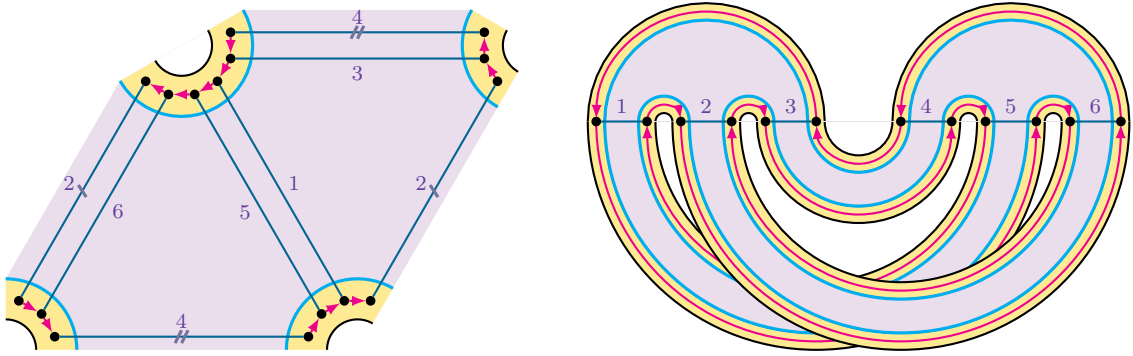

\subfile{punctured-torus.tex}
\caption{The once-punctured torus with its fencing graph overlaid, and its 2-framing as presented by a pair of jellyfish.}
\label{fig:torus}
\end{figure}

The triangulation $\tri$ is a presentation of $\decS$ as
\[
\decS = (\DD_3)^{\sqcup t} \bigsqcup_{(\DD_2)^{\sqcup 2\ell}} (\DD_2)^{\sqcup \ell},
\]
where the $2l$ digons over which we glue are obtained as small neighborhoods of the long edges of the fencing graph  $\Gamma_{\tri}$ as depicted in~\cref{fig:torus}.
Applying excision, we thus obtain an equivalence of categories
\begin{align}
\label{eqn:gluing2}
\Zc(\decS) \simeq \Zc(\DD_3)^{ t} \bigboxtimes_{\Zc(\DD_2)^{2 l}} \Zc(\DD_2)^{ l}
\end{align}
which allows us to define a open subcategory $\Zc(\tri)$ of $\Zc(\decS)$ by
\begin{align}
\label{triang-subcat}
{\Zc(\tri)} := {\Zc(\underline{\DD_3})}^{ t} \bigboxtimes_{{\Zc(\underline{\DD_2})}^{ 2l}} \Zc(\underline{\DD_2})^{ l}.
\end{align}
By construction, $\widetilde{\zeta}(\underline{\DD_2})^{\otimes l}$ is a commutative algebra object in the braided tensor category $\Tc_\Gamma$. 
Moreover, in view of Lemmas~\ref{basic-hex-rels} and~\ref{basic-sq-rels} we have equivalences
$$
\Zc(\underline{\DD_3})^{ t}\simeq \widetilde{\zeta}(\underline{\DD_3})^{\otimes t}\mod_{\Tc_\Gamma}, \qquad \Zc(\underline{\DD_2})^{ 2l}\simeq \widetilde{\zeta}(\underline{\DD_2})^{\otimes 2l}\mod_{\Tc_\Gamma},
$$
where $\widetilde{\zeta}(\underline{\DD_3})^{\otimes t}$ and $\widetilde{\zeta}(\underline{\DD_2})^{\otimes 2l}$ are both  $\widetilde{\zeta}(\underline{\DD_2})^{\otimes l}$--algebras. We thus find ourselves in the setting of \cref{lem:excision}, which we may apply to obtain the following description of the open subcategory corresponding to the triangulation $\tri$: 
$$
\Zc(\tri)\simeq \widetilde{\zeta}(\tri)\mod_{\Tc_\Gamma},
$$
where
$$
\widetilde{\zeta}(\tri):=\iEnd_{\Tf_\tri}(\Dist_\tri) \cong \widetilde{\zeta}(\underline{\DD_3})^{\otimes t}\otimes_{{\zeta}(\underline{\DD_2})^{\otimes 2l}}\widetilde{\zeta}(\underline{\DD_2})^{\otimes l}.
$$

The quantum torus $\widetilde{\zeta}(\tri)$ can be described explicitly with the help of the relations from Lemmas~\ref{basic-hex-rels} and~\ref{basic-sq-rels}. Let us write $E(\Gamma_\tri)$ for the set of edges of the fencing graph $\Gamma_\tri$, so that
$$
E(\Gamma_\tri) = E_l(\Gamma_\tri) \sqcup E_s(\Gamma_\tri),
$$
where $E_l(\Gamma_\tri)$ and $E_s(\Gamma_\tri)$ denote the subsets of long and short edges respectively. Recall that the union of all short edges coincides with the set of walls $\Cc$ between $G$ and $T$-regions. Let us now orient the short edges so that they travel around $T$-regions in a clockwise manner.  Given a short edge $e \in E_s(\Gamma_\tri)$, we define its \defterm{head} $h(e)$ and its \defterm{tail} $t(e)$, to be its endpoints so that $e$ is oriented from $t(e)$ to $h(e)$.  
\begin{prop}
\label{pre-glue-alg}
The algebra object $\widetilde{\zeta}(\tri)$
in the category $\Tc_\Gamma$ is a quantum torus with generators $A_\ell^{\pm1}$, $\ell \in E_l(\Gamma_\tri)$,  $a_e^{\pm1}$, $e \in E_s(\Gamma_\tri)$. These generators are subject to the following \defterm{fencing graph relations} which hold for all long edges $\ell, \ell'$ and short edges $e, e'$: 
\beq
\label{fencing-rels1}
A_\ell A_{\ell'}=A_{\ell'} A_{\ell}, \qquad a_1A_2 = a_2A_1,
\eeq
if $a_1, a_2, A_1, A_2$ are the four edges in a digon as in \cref{basic-sq-rels},
\begin{align*}
a_e A_\ell &= \begin{cases} q^{\frac12} A_\ell a_e & \text{if} \quad \ell \cap e \ne \emptyset, \\ A_\ell a_e & \text{otherwise};\end{cases}, \\
a_e a_{e'} &= \begin{cases} q^{\frac12} a_{e'} a_e & \text{if} \quad h(e)=t(e') \quad \text{and} \quad t(e) \ne h(e'), \\ a_{e'} a_e & \text{otherwise}.
\end{cases},
\end{align*}

The $\Tf_\tri$-equivariant structure is as follows:
\[
\wt_i(A_\ell) =
    \begin{cases}
        1 &\text{if} \quad i \in \ell, \\
        0 &\text{if} \quad i \notin \ell,
    \end{cases}
\qquad
\wt_i(a_e) =
    \begin{cases}
        1 &\text{if} \quad i = t(e), \\
        -1 &\text{if} \quad i = h(e), \\
        0 &\text{otherwise},
    \end{cases}
\]
where $\wt_i(x)\in \Lambda(T)$ indicates weight of an element $x \in \widetilde\zeta(\tri)$ with respect to the torus at the $i$-th vertex in $\Gamma_\tri$.
\end{prop}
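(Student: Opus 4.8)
My plan is to deduce the presentation directly from the relative tensor product description $\widetilde\zeta(\tri)\cong\widetilde\zeta(\underline{\DD_3})^{\otimes t}\otimes_{\zeta(\underline{\DD_2})^{\otimes 2l}}\widetilde\zeta(\underline{\DD_2})^{\otimes l}$ established above, by plugging in the local presentations of \cref{basic-hex-rels} and \cref{basic-sq-rels} and simplifying. Recall that $\widetilde\zeta(\underline{\DD_3})$ is the quantum torus on $A_{12},A_{13},A_{23},a_1,a_2,a_3$ and that $\widetilde\zeta(\underline{\DD_2})$ is the quantum torus on $A_1,A_2,a_1,a_2$ together with the single relation $a_1A_2=a_2A_1$. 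First I would observe that each of the $2l$ structure homomorphisms $\zeta(\underline{\DD_2})=\C(q)\ha{D^{\pm1}}\to\widetilde\zeta(\underline{\DD_3})^{\otimes t}$, respectively $\to\widetilde\zeta(\underline{\DD_2})^{\otimes l}$, corresponds to a collar digon sitting along one long edge of $\Gamma_\tri$, and — unwinding the excision equivalences \eqref{eqn:gluing2}, \eqref{triang-subcat} and the identifications preceding the statement — realizes $\C(q)\ha{D^{\pm1}}$ as the sub-quantum-torus generated by the long-edge generator $A_{ij}$ of the hexagon (respectively by one of $A_1,A_2$ of the rectangle) attached along that collar; here one uses the inclusion $\eta_{ij}$ of \cref{Oq-morphism-lemma} to identify which long-edge generator occurs. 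Since in every tensor factor the image of $D$ is one of the \emph{free} generators of the ambient quantum torus, the only effect of forming the relative tensor product is to rename, at each collar, the two long-edge generators meeting there by one common symbol $A_\ell$, $\ell\in E_l(\Gamma_\tri)$, while the short-edge generators of the hexagons and rectangles survive untouched as the $a_e$, $e\in E_s(\Gamma_\tri)$.

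Identifying free generators with free generators, and imposing the relation $a_1A_2=a_2A_1$ inside each rectangle (which, as noted after \cref{basic-sq-rels}, merely eliminates a redundant generator), both preserve the class of quantum tori, so $\widetilde\zeta(\tri)$ is a quantum torus on the generators $A_\ell,a_e$, and it remains only to compute the commutation relations. The combinatorial input I would use is that, by the gluing recipe and \cref{def:fencing-graph}, every vertex of $\Gamma_\tri$ is a corner of exactly one hexagon and of at most one rectangle — the two glued precisely along the long edge through that vertex — so that two distinct hexagons, two distinct long edges, and a long edge and a short edge not belonging to one common polygon never share a vertex. It follows that every $A$--$A$ relation is $A_\ell A_{\ell'}=A_{\ell'}A_\ell$, and that every $A$--$a$ relation is the one internal to a single hexagon or rectangle; by \cref{basic-hex-rels} and \cref{basic-sq-rels} this is $a_eA_\ell=q^{1/2}A_\ell a_e$ exactly when $e$ and $\ell$ are incident edges of a common polygon, equivalently exactly when $\ell\cap e\neq\emptyset$, and $a_eA_\ell=A_\ell a_e$ otherwise. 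This yields \eqref{fencing-rels1} together with the long--short rule.

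For the short--short relations, two short edges of a single polygon are non-adjacent — short edges alternate with long edges around a hexagon and are opposite in a rectangle — hence share no vertex, so \cref{basic-hex-rels} and \cref{basic-sq-rels} give $[a_e,a_{e'}]=0$. If instead $e$ and $e'$ lie in different polygons and share a vertex $i$ (generically the only one), they occupy factors of $\Tc_\Gamma$ whose braiding contributes $q^{\pm\frac12\,\wt_i(a_e)\wt_i(a_{e'})}$ at each shared vertex; feeding in the weights $\wt_i(a_e)=+1$ at $t(e)$ and $-1$ at $h(e)$ together with the clockwise orientation of short edges around $T$-regions, a single shared vertex with $h(e)=t(e')$ produces the factor $q^{1/2}$, while a $T$-region bounded by exactly the two short edges $e,e'$ contributes at two vertices whose contributions cancel, giving commutation — matching the two clauses of the asserted short--short relation. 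The same weights $\wt_i(A_{ij}),\wt_i(a_k)$ of \cref{basic-hex-rels} and $\wt_i(A_i),\wt_i(a_i)$ of \cref{basic-sq-rels}, carried through the renaming, yield at once the stated $\Tf_\tri$-grading: $\wt_i(A_\ell)=1$ for $i\in\ell$ and $0$ otherwise, while $\wt_i(a_e)=+1,-1,0$ according as $i=t(e)$, $i=h(e)$, or neither.

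The hard part will be bookkeeping rather than anything conceptual: one must fix, compatibly with the chosen framing of $\decS$, the orientation of every short edge and the ordering of the two vertices on every long edge of $\Gamma_\tri$, and then propagate these through the braiding form of $\overline{\Repq(T)}$ so that each $q$-exponent emerges with the sign recorded in the proposition — and, in particular, so that the ``otherwise'' clauses are genuinely produced rather than merely not contradicted by a cancellation. Once this dictionary is pinned down, every relation is a verbatim instance of \cref{basic-hex-rels} or \cref{basic-sq-rels} or a single application of the braided multiplication rule of \cref{ex:gate-opening}, and the quantum torus property persists at each step for the reason given in the second paragraph.
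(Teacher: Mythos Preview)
Your approach is correct and is essentially the one the paper intends: the paper does not supply a separate proof of this proposition, treating it as a direct consequence of the excision formula for $\widetilde\zeta(\tri)$ together with the local presentations in \cref{basic-hex-rels} and \cref{basic-sq-rels} and the braided multiplication rule \eqref{induction-mult-rule}. Your outline of reading off generators from the tensor factors, identifying long-edge generators across each collar, and deriving the cross-relations from the $\Tc_\Gamma$-braiding is exactly the intended computation.

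One small correction: the reference to $\eta_{ij}$ from \cref{Oq-morphism-lemma} is misplaced --- that lemma concerns the embedding $\Oq(G)\hookrightarrow\OqConfe{n}{e_{ij}}$, not the structure map $\zeta(\underline{\DD_2})\to\widetilde\zeta(\underline{\DD_3})$. The identification of which long-edge generator receives $D$ comes instead directly from the explicit formulas $A_{ij}=D_{ij}\otimes\chi_{\epsilon_j,\epsilon_i}$ in the proof of \cref{basic-hex-rels} (and their analogues in \cref{basic-sq-rels}), together with the fact that the digon-into-triangle embedding sends the generator $D$ of $\zeta(\underline{\DD_2})$ to the corresponding $D_{ij}$. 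With that fix, the bookkeeping you flag as the hard part is indeed all that remains.
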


\begin{cor}\label{cor:tildezetaPGL2}
The algebra object $\widetilde\zeta_{\PGL_2}(\tri)$ is the subalgebra of $\widetilde\zeta_{\SL_2}(\tri)$ generated by $(a_i)^{\pm 2}$ for each short edge, $A_\ell^{\pm 2}$ for each long edge, and $(A_{\ell_1}A_{\ell_2}A_{\ell_3}a_1a_2a_3)^{\pm 1}$ for each triangle.
\end{cor}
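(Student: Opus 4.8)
The plan is to rerun the excision computation of \cref{sec:general-charts} verbatim with $G=\PGL_2$, and then to match the resulting quantum torus with the claimed monomial subtorus of $\widetilde\zeta_{\SL_2}(\tri)$ by bookkeeping at the level of character lattices. First I would observe that the derivation of $\widetilde\zeta(\tri)\cong\widetilde\zeta(\underline{\DD_3})^{\otimes t}\otimes_{\zeta(\underline{\DD_2})^{\otimes 2l}}\widetilde\zeta(\underline{\DD_2})^{\otimes l}$ used only that $\Repq(G)$ is semisimple and that the local algebras are quantum tori; both survive for $G=\PGL_2$ once all occurring $T$-weights are required to be even, so the same formula holds with $\PGL_2$-coefficients, each local factor being now the quantum torus recorded in \cref{cor:PGL2-D2-D3-gen}.

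Next I would upgrade \cref{cor:PGL2-D2-D3-gen} to a statement about the glued algebra. Each local $\PGL_2$-algebra is a monomial subtorus of the corresponding $\SL_2$-algebra, and the structure maps used in \cref{eqn:gluing2} respect these inclusions: $\zeta_{\PGL_2}(\underline{\DD_2})=\langle D^{\pm 2}\rangle\subset\zeta_{\SL_2}(\underline{\DD_2})$, with $D^2$ going to the square of a long-edge generator of $\widetilde\zeta_{\PGL_2}(\underline{\DD_2})$ (resp.\ of $\widetilde\zeta_{\PGL_2}(\underline{\DD_3})$). Applying \cref{lem:excision} to the $\PGL_2$-data thus produces an algebra map $\widetilde\zeta_{\PGL_2}(\tri)\to\widetilde\zeta_{\SL_2}(\tri)$, and it remains to identify it as an embedding with the asserted image. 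Here I would use that a Laurent-monomial subalgebra of a quantum torus is itself a quantum torus determined by the sublattice of $\Z^{E(\Gamma_\tri)}$ the monomials span, and that the relative tensor product $M\otimes_A N^{\op}$ of quantum tori has character lattice the pushout $L_M\oplus_{L_A}L_N$. So the image corresponds to the pushout of the sublattices of \cref{cor:PGL2-D2-D3-gen}. Tracking the edge-identifications of $\Gamma_\tri$ coming from the gluing \cref{eqn:gluing2} — each overlap-digon identifying a long-edge generator of a triangle with a long edge of a digon, the digon short edges entering $\Gamma_\tri$ as further short edges — one reads off that this pushout sublattice is spanned by $2e_\ell$ for $\ell\in E_l(\Gamma_\tri)$, $2e_e$ for $e\in E_s(\Gamma_\tri)$, and, for each triangle, the sum of the six edge-vectors of its hexagon; the digon-only generators $2e_{A_i},2e_{a_i}$ are already of the first two forms. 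These exponent vectors are exactly those of $A_\ell^{\pm2}$, $a_e^{\pm2}$ and $(A_{\ell_1}A_{\ell_2}A_{\ell_3}a_1a_2a_3)^{\pm1}$, which is the assertion.

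The step I expect to be the main obstacle is this last one: a relative tensor product is a coequalizer and does not commute with passage to subobjects in general, so one must genuinely verify that gluing the $\PGL_2$-subtori and then including into $\widetilde\zeta_{\SL_2}(\tri)$ agrees with first gluing the $\SL_2$-tori and then restricting to the listed monomials — in particular that nothing is collapsed, including under self-gluings. Since $\zeta_{\PGL_2}(\underline{\DD_2})\subset\zeta_{\SL_2}(\underline{\DD_2})$ is a finite-index monomial sublattice and the gluing maps are surjective onto generating monomials on both sides, the pushout square of free abelian groups maps injectively into the $\SL_2$ pushout, and the comparison reduces to the routine lattice computation indicated above; the real care goes into pinning down the edge-identifications of the whole-surface fencing graph precisely enough to perform it.
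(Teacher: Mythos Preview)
Your approach is correct, but it is considerably more elaborate than what the paper has in mind. The paper gives no separate proof for this corollary: it is meant to follow immediately from the description of $\widetilde\zeta_{\SL_2}(\tri)$ in \cref{pre-glue-alg} by the same one-line observation that proves \cref{cor:PGL2-D2-D3-gen}, namely that passing from $\SL_2$ to $\PGL_2$ amounts to restricting to the sublattice of even weights at every $T$-gate. Since \cref{pre-glue-alg} already records the weight of each generator $A_\ell$ and $a_e$ at every vertex of $\Gamma_\tri$, one simply reads off which monomials in the $A_\ell,a_e$ have all weights even: $A_\ell^2$ and $a_e^2$ obviously do, and for each hexagon the product $A_{\ell_1}A_{\ell_2}A_{\ell_3}a_1a_2a_3$ has weight $0$ or $2$ at each of its six vertices (each vertex meeting exactly one long and one short edge of that hexagon), hence is even. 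That these span the full even sublattice is an elementary check, essentially the same local one already done for a single triangle.

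By contrast, you rerun the entire excision argument on the $\PGL_2$ side and then compare the resulting pushout of character lattices with the $\SL_2$ one. This is a legitimate route and your discussion of the potential obstacle---that relative tensor products need not commute with passage to subalgebras---is well-placed and correctly resolved. What your approach buys is an independent construction of $\widetilde\zeta_{\PGL_2}(\tri)$ that does not presuppose the embedding into the $\SL_2$ algebra; what the paper's approach buys is brevity, since once the $\SL_2$ computation is done the $\PGL_2$ answer is literally the even-degree part, with no second gluing required.
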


\subsection{Closing $T$-gates}
\label{gluing-pattern-subsec}

\begin{defn}
\label{A-hat-torus}
If $\trib$ is a notched triangulation of a decorated surface $\decS$, we define the quantum torus $\zeta(\trib)$ to be the subalgebra of $\Tc_{P^c}$-invariants in $\widetilde\zeta(\tri)$.
\end{defn}

By \cref{lem:opening}, the functor of taking $\Tc_{P^c}$ invariants is conservative. Hence we have an equivalence of categories,
$$
\zeta(\trib)\mod_{\Tc_{P\cup M}} \simeq \widetilde\zeta(\tri)\mod_{\Tc_\Gamma}.
$$

The generators of $\zeta(\trib)$ are labelled by the edges and the notch of the notched ideal triangulations. We shall now describe these generators and their relations, and express them in terms of generators of $\widetilde \zeta(\tri)$.  We will make heavy use of the Weyl ordering on the quantum torus $\widetilde\zeta(\tri)$.

\begin{defn}
\label{weyl-order}
Suppose that $\zeta$ is a quantum torus with a basis of generators $\{X_i \}$. Then the \defterm{Weyl ordering} with respect to the generating set $(X_i)$ is defined recursively by declaring $\nord{X_i}=X_i$ for all $i$, and if $Y$ and $Z$ are two monomial elements satisfying $YZ = q^{2r}ZY$,
$$
q^{-r}\!\!\nord{Y}\!\nord{Z} = \nord{YZ} = q^r\!\!\nord{Z}\!\nord{Y}.
$$
\end{defn}

Let $s$ be an edge in the notched triangulation $\trib$, and pick a long edge $l$ in the fencing graph corresponding to $s$. Then the endpoints of $l$ are vertices $w_1,w_2$ of the fencing graph contained in regions whose distinguished vertices are $v_1$ and $v_2$. Denote by $a(l_1)$ the set of short edges in the minimal path in the fencing graph starting at distinguished vertex $v_1$ and following the short edges clockwise to the vertex $w_1$. Similarly we define $a(l_2)$ to be the set of short edges in the minimal path in the fencing graph starting at the distinguished vertex $v_2$ and travelling clockwise to the vertex $w_2$. Then we define an element $Z_s$ of $\zeta(\trib)$ by
$$
Z_s := \nord{a(l_1)a(l_2)A_l}.
$$
It follows from the relations~\eqref{fencing-rels1} in \cref{pre-glue-alg} that the element $Z_s$ does not depend on the choice of long edge $l$ projecting to the edge $s$ in the underlying notched triangulation. 

Given a puncture $p \in \hatS$, let $v \in \Gamma_\tri$ be the corresponding distinguished vertex, and $\gamma_p$ be the path which connects $v$ to itself and winds once around the corresponding $T$-region. Then we define another element of $\zeta(\trib)$ as the Weyl-ordered product
$$
\alpha_p := \nord{\prod_{s \in \gamma_p} a_s},
$$
where the product of short edges is taken in clockwise order starting at any vertex in $\gamma_p$.
\
We now describe the weights and commutation relations between the edge variables $Z_s$. In order to do this, we will make use of the two edge-ends $s_1,s_2$ associated to each edge in a notched triangulation. Each edge-end $s_i$ has a a boundary vertex $a = \partial s_i$. Note that the weights of the generators $Z_s$ and $\alpha_p$ with respect to the $i$-th factor of $\mathcal{T}_{P\cup M}$ are as follows:
\beq
\label{wt_z}
\wt_i(Z_s) = \#\hc{\text{edge--ends of $s$ lying in the region $i$}}
\qquad\text{and}\qquad
\wt_i(\alpha_p) = 0.
\eeq

\begin{defn}
Let $s$ be an edge-end in a notched triangulation $\trib$, and $p = \partial s$ be the vertex at which $s$ terminates. We write $|s|$ for the number of $s$ in the total ordering on the set of edge-ends terminating at the vertex $p$ defined by the distinguished gate.
\end{defn}
For a pair of edge-ends $s_1$ and $s_2$ we now define
$$
\omega(s_1,s_2) =
\begin{cases}
-1 & \text{if $p := \partial s_1 = \partial s_2$ and $|s_1| < |s_2|$}, \\
1 & \text{if $p := \partial s_1 = \partial s_2$ and $|s_2| < |s_1|$}, \\
0 & \text{otherwise}.
\end{cases}
$$

Now let $s, t \in \trib$ be a pair of edges with edge-ends $s_1, s_2$ and $t_1,t_2$ respectively. We define $\omega(s,t)$ by the following formula:
$$
\omega(s,t) = \sum_{i,j=1}^2 \omega(s_i,t_j).
$$

Then from the presentation of $\widetilde\zeta(\tri)$ described in \cref{pre-glue-alg} we deduce:

\begin{theorem}
\label{thm:zeta-rel}
For any notching $\trib$ of a triangulation $\tri$, we have an equivalence of categories
\[
\Zc(\tri)\simeq \zeta(\trib)\mod_{\mathcal{T}_{P\cup M}}.
\]
The algebra $\zeta(\trib)$ is a quantum torus with generators $Z_s^{\pm1}$ labelled by the edges $s$ of the notched ideal triangulation $\trib$, and by an invertible generator $\alpha_p$ for each puncture $p$ of $\decS$. The relations between these generators are as follows:
\begin{itemize}
\item for a pair of edges $s, t \in \trib$ we have
$$
Z_{s} Z_{t} = q^{\frac{1}{2}\omega(s,t)} Z_{t} Z_{s};
$$
\item for any puncture $p$ and an edge $s$ with vertices $\partial s_1, \partial s_2$ we have
$$
\alpha_p Z_s = q^{\delta_{p,\partial s_1}+\delta_{p, \partial s_2}} Z_s \alpha_p.
$$
\end{itemize}

\end{theorem}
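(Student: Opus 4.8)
The equivalence of categories is already in hand: composing the equivalence $\Zc(\tri)\simeq\widetilde\zeta(\tri)\mod_{\Tc_\Gamma}$ established in \cref{sec:general-charts} with the equivalence $\zeta(\trib)\mod_{\Tc_{P\cup M}}\simeq\widetilde\zeta(\tri)\mod_{\Tc_\Gamma}$ recorded after \cref{A-hat-torus} (which itself comes from closing all gates at non-distinguished vertices of the fencing graph by iterating \cref{lem:opening}) yields $\Zc(\tri)\simeq\zeta(\trib)\mod_{\Tc_{P\cup M}}$. So the real content is to identify the invariant subalgebra $\zeta(\trib)=\widetilde\zeta(\tri)^{\Tc_{P^c}}$ with the quantum torus described. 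The plan is to read this off from the presentation of $\widetilde\zeta(\tri)$ in \cref{pre-glue-alg} together with the formulas defining $Z_s$ and $\alpha_p$ as Weyl-ordered monomials.

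First I would compute the $\Tc_\Gamma$-weights of the proposed generators. By \cref{pre-glue-alg} the long-edge generator $A_\ell$ has weight $\epsilon_{u_1}+\epsilon_{u_2}$ at the endpoints of $\ell$, while a short-edge generator $a_e$ has weight $\epsilon_{t(e)}-\epsilon_{h(e)}$; hence the product of short edges along the oriented path $a(l_i)$ telescopes to $\epsilon_{v_i}-\epsilon_{w_i}$, where $v_i$ is the distinguished vertex of the relevant $T$-region and $w_i$ the endpoint of $l$. Consequently $Z_s=\nord{a(l_1)a(l_2)A_l}$ has weight supported exactly at the two distinguished vertices, which recovers~\eqref{wt_z} and in particular shows $Z_s\in\zeta(\trib)$; likewise the cycle $\gamma_p$ telescopes to $0$, so $\alpha_p$ is $\Tc_\Gamma$-invariant. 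Independence of $Z_s$ from the choice of long edge $l$ projecting to $s$ follows from the digon relation $a_1A_2=a_2A_1$ in~\eqref{fencing-rels1}: replacing $l$ by an adjacent parallel long edge changes $A_l$ and the paths $a(l_i)$ by a compensating short-edge factor. That $\zeta(\trib)$ is a quantum torus on $\{Z_s\}\cup\{\alpha_p\}$ then amounts to checking that the exponent vectors of these elements form a $\Z$-basis of the sublattice of $\Tc_{P^c}$-invariant exponent vectors of $\widetilde\zeta(\tri)$; one sees this by recovering each $A_\ell$ modulo short-edge monomials as some $Z_s$, expressing each invariant short-edge monomial through the $\alpha_p$ and such monomials, and matching ranks via the standard combinatorial count of edges and punctures of an ideal triangulation.

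Finally, the commutation relations. Since $Z_s,Z_t$ are Weyl-ordered monomials in the quantum torus $\widetilde\zeta(\tri)$, bi-additivity of Weyl ordering gives $Z_sZ_t=q^{\langle\mathbf z_s,\mathbf z_t\rangle}Z_tZ_s$ and $\alpha_pZ_s=q^{\langle\boldsymbol\alpha_p,\mathbf z_s\rangle}Z_s\alpha_p$, where $\langle-,-\rangle$ is the bi-additive commutation form read off \cref{pre-glue-alg} and $\mathbf z_s,\boldsymbol\alpha_p$ are the exponent vectors, $\mathbf z_s=\mathbf e_l+\sum_{a(l_1)}\mathbf e_e+\sum_{a(l_2)}\mathbf e_e$. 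That form pairs two short edges nontrivially only when they are consecutive around a $T$-region, and a short edge with a long edge only when they meet; hence $\langle\mathbf z_s,\mathbf z_t\rangle$ receives contributions only in $T$-regions where both $s$ and $t$ have an edge-end, and there the radial paths contribute a telescoping sum collapsing to $\pm\tfrac12$ according to which edge-end comes first in the clockwise total order, i.e. to $\tfrac12\omega(s_i,t_j)$; summing over the (at most four) pairs of edge-ends gives $\tfrac12\omega(s,t)$. For $\langle\boldsymbol\alpha_p,\mathbf z_s\rangle$ the path portions telescope to zero against the full cycle $\gamma_p$, leaving only the pairing of $\gamma_p$ with $A_l$, which contributes $\delta_{p,\partial s_1}+\delta_{p,\partial s_2}$. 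I expect the main obstacle to be exactly this last step: carefully tracking the orientation conventions on the short-edge cycles and the many half-integer contributions from the relations $a_eA_\ell=q^{1/2}A_\ell a_e$ and $a_ea_{e'}=q^{1/2}a_{e'}a_e$, so that the internal terms visibly cancel and the surviving ones assemble into precisely $\tfrac12\omega(s,t)$ and $\delta_{p,\partial s_1}+\delta_{p,\partial s_2}$, including the degenerate cases of loop edges and short cycles.
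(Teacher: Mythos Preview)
Your proposal is correct and takes the same approach as the paper, which simply states the theorem as a direct deduction from the presentation of $\widetilde\zeta(\tri)$ in \cref{pre-glue-alg} without supplying further details. You have filled in precisely the computations the paper leaves implicit: the weight calculation showing $Z_s,\alpha_p\in\zeta(\trib)$, the lattice-basis check, and the bi-additive commutation-form bookkeeping that reduces the relations to the local fencing-graph rules.
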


Recall now the subalgebra $\chi\hr{\tri}$ of $\zeta(\trib)$ obtained by closing the distinguished gate at each puncture:
$$
\chi\hr{\tri} = \zeta(\trib)^{\mathcal{T}_P}.
$$
\begin{cor}
Suppose that either $G=\SL_2$ and we have at least one marked point, or that $G=\PGL_2$. Then the functor of taking $\Tc_P$ invariants defines an equivalence,
\[
\Zc(\tri) \simeq \chi(\tri)\mod_{\Tc_M}.
\]
\end{cor}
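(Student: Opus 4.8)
The plan is to reduce the corollary to a strong-grading computation for the quantum torus $\zeta(\trib)$. By \cref{thm:zeta-rel} we may replace $\Zc(\tri)$ by $\zeta(\trib)\mod_{\Tc_{P\cup M}}$, so that it suffices to prove that the functor $\mathrm{Inv}$ of taking $\Tc_P$-invariants defines an equivalence
\[
\mathrm{Inv}\colon\ \zeta(\trib)\mod_{\Tc_{P\cup M}}\ \longra\ \chi(\tri)\mod_{\Tc_M}.
\]
Using $\Tc_{P\cup M}=\Tc_P\bt\Tc_M$, I would regard $\zeta(\trib)$ as an algebra object in $\Repq(\Tc_M)$ graded by the $\Tc_P$-weight lattice $\bigoplus_p\Lambda(T)$ (one copy of $\Lambda(T)$ per puncture $p$); then an object of $\zeta(\trib)\mod_{\Tc_{P\cup M}}$ is such a graded $\zeta(\trib)$-module in $\Repq(\Tc_M)$, the functor $\mathrm{Inv}$ extracts the degree-zero component, and it takes values in modules over $\zeta(\trib)_0=\zeta(\trib)^{\Tc_P}=\chi(\tri)$.

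I would then exhibit the left adjoint of $\mathrm{Inv}$ as the graded induction functor $\mathrm{Ind}(N)=\zeta(\trib)\otimes_{\chi(\tri)}N$, whose degree-$\lambda$ part is $\zeta(\trib)_\lambda\otimes_{\chi(\tri)}N$: a graded $\zeta(\trib)$-module map out of $\mathrm{Ind}(N)$ is determined by its restriction to the degree-zero piece $\zeta(\trib)_0\otimes_{\chi(\tri)}N=N$, which yields the adjunction and, in particular, shows that the associated monad $\mathrm{Inv}\circ\mathrm{Ind}$ is the identity monad. Since graded components are computed componentwise, $\mathrm{Inv}$ is colimit-preserving. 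By the Barr--Beck reconstruction theorem (\cref{BB-original}, in its module-category incarnation), it follows that $\mathrm{Inv}$ is an equivalence onto modules over its monad --- that is, exactly the asserted equivalence --- provided $\mathrm{Inv}$ is also \emph{conservative}. Because $\zeta(\trib)$ is a quantum torus, every homogeneous monomial is invertible; hence conservativity of $\mathrm{Inv}$ is equivalent to surjectivity of the weight map sending a monomial of $\zeta(\trib)$ to its $\Tc_P$-degree in $\bigoplus_p\Lambda(T)$.

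This surjectivity is the one nontrivial point, and it is where the hypotheses enter; I expect it to be the main obstacle. By \cref{pre-glue-alg} and \cref{thm:zeta-rel} the generators of $\zeta(\trib)$ are the edge variables $Z_s$, with $\wt_p(Z_s)$ equal to the number of edge-ends of $s$ at $p$ (hence in $\{0,1,2\}$ for $G=\SL_2$), together with the puncture variables $\alpha_p$, all of $\Tc_P$-weight zero. For $G=\PGL_2$ one has $\Lambda(T)=\Lambda_{ad}$: every puncture $p$ is incident to some edge $s$, and a suitable power of $Z_s$ --- namely $Z_s$ if $s$ is a loop at $p$, or $Z_s^2$ otherwise --- lies in $\zeta_{\PGL_2}(\trib)$ and has $\wt_p$-weight $2$, which generates $\Lambda_{ad}$; propagating along a spanning tree of the puncture--adjacency graph then gives full surjectivity, so no marked point is needed. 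For $G=\SL_2$ one instead needs an invertible element of $\wt_p$-weight $1$ at each puncture, i.e.\ an edge with exactly one end at $p$; here the hypothesis of at least one marked point, together with connectedness of the $1$-skeleton of $\tri$, is what I would use --- propagating along a spanning tree rooted at a marked point --- to produce such elements and to conclude that the $Z_s$ jointly surject onto $\bigoplus_p\Lambda(T)$. Without this hypothesis the statement genuinely fails (the once-punctured torus, all of whose edges are loops at the puncture, being the prototype, consistent with the $\SL_2$-center caveat in \cref{rmk:SL2-eta}). Granting this combinatorial input, Barr--Beck applies and the equivalence follows, the $\Tc_M$-equivariant structure being transported along it.
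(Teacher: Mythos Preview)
Your strategy coincides with the paper's: reduce to conservativity of the $\Tc_P$-invariants functor on $\zeta(\trib)\mod_{\Tc_{P\cup M}}$, then verify conservativity by exhibiting invertible homogeneous monomials that shift any nonzero vector to $\Tc_P$-degree zero. For $G=\SL_2$ your propagation along the connected $1$-skeleton toward a marked point is exactly what the paper does (phrased there as acting by ratios $D_{ij}D_{jk}^{-1}$ within a triangle to move weight from a puncture $i$ to another vertex $k$).

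There is, however, a gap in your $\PGL_2$ argument. The monomials you use --- $Z_s$ for loops and $Z_s^2$ for non-loop edges --- need not surject onto $\bigoplus_p \Lambda_{ad}$. On the three-punctured sphere (three edges, no loops) their $\Tc_P$-weights $(2,2,0),(0,2,2),(2,0,2)$ span only the index-$2$ sublattice of $(2\Z)^3$ with coordinate sum divisible by $4$; in particular $(2,0,0)$ is not hit, and ``propagating along a spanning tree'' fails for lack of a sink. The paper avoids this by also using the triangle generator of $\zeta_{\PGL_2}(\trib)$ (cf.~\cref{cor:PGL2-D2-D3-gen}): for any triangle with vertices $i,j,k$ the element $D_{ij}D_{ki}D_{jk}^{-1}=(D_{ij}D_{jk}D_{ki})\cdot D_{jk}^{-2}$ is invertible in $\zeta_{\PGL_2}(\trib)$ and has weight $2$ at $i$ and $0$ at $j$ and $k$. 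This lets one reduce the degree at each puncture independently, so no propagation or marked point is needed. Adding this single observation repairs your argument.
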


\begin{proof}

First we treat the case $G=\SL_2$.  Since the functor of taking $\Tc_P$-invariants is exact, we need only show that it is conservative, i.e. that no object is sent to zero, equivalently that for any $\Lambda(\Tc_{P\cup M})$-graded $\zeta(\trib)$-module, the degree-zero subspace with respect to the $\Lambda(\Tc_P)$-grading is non-zero.  For this, simply note that the invertible elements $D_{ij}$ emanating from any puncture have weight $1$ at the puncture.  Hence, taking ratios $D_{ij}D_{jk}^{-1}$ for any triangle with vertices $ijk$, where $i$ is the puncture, we can shift the degree to vertex $k$.  In this way, we can move any non-vector to another non-zero vector with degree supported exclusively over the non-empty set of marked points.

In the case of $\PGL_2$, we do not need to assume existence of a marked point.  In this case, we have generators $D_{ij}^2$ and $D_{ij}D_{jk}D_{kl}$, and by definition we consider only modules of even total degree at each gate.  For any triangle with vertices $ijk$, the invertible element $(D_{ij}D_{jk}D_{ki})^2/D_{jk}^2$ has degree $2$ at vertex $i$ and degree zero at all other vertices, so it can be used directly to reduce the degree at any vertex $i$, in particular at the puncture.
\end{proof}

\begin{cor}
Let $G=\PGL_2$, and let $\eta(\tri)=\chi(\tri)^{\Tc_M}$.  Then we have an equivalence of categories,
\[
\Zc_{\PGL_2}(\tri) \simeq \eta(\tri)\mod.
\]
\end{cor}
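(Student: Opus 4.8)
The plan is to mimic the proof of the preceding corollary, now closing the remaining $\Tc_M$-gates rather than the $\Tc_P$-gates. Starting from the equivalence $\Zc_{\PGL_2}(\tri)\simeq\chi(\tri)\mod_{\Tc_M}$ just established, consider the functor $\Phi$ of taking $\Tc_M$-invariants,
$\Phi\colon\chi(\tri)\mod_{\Tc_M}\to\chi(\tri)^{\Tc_M}\mod=\eta(\tri)\mod$.
Since the category $\Repq(T^m)$ attached to the marked-point gates is semisimple, $\Phi$ is exact (indeed colimit-preserving) and it is precisely the passage to the degree-zero component of the $\Lambda(\Tc_M)$-grading; its left adjoint is induction along $\eta(\tri)\subset\chi(\tri)$. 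As in the preceding proof, once we know $\Phi$ is conservative — equivalently, that for any nonzero $\Lambda(\Tc_M)$-graded $\chi(\tri)$-module the degree-zero subspace is nonzero, equivalently that $\chi(\tri)$ is strongly $\Lambda(\Tc_M)$-graded — the triangle identities force the counit of the adjunction to be an isomorphism, so $\Phi$ is an equivalence. Composing with $\Zc_{\PGL_2}(\tri)\simeq\chi(\tri)\mod_{\Tc_M}$ then gives the claim.

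To establish conservativity I would exhibit, for each marked point $i$ of $\decS$, an invertible element of $\chi(\tri)$ whose $\Lambda(\Tc_M)$-weight is $2\eps_i$, i.e.\ $2$ at the gate over $i$ and $0$ at every other $T$-gate. Concretely, choose a triangle of $\tri$ incident to $i$, with $T$-regions labelled $i,j,k$; then, exactly as in the proof of the preceding corollary, the monomial $(D_{ij}D_{jk}D_{ki})^2 D_{jk}^{-2}$ is an invertible element of $\zeta_{\PGL_2}(\trib)$ of weight $2$ at the vertex $i$ and weight $0$ at every other vertex — in particular at every puncture — so it lies in $\chi(\tri)=\zeta_{\PGL_2}(\trib)^{\Tc_P}$. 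Since the classes $\{2\eps_i\}$ together with their negatives generate the lattice $\Lambda(\Tc_M)=\bigoplus_i\Lambda_{ad}$, multiplying a nonzero homogeneous vector of a graded module by a suitable monomial in these units shifts its degree to $0$; hence $\Phi$ is conservative.

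The main point to check carefully is the combinatorial input that every marked point of a simple decorated surface is a vertex of at least one triangle of $\tri$; this holds as soon as $\tri$ contains a triangle at all, the only exceptions being the degenerate surfaces $\DD_1$ and $\DD_2$, for which the assertion can be verified directly. Note that, in contrast with the $\SL_2$ situation of \cref{rmk:X}, no hypothesis on the number of marked points is required here: the monomial above has weight supported entirely at the vertex $i$, so there is no need to transport excess degree along a chain of triangles to an auxiliary marked point. The role of \cref{lem:opening} in the earlier arguments is played here simply by the exactness of $\Phi$, which again rests on the semisimplicity of $\Repq(T)$ for the Cartan of $\PGL_2$.
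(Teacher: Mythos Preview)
Your argument is correct and is exactly the one the paper intends: the corollary is stated without proof precisely because the argument is identical to that of the preceding corollary about $\chi(\tri)$, using the same invertible monomial $(D_{ij}D_{jk}D_{ki})^2 D_{jk}^{-2}$ to shift degrees, now at marked points rather than at punctures.

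One small correction, however. Your claim that for the degenerate case $\DD_2$ ``the assertion can be verified directly'' is not right. For $\underline{\DD_2}$ we have $\chi_{\PGL_2}(\underline{\DD_2}) = \C_q[D^{\pm 2}]$ with $D^2$ of $\Tc_M$-weight $(2,2)\in(2\Z)^2$, so $\eta(\underline{\DD_2})=\C_q$; but $\chi_{\PGL_2}(\underline{\DD_2})\mod_{T^2}$ is equivalent to $\Repq(T_{\PGL_2})$ (send $M$ to $\bigoplus_k M_{(2k,0)}$), not to $\Vect$. In particular the invariants functor is \emph{not} conservative here: the free module shifted to degree $(2,0)$ has zero invariants. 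The corollary simply does not cover $\DD_2$, which has no triangles and hence no triangulation in the sense used in this section; you should say this rather than assert a direct verification.
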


\begin{remark}\label{rmk:SL2-eta} For $\SL_2$, the functor of taking $T$-invariants at the final gate is not an equivalence for obvious parity reasons (owing to existence of a center of $\SL_2$).  However, one can show that instead of $\Dist_\tri$ being a generator of each chart, we have instead that the sum $\Dist_\tri \oplus \Dist_\tri[1]$ is a generator, where the latter denotes a parity shift applied to $\Dist_\tri$ is a generator, so that one has an equivalence:
\[
\Zc_{\PGL_2}(\tri) \simeq \hr{\eta_{\SL_2}(\tri)\oplus \eta_{\SL_2}(\tri)}\mod.
\]
\end{remark}
\subsection{Comparison of $\PGL_2$ and $\SL_2$ charts}\label{sec:PGL2-comparison}
Let us consider the commuting diagram of inclusions of quantum tori, where below each one we recall which $T$-gates remain open when defining that algebra.  Hence $M$ bijects with the marked points, $P$ bijects with punctures, and $\widetilde{P}$ bijects with the set of incidences $(v,T)$, where $v$ is a vertex of $\tri$, $T$  is a triangle of $\tri$, and $v$ is a vertex of $T$.

$$
\begin{tikzcd}[column sep = large]
\eta_{\SL_2}(\tri) \arrow[hook]{r} &\chi_{\SL_2}(\tri) \arrow[hook]{r} & \zeta_{\SL_2}(\trib)\arrow[hook]{r} & \widetilde\zeta_{\SL_2}(\tri)\\ \\
\eta_{\PGL_2}(\tri)\arrow[hook]{uu} \arrow[hook]{r} &\chi_{\PGL_2}(\tri) \arrow[hook]{uu} \arrow[hook]{r} & \zeta_{\PGL_2}(\trib) \arrow[hook]{uu} \arrow[hook]{r} &  \widetilde\zeta_{\PGL_2}(\tri) \arrow[hook]{uu}\\[-12pt]
\emptyset & M & M\cup P & M\cup \widetilde{P}
\end{tikzcd}
$$

Let $H_1(S,X)$ denote the homology of $S$ relative to $X$, with $\mathbb{Z}/2\mathbb{Z}$ coefficients.

\begin{prop}
\label{prop:cover}
We have:
\begin{enumerate}
    \item The algebra $\eta_{\SL_2}(\tri)$ is a free module over $\eta_{\PGL_2}(\tri)$ of rank $|H_1(S)|$.
    \item The algebra $\chi_{\SL_2}(\tri)$ is a free module over $\chi_{\PGL_2}(\tri)$ of rank $|H_1(S,M)|$.
    \item The algebra $\zeta_{\SL_2}(\trib)$ is a free module over $\eta_{\PGL_2}(\tri)$ of rank $|H_1(S,M\cup P)|$.
    \item The algebra $\widetilde\zeta_{\SL_2}(\trib)$ is a free module over $\widetilde\zeta_{\PGL_2}(\tri)$ of rank $|H_1(S,M\cup\widetilde{P})|$.
\end{enumerate}
Moreover in each case, a free basis can be taken consisting of $M_1^{\eps_1} \cdots M_N^{\eps_N}$, for some monomials $M_1,\ldots, M_N$, with $\eps_i\in\{0,1\}$, and such that $M_i^2$ lies in the $\SL_2$ quantum torus.
\end{prop}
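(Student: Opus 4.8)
The plan is to reduce all four statements to a single fact about finite–index sublattices of the lattice underlying a quantum torus, and then to read off the index from a relative cellular chain complex of $S$. \emph{Step 1: freeness over a monomial sub-torus.} A quantum torus is canonically the twisted group algebra $\C_q[L]$ of a lattice $L$ (the weight lattice of its generating monomials $x_\mu$, $\mu\in L$), and any subalgebra generated by monomials is $\C_q[L']$ for a sublattice $L'\subseteq L$. If $[L:L']=d<\infty$, then choosing a transversal $\mu_1,\dots,\mu_d$ of $L/L'$ and putting $M_i=x_{\mu_i}$ gives $\C_q[L]=\bigoplus_{i}M_i\,\C_q[L']=\bigoplus_i\C_q[L']\,M_i$, since each basis monomial $x_\mu$ factors — uniquely up to an invertible scalar, which is harmless — as $M_i\,x_{\mu-\mu_i}$ with $\mu-\mu_i\in L'$. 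Thus $\C_q[L]$ is free of rank $d$ over $\C_q[L']$ on both sides, with the chosen monomial transversal as basis. By \cref{pre-glue-alg}, \cref{cor:tildezetaPGL2} and the $T$-invariant constructions feeding \cref{thm:zeta-rel}, in each of the four cases the $\SL_2$-algebra is such a $\C_q[L_{\SL_2}]$ and its $\PGL_2$-counterpart is the monomial sub-torus $\C_q[L_{\PGL_2}]$ of a sublattice $L_{\PGL_2}\subseteq L_{\SL_2}$; so everything reduces to computing the index $[L_{\SL_2}:L_{\PGL_2}]$ and exhibiting a transversal of the claimed form.

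\emph{Step 2: the extension is by $(\Z/2\Z)^N$.} In all four cases $2L_{\SL_2}\subseteq L_{\PGL_2}$: the squares of the $\SL_2$-generators occur among the $\PGL_2$-generators of \cref{cor:tildezetaPGL2} for $\widetilde\zeta$, and for $\zeta,\chi,\eta$ the inclusion descends by intersecting with the weight-zero sublattices cut out when closing gates. Hence $L_{\SL_2}/L_{\PGL_2}$ is a quotient of $L_{\SL_2}/2L_{\SL_2}$, so it is finite and isomorphic to $(\Z/2\Z)^N$. Pick $v_1,\dots,v_N\in L_{\SL_2}$ whose images form a basis of $L_{\SL_2}/L_{\PGL_2}$ and set $M_i=x_{v_i}$; then $M_i^2=x_{2v_i}\in\C_q[2L_{\SL_2}]\subseteq\C_q[L_{\PGL_2}]$, while the $2^N$ monomials $M_1^{\eps_1}\cdots M_N^{\eps_N}$, $\eps\in\{0,1\}^N$, form a transversal of $L_{\SL_2}/L_{\PGL_2}$. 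By Step 1 they are a free basis, and this establishes every assertion of the proposition except $N=\dim_{\Z/2\Z}H_1(S,X)$.

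\emph{Step 3: computing $N$ homologically.} Using $L_{\PGL_2}=2L_{\SL_2}+B$ from \cref{cor:tildezetaPGL2}, with $B$ spanned by the triangle classes $\mathbf 1_T=\sum_{\ell\subset T}[A_\ell]+\sum_{e\subset T}[a_e]$, one has $L_{\SL_2}/L_{\PGL_2}\cong(L_{\SL_2}\otimes\Z/2\Z)/\langle\mathbf 1_T\bmod 2\rangle_T$. I would then identify this with $\coker\partial_2$ for a $\Z/2\Z$-cellular chain complex modelling the pair $(S,X)$: take the closed $G$-region $\overline{\decS_G}$, with $1$-skeleton the fencing graph $\Gamma_\tri$ and $2$-cells the $G$-regions of the triangles and digons of $\tri$; the digon relations $a_1A_2=a_2A_1$ of \cref{pre-glue-alg} are exactly the boundaries of the digon $2$-cells, $\partial_2$ of a triangle cell is $\mathbf 1_T\bmod 2$, and $S$ deformation-retracts onto this complex (pushing the $T$-disks and $T$-annuli onto their inner boundaries). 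Taking $X$ to contain all $0$-cells — for $\widetilde\zeta$ this is the corner set $\widetilde P$ (fencing vertices, with the two ends of each short hexagon-edge merged) together with the marked points $M$ — forces $C_0(S,X)=0$, whence $\coker\partial_2=H_1(S,X)$ and $N=\dim_{\Z/2\Z}H_1(S,M\cup\widetilde P)$. Closing a $T$-gate passes $L_{\SL_2}$ and $L_{\PGL_2}$ to their weight-zero sublattices at that vertex, which on the homological side enlarges $X$ by that vertex; carrying this out and then collapsing the remaining corners at each marked point or puncture yields the relative subspaces $M\cup P$, $M$, and $\emptyset$ for $\zeta$, $\chi$, and $\eta$, giving the remaining three ranks by the same argument.

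\emph{The main obstacle} is Step 3: constructing the cellular model honestly and matching, for all four gate-configurations at once, the lattice-theoretic operation ``close a gate'' with the topological operation ``enlarge $X$'', including a correct treatment of the $T$-annuli around punctures (which are not cells) and of the merging of fencing vertices. A shortcut to the numerics, should the direct model prove unwieldy, is to note that $[L_{\SL_2}:L_{\PGL_2}]$ is independent of $q$, so $N$ may be computed at $q=1$ — as $\operatorname{rk}L_{\SL_2}-\dim_{\Z/2\Z}\langle\mathbf 1_T\bmod 2\rangle$, or as the $\Z/2\Z$-dimension of the kernel of the isogeny of classical tori dual to $L_{\PGL_2}\hookrightarrow L_{\SL_2}$ — and then matched against $\dim_{\Z/2\Z}H_1(S,X)$ by an Euler-characteristic count using $H_0(S,X)=0$ and the known value of $H_2(S,X)$.
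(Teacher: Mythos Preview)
Your approach is essentially the paper's: both treat (4) first by identifying $L_{\SL_2}/L_{\PGL_2}$ with $C_1/\operatorname{im}\partial_2$ in the $\Z/2\Z$-cellular chain complex coming from the fencing graph (your $\langle\mathbf 1_T,\text{digon boundaries}\rangle$ is exactly the paper's ``identifying any edge with the product of all other edges on the same face''), and both then descend to (3), (2), (1) by closing gates. Your Steps~1--2 spell out the freeness and the $(\Z/2\Z)^N$ structure more carefully than the paper does, which simply asserts the rank formula $N=E-F$ via the long exact sequence.

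There is, however, a directional error in your Step~3. You write that closing a $T$-gate ``on the homological side enlarges $X$ by that vertex'', but the proposition runs the other way: $\widetilde\zeta$ has \emph{all} gates open and $X=M\cup\widetilde P$ is largest, while $\eta$ has all gates closed and $X=\emptyset$. Closing a gate at $v$ restricts both $L_{\SL_2}$ and $L_{\PGL_2}$ to their weight-zero sublattices at $v$; since weight zero implies even weight, the parity constraint at $v$ becomes redundant in the quotient, so $L_{\SL_2}/L_{\PGL_2}$ can only shrink. On the homological side this corresponds to \emph{removing} $v$ from $X$: now $C_0(S,X)$ acquires a generator, $\ker\partial_1$ is cut down, and $H_1(S,X)$ drops (or not) by one $\Z/2\Z$ factor --- precisely according to whether the weight map at $v$ was surjective on the remaining $L_{\SL_2}$, which matches whether $v$ was already connected in $X$ to the rest. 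This is what the paper means by ``the parity requirement there becomes vacuous, and elements of our basis become redundant''; your Euler-characteristic shortcut would recover the numerics, but the conceptual picture you describe has the arrow reversed.
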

\begin{proof}
We prove (4) first; each of the other statements will follow by computing the appropriate subalgebras of invariants.  First, an elementary computation with the long exact sequence in relative homology yields $|H_1(S,M\cup\widetilde{P})| = 2^N$, where $N=E-F$ denotes the number of (long and short) edges in the fencing graph, and $F$ denotes the number of (hexagonal or rectangular) faces.  Here, we regard the fencing graph as giving a simplicial decomposition of the surface. Then the rank asserted in formula (4) and the required basis in that case follows immediately from \cref{cor:tildezetaPGL2}: a basis may be given by monomials of degree either one or zero with respect to each (long and short) edge, modulo an equivalence relation identifying any edge with the product of all other edges on the same face.

Each time we close a gate by taking $T$-invariants at some vertex, the parity requirement there becomes vacuous, and elements of our basis become redundant.  It is then elementary to verify each remaining formula (3), (2), and (1).
\end{proof}

\subsection{Comparison of $\Xc^q_{\tri}$ and $\chi_{\PGL_2}(\tri)$}

Let $s$ be an edge in the triangulation $\tri$, and $s_1$, $s_2$ be its edge ends incident to vertices $v_1$ and $v_2$. Denote by $s_i^+$ and $s_i^-$ the edge-ends incident to $v_i$ which immediately follow and precede $s_i$ in the clockwise order. We then define an element $\widetilde X_s \in \chi_{\SL_2}(\tri)$ as follows:
\begin{itemize}
    \item if $s$ is not a boundary edge and is not the internal edge of a self-folded triangle,
    $$
    \widetilde X_s = \nord{Z_{s_1^-}Z_{s_1^+}^{-1}Z_{s_2^-}Z_{s_2^+}^{-1}}
    $$
    \item if $s$ is a boundary edge, and $v_2$ follows $v_1$ as we traverse the corresponding boundary component in a counterclockwise direction,
    $$
    \widetilde X_s = \nord{Z_sZ_{s_1^-}Z_{s_2^+}^{-1}}
    $$
    \item if $s$ is the internal edge of a self-folded triangle with the outer edge $s'$, we set
    $$
    \widetilde X_s = \widetilde X_{s'}
    $$
\end{itemize}

Given a puncture $p$ and an edge $s$ in $\trib$, we say that $s$ is \defterm{adjacent to the notch} at $p$ if an edge-end of $s$ is either the first or the last one at $v_i$. We say that $s$ is \defterm{opposing the notch} at $p$ if there exist edges $s'$ and $s''$, both adjacent to notch at $p$, such that $\hc{s,s',s''}$ form a triangle in $\trib$, possibly self-folded or with some of the vertices coinciding. We then set
$$
\epsilon_p(s) =
\begin{cases}
1, &\text{if $s$ is adjacent to the notch at $p$}, \\
-1, &\text{if $s$ is opposing the notch at $p$}, \\
0, &\text{otherwise}.
\end{cases}
$$
If $s$ is not the internal edge of a self-folded triangle, we now define
$$
\alpha_s = \prod_{p \in P} \alpha_p^{\epsilon_p(s)},
$$
whereas if $s$ is the internal edge of a self-folded triangle with the internal puncture $p$ and the outer edge $s'$, we set
$$
\alpha_s = \alpha_{s'}\alpha_p^2.
$$

Now, let $\tri$ be the ideal triangulation underlying $\trib$. Denote by $\Xc^q_\tri$ the quantum chart of the cluster Poisson variety defined by~$\tri$, see~\cite{GS19}, and set $X_s$ to be the quantum cluster $\Xc$-variable labelled by the edge $s$.  We have:

\begin{theorem}
We have a well-defined homomorphism,
\begin{align*}
    \iota_{\trib} \colon \Xc^q_\tri &\longrightarrow \zeta_{\SL_2}(\trib),\\
    X_s &\longmapsto \nord{\widetilde X_s \alpha_s}.
\end{align*} 

Moreover, $\iota_{\trib}$ defines an isomorphism $\Xc^q_\tri\cong \chi_{\PGL_2}(\tri). $
\end{theorem}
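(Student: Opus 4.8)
The plan is to reduce the entire statement to linear algebra on the cocharacter lattices of quantum tori, isolating the single genuinely $q$-dependent point. Regard $\Xc^q_\tri$ as the quantum torus with cocharacter lattice $\Lambda_\Xc=\Z^{E(\tri)}$ (one basis vector $e_s$ per edge of $\tri$) equipped with the exchange form $\epsilon$ of the quiver attached to $\tri$ in~\cite{GS19}, and regard $\zeta_{\SL_2}(\trib)$ as the quantum torus with cocharacter lattice $\Lambda_\zeta$ spanned by the edge generators $Z_s$ and the puncture generators $\alpha_p$, equipped with the form $\omega_\zeta$ read off from \cref{thm:zeta-rel}. Let $L\colon\Lambda_\Xc\to\Lambda_\zeta$ be the linear map sending $e_s$ to the exponent vector of $\nord{\widetilde X_s\alpha_s}$; note $L$ is manifestly independent of $q$. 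In this language the theorem says: (i) $\omega_\zeta(L(x),L(y))=\epsilon(x,y)$ for all $x,y$, which makes $\iota_{\trib}$ a well-defined algebra homomorphism; (ii) $L(\Lambda_\Xc)$ is contained in the sublattice $\Lambda_{\chi_{\PGL_2}}\subseteq\Lambda_\zeta$ cut out by the $\PGL_2$-parity conditions (the $\Tc_P$-invariance being automatic since $\widetilde X_s\in\chi_{\SL_2}(\tri)$ by construction and $\wt_i(\alpha_p)=0$ by \eqref{wt_z}); and (iii) $L$ restricts to an isomorphism $\Lambda_\Xc\xrightarrow{\sim}\Lambda_{\chi_{\PGL_2}}$.

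First I would treat (i). Since $\nord{\widetilde X_s\alpha_s}$ is a Weyl-ordered Laurent monomial in invertible generators it is invertible, so it remains only to match $q$-commutators; because $\zeta_{\SL_2}(\trib)$ is a quantum torus, any two Weyl-ordered monomials $Y,Y'$ satisfy $YY'=q^{\omega_\zeta([Y],[Y'])}Y'Y$, reducing the problem to the combinatorial identity $\omega_\zeta(L(e_s),L(e_t))=\epsilon_{st}$. I would prove this locally: by \cref{thm:zeta-rel} the pairing of two edge-variables $Z_{s'},Z_{t'}$ receives a contribution only from edge-ends at a common vertex (governed by the clockwise ordering $\omega(\,\cdot\,,\,\cdot\,)$ there), and $\alpha_p$ pairs with $Z_{s'}$ only through the incidences of $s'$ at $p$. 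Expanding $\widetilde X_s,\widetilde X_t$ into the edge-ends $s_i^{\pm},t_j^{\pm}$ and summing the local contributions vertex by vertex reproduces exactly the ``per-triangle'' arrows of the Fock--Goncharov/Gekhtman--Shapiro quiver, while the $\alpha_s$-corrections reproduce the arrows glued in at punctures and the modifications at self-folded triangles. The substantive part here is verifying the sign conventions in one ordinary triangle and one self-folded triangle together with the puncture corrections; I expect this bookkeeping to be the main obstacle.

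For (ii) I would use \eqref{wt_z} and \cref{cor:tildezetaPGL2}. In the generic case the factors $Z_{s_i^-},Z_{s_i^+}^{-1}$ of $\widetilde X_s$ each contribute a single edge-end at $v_i$, so $\nord{\widetilde X_s\alpha_s}$ has even weight at every vertex of the fencing graph; and where it does not visibly factor into squares of generators it factors through a triangle-product $(A_{\ell_1}A_{\ell_2}A_{\ell_3}a_1a_2a_3)^{\pm1}$, hence lies in $\widetilde\zeta_{\PGL_2}(\tri)$ by \cref{cor:tildezetaPGL2}, and so (being $\Tc_P$-invariant) in $\chi_{\PGL_2}(\tri)=\zeta_{\PGL_2}(\trib)\cap\chi_{\SL_2}(\tri)$. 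The notch-adjacent, notch-opposing and self-folded cases are precisely the reason for the $\epsilon_p(s)$-exponents in the definition of $\alpha_s$, and I would check these remaining cases one at a time.

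Finally, for (iii), since $L$ is $q$-independent, the assertion that $L$ is an isomorphism onto $\Lambda_{\chi_{\PGL_2}}$ is a purely classical statement, namely the Fock--Goncharov description of the $\Xc$-variety of $\PGL_2$-local systems (\cite{FG06, GS19}) recast through the $\widehat\Ac$-coordinates $Z_s$; in the unpunctured case it is literally that statement, and in the punctured case the extra generators $\alpha_p$ of $\Lambda_\zeta$ exactly account for the discrepancy between the $\Xc$-torus and the honest monodromy data. Concretely I would first match ranks: $\operatorname{rank}\chi_{\PGL_2}(\tri)=\operatorname{rank}\zeta_{\PGL_2}(\trib)-p=\bigl(|E(\tri)|+p\bigr)-p=|E(\tri)|=\operatorname{rank}\Xc^q_\tri$, the $-p$ because closing the $p$ distinguished gates imposes $p$ independent puncture-weight conditions; then exhibit the generating monomials of $\chi_{\PGL_2}(\tri)$ supplied by \cref{prop:cover} as Laurent monomials in the $\nord{\widetilde X_s\alpha_s}$, so that $L$ is surjective onto $\Lambda_{\chi_{\PGL_2}}$. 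A surjection between free abelian groups of equal finite rank is an isomorphism, and (i) then upgrades this to the claimed algebra isomorphism $\iota_{\trib}\colon\Xc^q_\tri\xrightarrow{\sim}\chi_{\PGL_2}(\tri)$.
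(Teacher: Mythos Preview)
Your proposal is correct and follows essentially the same strategy as the paper: reduce the homomorphism check to a form-preserving lattice map, verify the image lands in $\chi_{\PGL_2}(\tri)$ by weight/parity, and establish the isomorphism by combining surjectivity with a rank/dimension count. The lattice reformulation is a clean organizational device, and your injectivity argument (surjection between free abelian groups of equal rank) is equivalent to the paper's (flatness of the kernel in $q$, then dimension count at $q=1$).

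The one place worth flagging is your surjectivity step. You write that you would ``exhibit the generating monomials of $\chi_{\PGL_2}(\tri)$ supplied by \cref{prop:cover} as Laurent monomials in the $\nord{\widetilde X_s\alpha_s}$'', but \cref{prop:cover} by itself does not hand you a generating set for $\chi_{\PGL_2}(\tri)$; it only tells you the index of $\chi_{\PGL_2}$ in $\chi_{\SL_2}$. The paper spends most of its effort here: it first describes generators of $\chi_{\PGL_2}(\tri)$ as the $\Tc_P$-invariant products $\prod_s Z_s^{2n_s}\prod_\Delta M_\Delta^{n_\Delta}$ (with $M_\Delta$ the triangle product), reduces to a single $M_\Delta$ using the identity $M_{\Delta'}=X_s M_\Delta\cdot(\text{squares})$ across a shared edge, observes $M_\Delta^2$ is itself a product of squares, and then expresses the remaining generators $\alpha_p^2$ and even-path products $M_\gamma^2$ explicitly as Weyl-ordered products of the $X_s$. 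This is not hard, but it is the actual content of surjectivity, and your sketch would need to supply it rather than defer to \cref{prop:cover}.
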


\begin{proof}
The map $\iota_{\trib}$ is a minor modification of the (quantum) cluster ensemble map, and can be shown to be well-defined by a straightforward calculation with the powers in the $q$-commutation relations.  It is also straightforward to see that $\iota_{\trib}(\Xc^q_\tri) \subset \chi_{\PGL_2}(\tri)$.

Let us now show that $\iota_{\trib}$ is surjective onto $\chi_{\PGL_2}(\tri)$. In what follows we will abuse notation and denote $\iota_{\trib}(X_s)$ by $X_s$. Given a puncture $p$ and a triangle $\Delta$ in $\trib$, we set
$$
\upsilon_p(\Delta) =
\begin{cases}
1, &\text{if two sides of $\Delta$ are adjacent to the notch at $p$}, \\
0, &\text{otherwise}.
\end{cases}
$$
Then, for a triangle $\Delta$ with the sides $s_1,s_2,s_3$ (two of which coincide if $\Delta$ is self-folded) we set
$$
M_\Delta = \nord{Z_{s_1}Z_{s_2}Z_{s_3}\prod_{p\in P}\alpha_p^{\upsilon_p(\Delta)}}.
$$
From~\cref{cor:PGL2-D2-D3-gen} and~\cref{prop:cover}, we see that $\chi_{\PGL_2}(\tri)$ is generated by $\alpha_p^{\pm2}$ for all punctures $p$, and the products
$$
\prod_s Z_s^{2n_s} \prod_\Delta M_\Delta^{n_\Delta}, \quad n_s, n_\Delta \in \Z,
$$
which have zero weight with respect to the torus action at each puncture. At the same time, for any pair of triangles $\Delta,\Delta'$ which share a common edge $s$, we have
$$
M_{\Delta'} = X_s M_{\Delta} \cdot \prod_{s'} Z_{s'}^{2n_{s'}} \prod_p \alpha_p^{2n_p},
$$
for some $n_{s'}, n_p \in \Z$. Therefore, for any fixed triangle $\Delta$ in $\tri$, the quantum torus $\chi_{\PGL_2}(\tri)$ is generated by $X_s$ for all edges $s$, $\alpha_p^{\pm2}$ for all punctures, and the products 
$$
M_\Delta^{n_\Delta} \prod_s Z_s^{2n_s}, \quad n_s, n_\Delta \in \Z,
$$
invariant under the $T$-action at punctures. Note however, that the $T$-invariance condition forces $n_\Delta$ to be even, and $M_\Delta^2$ itself is a product of elements of the form $Z_s^2$ and $\alpha_p^2$.

Now, let $\gamma = (s_1, \dots, s_{2n})$ be an \defterm{even path in $\tri$}, that is an ordered collection of edges, such that $s_i$ and $s_{i+1}$ share a common vertex $v_i$ for any $i \in \Z/2n\Z$. Define a product
$$
M_\gamma = \prod_{i=1}^{2n} {Z_{s_i}^{(-1)^i}}.
$$
Then the above discussion yields that $\chi_{\PGL_2}(\tri)$ is generated by $X_s$ for all edges $s$, $\alpha_p^{\pm2}$ for all punctures $p$, and monomials $M_\gamma^{\pm2}$ for all even paths $\gamma$. If $p$ is not the internal puncture of a self-folded triangle, one can check that
$$
\alpha_p^2 = \nord{\prod_{p \in \partial s} X_s},
$$
where the product is taken over all edges $s$ incident to $p$. Otherwise, if $p$ is the internal puncture of the self-folded triangle with the inner edge $s$ and the outer one $s'$, we get
$$
\alpha_p^2 = X_sX_{s'}^{-1}.
$$
Similarly, we have
$$
M_\gamma^2 = \prod_{i=1}^{2n} \prod_{e_i < e < e_{i+1}} X_e^{(-1)^i},
$$
where the product is taken over all edges $e$ incident to the vertex $v_i$ and sitting between $e_i$ and $e_{i+1}$ with respect to the clockwise cyclic order at $v_i$. This completes the proof of the surjectivity of $\iota_{\trib}$.

Finally, let us prove that $\iota_{\trib}$ is injective.  We note first that as a monomial map between quantum tori, the kernel is flat in $q$.  Hence it suffices to show at $q=1$ that $\Spec(\chi_{\PGL_2}(\tri))$ is an algebraic torus of dimension $|E|$, where $|E|$ is the number of edges in $\tri$, and is the dimension of $\Spec(\Xc^q_\tri)$. Indeed, we know that $\Spec(\zeta_{\SL_2}(\trib))$ has dimension $|P|+|E|$, where $|P|$ is the number of punctures. Since the action of $T$ on $\Spec(\zeta_{\SL_2}(\trib))$ is free at every puncture, the dimension of $\Spec(\chi_{\SL_2}(\tri))$ equals $|E|$, and so does the dimension of $\Spec(\chi_{\PGL_2}(\tri))$.
\end{proof}

\subsection{The flip on $\Zc(\DD_4)$}
The study of flips between triangulations of $\decS$ starts with the quadrilateral $\DD_4$ and its two triangulations $\tri$ and $\tri'$.
An arbitrary flip can be understood in terms of the one between $\tri,\tri'$, using excision to isolate the pair of adjacent triangles
where the flip takes place. In what follows, we retain our convention from \cref{sec:disk_charts} of enumerating the $T$--regions $\{1,2,3,4\}$ of $\DD_4$ in counter-clockwise order. 

Recall from \cref{disk-computation} that the category $\Zc(\DD_4)$ is equivalent to the orthogonal complement in $\Oq(\Conf_{4})\mod_{G\times T^4}$ of the localizing subcategory of torsion modules. By \cref{triangle-open-subcat}, we have full reflective subcategories $\Zc(\tri),\Zc(\tri')$ of $\Zc(\DD_4)$, each equivalent to the full subcategory of $\Oq(\Conf_{4})$--modules on which all elements $\{\Delta_{e}\}$ associated to the edges of the corresponding triangulation act invertibly. By the same argument used to prove \cref{triangle-open-subcat}, there is another open reflective subcategory 
$$
\Zc(\tri,\tri')\subset\Zc(\mathbb{D}_4)\into\Oq(\Conf_{4})\mod_{G\times T^4},
$$ 
equivalent to the full subcategory of $\Oq(\Conf_{4})$--modules on which all elements $D_{ij}$ with $1 \le i < j \le 4$ act invertibly.

To describe concretely the category $\Zc(\tri,\tri')$ we open four gates in each $T$-region, thereby endowing $\DD_4$ with a $T^{16}$-action.  We denote by $\Dist_{\tri,\tri'}$ the restriction of the distinguished object, and we set
\[\widetilde\zeta(\tri,\tri') = \iEnd_{T^{16}}(\Dist_{\tri,\tri'}),\]
giving an equivalence,
\[\Zc(\tri,\tri') \simeq \widetilde\zeta(\tri,\tri')\mod_{T^{16}}
\]
and the following:
\begin{prop}
\label{D4-flip-functor}
The transition functor between charts $\Zc(\tri)$ and $\Zc(\tri')$ is given by:
\begin{align*}
\mu_{\tri,\tri'} \colon \Zc(\tri)&\longrightarrow \Zc(\tri')\\
m &\longmapsto \zeta(\trib,\trib')\underset{\zeta(\trib)}{\otimes}m. 
\end{align*}
\end{prop}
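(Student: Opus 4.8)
The plan is to realize both charts $\Zc(\tri)$ and $\Zc(\tri')$ as open subcategories of $\Zc(\tri,\tri')$ and to track the transition functor through this common refinement. First I would recall that, by the discussion just above the statement, all three categories $\Zc(\tri)$, $\Zc(\tri')$, $\Zc(\tri,\tri')$ sit as open reflective subcategories of $\Oq(\Conf_4)\mod_{G\times T^4}$ (equivalently, inside $\Zc(\DD_4)$): the first two are cut out by invertibility of the $D_e$'s associated to the edges of $\tri$ resp.\ $\tri'$, and the third by invertibility of \emph{all} $D_{ij}$ with $1\le i<j\le 4$. In particular $\Zc(\tri,\tri')$ is an open subcategory of both $\Zc(\tri)$ and $\Zc(\tri')$, and the inclusion $\Zc(\tri,\tri')\into\Zc(\tri)$ is an Ore localization: it is induced by inverting the single remaining diagonal $D_e$, $e$ the edge of $\tri$ being flipped. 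By the general recipe for transition functors between affine charts recalled in the introduction, the transition functor $\phi_{\Zc(\tri),\Zc(\tri')}\colon\Zc(\tri)\to\Zc(\tri')$ is the composite $i_{\Zc(\tri)}$ followed by the left adjoint $i_{\Zc(\tri')}^{L}$; since both factor through the common open subcategory $\Zc(\tri,\tri')$ (the restriction functor $\Zc(\tri)\to\Zc(\tri,\tri')$ is exact, and likewise on the $\tri'$ side), and since on $\Zc(\tri,\tri')$ the two inclusions agree, the transition functor is computed entirely within $\Zc(\tri,\tri')$.

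Next I would translate this into module categories using the monadic identifications established earlier: after opening gates, $\Zc(\tri)\simeq\zeta(\trib)\mod_{\Tc_{P\cup M}}$ and $\Zc(\tri')\simeq\zeta(\trib')\mod_{\Tc_{P\cup M}}$ by \cref{thm:zeta-rel}, with $\zeta(\trib)$, $\zeta(\trib')$ quantum tori obtained by closing the auxiliary $\Tc_{P^c}$-gates in $\widetilde\zeta(\tri)$, $\widetilde\zeta(\tri')$ respectively, and $\Zc(\tri,\tri')\simeq\widetilde\zeta(\tri,\tri')\mod_{T^{16}}$. The key point is that restriction along an open immersion of affine (here $\Gc$-affine) charts is given by tensoring with a bimodule, and for Ore localizations that bimodule is the localized algebra itself. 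Concretely: $\Zc(\tri)\into\Zc(\tri)$-and-then-restricting-to-$\Zc(\tri,\tri')$ is $m\mapsto \zeta(\trib,\trib')\otimes_{\zeta(\trib)}(-)$ after one identifies $\widetilde\zeta(\tri,\tri')$ (with the appropriate gates closed) with the common localization $\zeta(\trib,\trib')$ of both $\zeta(\trib)$ and $\zeta(\trib')$ at their respective flipped-edge diagonals — this is exactly the content of \cref{prop:Ahat-flip}, where the birational isomorphism $\zeta(\trib,\trib')$ between $\zeta(\trib)$ and $\zeta(\trib')$ is exhibited. Feeding this into the formula for the transition functor, the left adjoint $i_{\Zc(\tri')}^{L}$ is the corresponding localization on the $\tri'$ side, and composing the two gives precisely $m\mapsto\zeta(\trib,\trib')\otimes_{\zeta(\trib)}m$, where the $\zeta(\trib')$-module structure on the target is via the birational isomorphism. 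I would also remark that closing the auxiliary gates is harmless: by \cref{lem:opening} the functor of taking $\Tc_{P^c}$-invariants is an equivalence at the level of module categories, so the computation at the level of the large tori $\widetilde\zeta$ descends to $\zeta$.

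The main obstacle I expect is verifying that the common localization $\widetilde\zeta(\tri,\tri')$ really is simultaneously an Ore localization of $\widetilde\zeta(\tri)$ at the flipped diagonal and of $\widetilde\zeta(\tri')$ at the corresponding one — i.e.\ that the single extra element whose inversion produces $\Zc(\tri,\tri')$ from $\Zc(\tri)$ matches, under the birational isomorphism, the one on the $\tri'$ side, with no missing or extra relations — and that the adjunction data (unit/counit) are the ones induced by localization. This is essentially the content of \cref{cor:Ore} applied inside $\Oq(\Conf_4)$ together with the explicit flip formula, so the work is bookkeeping: one must check that $S_{\tri}$ and $S_{\tri'}$ generate, after adjoining the flipped diagonal on either side, the same Ore set $\{D_{ij}\mid 1\le i<j\le 4\}$ up to units, and that the resulting localized algebra is $\widetilde\zeta(\tri,\tri')$ with the stated $T^{16}$-equivariance. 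Once this identification is in hand, \cref{prop:Ahat-flip} supplies the birational isomorphism $\zeta(\trib,\trib')$ explicitly and the stated formula for $\mu_{\tri,\tri'}$ follows immediately from the tensor-bimodule description of transition functors between affine charts.
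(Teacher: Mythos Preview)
Your proposal is correct and follows the same line as the paper, which treats the proposition as essentially immediate from the preceding setup: having realized $\Zc(\tri,\tri')$ as the common Ore localization of both $\Zc(\tri)$ and $\Zc(\tri')$, the general bimodule description of transition functors between affine charts (recalled in the introduction) yields the stated formula. One minor point: you invoke \cref{prop:Ahat-flip}, which in the paper is stated \emph{after} this proposition and concerns general $\decS$; for the $\DD_4$ case at hand you only need the fact that inverting the single remaining diagonal on either side produces the same localized algebra, which follows directly from \cref{cor:Ore} and \cref{cor-mut} without any forward reference.
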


This transition functor can be described completely explicitly as follows. Consider the ideal tetrahedron whose four triangular faces are partitioned into two copies of $\DD_4$. On the boundary of this tetrahedron we have a fencing graph $\Gamma_{\tri,\tri'}$ illustrated in~\cref{fig:tetrahedron}.

\begin{figure}[t]
\begin{tikzpicture}[every node/.style={inner sep=0, minimum size = 0.15cm, thick, circle, draw, fill}, thick, x=0.7cm, y=0.7cm]

\def\x{5};
\def\y{3.5};
\def\z{4.25};
\def\t{3.7};

\node(n1) at (-\x,-\y) {};
\node(n2) at (-\z,-\t) {};
\node(n3) at (-\t,-\z) {};
\node(n4) at (-\y,-\x) {};
\node(n5) at (\y,-\x) {};
\node(n6) at (\t,-\z) {};
\node(n7) at (\z,-\t) {};
\node(n8) at (\x,-\y) {};
\node(n9) at (\x,\y) {};
\node(n10) at (\z,\t) {};
\node(n11) at (\t,\z) {};
\node(n12) at (\y,\x) {};
\node(n13) at (-\y,\x) {};
\node(n14) at (-\t,\z) {};
\node(n15) at (-\z,\t) {};
\node(n16) at (-\x,\y) {};

\draw[MidnightBlue] (n4) to (n5);
\draw[MidnightBlue] (n8) to (n9);
\draw[MidnightBlue] (n12) to (n13);
\draw[MidnightBlue] (n16) to (n1);

\draw[MidnightBlue] (n2) to (n11);
\draw[MidnightBlue] (n3) to (n10);
\draw[MidnightBlue] (n6) to (n15);
\draw[MidnightBlue] (n7) to (n14);

\draw[red!70!black,->] (n1) to (n2);
\draw[red!70!black,->] (n2) to (n3);
\draw[red!70!black,->] (n3) to (n4);

\draw[red!70!black,->] (n5) to (n6);
\draw[red!70!black,->] (n6) to (n7);
\draw[red!70!black,->] (n7) to (n8);

\draw[red!70!black,->] (n9) to (n10);
\draw[red!70!black,->] (n10) to (n11);
\draw[red!70!black,->] (n11) to (n12);

\draw[red!70!black,->] (n13) to (n14);
\draw[red!70!black,->] (n14) to (n15);
\draw[red!70!black,->] (n15) to (n16);

\node[draw=none,fill=none] at (-4.5,-3.2) {\tiny $\boldsymbol{a_1^{(1)}}$};
\node[draw=none,fill=none] at (-3.7,-3.7) {\tiny $\boldsymbol{a_2^{(1)}}$};
\node[draw=none,fill=none] at (-3.2,-4.5) {\tiny $\boldsymbol{a_3^{(1)}}$};

\node[draw=none,fill=none] at (3.2,-4.5) {\tiny $\boldsymbol{a_1^{(2)}}$};
\node[draw=none,fill=none] at (3.7,-3.7) {\tiny $\boldsymbol{a_2^{(2)}}$};
\node[draw=none,fill=none] at (4.5,-3.2) {\tiny $\boldsymbol{a_3^{(2)}}$};

\node[draw=none,fill=none] at (4.5,3.2) {\tiny $\boldsymbol{a_1^{(3)}}$};
\node[draw=none,fill=none] at (3.7,3.7) {\tiny $\boldsymbol{a_2^{(3)}}$};
\node[draw=none,fill=none] at (3.2,4.5) {\tiny $\boldsymbol{a_3^{(3)}}$};

\node[draw=none,fill=none] at (-3.2,4.5) {\tiny $\boldsymbol{a_1^{(4)}}$};
\node[draw=none,fill=none] at (-3.7,3.7) {\tiny $\boldsymbol{a_2^{(4)}}$};
\node[draw=none,fill=none] at (-4.5,3.2) {\tiny $\boldsymbol{a_3^{(4)}}$};

\node[draw=none,fill=none,red!70!black] at (-4.5,-4.5) {$\boldsymbol{1}$};
\node[draw=none,fill=none,red!70!black] at (4.5,-4.5) {$\boldsymbol{2}$};
\node[draw=none,fill=none,red!70!black] at (4.5,4.5) {$\boldsymbol{3}$};
\node[draw=none,fill=none,red!70!black] at (-4.5,4.5) {$\boldsymbol{4}$};

\node[draw=none,fill=none] at (0,-4.5) {\footnotesize $A_{12}$};
\node[draw=none,fill=none] at (4.5,0) {\footnotesize $A_{23}$};
\node[draw=none,fill=none] at (0,4.5) {\footnotesize $A_{34}$};
\node[draw=none,fill=none] at (-4.5,0) {\footnotesize $A_{14}$};
\node[draw=none,fill=none] at (-2.5,-1.2) {\footnotesize $A'_{13}$};
\node[draw=none,fill=none] at (-1.2,-2.5) {\footnotesize $A''_{13}$};
\node[draw=none,fill=none] at (-1.2,2.5) {\footnotesize $A'_{24}$};
\node[draw=none,fill=none] at (-2.5,1.2) {\footnotesize $A''_{24}$};

\end{tikzpicture}
\caption{The fencing graph $\Gamma_{\tri,\tri'}$ in \cref{prop:D4-flip}}
\label{fig:tetrahedron}
\end{figure}

\begin{prop}\label{prop:D4-flip}
The algebra $\widetilde\zeta(\tri,\tri')$ has generators $A_\ell^{\pm1}$ with $\ell \in E_l(\Gamma_{\tri,\tri'})$ and $a_e^{\pm1}$ with $e \in E_s(\Gamma_{\tri,\tri'})$, in correspondence with the edges of the fencing graph $\Gamma_{\tri,\tri'}$ on the surface of the ideal tetrahedron shown in~\cref{fig:tetrahedron}. These generators satisfy the fencing graph relations described in~\cref{pre-glue-alg}, and in addition to these the \defterm{exchange relation}
\begin{align}
    \label{proto-exchange}
Z_{1,3}Z_{2,4} = q^{-\frac{1}{2}}Z_{1,2}Z_{3,4} + q^{\frac{1}{2}}Z_{2,3}Z_{1,4},
\end{align}
where
\begin{align*}
    Z_{1,2} &= \nord{a^{(1)}_{1}a^{(1)}_{2}a^{(1)}_{3}A_{1,2}}, &
    Z_{3,4} &= \nord{a^{(3)}_{1}a^{(3)}_{2}a^{(3)}_{3}A_{3,4}}, &
    Z_{1,3} &= \nord{a^{(1)}_{1}A_{1,3}'a_{1}^{(3)}a_{2}^{(3)}}, \\
    Z_{2,3} &= \nord{a^{(2)}_{1}a^{(2)}_{2}a^{(2)}_{3}A_{2,3}}, &
    Z_{1,4} &= \nord{a^{(4)}_{1}a^{(4)}_{2}a^{(4)}_{3}A_{1,4}}, &
    Z_{2,4} &= \nord{a^{(2)}_{1}a^{(2)}_{2}A_{2,4}'a_{1}^{(4)}}\hspace{-2.7pt}.
\end{align*}
In particular, we have injective algebra homomorphisms
\[
\widetilde\zeta(\tri)\to \widetilde\zeta(\tri,\tri'),\qquad \widetilde\zeta(\tri')\to \widetilde\zeta(\tri,\tri'),
\]
corresponding at level of generators to the inclusions of fencing graphs $\Gamma_{\tri}\subset\Gamma_{\tri,\tri'}\supset\Gamma_{\tri'}$.
\end{prop}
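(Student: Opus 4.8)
The approach is to compute $\widetilde\zeta(\tri,\tri')$ by first passing to $U_q(\g)$-invariants, then opening gates, and throughout transporting the quantum Ptolemy relation of \cref{cor-mut} through these operations. Recall (as established just before the statement) that $\Zc(\tri,\tri')$ is the open subcategory of $\Oq(\Conf_4)\mod_{G\times\rev{T}^4}$ on which all $D_{ij}$, $1\le i<j\le 4$, act invertibly, equivalently $\Oq(\Conf_4)[S^{-1}]\mod_{G\times\rev{T}^4}$ for $S$ the multiplicative set generated by the $D_{ij}$. Since $S$ contains the Ore set $S_\tri$ of \cref{cor:Ore} for a triangulation of $\DD_4$, and the remaining generator $D_{24}$ differs from a skew-Laurent monomial in $S_\tri$ by the relation of \cref{cor-mut}, the localization $\Oq(\Conf_4)[S^{-1}]$ is simply the further Ore localization $\Oq(\Conf_4)[S_\tri^{-1}][D_{24}^{-1}]$. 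Applying the $U_q(\g)$-invariants equivalence exactly as in the proof of \cref{thm-inv} (which only uses the embedding $\eta_{13}\colon\Oq(G)\hookrightarrow\Oq(\Conf_4)[D_{13}^{-1}]$ of \cref{Oq-morphism-lemma}, still available after the larger localization) gives $\Zc(\tri,\tri')\simeq\zeta(\tri,\tri')\mod_{\rev{T}^4}$ with $\zeta(\tri,\tri')=\iEnd_{T^4}(\Dist_{\tri,\tri'})\cong\Oq(\Conf_4)[S^{-1}]^{U_q(\g)}$. By \cref{thm:zeta-rel} in the punctureless case of $\DD_4$, the subalgebra $\Oq(\Conf_4)[S_\tri^{-1}]^{U_q(\g)}=\zeta(\tri)$ is a quantum torus with one generator $D_e$ per edge of $\tri$; hence $\zeta(\tri,\tri')=\zeta(\tri)[D_{24}^{-1}]$ is generated by all six $D_{ij}^{\pm1}$, subject precisely to the skew-commutation relations of \cref{cor-deltas} and the single relation $D_{13}D_{24}=q^{-\frac12}D_{12}D_{34}+q^{\frac12}D_{23}D_{14}$ of \cref{cor-mut}.

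Next, opening four $T$-gates in each of the four $T$-regions and iterating the gate-opening calculus of \cref{ex:gate-opening}, \cref{insertgate}, and \cref{lem:opening} produces $\widetilde\zeta(\tri,\tri')=\iEnd_{T^{16}}(\Dist_{\tri,\tri'})$ from $\zeta(\tri,\tri')$ by repeated use of the induction formula \eqref{induction-mult-rule}. Under this procedure each $D_{ij}$, of weight $\epsilon_i+\epsilon_j$ at the two $T$-regions it straddles, is replaced by the long-edge generator $A_\ell$ of the corresponding edge $\ell$ of the fencing graph $\Gamma_{\tri,\tri'}$, Weyl-multiplied by the string of short-edge generators $a_e$ running clockwise from the chosen gates to the endpoints of $\ell$; this substitution is the combinatorial content of the fencing graph drawn on the ideal tetrahedron in \cref{fig:tetrahedron}, and gives the stated expressions $Z_{1,2},\dots,Z_{2,4}$. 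The fencing-graph relations \eqref{fencing-rels1} among the $A_\ell$ and $a_e$ are supported on individual triangles and digons and follow verbatim from \cref{basic-hex-rels} and \cref{basic-sq-rels}, just as in the proof of \cref{pre-glue-alg}. Substituting the $Z_{i,j}$ for the $D_{ij}$ in the relation of \cref{cor-mut} and collecting the powers of $q$ introduced by \eqref{induction-mult-rule} then yields the exchange relation \eqref{proto-exchange}; the verification amounts to checking that the two monomials on the right-hand side acquire matching overall normalizations, so that the identity is weight-homogeneous.

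It remains to exhibit the injective homomorphisms $\widetilde\zeta(\tri)\hookrightarrow\widetilde\zeta(\tri,\tri')\hookleftarrow\widetilde\zeta(\tri')$. Categorically, $\Zc(\tri)$ (resp.\ $\Zc(\tri')$) is the further open subcategory of $\Zc(\tri,\tri')$ obtained by localizing at $S_\tri$ (resp.\ $S_{\tri'}$), so restricting the distinguished object and invoking \cref{insertgate} produces algebra maps between the corresponding internal endomorphism algebras; injectivity follows from injectivity of the Ore localization $\Oq(\Conf_4)[S_\tri^{-1}]\hookrightarrow\Oq(\Conf_4)[S^{-1}]$ — both localizations of the Noetherian domain $\Oq(\Conf_4)$, an iterated Ore extension by the presentation of \cref{prop:gensrels} — together with exactness of the $U_q(\g)$-invariants functor and of gate-opening. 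On generators these maps send $A_\ell\mapsto A_\ell$ and $a_e\mapsto a_e$ for the edges of $\Gamma_\tri$, which sit inside $\Gamma_{\tri,\tri'}$ as the boundary of the two faces constituting the $\tri$-copy of $\DD_4$, and symmetrically for $\tri'$; the exchange relation \eqref{proto-exchange} is precisely what identifies the $\tri'$-generators missing from $\widetilde\zeta(\tri)$ in terms of the $\tri$-data inside $\widetilde\zeta(\tri,\tri')$ (this is also what makes the flip functor of \cref{D4-flip-functor} well defined). Thus the two inclusions realize $\Gamma_{\tri}\subset\Gamma_{\tri,\tri'}\supset\Gamma_{\tri'}$.

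I expect the main obstacle to lie in the second step: converting the clean relation $D_{13}D_{24}=q^{-\frac12}D_{12}D_{34}+q^{\frac12}D_{23}D_{14}$ into the stated relation among the Weyl-ordered products $Z_{i,j}$ requires tracking the correction factors $q^{-(\mu,\nu)}$ from \eqref{induction-mult-rule} through non-commuting products of up to four generators on each side, and confirming that the two right-hand terms emerge with identical normalization. A secondary point requiring care is reading off from \cref{fig:tetrahedron} exactly which strings of short-edge generators, in which total order, enter each $Z_{i,j}$ — but this is the same sort of bookkeeping already carried out for \cref{pre-glue-alg}, and the homogeneity of the relations \eqref{commutator-rel} makes it purely mechanical.
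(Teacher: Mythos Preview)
Your proposal is correct and follows essentially the same route as the paper: define the generators via the gate-opening induction formula~\eqref{induction-mult-rule} applied to the $D_{ij}$, read off the fencing-graph relations from \cref{cor-deltas} and \eqref{induction-mult-rule}, and transport the three-term relation of \cref{cor-mut} to obtain~\eqref{proto-exchange}. The one simplification you anticipate as the ``main obstacle'' is dissolved in the paper by the observation that every $Z_{i,j}$ takes the uniform shape $Z_{i,j}=q^{-3/4}D_{i,j}\otimes\chi^{(i)}_{(1,0,0,0)}\boxtimes\chi^{(j)}_{(1,0,0,0)}$, so all six carry the \emph{same} scalar prefactor and the \emph{same} weight pattern; the exchange relation then follows from \cref{cor-mut} with no further $q$-bookkeeping.
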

\begin{proof}
The generators of $\widetilde\zeta(\tri,\tri')$ are defined  in complete analogy with Propositions~\ref{basic-sq-rels} and~\ref{basic-hex-rels}. For instance, if we enumerate the tensor factors in the $i$--th $T$-region of $\DD_4$ in clockwise order, then for boundary long edges $\overline{ij}$ we have
$$
A_{i,j} = D_{i,j}\otimes \chi^{(i)}_{(0,0,0,1)}\boxtimes\chi^{(j)}_{(1,0,0,0)},
\qquad\text{while}\qquad
a^{(i)}_{j}= \chi^{(i)}_{\eps_j-\eps_{j+1}}.
$$
The diagonal long edge variables are
\begin{align*}
A'_{1,3} &= D_{1,3}\otimes\chi^{(1)}_{(0,1,0,0)}\boxtimes\chi^{(3)}_{(0,0,1,0)}, &
A''_{1,3} &= D_{1,3}\otimes\chi^{(1)}_{(0,0,1,0)}\boxtimes\chi^{(3)}_{(0,1,0,0)}, \\
A'_{2,4} &= D_{2,4}\otimes\chi^{(2)}_{(0,0,1,0)}\boxtimes\chi^{(4)}_{(0,1,0,0)}, &
A''_{2,4} &= D_{2,4}\otimes\chi^{(2)}_{(0,1,0,0)}\boxtimes\chi^{(4)}_{(0,0,1,0)}.
\end{align*}
The fencing graph relations easily follow from this description, the $q$-commutation relations among the $D_{i,j}$ in Corollary~\eqref{cor-deltas}, and the multiplication rule~\eqref{induction-mult-rule}. The exchange relation follows by observing that
$$
Z_{i,j} = q^{-\frac{3}{4}} D_{i,j} \otimes \chi^{(i)}_{(1,0,0,0)}\boxtimes\chi^{(j)}_{(1,0,0,0)},
$$
and then applying the three-term relation in $\Oq(\Conf_4)$
$$
D_{1,3} D_{2,4} = q^{-\frac{1}{2}} D_{1,2} D_{3,4} + q^{\frac{1}{2}} D_{2,3} D_{1,4}
$$
established in \cref{cor-mut}.
\end{proof}
\begin{remark}
The exchange relation~\eqref{proto-exchange} can be rewritten in terms of Weyl ordered quantum torus elements as
\begin{align}
    \label{weyl-exrel}
    Z_{1,3} &= ~\nord{Z_{2,4}^{-1} Z_{1,2} Z_{3,4}}+ \nord{Z_{2,4}^{-1} Z_{2,3} Z_{1,4}},
\end{align}
or alternatively as
\begin{align*}
    Z_{2,4} &= ~\nord{Z_{1,3}^{-1} Z_{1,2} Z_{3,4}}+ \nord{Z_{1,3}^{-1} Z_{2,3} Z_{1,4}}.
\end{align*}
\end{remark}

\subsection{Flips on $\Zc(\decS)$}
Now let us consider a general decorated surface $\decS$, and two notched triangulations $\trib$ and $\trib'$ differing by the flip of a single edge.  Let us denote by $\widetilde{\decS}$ the decorated surface obtained as union of all but the two triangles sharing the edge, and by $\widetilde{\tri}$ the resulting triangulation of $\widetilde{\decS}$. Then we have a decomposition
\[\decS = \widetilde{\decS} \sqcup_{\DD_2^{\sqcup 4}} \DD_4.
\]
By excision, the flip functor $\mu_{\tri,\tri}$ thus admits a natural description as
\[
\mu_{\tri,\tri'} = \id_{\widetilde{\decS}} \boxtimes~ \mu^{\DD_4},
\]
where $\mu^{\DD_4}$ is the functor between open subcategories of $\Zc(\DD_4)$ from \cref{D4-flip-functor} induced by changing its triangulation. The only subtlety arises when we wish to describe this functor at the level of generators
and relations for the algebras $\zeta(\trib),\zeta(\trib')$. Recall that if $\decS$ has punctures, it is the the data of the notching that singles out a distinguished set of generators for the quantum torus $\zeta(\trib)$. Keeping track of the effect of flips on the notch necessitates a small amount of additional bookkeeping, which we shall now describe. 

Let us again enumerate the (possibly coincident) $T$-regions of the 4-gon $\{1,2,3,4\}$ in counterclockwise order. Then we have edges $\hc{e_{1,2}, e_{2,3}, e_{3,4}, e_{4,1}}$ common to both triangulations $\trib,\trib'$, where $\partial e_{i,j} = \hc{i,j}$ and we allow for the possibility that some of the $e_{i,j}$ are in fact identified. 
Using the notchings, we assign to each edge--end $e_i$ of a diagonal of $\DD_4$ an integer $\gamma_{i}$ defined by
\begin{align}
\label{special-angles}
\gamma_{i} = \begin{cases}
1 & \text{if} \quad |e_i| \text{ is minimal,} \\
-1 & \text{if} \quad |e_i| \text{ is maximal}, \\
0 & \text{otherwise}.
\end{cases}
\end{align}
More informally, we declare $\gamma_i=1$ if $e_i$ is the first edge-end with respect to the total order from the notched triangulation in which $e_i$ appears, while $\gamma_i=-1$ if $e_i$ is the last edge-end with respect to the notching; in all other cases we set $\gamma_i=0$.
Then we have the following exchange relation between the distinguished generating sets of the quantum tori $\zeta(\trib),\zeta(\trib')$ corresponding to the notched triangulations $\trib,\trib'$:
\begin{prop}
\label{prop:Ahat-flip}
Suppose that $\trib,\trib'$ are two notched triangulations whose underlying triangulations differ by flipping a single edge. Let ${Z}_{i,j} \in \zeta(\trib,\trib')$ denote the element corresponding to the diagonal $e_{i,j}$ with edge-ends $e_i$ and $e_j$, and let $(\gamma_i)_{i=1}^4$ be the integers defined in~\eqref{special-angles}. Let us set

\begin{align*}
  \alpha^{12}_{34} &= \alpha_1^{\delta_{\gamma_1,-1}} \alpha_2^{-\delta_{\gamma_2,1}} \alpha_3^{\delta_{\gamma_3,-1}} \alpha_4^{-\delta_{\gamma_4,1}}, \\
  \alpha^{14}_{23} &= \alpha_1^{-\delta_{\gamma_1,1}} \alpha_2^{\delta_{\gamma_2,-1}} \alpha_3^{-\delta_{\gamma_3,1}} \alpha_4^{\delta_{\gamma_4,-1}}.
\end{align*}
Then we have the following exchange relations in $\zeta(\trib,\trib')$:
\begin{align}
\label{skeinish}
Z_{1,3} &= \nord{\alpha^{12}_{34} Z_{2,4}^{-1} Z_{1,2} Z_{3,4}}+ \nord{\alpha^{14}_{23}Z_{2,4}^{-1} Z_{2,3} Z_{1,4}},
\end{align}
\begin{align}
\label{skeinish-2}
Z_{2,4} &= \nord{\alpha^{12}_{34} Z_{1,3}^{-1} Z_{1,2} Z_{3,4}}+ \nord{\alpha^{14}_{23}Z_{1,3}^{-1} Z_{2,3} Z_{1,4}},
\end{align}
where the Weyl ordering is taken with respect to the set of generators $\{{Z}_e,\alpha_i\}$.
\end{prop}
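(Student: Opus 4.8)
\emph{Proof idea.} The plan is to reduce the statement to the local computation on $\DD_4$ from \cref{prop:D4-flip}, and then to rewrite the locally–defined generators appearing there in terms of the distinguished generators $Z_s,\alpha_p$ of $\zeta(\trib)$ and $\zeta(\trib')$, keeping track of the monodromy corrections imposed by the notchings. Since $\trib$ and $\trib'$ differ by flipping a single edge, we have $\decS = \widetilde{\decS}\sqcup_{\DD_2^{\sqcup 4}}\DD_4$, so by \cref{D4-flip-functor} together with excision it suffices to describe the transition functor $\mu^{\DD_4}$ at the level of the glued algebras $\widetilde\zeta(\tri,\tri')$. On the boundary of the ideal tetrahedron of \cref{fig:tetrahedron} we have the fencing–graph generators $A_\ell, a_e$, the six Weyl–ordered elements $Z_{i,j}$ written out in \cref{prop:D4-flip}, and the exchange relation \eqref{proto-exchange}, equivalently its Weyl–ordered form \eqref{weyl-exrel}.

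Next I would express the distinguished generators of $\zeta(\trib,\trib')$ in terms of these fencing–graph generators. By \cref{A-hat-torus} and \cref{thm:zeta-rel}, the generator attached to an edge $s$ of the notched triangulation is $Z_s = \nord{a(l_1)a(l_2)A_l}$, where $a(l_i)$ is the product of short edges of the fencing graph running clockwise from the distinguished vertex of the $T$-region containing the $i$-th end of $s$ to the corresponding endpoint of the long edge $l$. For the four boundary edges $e_{1,2},e_{2,3},e_{3,4},e_{4,1}$ and the two diagonals of $\DD_4$, this differs from the element $Z_{i,j}$ of \cref{prop:D4-flip} only by a clockwise winding inside the relevant $T$-regions; since $\alpha_p$ is by definition the full clockwise product $\nord{\prod_{s\in\gamma_p}a_s}$, such a winding is a power of $\alpha_p$ up to a power of $q$ coming from Weyl reordering. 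The key combinatorial point is that this power is controlled entirely by the position of the diagonal's edge-end in the total order at each vertex induced by the distinguished gate: it is trivial unless that edge-end is first or last, i.e. unless $\gamma_i=\pm 1$, in which case it produces the factor $\alpha_i^{\delta_{\gamma_i,-1}}$ resp. $\alpha_i^{-\delta_{\gamma_i,1}}$, with the sign matching the diagonal to which the end belongs. Carrying this out, one finds that the corrections attached to the two diagonals distribute, between the two summands of \eqref{weyl-exrel}, exactly as the monomials $\alpha^{12}_{34}$ and $\alpha^{14}_{23}$ of the statement.

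Substituting these identities into \eqref{weyl-exrel}, collecting the resulting $\alpha_p$–monomials and Weyl–reordering with respect to the generating set $\{Z_e,\alpha_i\}$, one obtains precisely \eqref{skeinish}; the relation \eqref{skeinish-2} follows in the same way starting from the alternative Weyl–ordered form of \eqref{proto-exchange} displayed in the remark after \cref{prop:D4-flip}, which is symmetric under interchanging the two diagonals. The cases in which some of the $T$-regions $1,2,3,4$, or some of the boundary edges $e_{i,j}$, coincide (in particular self-folded triangles) need no separate treatment: excision isolates the flip inside a genuine copy of $\DD_4$, and the formulas for $\alpha^{12}_{34},\alpha^{14}_{23}$ depend only on the $\gamma_i$, which remain well defined in all these cases.

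The step I expect to be the main obstacle is the bookkeeping in the second paragraph: verifying that the winding discrepancy between $\nord{a(l_1)a(l_2)A_l}$ and the generator $Z_{i,j}$ of \cref{prop:D4-flip} is exactly the asserted power of $\alpha_p$, with the sign dictated by $\gamma_i$ and the correct compensating power of $q$, and that these corrections assemble into $\alpha^{12}_{34}$ and $\alpha^{14}_{23}$ in the two summands. This requires pinning down precisely the clockwise total order of edge-ends at each $T$-region determined by the distinguished gate, the orientation conventions on short edges of the fencing graph, and the $q$-powers produced by the recursion of \cref{weyl-order}; once these are fixed, the remaining manipulations are routine applications of the fencing–graph relations of \cref{pre-glue-alg} and the commutation relations of \cref{cor-deltas}.
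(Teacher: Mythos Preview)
Your proposal is correct and follows essentially the same approach as the paper: start from the Weyl-ordered exchange relation \eqref{weyl-exrel} in the larger algebra $\widetilde\zeta(\tri,\tri')$, and then pass to the subalgebra $\zeta(\trib,\trib')$ by multiplying through by the appropriate products of short-edge generators running from the distinguished vertex of each ambient $T$-region to the first vertex of the $\DD_4$-fencing graph, which is exactly how the paper phrases it. The paper's proof is terser --- it simply names the paths $\pi(i)$ and multiplies both sides of \eqref{weyl-exrel} by $\nord{\prod_{e\in\pi(1)}\prod_{f\in\pi(3)}a_ea_f}$ --- but this is the same operation as your ``express each $Z_s$ as $\nord{a(l_1)a(l_2)A_l}$ and substitute'', and your identification of the bookkeeping as the main obstacle matches what the paper leaves implicit.
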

\begin{proof}
We must translate the identity~\eqref{weyl-exrel}, which holds in the larger algebra 
$$
\widetilde{\zeta}(\tri,\tri')\simeq \widetilde\zeta({\tri})\otimes_{\zeta(\DD_2)^{\otimes4}}\widetilde\zeta_{\DD_4}(\tri,\tri'),
$$
into an identity between elements of the subalgebra ${\zeta}(\trib,\trib')$ of $\Tf'_\tri$--invariants in $\widetilde{\zeta}(\tri,\tri')$. To do this, for each $T$--region $i$ of $\DD_4$, let $w_i$ be the first vertex of the fencing graph for $\DD_4$ with respect to the clockwise order within $i$. Moreover, let $v_i$ be the distinguished vertex in the $T$-region of $\decS$ that contains the $T$--region $i$ of the subsurface $\DD_4$. Now write $\pi(i)$ for the set of short edges that make up the minimal path in the fencing graph $\Gamma_{\tri,\tri'}$ starting at  $v_i$  and travelling clockwise to vertex $w_i$. Then multiplying both sides of the relation~\eqref{weyl-exrel} by the product of short edges 
$$
\nord{\prod_{e\in\pi(1)}\prod_{f\in\pi(3)} a_e a_f},
$$
we obtain the relation~\eqref{skeinish}.
\end{proof}

The following is an easy direct computation.

\begin{cor}
The isomorphisms $\iota_{\trib}$ between $\Xc^q_\tri$ and $\chi_{\PGL_2}(\tri)$ intertwine the cluster mutation and the restricted flips functors $\chi_{\PGL_2}(\tri,\tri')$ obtained from $\zeta_{\SL_2}(\trib,\trib')$.  
\end{cor}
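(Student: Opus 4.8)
The plan is to reduce the statement to a local computation on the quadrilateral $\DD_4$, and then to a finite check on the generators $X_s$. First I would invoke excision exactly as in \cref{prop:Ahat-flip}: writing $\decS = \widetilde{\decS} \sqcup_{\DD_2^{\sqcup 4}} \DD_4$ and using $\mu_{\tri,\tri'} = \id_{\widetilde{\decS}} \boxtimes \mu^{\DD_4}$, together with the fact that $\iota_{\trib}$ and the quantum $\Xc$-mutation map are both defined edgewise, it suffices to verify the intertwining identity on each quantum $\Xc$-variable $X_s$, and in fact only on the six edges $e_{1,2},e_{2,3},e_{3,4},e_{4,1}$ (the frame) and $e_{1,3},e_{2,4}$ (the two diagonals) of the two triangulations of $\DD_4$, since every other edge variable is untouched by the flip and commutes with the flip functor tautologically. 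All four maps in the square are birational isomorphisms between explicitly presented quantum tori, so this is a finite check in the skew field of fractions of $\chi_{\PGL_2}(\tri')$.

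Next I would spell out the two maps to be compared on these generators. On one side, the restricted $\hat{A}$-flip $\chi_{\PGL_2}(\tri,\tri')$ is determined by~\eqref{skeinish} and~\eqref{skeinish-2}: the frame elements $Z_{i,j}$ are fixed, and the old diagonal $Z_{1,3}$ is replaced by the binomial in $Z_{2,4}^{-1},Z_{1,2},Z_{3,4},Z_{2,3},Z_{1,4}$ corrected by the $\alpha$-monomials $\alpha^{12}_{34},\alpha^{14}_{23}$. Pushing this through $\iota_{\trib}$, i.e. substituting $X_s\mapsto\nord{\widetilde X_s\alpha_s}$ and using the explicit description of $\widetilde X_s$ in terms of the $Z_{s_i^+},Z_{s_i^-}$ and of $\alpha_s$ in terms of the $\epsilon_p(s)$, produces an explicit element of $\chi_{\PGL_2}(\tri')$. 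On the other side, the quantum cluster $\Xc$-mutation $\mu^{\Xc}_{\tri,\tri'}$ in the direction of the flipped edge $e$ is the standard Fock--Goncharov formula~\cite{FG09a,GS19}: $X_e\mapsto X_e^{-1}$, and $X_s$ is multiplied by a product of $|\varepsilon_{se}|$ $q$-binomials in $X_e$ for the neighbouring edges, with exchange-matrix entries $\varepsilon_{se}\in\{0,\pm1,\pm2\}$ read off from $\tri$. The claim is that these two elements coincide for each of the finitely many generators.

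The verification itself is a direct manipulation of Weyl-ordered monomials in the quantum torus $\zeta_{\SL_2}(\trib,\trib')$ of \cref{prop:D4-flip}: rewrite both sides using the fencing-graph $q$-commutation relations of \cref{pre-glue-alg} together with the three-term relation~\eqref{proto-exchange}, and check that every exponent of $q^{1/2}$ matches. For the frame edges this is a pure monomial identity; for the flipped diagonal it is~\eqref{skeinish-2}, rewritten after cancelling the common $\alpha_s$ and $Z_{s_i^\pm}$ factors against the denominator $Z_{2,4}^{-1}$ produced by the $\Xc$-mutation. The tagged cases---when the flipped edge is incident to a puncture whose notch moves, or when a self-folded triangle is created or destroyed---are handled by the same computation after substituting the values of $\gamma_i$ from~\eqref{special-angles} and of $\epsilon_p(s),\upsilon_p(\Delta)$, using the identities $\alpha_p^2=\nord{\prod_{p\in\partial s}X_s}$ and $\alpha_p^2=X_sX_{s'}^{-1}$ established in the proof of the preceding theorem to translate between $\alpha$-corrections and $\Xc$-monomials. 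The main obstacle will be purely the bookkeeping: organizing the case analysis over the local configurations of the flip (generic flip, flip at a notched puncture, flip of or into the interior edge of a self-folded triangle) and tracking the half-integer powers of $q$ through the Weyl orderings so that the corrections $\alpha^{12}_{34},\alpha^{14}_{23}$ in~\eqref{skeinish} line up precisely with the tagged-arc quantum $\Xc$-mutation of~\cite{GS19}. There is no conceptual difficulty---everything is a monomial or binomial map between explicitly presented quantum tori and it suffices to match exponents on generators---but the number of subcases and the sign and ordering conventions make this the part that requires care.
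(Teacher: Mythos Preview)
Your proposal is correct and matches the paper's approach: the paper simply records this as ``an easy direct computation,'' and what you have outlined is precisely that computation spelled out in full, with the reduction to $\DD_4$ via excision and the finite check on generators using the exchange relations \eqref{skeinish}--\eqref{skeinish-2}. The paper does not provide any further detail beyond asserting the computation is straightforward, so your write-up is if anything more thorough than what appears there.
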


\subsection{Topological remarks}
\label{sec:modularity}
In this final section we collect a number of remarks concerning the topological invariance built into our construction.

Let $\Aut(\decS)$ denote the 2-group of automorphisms of $\decS$ in $\Surp$, i.e. the category with a single object, whose 1-morphisms are self-diffeomorphisms of $\decS$ which respect the stratification and the labeling by $G$ and $T$, and whose 2-morphisms are stratified isotopies of such (equivalently $\Aut(\decS)$ may be regarded as a monoidal category in which all objects and morphisms are invertible).  We note that the $1$-truncation $\pi_{\leq 1}\Aut(\decS)$ is the precisely the marked mapping class group, since by definition this means that we take isotopy equivalence classes.  Given a set $\Gc$ of $G$- and $T$-gates, let us also denote by $\Aut_\Gc(\decS)$ the sub $2$-group of $\Aut(\decS)$ of diffeomorphisms and isotopies which fix $\Gc$ componentwise.

Simply because $\Zc$ is a functor, the category $\Zc(\decS)$ inherits a canonical action of the 2-group $\Aut(\decS)$.  This data can be understood as an action of the mapping class group, in which functors compose by the group law only up to a coherent isomorphism, encoded by a 2-cocycle with values in the center of the category.  We note that the distinguished object, being defined by the empty embedding is preserved canonically (i.e. up to a canonical isomorphism) by any diffeomorphism, so that it obtains a canonical structure of a $\Aut(\decS)$-fixed point of the category.  In particular the mapping class group acts strictly on the vector space $\End(\Dist_\decS)$, and in such a way that it intertwines the cluster charts coming from different triangulations.

Suppose we are given two triangulations $\tri$ and $\tri'$ related by an isotopy $\gamma$.  We may assume $\gamma$ at time $t=0$ to be the identity automorphism of $\decS$, so that as $t$ runs to $t=1$, it induces isomorphisms between objects of $\Zc(\tri)$ and $\Zc(\tri')$.  Since both categories are closed under isomorphisms, it means two subcategories related by an isotopy are simply equal.  

An interesting extra feature comes from the $\Repq(\Gc)$-action at the gates.  In the presence of gates $\Gc$, any functor arising coming from $\Aut_\Gc(\decS)$ is canonically promoted to a $\Gc$-module functor, and any isotopy to a $\Gc$-module natural isomorphism.  This additional naturality has a consequence for the charts $\zeta(\trib)$ and their mutations.  The algebras $\zeta(\trib)$ are defined with respect to a notched triangulation, yet their presentation, as encoded by the fencing graph, depended only on the associated notched ideal triangulation.  We note that two isotopy classes of notched triangulations give rise to the same notched ideal triangulation if and only if they differ by iterated application of the Dehn twist implementing $2\pi$ rotation at some puncture.  This Dehn twist preserves the notching, hence it gives a $\Repq(T)$-module auto-equivalence of $\Zc(\decS)$.  However, the natural isotopy undoing the Dehn twist \emph{does not} preserve the notch.

This means that the Dehn twist \emph{is isomorphic} to the identity functor as a plain functor of categories, but is \emph{not isomorphic} as a $\Repq(T)$-module category.    Hence the Dehn twist does not act trivially on internal endomorphism algebras such as $\zeta(\trib)$.  Indeed, a direct computation implies that the Dehn twist induces a homomorphism $\widetilde\zeta(\trib)\to\widetilde\zeta(\trib')$ given by $Z_e\mapsto qZ_e\alpha_p$, for all edges $e$ incident to the puncture.

And additional interesting subtlety comes when we consider the appearance of topological framings of $\decS$.  In \cref{sec:general-charts}, we have implicitly fixed a 2-framing of the surface $\decS$ when computing the charts $\widetilde\zeta$.  The data of the 2-framing can be prescribed by a \emph{jellyfish} diagram as in \cref{fig:torus}: the triangles are drawn in some order in a line, and the attaching digons are stretched out to long legs connecting the triangles.  Such a prescription topologically fixes a 2-framing on the surface -- the blackboard framing -- and also a total ordering on edge-ends of the triangulation.  This data entered into our computation of the generators and relations for $\zeta(\trib)$.  However, the reader will note that the algebra $\widetilde\zeta(\tri)$ has been defined in such a way that it \emph{does not depend} on this data of framing.

Hence, while neither the \emph{algebras} $\widetilde\zeta(\tri)$, $\zeta(\trib)$, nor the \emph{flips} $\widetilde\zeta(\tri,\tri')$, $\zeta(\trib,\trib')$ depend on the choice of 2-framing, their \emph{presentation} by generators and relations (that is, their distinguished choice of generators), do indeed depend on such a choice (in fact, less data, the choice of a 3-framing at the quantum level, a spin structure classically).  Algebraically, this is because we have chosen the elements $D_{ij}$ as generators of a 1-dimensional space of invariants in $\Oq(N\backslash G)\otimes \Oq(N\backslash G),$ but this space doesn't naturally come with a distinguished vector.

We wish to stress that in traditional approach to quantization via quantum cluster algebras, one \emph{must fix} these choices {\it a priori} in order to \emph{define} the quantum cluster charts and their flips in the first place.  By contrast, we \emph{may make} such choices in order to \emph{compute} our quantum cluster charts and their flips {\it a posteriori}.

This distinction also resolves a potential point of confusion about $\Ac$-varieties in our set-up.  Traditionally, these are defined as moduli spaces of \emph{twisted} local systems on the surface with $B$-reductions.  Any choice of spin structure on the surface (in particular any 3-framing, and in particular any 2-framing) gives an isomorphism between the moduli spaces of twisted and ordinary local systems.  If defining charts and flips by formulas, one must work with twisted local systems and non-canonically identify them with ordinary local systems by a choice of spin structure.  In our approach, this is reversed, and ordinary local systems are what naturally appear.

\section{Examples}

In this section we consider the examples when $S$ is a cylinder with one marked point on each boundary circle, a punctured torus, and a three punctured sphere. On~\cref{fig:examples}, we show a pair $\tri, \tri'$ of superimposed triangulations of each surface. The triangulation $\tri$ consists of edges labelled $\hc{1,2,3}$ or $\hc{1,2,3,4}$, and $\tri'$ is obtained from $\tri$ by flipping the edge 3 to the one labelled $3'$. We denote the tori $\zeta_{\SL_2}(\tri)$ and $\zeta_{\SL_2}(\tri')$ by $\zeta$ and $\zeta'$ respectively. Similarly, we let $\chi$ and $\chi'$ denote the tori $\chi_{\PGL_2}(\tri)$ and $\chi_{\PGL_2}(\tri')$. We let $Z_j,\alpha_p$ be the generators of~$\zeta$, while setting $Z'_3 := Z_{3'} \in \zeta'$. The generators of $\chi$ and $\chi'$ are denoted respectively by~$\hc{X_i}$ and $\hc{X'_i}$. Finally, we let $\mu := \mu_{\tri,\tri'}$ be the flip between the triangulaitons $\tri$ and $\tri'$.

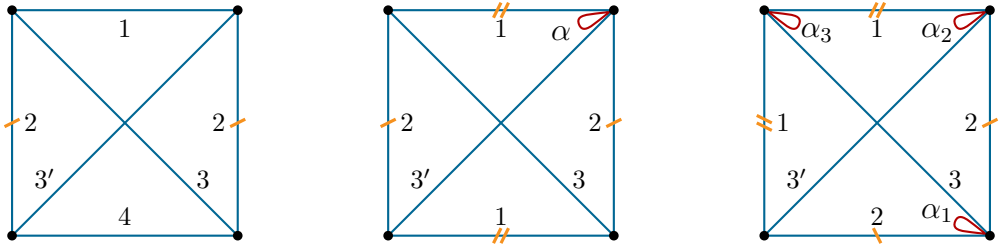
\begin{figure}
\begin{tikzpicture}[every node/.style={inner sep=0.5, thick, circle}, thick, x=0.5cm, y=0.5cm]

\def\z{0.2}

\draw[MidnightBlue, postaction={decorate,decoration={markings,mark=at position 0.5 with{
            \draw[-,BurntOrange,very thick] (\z/2,-\z) -- (-\z/2,\z);
            \node[black] at (0,-0.5) {\small $2$};
        }}}] (0,0) to (0,6);

\draw[MidnightBlue, postaction={decorate,decoration={markings,mark=at position 0.5 with{
            \node[black] at (0,-0.5) {\small $1$};
        }}}] (0,6) to (6,6);

\draw[MidnightBlue, postaction={decorate,decoration={markings,mark=at position 0.5 with{
            \draw[-,BurntOrange,very thick] (\z/2,-\z) -- (-\z/2,\z);
            \node[black] at (0,-0.5) {\small $2$};
        }}}] (6,6) to (6,0);

\draw[MidnightBlue, postaction={decorate,decoration={markings,mark=at position 0.5 with{
            \node[black] at (0,-0.5) {\small $4$};
        }}}] (6,0) to (0,0);

\draw[MidnightBlue, postaction={decorate,decoration={markings,mark=at position 0.2 with{
            \node[black] at (0,0.47) {\small $3'$};
        }}}] (0,0) to (6,6);

\draw[MidnightBlue, postaction={decorate,decoration={markings,mark=at position 0.8 with{
            \node[black] at (0,0.4) {\small $3$};
        }}}] (0,6) to (6,0);

\draw[fill] (0,0) circle (0.1);
\draw[fill] (6,0) circle (0.1);
\draw[fill] (0,6) circle (0.1);
\draw[fill] (6,6) circle (0.1);

\begin{scope}[shift={(10,0)}]

\draw[red!70!black, shift={(6,6)}, rotate=-157.5] (0,0) to [out=0, in=-90] (1,0) to [out=90,in=0] (0,0);

\draw[MidnightBlue, postaction={decorate,decoration={markings,mark=at position 0.5 with{
            \draw[-,BurntOrange,very thick] (\z/2,-\z) -- (-\z/2,\z);
            \node[black] at (0,-0.5) {\small $2$};
        }}}] (0,0) to (0,6);

\draw[MidnightBlue, postaction={decorate,decoration={markings,mark=at position 0.5 with{
            \draw[-,BurntOrange,very thick] (-\z,-\z) -- (0,\z);
            \draw[-,BurntOrange,very thick] (0,-\z) -- (\z,\z);
            \node[black] at (0,-0.5) {\small $1$};
        }}}] (0,6) to (6,6);

\draw[MidnightBlue, postaction={decorate,decoration={markings,mark=at position 0.5 with{
            \draw[-,BurntOrange,very thick] (\z/2,-\z) -- (-\z/2,\z);
            \node[black] at (0,-0.5) {\small $2$};
        }}}] (6,6) to (6,0);

\draw[MidnightBlue, postaction={decorate,decoration={markings,mark=at position 0.5 with{
            \draw[-,BurntOrange,very thick] (-\z,-\z) -- (0,\z);
            \draw[-,BurntOrange,very thick] (0,-\z) -- (\z,\z);
            \node[black] at (0,-0.5) {\small $1$};
        }}}] (6,0) to (0,0);

\draw[MidnightBlue, postaction={decorate,decoration={markings,mark=at position 0.8 with{
            \node[black] at (0,0.4) {\small $3$};
        }}}] (0,6) to (6,0);

\draw[MidnightBlue, postaction={decorate,decoration={markings,mark=at position 0.2 with{
            \node[black] at (0,0.47) {\small $3'$};
        }}}] (0,0) to (6,6);
        
\node at (4.6,5.4) {\large $\alpha$};

\draw[fill] (0,0) circle (0.1);
\draw[fill] (6,0) circle (0.1);
\draw[fill] (0,6) circle (0.1);
\draw[fill] (6,6) circle (0.1);

\end{scope}

\begin{scope}[shift={(20,0)}]

\draw[red!70!black, shift={(6,0)}, rotate=157.5] (0,0) to [out=0, in=-90] (1,0) to [out=90,in=0] (0,0);
\draw[red!70!black, shift={(6,6)}, rotate=-157.5] (0,0) to [out=0, in=-90] (1,0) to [out=90,in=0] (0,0);
\draw[red!70!black, shift={(0,6)}, rotate=-22.5] (0,0) to [out=0, in=-90] (1,0) to [out=90,in=0] (0,0);

\draw[MidnightBlue, postaction={decorate,decoration={markings,mark=at position 0.5 with{
            \draw[-,BurntOrange,very thick] (-\z,-\z) -- (0,\z);
            \draw[-,BurntOrange,very thick] (0,-\z) -- (\z,\z);
            \node[black] at (0,-0.5) {\small $1$};
        }}}] (0,0) to (0,6);

\draw[MidnightBlue, postaction={decorate,decoration={markings,mark=at position 0.5 with{
            \draw[-,BurntOrange,very thick] (-\z,-\z) -- (0,\z);
            \draw[-,BurntOrange,very thick] (0,-\z) -- (\z,\z);
            \node[black] at (0,-0.5) {\small $1$};
        }}}] (0,6) to (6,6);

\draw[MidnightBlue, postaction={decorate,decoration={markings,mark=at position 0.5 with{
            \draw[-,BurntOrange,very thick] (\z/2,-\z) -- (-\z/2,\z);
            \node[black] at (0,-0.5) {\small $2$};
        }}}] (6,6) to (6,0);

\draw[MidnightBlue, postaction={decorate,decoration={markings,mark=at position 0.5 with{
            \draw[-,BurntOrange,very thick] (\z/2,-\z) -- (-\z/2,\z);
            \node[black] at (0,-0.5) {\small $2$};
        }}}] (6,0) to (0,0);

\draw[MidnightBlue, postaction={decorate,decoration={markings,mark=at position 0.2 with{
            \node[black] at (0,0.47) {\small $3'$};
        }}}] (0,0) to (6,6);

\draw[MidnightBlue, postaction={decorate,decoration={markings,mark=at position 0.8 with{
            \node[black] at (0,0.4) {\small $3$};
        }}}] (0,6) to (6,0);
        
\node at (4.6,0.6) {\large $\alpha_1$};
\node at (4.6,5.4) {\large $\alpha_2$};
\node at (1.4,5.4) {\large $\alpha_3$};

\draw[fill] (0,0) circle (0.1);
\draw[fill] (6,0) circle (0.1);
\draw[fill] (0,6) circle (0.1);
\draw[fill] (6,6) circle (0.1);

\end{scope}

\end{tikzpicture}
\caption{At left: a cylinder with a marked point on each boundary circle. In the middle: a punctured torus. At right: a 3-punctured sphere.}
\label{fig:examples}
\end{figure}

\subsection{The cylinder}
We have
\begin{align*}
\zeta_1 &= \C[q^{\pm\frac12}]\ha{Z_1^{\pm1},Z_2^{\pm1},Z_3^{\pm1},Z_4^{\pm1}}, \\
\zeta_2 &= \C[q^{\pm\frac12}]\big\langle Z_1^{\pm1},Z_2^{\pm1},{Z'_3}^{\pm1},Z_4^{\pm1}\big\rangle,
\end{align*}
where the elements $Z_1$ and $Z_4$ are central in both tori, the other relations read
$$
Z_2 Z_3 = q Z_3 Z_2 \qquad\text{and}\qquad Z'_3 Z_2 = q Z_2 Z'_3,
$$
and the flip $\mu$ takes form
$$
Z'_3Z_3 = Z_1Z_4 + qZ_2^2.
$$

The $X$-variables of the torus $\chi_1$ then read
$$
X_1 = \nord{Z_1Z_2Z_3^{-1}}, \qquad X_2=\nord{Z_3^2Z_1^{-1}Z_4^{-1}}, \qquad X_3 = \nord{Z_1Z_4Z_2^{-2}}, \qquad X_4 = \nord{Z_4Z_2Z_3^{-1}},
$$
and satisfy the relations
\begin{align*}
X_1X_2 &= q^2X_2X_1, & X_1X_3 &= q^{-2}X_3X_1, & X_1X_4 &= X_4X_1, \\
X_4X_2 &= q^2X_2X_4, & X_4X_3 &= q^{-2}X_3X_4, & X_2X_3 &= q^4X_3X_2.
\end{align*}
Similarly, the $X$-variables of the torus $\chi_2$ are given by
$$
X'_1 = \nord{Z_1Z'_3Z_2^{-1}}, \qquad X'_2 = \nord{Z_1Z_4{Z'_3}^{-2}}, \qquad X'_3 = \nord{Z_2^2Z_1^{-1}Z_4^{-1}}, \qquad X'_4 = \nord{Z_4Z'_3Z_2^{-1}},
$$
and satisfy the relations
\begin{align*}
X'_1X'_2 &= q^{-2}X'_2X'_1, & X'_1X'_3 &= q^2X'_3X'_1, & X'_1X'_4 &= X'_4X'_1, \\
X'_4X'_2 &= q^{-2}X'_2X'_4, & X'_4X'_3 &= q^2X'_3X'_4, & X'_2X'_3 &= q^{-4}X'_3X'_2.
\end{align*}
Finally the flip $\mu$ reads
$$
X'_3 = X_3^{-1}, \qquad X'_1 = X_1(1+qX_3), \qquad X'_4 = X_4(1+qX_1),
$$
and
$$
X'_2 = X_2(1+qX_3^{-1})^{-1}(1+q^{3}X_3^{-1})^{-1}.
$$

\subsection{The punctured torus}

In this case, we get
\begin{align*}
\zeta_1 &= \C[q^{\pm\frac12}]\ha{Z_1^{\pm1},Z_2^{\pm1},Z_3^{\pm1},\alpha^{\pm1}}, \\
\zeta_2 &= \C[q^{\pm\frac12}]\big\langle Z_1^{\pm1},Z_2^{\pm1},{Z'_3}^{\pm1},\alpha^{\pm1}\big\rangle.
\end{align*}
The generators satisfy relations
\begin{align*}
Z_2Z_1 &= qZ_1Z_2, & Z_2Z_3 &= qZ_3Z_2, & Z_3Z_1 &= qZ_1Z_3, \\
&& Z_2Z'_3 &= qZ'_3Z_2, & Z_1Z'_3 &= qZ'_3Z_1,
\end{align*}
as well as
$$
\alpha Z_x = q^2 Z_x \alpha
$$
for any edge $x \in \hc{1,2,3,3'}$. The flip $\mu$ reads
$$
Z'_3Z_3 = q^2Z_1^2 \alpha + qZ_2^2.
$$

The $X$-generators of the torus $\chi_1$ read
$$
X_1 = \nord{Z_2^2Z_3^{-2}\alpha}, \qquad X_2 = \nord{Z_3^2Z_1^{-2}\alpha}, \qquad X_3 = \nord{Z_1^2Z_2^{-2}\alpha^{-1}},
$$
and satisfy the relations
$$
X_iX_{i+1} = q^4X_{i+1}X_i,
$$
for $i \in \Z/3\Z$. Similarly, the $X$-generators of the torus $\chi_2$ take form
$$
X'_1 = \nord{{Z'_3}^2Z_2^{-2}\alpha}, \qquad X'_2 = \nord{Z_1^2{Z'_3}^{-2}\alpha^{-1}}, \qquad X'_3 = \nord{Z_2^2Z_1^{-2}\alpha},
$$
and satisfy the relations
$$
X'_iX'_{i+1} = q^{-4}X'_{i+1}X'_i,
$$
for $i \in \Z/3\Z$. Finally, the flip of $X$-variables is
$$
X'_1 = X_1(1+qX_3)(1+q^{3}X_3), \qquad X'_2 = X_2(1+qX_3^{-1})^{-1}(1+q^{3}X_3^{-1})^{-1}, \qquad X'_3 = X_3^{-1}.
$$

\subsection{The 3-punctured sphere}
We have
\begin{align*}
\zeta_1 &= \C[q^{\pm\frac12}]\ha{Z_1^{\pm1},Z_2^{\pm1},Z_3^{\pm1},\alpha_1^{\pm1},\alpha_2^{\pm1},\alpha_3^{\pm1}}, \\
\zeta_2 &= \C[q^{\pm\frac12}]\big\langle Z_1^{\pm1},Z_2^{\pm1},{Z'_3}^{\pm1},\alpha_1^{\pm1},\alpha_2^{\pm1},\alpha_3^{\pm1}\big\rangle.
\end{align*}
The generators satisfy relations
\begin{align*}
Z_2Z_1 &= q^{\frac12}Z_1Z_2, & Z_3Z_2 &= q^{\frac12}Z_2Z_3, & Z_3Z_1 &= q^{\frac12}Z_1Z_3, \\
&& Z'_3Z_2 &= Z_2Z'_3, & Z'_3Z_1 &= qZ_1Z'_3,
\end{align*}
as well as
$$
[\alpha_i,\alpha_j]=0 \qquad\text{and}\qquad \alpha_i Z_j = q^{1-\delta_{ij}} Z_j \alpha_i,
$$
for any $1 \le i,j \le 3$. The flip $\mu$ reads
$$
Z'_3Z_3 = q^{\frac12}\!\!\nord{\alpha_1\alpha_2Z_1Z_2}+q^{-\frac12}\!\!\nord{\alpha_3Z_1Z_2}.
$$

The torus $\chi_1$ is commutative and is generated by the elements
$$
X_j = \frac{\alpha_1\alpha_2\alpha_3}{\alpha_j^2},
$$
for $1 \le i,j \le 3$. The torus $\chi_2$ is commutative as well, and is generated by
$$
X'_1 = X_1, \qquad X'_2 = X_2, \qquad X'_3 = X_3^{-1}.
$$

\printbibliography
\end{document}